\documentclass[11pt]{amsart}
\usepackage{latexsym,amscd,amssymb, graphicx, color, amsthm, bm, amsmath, cancel, enumitem, ytableau}
\allowdisplaybreaks
\usepackage{tikz}
\usepackage[margin=1in]{geometry}
\usepackage{hyperref}
\usepackage[all,cmtip]{xy}
\usepackage{mathtools}
\usepackage{rotating}
\usetikzlibrary{math, arrows.meta}
\usetikzlibrary{calc}
\usepackage[backend=bibtex]{biblatex}
\numberwithin{equation}{section}

\newtheorem{theorem}{Theorem}[section]
\newtheorem{proposition}[theorem]{Proposition}
\newtheorem{corollary}[theorem]{Corollary}
\newtheorem{lemma}[theorem]{Lemma}

\newtheorem{problem}[theorem]{Problem}

\newtheorem{remark}[theorem]{Remark}
\newtheorem{definition}[theorem]{Definition}

\theoremstyle{definition}

\newcommand{\Hilb}{{\mathrm{Hilb}}}

\newcommand{\rev}{{\mathrm{rev}}}

\newcommand{\gl}{\mathrm{GL}}
\newcommand{\Frob}{{\mathrm{Frob}}}
\newcommand{\symm}{{\mathfrak{S}}}

\newcommand{\grFrob}{{\mathrm{grFrob}}}

\newcommand{\CC}{{\mathbb{C}}}
\newcommand{\QQ}{{\mathbb{Q}}}
\newcommand{\ZZ}{{\mathbb{Z}}}
\newcommand{\PP}{{\mathbb{P}}}

\newcommand{\spa}{\mathrm{span}}

\newcommand{\xxx}{{\mathbf{x}}}

\newcommand{\mmm}{{\mathfrak{m}}}

\newcommand{\dschub}{\widetilde{S}}

\newcommand{\ind}{\mathrm{Ind}}
\newcommand{\sym}{{\mathrm{Sym}}}
\newcommand{\qsym}{{\mathrm{Qsym}}}
\newcommand{\DES}[1]{{\mathrm{DES}(#1)}}
\newcommand{\FIX}[1]{{\mathrm{FIX}(#1)}}
\newcommand{\fix}[1]{{\mathrm{fix}(#1)}}
\newcommand{\std}[1]{{\mathrm{std}(#1)}}

\newcommand{\grass}[2]{{\mathrm{Gr}(#1,#2)}}
\newcommand{\bound}[2]{{\mathrm{Bound}(#1,#2)}}
\newcommand{\codim}[2]{{\mathrm{codim}_{#1}(#2)}}
\newcommand{\F}{{\mathcal{F}}}
\newcommand{\alt}{{\operatorname{Alt}}}

\title{A family of algebras that connects positroids,\\the Theta conjecture, the Delta conjecture,\\and Eulerian quasisymmetric functions}

\author{Hai Zhu}

\address{Department of Mathematics, UC San Diego, La Jolla, CA 92093, USA}
\email{haz138@ucsd.edu}
\date{\today}

\begin{document}

\begin{abstract}
    We define a family of algebras $\mathbb{Q}[\mathbf{x}_n]/I_{n,a,b}$ whose Hilbert series, when $n\le a+b\le n+2$, $a>0$, and $0<b<n$, enumerate certain positroid cells by codimension. Equivalently, these series enumerate certain affine permutations by length. We determine the graded $\mathfrak{S}_n$-module structure of these algebras by using exact sequences to derive an invertible recursion for their graded Frobenius characteristics when $a+b\ge n$ and then passing to the limit $a\rightarrow\infty$. In particular, the graded Frobenius characteristic $\mathrm{grFrob}(\mathbb{Q}[\mathbf{x}_{a+b-1}]/I_{a+b-1,a,b};q)=\operatorname{rev}_q(\omega((\Theta_{e_{a-1}}\Theta_{e_{b-1}}\nabla e_1)\mid_{t=0}))$, where $\rev_q$ reverses the $q$-coefficients and the specialization $(\Theta_{e_{a-1}}\Theta_{e_{b-1}}\nabla e_1)\mid_{t=0}$ appears in the Theta conjecture. For $a,b\ge1$, we also show that $e_a^\perp\mathrm{grFrob}(\mathbb{Q}[\mathbf{x}_{a+b}]/I_{a+b,a,b};q)=q^{\binom{a}{2}}\operatorname{rev}_q(\omega((\Delta_{e_{a-1}}^\prime e_b)\mid_{t=0}))$, where $\Delta_{e_{k-1}}^\prime e_n$ occurs in the Delta conjecture. In the three cases $a+b-n=0,1,2$, the ungraded Frobenius characteristics agree with three families of Eulerian quasisymmetric functions, respectively. We also construct an open subvariety of $(\mathbb{P}^{k-1})^n$ whose rational Chow ring is isomorphic to $\mathbb{Q}[\mathbf{x}_n]/I_{n,k,n-k}$. This variety admits a stratification induced by certain positroids. Its strata need not be affine, and their codimension generating function equals $\mathrm{Hilb}(\mathbb{Q}[\mathbf{x}_n]/I_{n,k,n-k};q)$.
\end{abstract}

\maketitle

\section{Introduction}\label{sec:introduction}
Let $\xxx_n=(x_1,x_2,\dots,x_n)$ be the list of variables. The symmetric group $\symm_n$ acts on the polynomial ring $\QQ[\xxx_n]$ by permuting variables, making the quotient $\QQ[\xxx_n]/I$ a graded $\symm_n$-module for any $\symm_n$-invariant homogeneous ideal $I$ with $\deg x_i=1$. We give new constructions below, where the graded Frobenius characteristic (see Subsection~\ref{subsec:symfunc-symgp}) of a graded $\symm_n$-module $A$ is $\grFrob(A;q)\coloneqq\sum_{d\ge 0}q^d\cdot\Frob(A_d)$ and we write $\Hilb(A;q)\coloneqq\sum_{d=0}^\infty q^d\dim(A_d)$ for the Hilbert series of $A$.

\begin{definition}\label{def:symfpoly}
    For any subset $S\subseteq[n]$ and integer $b\ge 0$, let \[e_b(S)\coloneqq\sum_{T\in\binom{S}{b}}\prod_{i\in T}x_i\]
    be the elementary symmetric polynomial on the variable set $\{x_i\,;\, i\in S\}$, with the convention that
    $e_0(S)=1$ and $e_b(S)=0$ for $b>\lvert S\rvert$.
\end{definition}

\begin{definition}\label{def:R_{n,a,b}}
    For integers $n>0$ and $a,b\ge0$, define two ideals $I_{n,a,b},I_{n,\infty,b}\subseteq\mathbb{Q}[\xxx_n]$ by
    \[I_{n,a,b}\coloneqq\langle x_i^a,e_b([n]\setminus\{i\})\,:\, i\in[n] \rangle\]
    and
    \[I_{n,\infty,b}\coloneqq\langle e_b([n]\setminus\{i\})\,:\, i\in[n] \rangle.\]
    Let $R_{n,a,b}\coloneqq\mathbb{Q}[\xxx_n]/I_{n,a,b}$ and $R_{n,\infty,b}\coloneqq\mathbb{Q}[\xxx_n]/I_{n,\infty,b}$.
\end{definition}

\begin{definition}\label{def:F_{n,a,b}}
    For integers $n>0$ and $a,b\ge0$, let \[\F_{n,a,b}\coloneqq\grFrob(R_{n,a,b};q)\] and \[\F_{n,\infty,b}\coloneqq\grFrob(R_{n,\infty,b};q).\]
\end{definition}

The family of graded $\symm_n$-algebras $R_{n,a,b}$ defined above connects four seemingly different objects. Their Hilbert series admit interpretations in terms of positroid cells and affine permutations, while their graded Frobenius characteristics are related to Theta operators, Delta operators, and Eulerian quasisymmetric functions. We also provide a geometric realization of a subfamily through the Chow ring of an open subvariety of a product of projective spaces. We review these objects and present our corresponding main results, respectively, in the four following subsections.

\subsection{Main results about positroids and affine permutations}\label{subsec:intto-posiroid}
Positroids and bounded affine permutations are equivalent up to a bijection. Both are defined in Subsection~\ref{subsec:positroids-affine-perm}; see Speyer's survey~\cite{speyer2024richardsonvarietiesprojectedrichardson} for a detailed overview. The combinatorics of positroids was systematically developed by Postnikov in his study of the totally nonnegative Grassmannian \cite{Postnikov2006}, which admits a cell decomposition indexed by positroids. Knutson, Lam, and Speyer studied the corresponding stratification of the complex Grassmannian by open positroid varieties, indexed by bounded affine permutations, whose lengths give the codimensions of the corresponding strata \cite{KnutsonLamSpeyer2013}. Feigin, Lanini, and Pütz constructed quiver Grassmannians admitting cellular decompositions whose cell posets are dual to those of the totally nonnegative Grassmannians \cite{FeiginLaniniPutz2023}. Positroids are also closely connected to Schubert matroids \cite{Oh2011} and cluster algebras \cite{GalashinLam2023}.

We give the Frobenius characteristics of a subfamily of $R_{n,a,b}$ via affine permutations, where $F_{S,n}$ denotes fundamental quasisymmetric functions defined in Subsection~\ref{subsec:quasisym}, $\dschub_n^k$ is the set of affine permutations of average displacement $k$ as defined in Subsection~\ref{subsec:positroids-affine-perm}, and $\DES{w}$ is the descent set of a word $w$ defined in Subsection~\ref{subsec:combinatorics}.

\begin{theorem}[{Corollary~\ref{cor:a+b-n=0,1,2}}]\label{thm:main:a+b-n=0,1,2}
    Let $n,a,b>0$ satisfy $0\le a+b-n\le 2$ and $b<n$. Then
    \[\F_{n,a,b} = \sum_{\substack{f\in\widetilde{S}_n^0 \\ \text{$i-a<f(i)<i+b$ for all $i$}}} q^{\ell(f)} \cdot F_{\DES{f\mid_{[n]}},n}.\]
\end{theorem}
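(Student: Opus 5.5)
The plan is to prove the identity by showing that both sides satisfy the same invertible recursion with the same initial conditions. The abstract says this recursion for $\F_{n,a,b}$ (valid when $a+b\ge n$) comes from short exact sequences, so I would take that recursion as given and do all the remaining work on the combinatorial side.

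\textbf{Step 1: the algebraic recursion.} I would use the exact sequence
\[0\to R_{n,a-1,b}(-1)\xrightarrow{\cdot x_n}R_{n,a,b}\to R_{n,a,b}/\langle x_n\rangle\to0,\]
suitably symmetrized (induced up from $\symm_{n-1}\times\symm_1$). Setting $x_n=0$ identifies the cokernel with an algebra of the form $R_{n-1,a,b'}$ with $b'\in\{b-1,b\}$. Indeed, $e_b([n]\setminus\{i\})$ becomes $e_b([n-1]\setminus\{i\})$, and $e_b([n-1])$ is forced into the ideal. This should give the recursion for $\F_{n,a,b}$ expressed through $\F_{n,a-1,b}$, $\F_{n-1,a,b}$ and $\F_{n-1,a,b-1}$, with $q$-shifts and the operations $h_1\cdot$ or $e_1^\perp$. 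Staying inside the window $0\le a+b-n\le 2$ costs at most a step through $a+b-n=3$; in that case I would invoke the limit $\F_{n,\infty,b}$ and the $a\to\infty$ stabilization.

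\textbf{Step 2: the combinatorial recursion.} Let $G_{n,a,b}$ denote the right-hand side of the theorem. The window condition $i-a<f(i)<i+b$, together with $a+b\le n+2$, means $f$ is a bounded affine permutation whose window is nearly forced. There are then only a few choices for where the value $n$ (or position $n$) sits relative to the window's edges.

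I would decompose according to whether $f(n)$ lies at the left edge $n-a+1$ or strictly inside. Removing $n$ and re-standardizing should then yield the analogous sets for the parameters in Step 1, with $\ell(f)$ dropping by a controlled amount.

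For descent sets, I would use the standard fact about fundamental quasisymmetric functions: inserting a largest letter at all positions, or deleting the last letter, corresponds to $h_1 F_{S,n-1}$ and $e_1^\perp$ on the symmetric-function side. Alternatively, I would match $G$ to the Eulerian quasisymmetric functions, since the abstract identifies the ungraded cases $a+b-n=0,1,2$ with those. Their known recursions and $(\mathrm{des},\mathrm{exc})$/$\mathrm{maj}$-type statistics could then be matched to $\ell(f)$.

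\textbf{Step 3: base cases.} These are $b=1$, where $R_{n,a,1}$ has $e_1([n]\setminus\{i\})$ as generators, forcing $x_i$ to all be equal and killed after $x^a$, and small $n$. I would check them directly.

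\textbf{Main obstacle.} The hard step is Step 2: showing that $\ell(f)$ and $\DES{f\mid_{[n]}}$ transform exactly as the algebraic recursion requires. Affine length counts inversions across windows, so deleting $n$ changes $\ell$ by a quantity depending on $f(n)$. I would first try to prove the ungraded case via the Eulerian quasisymmetric functions. I would then refine to the grading by a bijection on $q$-statistics, verifying the length change by counting affine inversions $(i,j)$ with $i<j$, $f(i)>f(j)$ that involve the residue class of $n$.
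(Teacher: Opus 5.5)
There is a genuine gap in Step~1. Steps~2--3 inherit it, because they only aim to match whatever recursion Step~1 produces.

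The map $R_{n,a-1,b}(-1)\xrightarrow{\cdot x_n}R_{n,a,b}$ is not well defined. It would need $x_nx_i^{a-1}\in I_{n,a,b}$ for $i\neq n$, and this already fails for $(n,a,b)=(4,2,2)$, which satisfies the hypotheses. There $x_1\in I_{4,1,2}$ but $x_1x_4\notin I_{4,2,2}$: the degree-$2$ part of $I_{4,2,2}$ is spanned by the $x_i^2$ and the four $e_2([4]\setminus\{i\})$, and a direct linear-algebra check shows $x_1x_4$ is not in that span. The correct first term of your sequence would be $(R_{n,a,b}/\operatorname{Ann}(x_n))(-1)$, which you do not control. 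Your cokernel identification is also off: for $b=n-1$ and $a\ge2$ the cokernel is $\QQ[\xxx_{n-1}]/\langle x_1^a,\dots,x_{n-1}^a,x_1\cdots x_{n-1}\rangle$, which is not any $R_{n-1,a,b'}$.

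More fundamentally, a sequence built from the single variable $x_n$ is only $\symm_{n-1}$-equivariant. Even a repaired version therefore gives information about $e_1^\perp\F_{n,a,b}$ (restriction to $\symm_{n-1}$) and the Hilbert series. Inducing back up gives $h_1e_1^\perp\F_{n,a,b}$, not $\F_{n,a,b}$. Since $e_1^\perp:\sym_n\to\sym_{n-1}$ is not injective for $n\ge2$ (e.g.\ $e_1^\perp(s_{(2)}-s_{(1,1)})=0$), this cannot pin down the graded Frobenius characteristic. Your base case is also wrong: for $n\ge2$ the forms $e_1([n]\setminus\{i\})$ span all linear forms, so $R_{n,a,1}=\QQ$ and $\F_{n,a,1}=h_n$.

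The missing idea is an $\symm_n$-equivariant recursion. The paper filters $I_{n,a-1,b}/I_{n,a,b}$ by the ideals $I_{n,a,b}+\langle\prod_{i\in T}x_i^{a-1}:|T|=d\rangle$. It identifies the subquotients with $\ind_{\symm_{n-m}\times\symm_m}^{\symm_n}(R_{n-m,a-1,b-m}\otimes\mathbf 1)$ shifted by $m(a-1)$. This gives $\F_{n,a,b}=\sum_{m=0}^{b-1}q^{m(a-1)}h_m\F_{n-m,a-1,b-m}$ for $a+b>n$.

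This recursion lowers $a+b-n$ by one, so it does not stay in your window. Inverting it expresses $\F_{n,a,b}$ through larger $a$, and one must go all the way to $a\to\infty$. Then $\F_{n,\infty,b}$ is handled by a second recursion in $b$: a filtration by products of the $e_{b-1}([n]\setminus\{i\})$. That filtration a priori only gives surjections, which are upgraded to isomorphisms by an exact Hilbert series count at $b=n-1$.

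On the combinatorial side you then need a closed formula valid for all $a+b\ge n$. The paper uses $G_{n,a,b}$, with prefactors $h_r[(q^{n-b}+\cdots+q^{a-1})\xxx]$, and checks it against both recursions using an affine fixed-point insertion lemma. Inserting $r$ fixed points raises $\ell$ by $r$ times the absolute average displacement, and the descent sets shuffle to give a factor $h_r$. The formula in the statement is what $G_{n,a,b}$ collapses to when $a+b-n\le2$.

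Finally, your fallback of matching the ungraded case with Eulerian quasisymmetric functions is circular here. The paper derives those identifications from this theorem, and you give no independent computation of $\Frob(R_{n,a,b})$.
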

Most of the remaining main theorems mentioned in this section follows from this theorem.

In this theorem, we may use the translation bijection $f\mapsto f+k$ to change $\dschub_n^0$ to $\dschub_n^k$ in the summing range, and an appropriate choice of $k$, such as $k=a-1$, makes $f$ a bounded affine permutation (i.e. $i\le f(i)\le i+n$ for all $i$). Write $\bound{k}{n}$ for the set of bounded affine permutations in $\dschub_n^k$, and let $\Pi_f$ be the positroid variety in the Grassmannian $\grass{k}{n}$ indexed by $f$; see Subsection~\ref{subsec:positroids-affine-perm} for both definitions. This theorem immediately implies the Hilbert series formula counting affine permutations or, equivalently, positroid varieties. 
\begin{theorem}[{Corollary~\ref{cor:positroid}}]\label{thm:main:positroid-counting}
    For $1<k\le n$, we have
    \[\Hilb(R_{n,k+1,n-k+1};q) = \sum_{f\in\bound{k}{n}} q^{\ell(f)} = \sum_{f\in\bound{k}{n}} q^{\codim{\grass{k}{n}}{\Pi_f}}.\]
\end{theorem}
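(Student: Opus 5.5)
Taking Theorem~\ref{thm:main:a+b-n=0,1,2} as given, the plan is to specialize it to $a=k+1$, $b=n-k+1$, take dimensions, and then translate the resulting affine permutations into bounded ones. With these parameters $a+b-n=2$, so the hypothesis $0\le a+b-n\le 2$ holds. We have $a=k+1>0$, and $b=n-k+1>0$ because $k\le n$. The remaining hypothesis $b<n$ is exactly $k>1$. So Theorem~\ref{thm:main:a+b-n=0,1,2} applies and gives $\F_{n,k+1,n-k+1}$ as a sum over $f\in\dschub_n^0$ with $i-k-1<f(i)<i+n-k+1$ for all $i$, that is, $i-k\le f(i)\le i+n-k$.

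To get the Hilbert series from the graded Frobenius characteristic, I extract dimensions: for each $d$, take the coefficient of $q^d$ and pair it with $p_1^n$ (equivalently, take the coefficient of $x_1x_2\cdots x_n$). Every fundamental quasisymmetric function $F_{S,n}$ contributes exactly $1$ under this extraction. The descent data therefore disappears, and we obtain
\[\Hilb(R_{n,k+1,n-k+1};q)=\sum q^{\ell(f)},\]
where the sum runs over the same set of affine permutations.

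Next, apply the translation $f\mapsto g=f+k$, meaning $g(i)=f(i)+k$. This sends $\dschub_n^0$ bijectively onto $\dschub_n^k$. The window condition $i-k\le f(i)\le i+n-k$ becomes $i\le g(i)\le i+n$, which is precisely the definition of $\bound{k}{n}$. The length $\ell$ is the number of inversions, which is defined by comparing values and is therefore invariant under shifting all values by a constant; hence $\ell(g)=\ell(f)$. This gives the first equality of the statement.

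The second equality, $\ell(f)=\codim{\grass{k}{n}}{\Pi_f}$ for $f\in\bound{k}{n}$, is the theorem of Knutson--Lam--Speyer~\cite{KnutsonLamSpeyer2013} recalled in Subsection~\ref{subsec:positroids-affine-perm}. The only place needing care is matching the length convention used there with the one used in Theorem~\ref{thm:main:a+b-n=0,1,2}. In particular, I will check that the inversion count for $\dschub_n^k$ is translation-invariant in exactly that convention. Once the conventions agree, the rest of the argument is bookkeeping.
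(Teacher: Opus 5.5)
Your proposal is correct and is essentially the paper's proof. You specialize the $a+b-n=2$ case of Theorem~\ref{thm:main:a+b-n=0,1,2} to $(a,b)=(k+1,n-k+1)$ and translate by $f\mapsto f+k$ onto $\bound{k}{n}$, using that this preserves $\ell$. You then extract dimensions, since each $F_{S,n}$ contributes $1$, and cite Knutson--Lam--Speyer for $\ell(f)=\codim{\grass{k}{n}}{\Pi_f}$. The only difference from the paper is that it translates at the level of the graded Frobenius characteristic before taking dimensions, which is immaterial.
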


\subsection{Main results related to the Delta conjecture and the Theta conjecture}\label{subsec:intro-delta-theta}
Relevant terminologies in this section are defined in Subsection~\ref{subsec:theta}. Let $\rev_q$ denotes the reversal of $q$-coefficients: For $p=a_0+a_1q+\dots+a_dq^d\in R[q]$ with $a_0,\dots,a_d\in R$ and $a_d\neq0$, define $\rev_q(p)\coloneqq a_d+a_{d-1}q+\dots+a_0q^d$, with the convention that $\rev_q 0=0$

The theory of Macdonald polynomials and diagonal coinvariants has motivated the study of several operators on symmetric functions, including the nabla operator $\nabla$ and the Delta operators $\Delta_g$ and $\Delta_g^\prime$. Haglund, Remmel, and Wilson formulated the Delta conjecture, which predicts combinatorial interpretations of $\Delta'_{e_{k-1}}e_n$ in terms of decorated labelled Dyck paths \cite{HaglundRemmelWilson2018}. Its rise version was subsequently proved by D'Adderio and Mellit \cite{DAdderioMellit2022}. At $t=0$, Haglund, Rhoades, and Shimozono related these symmetric functions to generalized coinvariant algebras, whose combinatorics is governed by ordered set partitions \cite{HaglundRhoadesShimozono2018}. More precisely, their graded Frobenius characteristics agree with $\rev_q(\omega((\Delta'_{e_{k-1}}e_n)\mid_{t=0}))$. The subfamily of algebras $R_{n,a,b}$ such that $a+b=n$ provides another representation theoretical model for this symmetric function. 
\begin{theorem}[{Corollary~\ref{cor:delta}}]\label{thm:main:e_k-perp}
    For $a,b\ge 1$, we have
    \[e_a^\perp\F_{a+b,a,b} = q^{\binom{a}{2}}\rev_q(\omega((\Delta'_{e_{a-1}}e_{b})\mid_{t=0})).\]
\end{theorem}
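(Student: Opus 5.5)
We plan to derive the identity from Theorem~\ref{thm:main:a+b-n=0,1,2} in the case $n=a+b$, combined with the known combinatorial expansion of the Delta conjecture at $t=0$ in terms of ordered set partitions and the Haglund--Rhoades--Shimozono algebras $R_{n,k}$.

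\textbf{Step 1: the left side as a sum over affine permutations.} Theorem~\ref{thm:main:a+b-n=0,1,2} with $n=a+b$ (so $a+b-n=0$ and $b<n$) gives
\[\F_{a+b,a,b}=\sum_{f} q^{\ell(f)}F_{\DES{f\mid_{[n]}},n},\]
where $f$ ranges over $\dschub_n^0$ with $i-a<f(i)<i+b$. Since $f(i)\equiv$ a residue determined by the window and the window of allowed values has length $a+b-1=n-1$, each $f$ is determined by which residues occur and by a permutation structure. We will reorganize the sum by recording, for each $i\in[n]$, the sign of $f(i)-i$ (one of at most $a-1$ downward or $b-1$ upward shifts). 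This should yield a bijection with pairs consisting of a permutation word and a marking that looks like an ordered set partition, with $\ell(f)$ given by inversions plus a displacement statistic.

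\textbf{Step 2: apply $e_a^\perp$.} On fundamental quasisymmetric expansions that are actually Schur positive, $e_a^\perp$ removes vertical strips. Rather than working with $F$'s directly, we prefer an algebraic route. Using the exact sequences the paper develops (the recursion for $\F_{n,a,b}$ when $a+b\ge n$), we expect a relation of the form $e_a^\perp\F_{a+b,a,b}=\sum_j(\cdots)\F_{\cdot}$ paralleling the known recursion for $\grFrob(R_{n,k})$ in \cite{HaglundRhoadesShimozono2018}. There $e_j^\perp\grFrob(R_{n,k})$ is given by a $q$-binomial combination of $\grFrob(R_{n-j,r})$. Concretely, we will show that both sides satisfy the same recursion in $b$ with the same initial condition at $b=1$ (where both sides should equal $q^{\binom a2}$-shifted $h_1$- or $e_1$-type terms). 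The symmetric function $\rev_q\omega(\Delta'_{e_{a-1}}e_b)|_{t=0}$ equals $\grFrob(R_{b,a})$ up to the conventions in \cite{HaglundRhoadesShimozono2018}, so we may use its recursion and its ordered set partition model.

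\textbf{Step 3: direct combinatorial verification (fallback).} Alternatively, we will compute $e_a^\perp$ via the Hall inner product $\langle e_a^\perp\F,s_\lambda\rangle=\langle\F,e_as_\lambda\rangle$. This means we must match the affine permutation sum, restricted by a standardization argument to words whose last $a$ letters form a decreasing block, with the ordered set partitions of $[b]$ into $a$ blocks, weighted by the HRS coinversion statistic. The factor $q^{\binom a2}$ should come from the minimal length forced by the $a$ removed entries.

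\textbf{Main obstacle.} The hard part is turning the $e_a^\perp$ action into something computable on the affine-permutation side; $e_a^\perp$ does not act nicely on fundamental quasisymmetric functions. We would try the recursion route first, since it avoids this issue, matching the exact-sequence recursion with the HRS recursion.
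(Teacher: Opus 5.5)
Your plan has a genuine gap: it never supplies a way to compute $e_a^\perp\F_{a+b,a,b}$, which is the whole content of the statement.

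Step~1 posits a bijection from the affine permutations with $i-a<f(i)<i+b$ to ordered-set-partition-like objects, with $\ell(f)$ matching an HRS statistic, but nothing is constructed. As you note yourself, $e_a^\perp$ does not act on fundamental expansions, so this step does not feed into anything.

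Step~2 fails as stated. The exact-sequence recursion of Proposition~\ref{prop:recursive-n,a,b} lowers $a$ and $a+b-n$ and requires $a+b>n$, so it does not apply to $\F_{a+b,a,b}$ at all. Nor is there any recursion in $b$ at fixed $a$ inside the family $a+b=n$. The only usable relation is the inversion $\F_{a+b,a,b}=\sum_{m=0}^{b-1}(-1)^mq^{am}e_m\F_{a+b-m,a+1,b-m}$, which trades the case $a+b-n=0$ for cases with $a+b-n=1$. But the coproduct then gives $e_a^\perp(e_m\F)=\sum_i e_{m-i}\,e_{a-i}^\perp\F$. So you would need every skewing $e_j^\perp\F_{a+b-m,a+1,b-m}$ for $0\le j\le a$, and your plan has no handle on these. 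Appealing to the HRS recursion does not help either. HRS pin down $\grFrob(R_{b,a};q)$ through $e_j^\perp$ for all $j\ge1$, which on your side means computing $e_j^\perp e_a^\perp\F_{a+b,a,b}$, the same problem again. Your proposed initial condition is also off: for $b=1$ and $a\ge2$ both sides vanish, since $\F_{a+1,a,1}=h_{a+1}$ and $\Delta'_{e_{a-1}}e_1=0$.

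The missing idea is to identify the $a+b-n=1$ family in a language where skewing is computable. The paper does this in four moves:
\begin{enumerate}
\item Theorem~\ref{thm:theta-conjecture} gives $\F_{a+b-m,a+1,b-m}=\rev_q\omega\bigl((\Theta_{e_a}\Theta_{e_{b-m-1}}\nabla e_1)|_{t=0}\bigr)$.
\item Identity \eqref{eq:theta-delta} replaces $\Theta_{e_{b-m-1}}\nabla e_1$ by $e_{b-m}$. Hence $\F_{a+b,a,b}=q^{a(b-1)}\omega\bigl(\mathcal{U}_{a,b}|_{q=q^{-1},\,t=0}\bigr)$ with $\mathcal{U}_{a,b}=\sum_{m=0}^{b-1}(-1)^mh_m\Theta_{e_a}e_{b-m}$.
\item Since $e_a^\perp\omega=\omega h_a^\perp$, the D'Adderio--Romero commutation \eqref{eq:theta-h-perp} together with \eqref{eq:h-perp-product} makes $h_a^\perp\mathcal{U}_{a,b}$ telescope to $\sum_m(-1)^mh_m\Delta_{e_a}e_{b-m}$.
\item Lemma~\ref{lem:delta-t-zero-convolution} evaluates this at $t=0$ as $q^{b-a}(\Delta'_{e_{a-1}}e_b)|_{t=0}$. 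The $q$-degree $(a-1)b-\binom a2$ then produces the factor $q^{\binom a2}$ under $\rev_q$.
\end{enumerate}
Your plan contains none of these ingredients (the Theta identification of the $a+b-n=1$ case, the $h^\perp$--Theta commutation, the $t=0$ convolution) and no substitute for them.

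If you want to avoid Theta operators, a more promising algebraic route than Step~2 exists for $a\le b$. Prove directly that multiplication by $\prod_{b+1\le i<j\le a+b}(x_i-x_j)$ maps $R_{b,a}$ injectively into $\alt_a(R_{a+b,a,b})$. Surjectivity is elementary (see Corollary~\ref{cor:alg-of-e-perp}), but the paper only obtains injectivity from the statement itself.
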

Here $e_a^\perp$ is the adjoint operator of multiplying $e_a$; see Subsection~\ref{subsec:symfunc-symgp} for details.

Extending the framework of the Delta conjecture, D'Adderio, Iraci, and Vanden Wyngaerd introduced the Theta operators $\Theta_g$ to formulate refinements of the Delta conjecture and conjectural formulas for the Frobenius characteristics of super-diagonal coinvariant rings \cite{DAdderioIraciVandenWyngaerd2021}. In particular, the symmetric functions $\left.(\Theta_{e_k}\Theta_{e_\ell}\nabla e_{n-k-\ell})\right|_{t=0}$ are conjecturally related to coinvariant rings with one set of commuting variables and two sets of anticommuting variables. Iraci, Nadeau, and Vanden Wyngaerd gave combinatorial interpretations of these specializations in terms of segmented Smirnov words, establishing further connections with the Delta conjectures \cite{MR4766722}. The subfamily of algebras $R_{n,a,b}$ such that $a+b=n+1$ gives a new model for this symmetric function when $n-k-\ell=1$.
\begin{theorem}[{Theorem~\ref{thm:theta-conjecture}}]\label{thm:main:theta}
    For integers \(a,b>0\), we have
    \[
        \F_{a+b-1,a,b}
        =
        \rev_q\Big(
        \omega\left(
        \left.
        \bigl(
        \Theta_{e_{a-1}}\Theta_{e_{b-1}}\nabla e_1
        \bigr)\right|_{t=0}
        \right)\Big).
    \]
\end{theorem}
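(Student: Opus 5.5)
Our approach is to reduce both sides to the same quasisymmetric expansion and compare them. For the left side, with $n=a+b-1$ we have $a+b-n=1$ and $b<n$ exactly when $a>1$. In that range Theorem~\ref{thm:main:a+b-n=0,1,2} (Corollary~\ref{cor:a+b-n=0,1,2}) gives
\[
\F_{a+b-1,a,b}=\sum_{\substack{f\in\dschub_{n}^0\\ i-a<f(i)<i+b}} q^{\ell(f)}F_{\DES{f\mid_{[n]}},n}.
\]
The degenerate case $a=1$ is handled directly. Then $x_i^1\in I$, so $R_{b,1,b}=\QQ$ and $\F=s_{(b)}$. On the other side, $\Theta_{e_0}$ is the identity, and $\Theta_{e_{b-1}}\nabla e_1\mid_{t=0}$ is known to equal $\Delta'_{e_{b-1}}e_b$-type data. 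This reduces to $\omega$ of a single term $e_b$ at $t=0$ (or equivalently $\Theta_{e_{b-1}}e_1=\ldots$), which we check directly. The case $b=1$ follows by the symmetry $a\leftrightarrow b$, since the right side is symmetric in $a,b$. This symmetry is also a sanity check on the left side: the map $f\mapsto -f(-\cdot)$ composed with reversal swaps the two bounds.

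For the right side we invoke the combinatorial formula of Iraci--Nadeau--Vanden Wyngaerd~\cite{MR4766722}. It expresses $(\Theta_{e_k}\Theta_{e_\ell}\nabla e_{n-k-\ell})\mid_{t=0}$ as a sum over segmented Smirnov words, or equivalently as a Gessel-type fundamental expansion over suitable permutation-like objects with a statistic. With $n-k-\ell=1$ there is only one segment, so these are Smirnov words with $k=a-1$ ascents and $\ell=b-1$ descents, or a similarly marked structure. We will write their $t=0$ result in the form $\sum_w q^{\mathrm{stat}(w)}F_{\cdot}$. Applying $\omega$ complements the descent sets of fundamentals, via $\omega F_{S,n}=F_{[n-1]\setminus S,n}$. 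Applying $\rev_q$ replaces $\mathrm{stat}$ by $M-\mathrm{stat}$, where $M$ is the top degree; by the Hilbert series this should be the maximal length among allowed $f$.

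The heart of the proof is an explicit bijection $\phi$ from the window-restricted affine permutations $\{f\in\dschub_n^0: i-a<f(i)<i+b\}$ to the combinatorial objects of~\cite{MR4766722}. It must satisfy $\ell(f)=M-\mathrm{stat}(\phi(f))$, and $\DES{f\mid_{[n]}}$ must correspond to the complemented descent set. The window $f\mid_{[n]}$ is a sequence of $n$ distinct residues, and its standardization determines the descent set. The extra data is the vector of displacements $f(i)-i\in(-a,b)$, which sums to $0$. We expect this data to match the choice of which letters are ``marked'' by $\Theta_{e_{a-1}}$ and $\Theta_{e_{b-1}}$: displacement constraints of $a-1$ on one side and $b-1$ on the other. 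Inversions of $f$ would then match the $\mathrm{dinv}$/$\mathrm{inv}$-type statistic.

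If no direct bijection is apparent, the fallback is to compare recursions. The paper derives an invertible recursion for $\F_{n,a,b}$ with $a+b\ge n$ from exact sequences, for instance removing $x_n$ or decreasing $a$. We would show that the right-hand side satisfies the same recursion, using known Theta-operator identities such as the Pieri-type rules for $\Theta_{e_k}$ at $t=0$ and the relation to $\Delta'$ from~\cite{DAdderioIraciVandenWyngaerd2021}. We would then match initial conditions at $a=1$ or $b=1$. We expect the main obstacle to be the compatibility of statistics: making $\ell(f)$ correspond exactly to the reversed Theta-side statistic, including verifying that the maximal degree $M$ agrees on both sides.
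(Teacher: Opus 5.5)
\textbf{There is a genuine gap: the proposal never proves the graded correspondence that is the actual content of the theorem.} Your treatment of the left side is fine. For $a>1$, Corollary~\ref{cor:a+b-n=0,1,2} gives the affine-permutation expansion, and $a=1$ is trivial. Two small corrections. First, $\Theta_{e_{b-1}}\nabla e_1=\Delta'_{e_0}e_b=e_b$ by \eqref{eq:theta-delta}, not ``$\Delta'_{e_{b-1}}e_b$''. Second, $b=1$ needs no symmetry argument (you have not proved the left side is symmetric in $a,b$): it lies inside the range of the corollary, and directly $R_{a,a,1}=\QQ$.

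The missing statement is that $\ell(f)$ on $\{f\in\dschub_n^0: i-a<f(i)<i+b\}$, jointly with $\DES{f\mid_{[n]}}$, is distributed like $(a-1)(b-1)$ minus the Theta-side statistic together with the complemented descent set. You name a bijection $\phi$ and list the properties it must have. But you never construct it, never specify the one-segment Smirnov-word statistic, and never show how the displacements or inversions of $f$ would match it. At $q=1$ this reduces to the Shareshian--Wachs link between Smirnov words counted by descents and $Q_{n,k}$; the $q$-refinement is exactly what is missing, and you leave it as an expectation.

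The fallback does not work as stated either. The recursions in the paper (Proposition~\ref{prop:recursive-n,a,b} and Lemma~\ref{lem:dual-recursive}) change $a+b-n$ by one. They therefore tie the $a+b-n=1$ family to the $a+b-n=0$ family. That family has no independent Theta-operator description: only $e_a^\perp\F_{a+b,a,b}$ is identified, and the paper deduces even that from this very theorem. So the right-hand side cannot ``satisfy the same recursion'' with initial conditions at $a=1$. To make a recursion argument work you would need two steps, neither of which the proposal supplies:
\begin{itemize}
\item eliminate the $a+b-n=0$ terms, for instance by composing Proposition~\ref{prop:recursive-n,a,b} with the inverse of Lemma~\ref{lem:dual-recursive}, to get a recursion internal to the $a+b-n=1$ family;
\item prove the matching Theta-operator identity at $t=0$.
\end{itemize}

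The paper closes the gap by citing an external theorem rather than building $\phi$ from \cite{MR4766722}. The argument runs as follows.
\begin{enumerate}
\item Lemma~\ref{lem:permutation-affine-bijection} turns the sum over $f$ into a sum over $w\in\symm_n$ with $\operatorname{exc}(w)=b-1$, weighted by $q^{\ell(\beta_{b-1}(w))}F_{\operatorname{DEX}(w),n}$.
\item Using \eqref{eq:bg-affine-window-length}, it checks that Liao's bicolored affine permutation $f_w(i)=\widetilde w(i)-n\varepsilon_i$ satisfies $\ell(f_w)=\ell(\beta_r(w))+r(n-r)$. This identifies $\F_{n,a,b}$ with the unbarred bicolored Eulerian quasisymmetric function $\Phi^{\mathrm{unb}}_{n,0,b-1}$.
\item It invokes Theorem~1.5 of \cite{liao2026bicoloredeulerianquasisymmetricfunctions}, which already equates this function with $q^{(a-1)(b-1)}\omega$ of the Theta expression at $t=0$, $q\mapsto q^{-1}$.
\item The $q$-degree $(a-1)(b-1)$, taken from \cite{MR4766722}, converts that into $\rev_q$.
\end{enumerate}
Either that cited theorem or an actual proof of your $\phi$ with its statistic correspondence is needed to complete the argument.
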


\subsection{Main results connected to Eulerian quasisymmetric functions}\label{subsec:intro-eulerian}
See Subsection~\ref{subsec:eulerian} for detailed definitions. Eulerian quasisymmetric functions were introduced by Shareshian and Wachs as symmetric-function refinements of the classical Eulerian polynomials and Eulerian numbers \cite{ShareshianWachs2010}. The functions $Q_{n,k}$ are defined as sums of fundamental quasisymmetric functions indexed by permutations with $k$ excedances. Despite their quasisymmetric definition, these functions are symmetric and arise as the Frobenius characteristics of the symmetric-group representations on graded components of the cohomology of the permutahedral toric variety. Shareshian and Wachs also introduced the refined functions $Q_{n,k,r}$ by recording the number of fixed points, with $Q_{n,k,0}$ corresponding to derangements \cite{ShareshianWachs2010}.

In a related development, Shareshian and Wachs introduced the symmetric-function analogues $\widetilde Q_n(\xxx,t)$ of the binomial Eulerian polynomials, whose coefficients $\widetilde{Q}_{n,k}\coloneqq[t^k]\widetilde Q_n(\xxx,t)$ describe the symmetric-group representations on graded components of the cohomology of the stellahedral toric variety \cite{ShareshianWachs2020}. Liao subsequently constructed explicit permutation bases for these representations using Stembridge codes and their extensions. He also obtained a fundamental quasisymmetric expansion of $\widetilde Q_n(\xxx,t)$ in terms of decorated permutations, extending the combinatorial interpretation of the original Eulerian quasisymmetric functions \cite{Liao2024}.

Forgetting the grading of our family of algebras $R_{n,a,b}$ at $a+b-n=0,1,2$ gives $Q_{n,k,0},Q_{n,k},\widetilde{Q}_{n,k}$, respectively.
\begin{theorem}[{Theorem~\ref{thm:a+b-n=0:Eulerian}}]\label{thm:main:eulerian-0}
    Let $n$ be a positive integer.  For every $0\leq k\leq n$,
    \[
        \Frob(R_{n,k,n-k})=Q_{n,k,0}.
    \]
\end{theorem}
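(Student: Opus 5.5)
The plan is to specialize Theorem~\ref{thm:main:a+b-n=0,1,2} at $q=1$ with $a=k$, $b=n-k$. This gives
\[\Frob(R_{n,k,n-k})=\sum_{\substack{f\in\dschub_n^0\\ i-k<f(i)<i+n-k}}F_{\DES{f\mid_{[n]}},n}.\]
We then match this sum with the fundamental expansion of $Q_{n,k,0}$ over derangements through an explicit bijection.

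First we dispose of the degenerate cases. If $k=0$ then $x_i^0=1\in I_{n,0,n}$, and if $k=n$ then $e_0=1\in I_{n,n,0}$, so $R_{n,k,n-k}=0$ in both cases. On the other side, a derangement has at least one excedance and at most $n-1$, so $Q_{n,0,0}=Q_{n,n,0}=0$. Hence we may assume $0<k<n$, which is exactly the range where Theorem~\ref{thm:main:a+b-n=0,1,2} applies.

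Next we translate $f$ by $k$, putting $g=f+k\in\dschub_n^k$. The window condition becomes $i<g(i)<i+n$. Define $\sigma\in\symm_n$ by $\sigma(i)\equiv g(i)\pmod n$. Since the window has length $n-1$ and omits only the residue of $i$, $\sigma$ is a derangement. Conversely, every derangement $\sigma$ lifts uniquely to such a $g$: set $g(i)=\sigma(i)$ if $\sigma(i)>i$, and $g(i)=\sigma(i)+n$ if $\sigma(i)<i$. Summing displacements gives $\sum_i(g(i)-i)=n\cdot\#\{i:\sigma(i)<i\}$. The condition that $g$ has average displacement $k$ therefore says that $\sigma$ has exactly $k$ anti-excedances, equivalently $n-k$ excedances. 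Translation does not change relative order, so $\DES{f\mid_{[n]}}=\DES{g\mid_{[n]}}$. This is the descent set of the word obtained from $\sigma$ by raising each anti-excedance value by $n$. So we obtain
\[\Frob(R_{n,k,n-k})=\sum_{\substack{\sigma\text{ derangement}\\ \#\{i:\sigma(i)<i\}=k}}F_{\DES{\hat\sigma},n},\]
where $\hat\sigma$ denotes this raised word.

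The remaining step is to identify this with $Q_{n,k,0}=\sum_{\sigma}F_{\mathrm{DEX}(\sigma),n}$, the sum running over derangements with $k$ excedances. Conjugating by $w_0$, i.e.\ $\sigma\mapsto w_0\sigma w_0$, exchanges excedances and anti-excedances and reverses positions. I expect it to transform $\DES{\hat\sigma}$ into $\{n-j: j\in \mathrm{DEX}(\sigma')\}$ for the image $\sigma'$, possibly after an order-reversal of letters. This is an instance of the reversal involution $F_{S,n}\mapsto F_{n-S,n}$ on quasisymmetric functions, which fixes every symmetric function, and $Q_{n,k,0}$ is symmetric by Shareshian--Wachs. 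As an alternative route, one could compare directly with Shareshian--Wachs's formula for $Q_{n,j,0}$ and invoke their symmetry $Q_{n,j,0}=Q_{n,n-j,0}$.

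The main obstacle is this last step. It requires checking carefully that the ``raise anti-excedance values by $n$'' word has the same descent statistic as Shareshian--Wachs's $\mathrm{DEX}$ under the chosen involution, since $\mathrm{DEX}$ is defined through a barred alphabet with a specific ordering convention. I would first test small cases such as $n=3,4$ to pin down which of reversal, complement, or conjugation aligns the conventions. If no direct match appears, I would instead compare both sides' fundamental expansions after applying $\omega$ or the reversal map, using only that both sides are symmetric.
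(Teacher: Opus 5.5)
Your reduction is correct and matches the paper's: since $\hat\sigma=\widetilde\sigma+n$ for a derangement, your $f=g-k$ is exactly $\beta_{n-k}(\sigma)$ from Lemma~\ref{lem:permutation-affine-bijection}. The step you call the main obstacle is in fact immediate. In $\hat\sigma$ the excedance values stay in $[1,n]$ and the raised anti-excedance values exceed $n$, with the order inside each class unchanged. That is exactly the barred order $\overline1<\cdots<\overline n<1<\cdots<n$ defining $\operatorname{DEX}$. Hence $\DES{\hat\sigma}=\operatorname{DEX}(\sigma)$, and since your $\sigma$ has $n-k$ excedances, your sum is literally $Q_{n,n-k,0}$. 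The paper then finishes by your ``alternative route'': it cites the Shareshian--Wachs symmetry $Q_{n,n-k,0}=Q_{n,k,0}$ \cite[Theorem~5.1(2)]{ShareshianWachs2010}.

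Your primary route via $w_0$-conjugation also works and is genuinely different, but as written it is only a guess. You need to settle your hedge about ``reversal, complement, or conjugation'': complementing the descent set would produce $\omega$, and the argument would fail.

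The identity you need is short. Take a derangement $\sigma$ and set $\sigma'=w_0\sigma w_0$. Then $p\in\operatorname{EXC}(\sigma')$ iff $n+1-p\notin\operatorname{EXC}(\sigma)$. Checking the two cases gives $\widehat{\sigma'}(p)=2n+1-\hat\sigma(n+1-p)$, so $\widehat{\sigma'}$ is the reverse-complement of $\hat\sigma$. Therefore $\operatorname{DEX}(\sigma')=\{n-j:j\in\operatorname{DEX}(\sigma)\}$, a pure reversal of positions. Summing over derangements gives
\[
\sum_{\sigma}F_{\DES{\hat\sigma},n}=\sum_{\sigma'}F_{n-\operatorname{DEX}(\sigma'),n}=\sum_{\sigma'}F^{\downarrow}_{\operatorname{DEX}(\sigma'),n},
\]
with $\sigma'$ over derangements with $k$ excedances. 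This equals $Q_{n,k,0}$ because $Q_{n,k,0}$ is symmetric.

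Compared with the paper, this route needs only the symmetry of $Q_{n,k,0}$, not the palindromicity statement. In effect it derives $Q_{n,j,0}=Q_{n,n-j,0}$ from symmetry. It relies essentially on the absence of fixed points: a fixed point is unbarred in both $\sigma$ and $\sigma'$, which breaks the reverse-complement relation. So the argument is specific to $Q_{n,k,0}$.
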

\begin{theorem}[{Theorem~\ref{thm:a+b-n=1:Eulerian}}]\label{thm:main:eulerian-1}
    Let $n$ be a positive integer.  For every $0\leq k\leq n$,
    \[
        \Frob(R_{n,k+1,n-k})=Q_{n,k}.
    \]
\end{theorem}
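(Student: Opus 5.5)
We deduce the statement from Theorem~\ref{thm:main:a+b-n=0,1,2} in the case $a+b-n=1$, with $a=k+1$ and $b=n-k$. Setting $q=1$ there gives
\[
\Frob(R_{n,k+1,n-k})=\sum_{\substack{f\in\dschub_n^0\\ i-k-1<f(i)<i+n-k}} F_{\DES{f|_{[n]}},n}.
\]
The window condition says $f(i)-i\in\{-k,\dots,n-k-1\}$. This is a set of $n$ consecutive residues mod $n$. So $f$ is determined by the permutation $\sigma\in\symm_n$ obtained by reducing $f(i)$ mod $n$. Conversely, every $\sigma$ lifts to a unique $f$ whose displacements lie in this window. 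The remaining constraint is average displacement $0$, i.e.\ $\sum_i (f(i)-i)=0$.

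The next step is to make this bijection explicit. For the lift of $\sigma$, write $f(i)=\sigma(i)+n\,\epsilon_i$ with $\epsilon_i\in\{-1,0,1\}$ chosen so that the displacement lands in the window. Then $\sum_i(f(i)-i)=n\sum_i\epsilon_i$. The average-displacement condition therefore says the number of $i$ with $\epsilon_i=+1$ equals the number with $\epsilon_i=-1$. For $0<k<n$, a displacement $\sigma(i)-i\ge n-k$ forces $\epsilon_i=-1$ (subtract $n$). A displacement $\sigma(i)-i<-k$ forces $\epsilon_i=+1$. Working through this, $\sum_i\epsilon_i=0$ becomes a statistic condition on $\sigma$, for example $\#\{i:\sigma(i)-i<-k\}=\#\{i:\sigma(i)-i\ge n-k\}$. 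The descent set of $f|_{[n]}$ is not simply $\DES{\sigma}$, so the fundamental expansion has to be compared carefully. This is the step I expect to be the main obstacle.

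Rather than fight this, I would use a cleaner transformation: a rotation/Foata-type map relating such window-affine permutations to permutations with exactly $k$ excedances, preserving the relevant descent data. Concretely, I would apply the cyclic shift $f\mapsto f+k$ to $\dschub_n^{k}$. This puts $f$ in bounded-affine form with $i\le f(i)\le i+n-1$, so $f$ is a bounded affine permutation with no fixed points of value $i+n$ type. Such permutations correspond to decorated permutations $\pi=f\bmod n$ in which anti-excedances, excedances and fixed points are read off. The condition $f(i)<i+n$ excludes the decoration case, and the window length $n$ means every $\pi\in\symm_n$ with $\mathrm{exc}(\pi)=k$ arises exactly once, where $\mathrm{exc}$ is the number of excedances. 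Here the affine average $k$ counts $\#\{i: f(i)>n\}$, and these positions are exactly the excedance data.

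The last step is to match $\DES{f|_{[n]}}$, after translation, with the descent statistic in Shareshian--Wachs' definition $Q_{n,k}=\sum_{\mathrm{exc}(\pi)=k}F_{\mathrm{DEX}(\pi),n}$. I would use their alternative fundamental expansion and symmetry: since $Q_{n,k}$ is symmetric, it suffices to show equality of the multisets of descent compositions. I would attempt a direct check that the standardization of the window word $f(1),\dots,f(n)$ has descent set $\mathrm{DEX}(\pi)$. This uses that $\mathrm{DEX}$ is defined via the ordering $\overline{\pi(i)}$ for excedances, which matches shifting values by $n$. As a sanity check, I would compare with Theorem~\ref{thm:main:eulerian-0}, which by the same argument should arise with fixed points excluded.
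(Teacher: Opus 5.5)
Your reduction to Theorem~\ref{thm:main:a+b-n=0,1,2} at $q=1$ and the translation $g=f+k$ (so $i\le g(i)\le i+n-1$ and $\operatorname{av}(g)=k$) are fine. The two claims the rest of your argument depends on, however, are false.

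Put $\pi=g\bmod n$. For $i\in[n]$ you have $g(i)=\pi(i)$ if $\pi(i)\ge i$, and $g(i)=\pi(i)+n$ if $\pi(i)<i$. So $\operatorname{av}(g)=\#\{i:g(i)>n\}$ counts the \emph{anti}-excedances of $\pi$, not its excedances. For $n=3$, $k=1$, the identity $f$ gives $g=[2,3,4]$ and $\pi=231$, which has two excedances; meanwhile $312$, which has one excedance, is never reached.

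Second, raising the anti-excedance letters by $n$ does not reproduce the $\operatorname{DEX}$ alphabet. It puts fixed points in the low block together with excedances, whereas $\operatorname{DEX}$ puts them in the high block. For instance $\pi=132$ gives $g=[1,3,5]$ with $\DES{g\mid_{[3]}}=\varnothing$, but $\operatorname{DEX}(132)=\{1\}$. So your ``direct check'' cannot succeed. Note also that equality of the multisets of descent sets would give equality of quasisymmetric functions with no appeal to symmetry. Symmetry is only useful for justifying a reversal $S\mapsto n-S$.

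There are two ways to repair this.

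\emph{The paper's route.} Translate in the other direction, $f\mapsto f-(n-k-1)$, which lands in $i-n<\cdot\le i$. There, lowering the excedance values by $n$ realizes exactly the $\operatorname{DEX}$ order (Lemma~\ref{lem:permutation-affine-bijection} with $j=n-k-1$). This yields $\sum_{\operatorname{exc}(w)=n-1-k}F_{\operatorname{DEX}(w),n}=Q_{n,n-1-k}$, and one finishes with Shareshian--Wachs's palindromicity $Q_{n,n-1-k}=Q_{n,k}$.

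\emph{Keeping your $g$.} Pass to the reverse-complement $\sigma(i)=n+1-\pi(n+1-i)$. This turns anti-excedances of $\pi$ into excedances of $\sigma$ (so $\operatorname{exc}(\sigma)=k$) and fixed points into fixed points. A two-case check gives $g(n+1-i)=(n+1)-\widetilde\sigma(i)$, with $\widetilde\sigma$ as in Definition~\ref{def:permutation-to-affine}. Hence $\DES{g\mid_{[n]}}=n-\operatorname{DEX}(\sigma)$, and your sum becomes $\sum_{\operatorname{exc}(\sigma)=k}F_{n-\operatorname{DEX}(\sigma),n}=\sum_{\operatorname{exc}(\sigma)=k}F^{\downarrow}_{\operatorname{DEX}(\sigma),n}$. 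This equals $Q_{n,k}$ because $Q_{n,k}$ is symmetric. This route trades palindromicity for the reversal invariance of symmetric functions.

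Finally, Theorem~\ref{thm:main:a+b-n=0,1,2} requires $0<b<n$, so $k=0$ and $k=n$ must be handled separately. For $k=0$, $I_{n,1,n}=\langle x_1,\dots,x_n\rangle$ gives $h_n=Q_{n,0}$. For $k=n$, $e_0=1\in I_{n,n+1,0}$ gives $0=Q_{n,n}$.
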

\begin{theorem}[{Corollary~\ref{cor:Eulerian-2}}]\label{thm:main:eulerian-2}
    For $1<k\le n$,
    \[\Frob(R_{n,k+1,n-k+1}) = \widetilde{Q}_{n,k}.\]
\end{theorem}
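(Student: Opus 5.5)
The plan is to obtain Theorem~\ref{thm:main:eulerian-2} from Theorem~\ref{thm:main:a+b-n=0,1,2} in the case $a+b-n=2$, by setting $q=1$ and then matching the result with Liao's decorated-permutation expansion of $\widetilde Q_{n,k}$. With $a=k+1$ and $b=n-k+1$ we have $a+b-n=2$, and the hypotheses $1<k\le n$ give $b<n$ and $b>0$. Theorem~\ref{thm:main:a+b-n=0,1,2} then yields
\[
\Frob(R_{n,k+1,n-k+1})=\F_{n,k+1,n-k+1}\mid_{q=1}=\sum_{\substack{f\in\dschub_n^0\\ i-k-1<f(i)<i+n-k+1}} F_{\DES{f\mid_{[n]}},n}.
\]
Translating by $f\mapsto f+k$ turns the index set into $\{g\in\dschub_n^k : i\le g(i)\le i+n\}=\bound{k}{n}$. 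The descent set of the window is unchanged by this shift, so the sum becomes $\sum_{g\in\bound{k}{n}}F_{\DES{(g-k)\mid_{[n]}},n}$.

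Next I would recall Liao's expansion \cite{Liao2024}. It writes $\widetilde Q_{n}(\xxx,t)$ as a sum of $t^{\mathrm{exc}}F_{\DES{\cdot},n}$ over decorated permutations, and decorated permutations are exactly in bijection with bounded affine permutations via Postnikov's correspondence. Concretely, $g(i)=i$ corresponds to one color of fixed point and $g(i)=i+n$ to the other, while the number $k$ records the anti-exceedances plus the fixed points of one color (in Postnikov's convention). So the core step is to write down this bijection $\phi$ explicitly and check two things: that $k$ corresponds to the statistic marked by $t^k$ in Liao's formula, and that the descent set of the word $(g(1)-k,\dots,g(n)-k)$, after standardization, equals the descent set Liao attaches to the decorated permutation $\phi(g)$.

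If Liao's descent convention uses a specific total order on letters in which decorated fixed points are placed specially (as in Shareshian--Wachs, where excedance letters are barred and compared differently), I would instead compare the two sides through the $k$-grading by excedances. I expect this matching of descent conventions to be the main obstacle. If a direct match fails, the first fallback is to find a descent-preserving involution on $\bound{k}{n}$, for example one arising from the symmetry $R_{n,a,b}$ versus $k\mapsto n-k$ or from reversal and complement. The second fallback is to avoid quasisymmetric matching altogether. One can verify that both sides satisfy the same recursion: $\widetilde Q_n(\xxx,t)=\sum_{m}h_{n-m}\,Q_m(\xxx,t)$ by Shareshian--Wachs, and on the algebra side the corresponding decomposition splits $\bound{k}{n}$ according to the set of decorated fixed points. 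That decomposition reduces to the $a+b-n=1$ case (Theorem~\ref{thm:main:eulerian-1}) tensored with trivial representations, and the rest is a routine check of the $k$-indices.
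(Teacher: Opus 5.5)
Your primary route has a gap at the step you flag yourself, and that step is not a routine check of conventions. Remove the loops ($g(i)=i$) of $g\in\bound{k}{n}$ as in Lemma~\ref{lem:affine-fixed-point-insertion}, with $K=k$ and $\varepsilon=\xi=1$, and then apply Lemma~\ref{lem:permutation-affine-bijection} to what remains. This identifies $g$ with a set of $r$ loops together with a permutation $\pi$ of the other $N=n-r$ positions. The coloops of $g$ become the fixed points of $\pi$, one has $\operatorname{exc}(\pi)=N-k$, and the window descent set is the corresponding shuffle of $\operatorname{DEX}(\pi)$. So your $q=1$ sum is naturally $\sum_r h_r Q_{n-r,\,n-r-k}$. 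By contrast, \eqref{eq:tildeQ-Q} gives $\widetilde Q_{n,k}=\sum_{m=1}^n h_{n-m}Q_{m,k-1}$, which grades the same objects by $1+\operatorname{exc}(\pi)$ rather than by $N-\operatorname{exc}(\pi)$. Since fundamental expansions are unique, reconciling the two is precisely the Shareshian--Wachs symmetry $Q_{N,N-k}=Q_{N,k-1}$ \cite[Theorem~5.1(3)]{ShareshianWachs2010}. Either Liao's theorem is stated in a form that already encodes this symmetry, which you would have to verify, or you must invoke the symmetry yourself. As written, the key identification is missing, and the unspecified involution in your first fallback does not supply it.

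Your second fallback is the right argument and is essentially the paper's proof, but the recursion you quote is wrong. The relation \eqref{eq:tildeQ-Q} reads $\widetilde Q_n=h_n+t\sum_{m=1}^n h_{n-m}Q_m$, not $\sum_m h_{n-m}Q_m$. The factor $t$ is exactly the shift $k\mapsto k-1$ that you called routine, and it is what makes the two sides agree. You also do not need to split $\bound{k}{n}$ combinatorially. Apply Proposition~\ref{prop:recursive-n,a,b} with $(a,b)=(k+1,n-k+1)$; this is legitimate because $a+b=n+2>n$, and $b<n$ since $k>1$. Setting $q=1$ gives
\[
\Frob(R_{n,k+1,n-k+1})=\sum_{m=0}^{n-k}h_m\,\Frob\bigl(R_{n-m,\,k,\,(n-m)-k+1}\bigr)=\sum_{m=0}^{n-k}h_m\,Q_{n-m,k-1},
\]
where the second equality is Theorem~\ref{thm:a+b-n=1:Eulerian} applied with $(n-m,k-1)$. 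Since $Q_{r,k-1}=0$ for $r<k$, this equals $\sum_{r=1}^n h_{n-r}Q_{r,k-1}=\widetilde Q_{n,k}$. The excluded term $h_nQ_{0,0}$ matters only at $k=1$, which is why that case is excluded. If you do prefer the split of $\bound{k}{n}$, remove only the loops and keep the coloops as fixed points of $\pi$. Removing both colors of fixed points lowers the average displacement and lands you in the derangement case $a+b-n=0$, not in the case $a+b-n=1$.
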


Interestingly, comparing Theorem~\ref{thm:main:e_k-perp} and Theorem~\ref{thm:main:eulerian-0} immediately proves the following connection between the Delta conjecture and Eulerian quasisymmetric functions.
\begin{corollary}\label{cor:euler-delta}
    For $a,b\ge 1$,
    \[e_a^\perp Q_{a+b,a,0}=\omega((\Delta'_{e_{a-1}}e_{b})\mid_{q=1,\,t=0}).\]
\end{corollary}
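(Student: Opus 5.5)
The plan is to specialize the graded identity of Theorem~\ref{thm:main:e_k-perp} at $q=1$ and then identify the left-hand side using Theorem~\ref{thm:main:eulerian-0}.

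First, note that $\F_{n,a,b}=\grFrob(R_{n,a,b};q)$ is a polynomial in $q$ with coefficients in the ring of symmetric functions, since $R_{a+b,a,b}$ is finite-dimensional because $x_i^a\in I_{a+b,a,b}$. Setting $q=1$ gives $\F_{a+b,a,b}\mid_{q=1}=\Frob(R_{a+b,a,b})$. The operator $e_a^\perp$ acts only on the symmetric-function variables and is linear over $\QQ[q]$, so it commutes with the specialization $q=1$. Therefore
\[(e_a^\perp\F_{a+b,a,b})\mid_{q=1}=e_a^\perp\Frob(R_{a+b,a,b}).\]
By Theorem~\ref{thm:main:eulerian-0}, applied with $n=a+b$ and $k=a$ (allowed since $0\le a\le a+b$), this equals $e_a^\perp Q_{a+b,a,0}$.

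Next, specialize the right-hand side of Theorem~\ref{thm:main:e_k-perp} at $q=1$. The factor $q^{\binom a2}$ becomes $1$. The reversal $\rev_q$ only permutes the coefficients of the polynomial, so it preserves their sum and hence the value at $q=1$. The involution $\omega$ is $\QQ[q]$-linear and acts coefficientwise, so it commutes with $q\mapsto1$. Combining these gives
\[\omega((\Delta'_{e_{a-1}}e_b)\mid_{t=0})\mid_{q=1}=\omega((\Delta'_{e_{a-1}}e_b)\mid_{q=1,\,t=0}).\]

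The only point needing care is that $(\Delta'_{e_{a-1}}e_b)\mid_{t=0}$ is actually a polynomial in $q$ with symmetric-function coefficients, so that both $\rev_q$ and the evaluation $q=1$ make sense. This follows from the Haglund--Rhoades--Shimozono interpretation cited above, or directly from Theorem~\ref{thm:main:e_k-perp} itself, since its left-hand side is a polynomial in $q$. Equating the two specializations proves the corollary.
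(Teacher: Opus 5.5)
Your proposal is correct and is the paper's own argument: set $q=1$ in Theorem~\ref{thm:main:e_k-perp} and use Theorem~\ref{thm:main:eulerian-0} (with $n=a+b$, $k=a$) to identify $\Frob(R_{a+b,a,b})$ with $Q_{a+b,a,0}$. Your remarks that $e_a^\perp$, $\omega$, and $\rev_q$ are compatible with the specialization $q=1$ spell out the justifications the paper leaves implicit.
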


Similarly, comparing Theorem~\ref{thm:main:theta} and Theorem~\ref{thm:main:eulerian-1} immediately yields the following connection between the Theta conjecture and Eulerian quasisymmetric functions, which also follows from \cite{liao2026bicoloredeulerianquasisymmetricfunctions}.
\begin{corollary}\label{cor:euler-theta}
    For $a,b\ge 1$,
    \[Q_{a+b-1,a-1}=\omega\left(
        \left.
        \bigl(
        \Theta_{e_{a-1}}\Theta_{e_{b-1}}\nabla e_1
        \bigr)\right|_{q=1,\,t=0}
        \right).\]
\end{corollary}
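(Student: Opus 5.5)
The plan is to specialize $q=1$ in Theorem~\ref{thm:main:theta} and then substitute Theorem~\ref{thm:main:eulerian-1}, exactly parallel to how Corollary~\ref{cor:euler-delta} follows from Theorems~\ref{thm:main:e_k-perp} and~\ref{thm:main:eulerian-0}.

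\textbf{Step 1: setting $q=1$ in $\F_{a+b-1,a,b}$.} For any graded $\symm_n$-module $A$ of finite total dimension, setting $q=1$ in $\grFrob(A;q)$ gives $\Frob(A)$. I will check that $R_{n,a,b}$ is finite dimensional when $a>0$. This holds because the generators $x_i^a$ lie in $I_{n,a,b}$, so every $x_i$ is nilpotent in $R_{n,a,b}$. Hence $\F_{a+b-1,a,b}\mid_{q=1}=\Frob(R_{a+b-1,a,b})$.

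\textbf{Step 2: setting $q=1$ on the right-hand side.} The operation $\rev_q$ only permutes the coefficients of powers of $q$, so it commutes with $q=1$. Also $\omega$ acts on the symmetric-function coefficients and not on $q$, so it commutes with specializing $q$. Therefore
\[\rev_q\Big(\omega\big((\Theta_{e_{a-1}}\Theta_{e_{b-1}}\nabla e_1)\mid_{t=0}\big)\Big)\Big|_{q=1}=\omega\big((\Theta_{e_{a-1}}\Theta_{e_{b-1}}\nabla e_1)\mid_{q=1,\,t=0}\big).\]
One point needs care. The specialization at $t=0$ must already be a polynomial in $q$ and not merely a rational function, so that $q=1$ makes sense. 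This is guaranteed by Theorem~\ref{thm:main:theta} itself, since the left side there is a polynomial in $q$.

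\textbf{Step 3: identifying the left side.} Put $n=a+b-1$ and $k=a-1$, so that $0\le k\le n$ and $k+1=a$. Also $n-k=b$, hence $R_{a+b-1,a,b}=R_{n,k+1,n-k}$. Theorem~\ref{thm:main:eulerian-1} then gives $\Frob(R_{a+b-1,a,b})=Q_{a+b-1,a-1}$. Combining Steps 1 through 3 yields the claimed identity.

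The only point requiring attention is the index bookkeeping, together with the fact that the specialization commutes with $\rev_q$ and $\omega$. No step is a genuine obstacle.
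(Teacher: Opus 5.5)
Your proposal is correct and is essentially the paper's argument: specialize Theorem~\ref{thm:main:theta} at $q=1$, using that $\rev_q$ and $\omega$ commute with this specialization and that $R_{a+b-1,a,b}$ is finite-dimensional, so that $\F_{a+b-1,a,b}$ becomes $\Frob(R_{a+b-1,a,b})$; then identify this with $Q_{a+b-1,a-1}$ via Theorem~\ref{thm:main:eulerian-1} with $n=a+b-1$ and $k=a-1$. You deduce the polynomiality in $q$ of the $t=0$ specialization from the theorem itself, but the paper also records this directly in Subsection~\ref{subsec:theta}, so nothing further is needed.
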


\subsection{Main results about Chow ring interpretation}
\label{subsec:intro-chow}

Chow rings and relevant standard results are reviewed in
Subsection~\ref{subsec:chow}.
Pawlowski and Rhoades~\cite{MR4029695} constructed an open
subscheme $X_{n,k}\subseteq(\PP^{k-1})^n$ by
\[
X_{n,k}\coloneqq
\left\{
(\ell_1,\dots,\ell_n)\in(\PP^{k-1})^n:
\sum_{i=1}^n\ell_i=\CC^k
\right\}
\]
and proved that its cohomology ring is isomorphic to the
generalized coinvariant algebra defined in
\cite{HaglundRhoadesShimozono2018}, with graded Frobenius
character
$\rev_q(\omega((\Delta'_{e_{k-1}}e_n)\mid_{t=0}))$.

We construct a similar space of line configurations
\[
U_{n,k}\coloneqq
\left\{
(\ell_1,\dots,\ell_n)\in(\PP^{k-1})^n:
\sum_{\substack{i=1\\i\neq j}}^n\ell_i=\CC^k
\text{ for all }j\in[n]
\right\}
\]
to provide $R_{n,k,n-k}$ with a Chow ring interpretation.

\begin{theorem}[{Theorem~\ref{thm:chow}}]\label{thm:main:chow}
    For $1\le k< n$, we have the $\QQ$-algebra
    isomorphism
    \[
    CH^*_\QQ(U_{n,k})\cong R_{n,k,n-k}
    \]
    which maps $c_1(\mathcal{L}_i)$ to the residue
    class of $x_i$ for all $i\in[n]$, where
    $\mathcal{L}_i$ is the pullback of the
    tautological line bundle over the $i$-th
    projective space factor.
\end{theorem}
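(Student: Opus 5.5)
The plan is the standard localization (excision) argument for Chow rings, set up to mirror Pawlowski--Rhoades' treatment of $X_{n,k}$. First, recall that $CH^*_\QQ((\PP^{k-1})^n)\cong\QQ[\xxx_n]/\langle x_1^k,\dots,x_n^k\rangle$ with $x_i\mapsto c_1(\mathcal{L}_i)$; we may choose the sign convention for $c_1$ of the tautological bundle so that this matches. Set $Z=(\PP^{k-1})^n\setminus U_{n,k}$. The excision exact sequence
\[
CH_*(Z)\to CH_*((\PP^{k-1})^n)\to CH_*(U_{n,k})\to 0
\]
shows that the restriction map $\QQ[\xxx_n]/\langle x_i^k\rangle\to CH^*_\QQ(U_{n,k})$ is a surjective ring map. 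Its kernel is spanned by the pushforwards of classes of subvarieties supported on $Z$. Since $x_i^k$ already lies in $I_{n,k,n-k}$, it suffices to prove that this kernel equals the image of $\langle e_{n-k}([n]\setminus\{i\})\rangle$.

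\textbf{Step 1: the relations hold on $U_{n,k}$.} Fix $j$ and let $V_j=\CC^k\otimes\mathcal{O}$ be the trivial bundle. On $U_{n,k}$ the sum map $\bigoplus_{i\ne j}\mathcal{L}_i\to V_j$ is surjective, so its kernel $K_j$ is a vector bundle of rank $n-1-k$. Hence
\[
c\Bigl(\bigoplus_{i\neq j}\mathcal{L}_i\Bigr)=c(K_j),
\]
because $c(V_j)=1$. The left side is $\prod_{i\ne j}(1+x_i)$ (up to sign convention). Its degree-$(n-k)$ part is $e_{n-k}([n]\setminus\{j\})$, and it must vanish since $n-k>\operatorname{rk}K_j=n-1-k$. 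Therefore $I_{n,k,n-k}$ lies in the kernel, and we obtain a surjection $R_{n,k,n-k}\twoheadrightarrow CH^*_\QQ(U_{n,k})$.

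\textbf{Step 2: injectivity.} This step is the main obstacle, and I would settle it by a dimension count. Stratify $U_{n,k}$ by the positroid-type data mentioned in the abstract: record the matroid pattern of which subsets of lines are spanning, or more concretely use the Schubert-cell decomposition of each $\PP^{k-1}$ with respect to a fixed flag, intersected with $U_{n,k}$. The goal is to show that the Chow groups are spanned by the closures of the strata, and that the number of strata of codimension $d$ is at most $\dim(R_{n,k,n-k})_d$. By the Hilbert series result obtained earlier (the positroid/affine-permutation count, Theorem~\ref{thm:main:a+b-n=0,1,2} with $a+b=n$), this dimension equals the number of affine permutations $f\in\widetilde S^0_n$ with $i-k<f(i)<i+n-k$ of length $d$.

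An alternative route, likely cleaner, is induction on $n$ using the exact sequences the paper builds for $R_{n,a,b}$. Project $U_{n,k}$ by forgetting $\ell_n$ and decompose according to whether $\ell_1,\dots,\ell_{n-1}$ still satisfy the spanning condition after removing one further line. Each piece is a projective or affine bundle over a smaller $U$-type space or over some $X_{n',k'}$, and its Chow group is known by induction or by Pawlowski--Rhoades. The resulting right-exact sequences of Chow groups then give
\[
\dim CH^d_\QQ(U_{n,k})\le\dim(R_{n,k,n-k})_d,
\]
where the bound is matched with the algebraic recursion for $\Hilb(R_{n,k,n-k};q)$. Combined with the surjection from Step 1, this forces an isomorphism.

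The hardest part is getting the geometric decomposition to reproduce exactly the algebraic short exact sequences, in particular identifying the closed pieces where some $\ell_j$ is needed for spanning. That amounts to the condition that $\ell_1,\dots,\widehat{\ell_j},\dots,\ell_n$ span only a hyperplane, which fibers over a Grassmannian-type locus. So the first thing I would try is the base case $k=2$, checked by hand, and then match the $\Hilb$ recursion.
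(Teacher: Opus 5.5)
Your Step 1 is correct. It gives a self-contained proof that $I_{n,k,n-k}\subseteq\ker\bigl(CH^*_\QQ((\PP^{k-1})^n)\to CH^*_\QQ(U_{n,k})\bigr)$: on $U_{n,k}$ the map $\bigoplus_{i\neq j}\mathcal L_i\to\CC^k\otimes\mathcal O$ is surjective, so $c_m\bigl(\bigoplus_{i\neq j}\mathcal L_i\bigr)=0$ for $m\ge n-k$.

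The gap is in Step 2, and it is a logical one. The surjection $R_{n,k,n-k}\twoheadrightarrow CH^*_\QQ(U_{n,k})$ from Step 1 \emph{already} says $\dim CH^d_\QQ(U_{n,k})\le\dim(R_{n,k,n-k})_d$. Proving that same inequality again forces nothing, whether you do it by spanning the Chow groups with stratum closures or by chaining localization sequences. Injectivity is equivalent to the reverse inequality $\dim CH^d_\QQ(U_{n,k})\ge\dim(R_{n,k,n-k})_d$. Equivalently, every class pushed forward from $Z$ must lie in $I_{n,k,n-k}$. Neither of your routes can give this. Localization sequences are only right exact, so they only bound the Chow groups of open pieces from above. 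The stratification route would need the classes $[\overline{U_f}]$ to be linearly independent, and the paper remarks that they typically are not. The strata $U_f$ are also not affine cells, so no cellular argument applies directly.

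The missing idea is to compute the kernel exactly, i.e.\ the image of $CH_*(Z)\to CH_*((\PP^{k-1})^n)$, rather than bound dimensions. Write $Z=\bigcup_j Z_j$, where $Z_j$ is the locus where the lines other than $\ell_j$ fail to span $\CC^k$. Then $Z_j\cong W_{n-1,k}\times\PP^{k-1}$, where $W_{n-1,k}$ is the complement of the Pawlowski--Rhoades space $X_{n-1,k}$ in the other $n-1$ factors, and the image of $CH_*(Z)$ is the sum of the images of the $CH_*(Z_j)$.

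Theorem~\ref{thm:span-line-config} for $X_{n-1,k}$ computes the image of $CH_*(W_{n-1,k})$ in $CH^*_\QQ((\PP^{k-1})^{n-1})$. This uses that $(\PP^{k-1})^{n-1}$ and $X_{n-1,k}$ admit cellular decompositions, so their rational Chow rings agree with cohomology. The image is the ideal generated by $e_{n-k}([n]\setminus\{j\}),\dots,e_{n-1}([n]\setminus\{j\})$. The exterior product with $\PP^{k-1}$ is surjective and compatible with proper pushforward. So the image of $CH_*(Z_j)$ in $\QQ[\xxx_n]/\langle x_i^k\rangle$ is that same ideal.

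Summing over $j$ and using Lemma~\ref{lem:higher-e} to discard $e_{n-k+1},\dots,e_{n-1}$ gives exactly $I_{n,k,n-k}$. This proves both containments at once and needs no Hilbert-series input.
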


The space $U_{n,k}$ admits a positroid stratification
indexed by strictly bounded affine permutations
\[
\bound{k}{n}^{\circ}\coloneqq
\{f\in\bound{k}{n}:i<f(i)<i+n
\text{ for all }i\in[n]\},
\]
which correspond to positroids with neither loops nor
coloops. Let $U_f$ denote the stratum indexed by
$f\in\bound{k}{n}^{\circ}$.
The codimension of $U_f$ in $U_{n,k}$ equals
$\ell(f)$, as for the corresponding positroid variety
in $\grass{k}{n}$~\cite{KnutsonLamSpeyer2013}.
Combining Theorems~\ref{thm:main:chow}
and~\ref{thm:main:a+b-n=0,1,2}, we obtain the
following geometric interpretation of the Hilbert
series of $R_{n,k,n-k}$; see Section~\ref{sec:chow} for details.

\begin{corollary}[{Corollary~\ref{cor:chow-positroid-hilbert}}]\label{cor:main:chow-positroid}
    For $1\le k<n$, we have
    \[
    \begin{aligned}
    \Hilb(R_{n,k,n-k};q)
    =\Hilb(CH^*_\QQ(U_{n,k});q)=\sum_{f\in\bound{k}{n}^{\circ}}q^{\ell(f)}=\sum_{f\in\bound{k}{n}^{\circ}}
      q^{\codim{U_{n,k}}{U_f}}.
    \end{aligned}
    \]
\end{corollary}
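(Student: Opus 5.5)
The plan is to chain the three equalities in Corollary~\ref{cor:main:chow-positroid} left to right, each from an earlier result or a short combinatorial translation.

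The first equality, $\Hilb(R_{n,k,n-k};q)=\Hilb(CH^*_\QQ(U_{n,k});q)$, follows from Theorem~\ref{thm:main:chow}. The isomorphism $CH^*_\QQ(U_{n,k})\cong R_{n,k,n-k}$ sends $c_1(\mathcal{L}_i)$, which lies in $CH^1$, to $x_i$, which has degree $1$. Both rings are generated in degree one by these classes, so the isomorphism is graded and the Hilbert series agree.

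For the second equality, I apply Theorem~\ref{thm:main:a+b-n=0,1,2} with $a=k$, $b=n-k$, so $a+b-n=0$ and $0<b<n$ holds because $1\le k<n$. Specializing the fundamental quasisymmetric functions to dimensions (taking the coefficient of $x_1x_2\cdots x_n$, where each $F_{S,n}$ contributes $1$) gives
\[\Hilb(R_{n,k,n-k};q)=\sum_{f\in\dschub_n^0,\ i-k<f(i)<i+n-k} q^{\ell(f)}.\]
Next I translate by $f\mapsto g=f+k$. This is a bijection $\dschub_n^0\to\dschub_n^k$ that preserves length, since the inversion count of an affine permutation is invariant under adding a constant. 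It turns the constraint into $i<g(i)<i+n$, which is exactly the condition defining $\bound{k}{n}^\circ$. Strict bounds imply the bounded condition $i\le g(i)\le i+n$, so the image is precisely $\bound{k}{n}^\circ$. The main points to verify are that translation preserves $\ell$ under the paper's length convention and that average displacement shifts by exactly $k$. Both should be immediate from the definitions in Subsection~\ref{subsec:positroids-affine-perm}.

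For the third equality I need $\codim{U_{n,k}}{U_f}=\ell(f)$. I take $U_f=U_{n,k}\cap\widetilde\Pi_f$, where $\widetilde\Pi_f$ is the preimage of the open positroid variety under the map sending $(\ell_1,\dots,\ell_n)$ to the rowspan of the $k\times n$ matrix of chosen representatives. This is well defined on $U_{n,k}$ because the vectors span $\CC^k$. The map is a principal $(\CC^*)^n$-bundle over the open set of $\grass{k}{n}$ with no loops (no zero columns), followed by an open restriction. So codimension is preserved, and the count follows from the Knutson--Lam--Speyer codimension formula \cite{KnutsonLamSpeyer2013}.

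It remains to check that the strata of $U_{n,k}$ are exactly those indexed by positroids with no loops and no coloops. No loops holds since each $\ell_i$ is a line. No coloops is the condition that dropping any column keeps rank $k$, which is exactly the defining condition of $U_{n,k}$. This identification, together with nonemptiness of each stratum, is the step I expect to need the most care, though it reduces to the standard dictionary between loops/coloops and the fixed points $f(i)=i$ and $f(i)=i+n$.
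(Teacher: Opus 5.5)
Your first two equalities follow the paper: Theorem~\ref{thm:chow}, then Theorem~\ref{thm:main:a+b-n=0,1,2} at $(a,b)=(k,n-k)$ with the translation $f\mapsto f+k$. Your identification of the strata with loop-free, coloop-free positroids also matches the paper. The gap is in the codimension step.

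There is no morphism $U_{n,k}\to\grass{k}{n}$ sending a configuration to the row span of a representing matrix. Replacing $v_i$ by $c_iv_i$ replaces the row span $V$ by $V\cdot\mathrm{diag}(c_1,\dots,c_n)$, which is in general a different point of $\grass{k}{n}$. The spanning condition only guarantees that the matrix has rank $k$. For example, take $n=3$, $k=2$ and lines $[1:0],[0:1],[1:1]$. The choices $v_3=(1,1)$ and $v_3=(2,2)$ give the different planes $\{(a,b,a+b)\}$ and $\{(a,b,2a+2b)\}$.

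What is well defined is membership in a stratum, because each $\Pi_f^\circ$ is invariant under the torus $(\CC^\times)^n$. So your set $U_f$ is correct, but the principal $(\CC^\times)^n$-bundle you invoke does not exist. A dimension count confirms this. Such a bundle over an open subset of $\grass{k}{n}$ would have dimension $k(n-k)+n$, whereas $\dim U_{n,k}=n(k-1)$. These agree only when $k^2=2n$; for $n=3$, $k=2$ they are $5$ versus $3$. So "codimension is preserved" is not justified by what you wrote.

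The fix, which is the paper's argument in Proposition~\ref{prop:chow-positroid-stratification}, is to pass through the common cover $\mathcal M_{n,k}$. This is the space of $k\times n$ matrices with nonzero columns and no coloops, with $\mathcal M_f$ its part lying over $U_f$.
\begin{itemize}
\item $\mathcal M_f\to U_f$ is surjective with fibers $(\CC^\times)^n$ (column rescaling).
\item $\mathcal M_f\to\Pi_f^\circ$ is surjective with fibers $\mathrm{GL}_k$ (row operations).
\item Hence $\dim U_f=\dim\Pi_f^\circ+k^2-n$.
\item Also $\dim U_{n,k}-\dim\grass{k}{n}=n(k-1)-k(n-k)=k^2-n$.
\end{itemize}
Together these give $\codim{U_{n,k}}{U_f}=\codim{\grass{k}{n}}{\Pi_f}=\ell(f)$ by Knutson--Lam--Speyer. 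The comparison is a correspondence between the torus quotient of the Grassmannian and the $\mathrm{GL}_k$-quotient of the configuration space, not a map from $U_{n,k}$ to $\grass{k}{n}$.
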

However, the author cannot geometrically explain why the codimension generating function of these strata agrees with the Hilbert series of the Chow ring.

\subsection{Outline}\label{subsec:outline}
The rest of this paper is structured as follows. We first review standard or technical definitions and results in Section~\ref{sec:background}. Then we use Sections~\ref{sec:F_{n,a,b}-recurs}, \ref{sec:F_{n,infty,b}-recurs}, and \ref{sec:F_{n,a,b}-closed} to prove a closed formula of $\F_{n,a,b}$ for $a+b\ge n$, which yields Theorem~\ref{thm:main:a+b-n=0,1,2}, the starting point of all main results. Our proof strategy is providing an invertible recursive formula for $\F_{n,a,b}$ with $a<\infty$ and $a+b\ge n$ in Section~\ref{sec:F_{n,a,b}-recurs} so that we can use the outputs of $\F_{n,a,b}$ for larger values of $a$ to compute their counterparts for smaller values of $a$. Consequently, it suffices to let $a\rightarrow\infty$ and turn to study $\F_{n,\infty,b}$, which will be handled via the recursive formula in Section~\ref{sec:F_{n,infty,b}-recurs}. With these two recursive formulas, we eventually give the closed formula for $\F_{n,a,b}$ in Section~\ref{sec:F_{n,a,b}-closed}.

After proving this closed formula, we focus on the cases when $a+b-n=0,1,2$. Section~\ref{sec:Theta-conjecture} reveals the connections of the case when $a+b-n=0,1$ to the Delta conjecture and the Theta conjecture, respectively. Section~\ref{sec:eulerian-quasisym} gives the connections of the cases when $a+b-n=0,1,2$ to three families of Eulerian symmetric functions, respectively. Section~\ref{sec:chow} gives a geometric interpretation to the case when $a+b-n=0$. Finally, Section~\ref{sec:future} proposes our open problems to study in the future.

\section{Background}\label{sec:background}

\subsection{Combinatorics}\label{subsec:combinatorics}
A \emph{partition} of $n$ is a finite weakly decreasing sequence
$\lambda=(\lambda_1,\dots,\lambda_\ell)$ of positive integers
such that $|\lambda|\coloneqq\lambda_1+\cdots+\lambda_\ell=n$;
we write $\lambda\vdash n$ and call $\ell$ its \emph{length}.
The empty partition is the unique partition of $0$.
The \emph{Young diagram} of $\lambda$, drawn in English convention,
consists of left-justified rows of boxes, with $\lambda_i$ boxes
in the $i$-th row from the top.
A \emph{semistandard Young tableau} (SSYT) of shape $\lambda$
is a filling of this diagram with positive integers that
weakly increase along rows from left to right and strictly
increase down columns.
A \emph{standard Young tableau} (SYT) of shape $\lambda\vdash n$
is an SSYT containing each of $1,\dots,n$ exactly once;
its entries therefore strictly increase along both rows and columns.
We denote these sets by $\operatorname{SSYT}(\lambda)$
and $\operatorname{SYT}(\lambda)$, respectively.

For example, $\lambda=(3,2)\vdash5$ has length $2$, and
\[
T=
\begin{ytableau}
1 & 1 & 3\\
2 & 4
\end{ytableau},
\qquad
S=
\begin{ytableau}
1 & 2 & 4\\
3 & 5
\end{ytableau}
\]
are an SSYT and an SYT of shape $(3,2)$, respectively.

For a word $w=w_1\cdots w_n$ over a totally ordered alphabet,
its \emph{descent set} is
\[
\operatorname{DES}(w)
\coloneqq\{i\in[n-1]:w_i>w_{i+1}\}.
\]
For example, $\operatorname{DES}(21332)=\{1,4\}$. We identify $w$ with the map $i\mapsto w_i$ form $[n]$ to the alphabet.

\subsection{Quasisymmetric functions}\label{subsec:quasisym}

Throughout this paper, symmetric and quasisymmetric functions have
coefficients in $\QQ$.
We write $\xxx=(x_1,x_2,\dots)$ for their infinite ordered list
of variables, which should be distinguished from the finite list
$\xxx_n=(x_1,\dots,x_n)$ of polynomial variables in $\QQ[\xxx_n]$.

A bounded-degree formal power series $f$ in $\xxx$ with coefficients
in $\QQ$ is \emph{quasisymmetric} if
\[
[x_{i_1}^{a_1}\cdots x_{i_r}^{a_r}]f
=
[x_{j_1}^{a_1}\cdots x_{j_r}^{a_r}]f
\]
for all positive integers $a_1,\dots,a_r$ and strictly increasing
index sequences $i_1<\cdots<i_r$ and $j_1<\cdots<j_r$.
These functions form the graded $\QQ$-algebra
$\qsym=\bigoplus_{n\geq0}\qsym_n$;
see \cite[Section~4.1]{MR3810249}.

For $n\geq1$ and $S\subseteq[n-1]$, define the
\emph{fundamental quasisymmetric function}
\[
F_{S,n}(\xxx)=
\sum_{\substack{1\leq i_1\leq\cdots\leq i_n\\
j\in S\Rightarrow i_j<i_{j+1}}}
x_{i_1}\cdots x_{i_n},
\]
and set $F_{\varnothing,0}=1$.
The functions $\{F_{S,n}:S\subseteq[n-1]\}$ form a linear basis
of $\qsym_n$.
We use increasing indices; the decreasing-index convention of
\cite[Section~2]{ShareshianWachs2010} instead defines
\[
F^{\downarrow}_{S,n}(\xxx)=
\sum_{\substack{i_1\geq\cdots\geq i_n\geq1\\
j\in S\Rightarrow i_j>i_{j+1}}}
x_{i_1}\cdots x_{i_n}.
\]
Reversing the index sequence gives
$F_{S,n}=F^{\downarrow}_{n-S,n}$, where
$n-S\coloneqq\{n-j:j\in S\}$.
In particular,
$F_{\varnothing,n}=h_n$ and $F_{[n-1],n}=e_n$,
with $h_n,e_n$ defined below.

A quasisymmetric function is \emph{symmetric} if it is invariant
under every finite permutation of the variables $\xxx$. For a homogeneous symmetric function $f$ of degree $n$, the two
conventions give the same fundamental expansion coefficients:
\[
f=\sum_{S\subseteq[n-1]}c_SF_{S,n}
\quad\Longleftrightarrow\quad
f=\sum_{S\subseteq[n-1]}c_SF^{\downarrow}_{S,n}.
\]
Indeed, after specializing to $x_1,x_2,\dots,x_N$, reversing the variables
fixes every symmetric polynomial and interchanges $F_{S,n}$ with
$F^{\downarrow}_{S,n}$. The assertion follows since this holds for
every $N$.

Let $u=u_1\cdots u_m$ and $v=v_1\cdots v_n$ be words on a
totally ordered alphabet, with all $m+n$ letters distinct.
A \emph{shuffle} of $u$ and $v$ is a word obtained by interleaving
their letters while preserving the order within each word.
Write $\operatorname{Sh}(u,v)$ for the set of such shuffles.
The fundamental basis satisfies the multiplication rule
\cite[Theorem~4.1]{MR3810249}
\[
F_{\operatorname{DES}(u),m}\,
F_{\operatorname{DES}(v),n}
=
\sum_{w\in\operatorname{Sh}(u,v)}
F_{\operatorname{DES}(w),m+n}.
\]

Define the $\QQ$-linear involution $\Omega:\qsym\to\qsym$ by
\[
\Omega(F_{S,n})=F_{[n-1]\setminus S,n},
\qquad
\Omega(1)=1,
\]
and write $\Omega_n$ for its restriction to $\qsym_n$.
On symmetric functions, $\Omega$ coincides with the involution
$\omega$ defined below; see
\cite[Section~3.6]{LuotoMykytiukVanWilligenburg2013}.

\subsection{Symmetric functions and the representation theory of symmetric groups}
\label{subsec:symfunc-symgp}

A quasisymmetric function is \emph{symmetric} if it is invariant
under every finite permutation of the variables $\xxx$.
The symmetric functions form the graded $\QQ$-subalgebra
$\sym=\bigoplus_{n\geq0}\sym_n$ of $\qsym$.
We recall standard notation; see \cite[Chapter~7]{MR4621625}.

For $r\geq1$, the elementary, complete homogeneous, and power-sum
symmetric functions are
\[
\begin{aligned}
e_r(\xxx)&=\sum_{1\leq i_1<\cdots<i_r}x_{i_1}\cdots x_{i_r},\\
h_r(\xxx)&=\sum_{1\leq i_1\leq\cdots\leq i_r}x_{i_1}\cdots x_{i_r},\\
p_r(\xxx)&=\sum_{i\geq1}x_i^r,
\end{aligned}
\]
with $e_0=h_0=1$ and the conventions that $e_r=h_r=0$ if $r<0$.
For a partition $\lambda=(\lambda_1,\dots,\lambda_\ell)$,
write $e_\lambda=\prod_i e_{\lambda_i}$, and similarly
define $h_\lambda$ and $p_\lambda$.
The Schur function of shape $\lambda$ is
\[
s_\lambda(\xxx)=
\sum_{T\in\operatorname{SSYT}(\lambda)}
\prod_{c\in\lambda}x_{T(c)},
\]
where $c$ ranges over the boxes of the Young diagram.
Each of $\{e_\lambda\}$, $\{h_\lambda\}$, $\{p_\lambda\}$,
and $\{s_\lambda\}$ is an $\QQ$-basis of $\sym$.

The \emph{Hall inner product} is the $\QQ$-bilinear form
determined by
$\langle s_\lambda,s_\mu\rangle=\delta_{\lambda\mu}$.
For $f\in\sym$, let $f^\perp$ be the adjoint of multiplication by $f$:
\[
\langle f^\perp g,h\rangle=\langle g,fh\rangle
\qquad(g,h\in\sym).
\]
The $\QQ$-algebra involution $\omega$ is determined by
$\omega(e_r)=h_r$; equivalently,
\[
\omega(p_r)=(-1)^{r-1}p_r,
\qquad
\omega(s_\lambda)=s_{\lambda'},
\]
where $\lambda'$ is obtained by transposing the Young diagram. In particular, $\omega$ is unitary with respect to the Hall inner product. 

All these definitions can be linearly extended to $\sym\otimes_\QQ \QQ(q,t)$.

A symmetric function $f\in\sym\otimes_\QQ \QQ[[q]]$ is called \emph{Schur positive} if all its Schur coefficients belong to $\QQ_{\ge 0}[[q]]$. We define a partial order $\le$ on $f\in\sym\otimes_\QQ \QQ[[q]]$ by $f\le g\Leftrightarrow\text{$g-f$ is Schur positive}$.

We also use \emph{plethystic substitution}
\cite[Section~3.3]{MR2051783}.
Inside plethystic brackets, by a slight abuse of notation, $\xxx$ denotes the formal sum of
all variables $x_i$, so $\xxx=x_1+x_2+\cdots$.
For a symmetric function $A=A(x_1,x_2,\dots;q,t)\in\sym\otimes_\QQ\QQ(q,t)$, set
\[
p_r[A]=A(x_1^r,x_2^r,\dots;q^r,t^r).
\]
Since $\sym\otimes_\QQ\QQ(q,t)=\QQ(q,t)[p_1,p_2,\dots]$ where $p_1,p_2,\dots$ are algebraically independent, the plethystic substitution $f[A]$ with $f\in\sym\otimes_\QQ\QQ(q,t)$
is uniquely defined by replacing each $p_r$ in $f$ by $p_r[A]$
and keeping the coefficients of $f$ in $\QQ(q,t)$ fixed.
In particular,
\[
p_r\!\left[\frac{\xxx}{1-q}\right]
=\frac{p_r(\xxx)}{1-q^r},
\qquad
p_r[-\xxx]=-p_r(\xxx),\qquad
f[-\xxx]=(-1)^d\omega(f)\text{ if $f\in\sym_d\otimes_\QQ\QQ(q,t)$}.
\]
For $f,g\in\sym\otimes_\QQ\QQ(q,t)$, the expression $f[g]$ is the
\emph{plethysm} of $f$ and $g$.
We will also use the addition formulas
$h_r[A+B]=\sum_{i=0}^r h_i[A]h_{r-i}[B]$ and $e_r[A+B]=\sum_{i=0}^r e_i[A]e_{r-i}[B]$.

We study the representation theory of symmetric groups via the Frobenius characteristic; see \cite{MR1824028}.
For a finite-dimensional $\QQ[\mathfrak{S}_n]$-module $V$
with character $\chi_V$, its \emph{Frobenius characteristic} is
\[
\Frob(V)=
\sum_{\lambda\vdash n}\frac{\chi_V(\lambda)}{z_\lambda}p_\lambda,
\qquad
z_\lambda=\prod_{i\geq1}i^{m_i(\lambda)}m_i(\lambda)!,
\]
where $m_i(\lambda)$ is the multiplicity of $i$ in $\lambda$
and $\chi_V(\lambda)$ is the character value on cycle type $\lambda$. All $\symm_n$-irreducibles $\{S^\lambda\}_{\lambda\vdash n}$ are called \emph{Specht modules}.
If $V\cong\bigoplus_{\lambda\vdash n}(S^\lambda)^{\oplus a_\lambda}$
is its decomposition into irreducibles, then
$\Frob(V)=\sum_{\lambda\vdash n}a_\lambda s_\lambda$.
In particular,
\[
\Frob(\mathbf{1})=h_n,
\qquad
\Frob(\operatorname{sgn})=e_n,
\qquad
\Frob(V\otimes\operatorname{sgn})=\omega(\Frob(V)).
\]

For a graded $\QQ[\mathfrak{S}_n]$-module
$V=\bigoplus_{d\geq0}V_d$ with finite-dimensional homogeneous
components and a degree-preserving action, define its \emph{graded Frobenius characteristic}
\[
\grFrob(V;q)=\sum_{d\geq0}q^d\Frob(V_d).
\]
If $V$ is infinite-dimensional, the sum is interpreted as a formal
$q$-series.
Its Schur coefficients record graded multiplicities, and
\[
\Hilb(V;q)=\sum_{d\geq0}q^d\dim V_d
=\big\langle\grFrob(V;q),p_1^n\big\rangle.
\]

Under the Frobenius characteristic map, the operator
$e_k^\perp$ corresponds to antisymmetrization with respect
to $k$ letters; see, e.g., \cite{MR4621625,MR1824028}.
More precisely, for a $\QQ[\mathfrak{S}_n]$-module $V$ and
$0\le k\le n$, define the $\mathfrak{S}_{n-k}$-module
\[
\operatorname{Alt}_k(V)
\coloneqq
\left\{
v\in V:
\sigma\cdot v=\operatorname{sgn}(\sigma)v
\text{ for all }\sigma\in\mathfrak{S}_k
\right\},
\]
where $\mathfrak{S}_{n-k}\times\mathfrak{S}_k$ is regarded
as the Young subgroup of $\mathfrak{S}_n$ acting on
the first $n-k$ and last $k$ letters, respectively.
Equivalently, $\operatorname{Alt}_k(V)$ is the image of
the antisymmetrizer
\[
\frac{1}{k!}
\sum_{\sigma\in\mathfrak{S}_k}
\operatorname{sgn}(\sigma)\sigma
\]
acting on $V$. We then have
\[
\Frob(\operatorname{Alt}_k(V))
=
e_k^\perp\Frob(V).
\]
The same identity holds for graded modules, with
$\Frob$ replaced by $\grFrob$.

\subsection{Modified Macdonald symmetric functions and relevant operators}
\label{subsec:theta}

Throughout this section, all functions $f,g\in\sym\otimes_\QQ\QQ(q,t)$. For $\mu\vdash n$, the \emph{modified Macdonald symmetric function}
$\widetilde H_\mu=\widetilde H_\mu(\xxx;q,t)\in\sym_n\otimes_\QQ\QQ(q,t)$ is uniquely
characterized by
\[
\begin{aligned}
\widetilde H_\mu[(1-q)\xxx]
&\in\operatorname{span}_{\QQ(q,t)}
   \{s_\lambda:\lambda\vdash n,\ \lambda\unrhd\mu\},\\
\widetilde H_\mu[(1-t)\xxx]
&\in\operatorname{span}_{\QQ(q,t)}
   \{s_\lambda:\lambda\vdash n,\ \lambda\unrhd\mu'\},\\
\langle\widetilde H_\mu,s_{(n)}\rangle&=1,
\end{aligned}
\]
where $\lambda\unrhd\mu$ means
$\sum_{i=1}^j\lambda_i\geq\sum_{i=1}^j\mu_i$ for every $j$;
see \cite[Definition~3.5.2]{MR2051783}.
Set $\widetilde H_\varnothing=1$.
These functions form an $\QQ(q,t)$-basis of $\sym\otimes_\QQ\QQ(q,t)$, and their
Schur coefficients belong to $\mathbb Z_{\geq0}[q,t]$.

For a partition $\mu$, let $(i,j)$ denote the cell in row $i$ and column $j$, and set
\[
B_\mu=\sum_{(i,j)\in\mu}q^{j-1}t^{i-1},
\qquad
T_\mu=\prod_{(i,j)\in\mu}q^{j-1}t^{i-1},
\qquad
\Pi_\mu=\prod_{\substack{(i,j)\in\mu\\(i,j)\neq(1,1)}}
          (1-q^{j-1}t^{i-1}).
\]
The \emph{nabla operator} and \emph{Delta operators} are the
$\QQ(q,t)$-linear operators determined by
\[
\nabla\widetilde H_\mu=T_\mu\widetilde H_\mu,
\qquad
\Delta_g\widetilde H_\mu=g[B_\mu]\widetilde H_\mu,
\qquad
\Delta'_g\widetilde H_\mu=g[B_\mu-1]\widetilde H_\mu;
\]
see \cite[Section~2]{DAdderioIraciVandenWyngaerd2021}.
On $\sym_n\otimes_\QQ\QQ(q,t)$ with $n\geq1$, we have
\[
\nabla=\Delta_{e_n}=\Delta'_{e_{n-1}},
\qquad
\Delta_{e_r}=\Delta'_{e_r}+\Delta'_{e_{r-1}}
\quad(r\geq1),
\]
and for any $k>n$ we have $\Delta_{e_k}=\Delta_{e_{k-1}}^\prime=0$ on $\sym_n\otimes_\QQ\QQ(q,t)$; see \cite[Section~2]{DAdderioIraciVandenWyngaerd2021}.

Let $M=(1-q)(1-t)$, and define the invertible
$\QQ(q,t)$-linear operator $\Pi$ on $\sym_{\geq1}\otimes_\QQ\QQ(q,t)$ by
$\Pi\widetilde H_\mu=\Pi_\mu\widetilde H_\mu$.
For $g\in\sym\otimes_\QQ\QQ(q,t)$ and $f$ of positive degree, the
\emph{Theta operator} is defined by
\[
\Theta_g(f)=
\Pi\left(g\left[\frac{\xxx}{M}\right]\Pi^{-1}(f)\right).
\]
For homogeneous $g$ of positive degree, set $\Theta_g(1)=0$;
for $c\in\QQ(q,t)$, set $\Theta_c=c\,\mathrm{id}$.
Extend these definitions by linearity.
If $g$ is homogeneous of degree $r$, then $\Theta_g$ raises
symmetric-function degree by $r$.
See \cite[Section~3]{DAdderioIraciVandenWyngaerd2021}.

We shall use the following identities for Theta operators.
By \cite[Theorem~3.1]{DAdderioIraciVandenWyngaerd2021}, for
$1\leq k\leq n$,
\begin{equation}\label{eq:theta-delta}
    \Theta_{e_{n-k}}\nabla e_k=\Delta'_{e_{k-1}}e_n.
\end{equation}
In particular, $\Theta_{e_{r-1}}\nabla e_1=e_r$ for $r\geq1$.
We also use the commutation relation
\begin{equation}\label{eq:theta-h-perp}
    h_j^\perp\Theta_{e_k}
    =
    \sum_{i=0}^{j}
    \Theta_{e_{k-j+i}}\Delta_{e_{j-i}}h_i^\perp
    \qquad(j,k\geq0),
\end{equation}
proved in \cite[Theorem~4.2]{DAdderioRomero2023}.
Here and below, $e_r=h_r=0$ and $\Theta_{e_r}=0$ for $r<0$,
while $e_0=h_0=1$ and $\Theta_{e_0}=\mathrm{id}$.
The coproduct rule for the Hall adjoints gives
\begin{equation}\label{eq:h-perp-product}
    h_j^\perp(fg)
    =
    \sum_{i=0}^{j}(h_i^\perp f)(h_{j-i}^\perp g)\quad(j\ge 0).
\end{equation}
In particular,
\[
    h_i^\perp h_m=h_{m-i}\quad(i\ge 0),
    \qquad
    h_i^\perp e_r=
    \begin{cases}
        e_r,&i=0,\\
        e_{r-1},&i=1,\\
        0,&i\geq2,
    \end{cases}
    \qquad(m\geq0,\ r\geq1).
\]
These standard skewing identities follow from the coproduct and
Pieri rules; see \cite[Chapter~7]{MR4621625}.

Write $[0]_q=0$ and $[j]_q=1+q+\cdots+q^{j-1}$ for $j\geq1$.
For $k\geq1$, define
\[
    c_{k,j}
    =
    (-1)^{k-j}q^{\binom{j+1}{2}-kj}
    \begin{bmatrix}k\\j\end{bmatrix}_q
    \qquad(0\leq j\leq k),
\]
and set $c_{k,j}=0$ otherwise. Here
\[
    \begin{bmatrix}k\\j\end{bmatrix}_q
    =
    \prod_{i=1}^{j}\frac{1-q^{k-j+i}}{1-q^i}
    \qquad(0\leq j\leq k)
\]
is the Gaussian binomial coefficient.
We shall use the finite plethystic expansion
\begin{equation}\label{eq:delta-prime-finite}
    \left.(\Delta'_{e_{k-1}}e_n)\right|_{t=0}
    =
    \sum_{j=0}^{k}c_{k,j}e_n[\xxx[j]_q]
    \qquad(n,k\geq1).
\end{equation}
For $n\geq k$, this follows from \eqref{eq:theta-delta},
the standard specialization
$\left.(\nabla e_k)\right|_{t=0}=\widetilde H_{(k)}$,
and \cite[Corollary~7.3]{DAdderioRomero2023}.
For $n<k$, the left-hand side vanishes, and the right-hand side also vanishes.
Indeed, for every $\lambda\vdash n$,
\[
    p_\lambda[\xxx[j]_q]
    =
    p_\lambda(\xxx)
    \prod_{r=1}^{\ell(\lambda)}
    \frac{1-(q^j)^{\lambda_r}}{1-q^{\lambda_r}}.
\]
This is a polynomial in $(\sym\otimes_\QQ\QQ(q))[q^j]$ of degree at most
$|\lambda|=n$, whose denominators are independent of $j$.
Expanding $e_n$ in the power-sum basis, we may therefore write
\[
    e_n[\xxx[j]_q]
    =
    \sum_{d=0}^{n}g_d(\xxx;q)q^{dj},
\]
where $g_d(\xxx;q)\in\sym_n\otimes_\QQ\QQ(q)$
is independent of $j$.
By the definition of $c_{k,j}$ and the finite
$q$-binomial theorem \cite[(1.10)]{DAdderioRomero2023}, for $0\le d\le n$ we have
\begin{align}\label{eq:sum-vanish}
\begin{aligned}
    \sum_{j=0}^{k}c_{k,j}q^{dj}
    &=
    (-1)^k
    \sum_{j=0}^{k}
    (-1)^j q^{\binom{j}{2}}
    \begin{bmatrix}k\\j\end{bmatrix}_q
    \bigl(q^{d+1-k}\bigr)^j\\
    &=
    (-1)^k\prod_{i=0}^{k-1}
    (1-q^{d+1-k+i})=0
\end{aligned}
\end{align}
where the last equality follows from that for $0\leq d\leq n<k$ the factor indexed by
$i=k-1-d$ is $1-q^0=0$.
Consequently,
\[
    \sum_{j=0}^{k}c_{k,j}e_n[\xxx[j]_q]
    =
    \sum_{d=0}^{n}g_d(\xxx;q)
    \sum_{j=0}^{k}c_{k,j}q^{dj}
    =0.
\]

The following convolution identity will be useful.

\begin{lemma}\label{lem:delta-t-zero-convolution}
For integers $n,k\geq1$,
\[
    \left.
    \left(\sum_{m=0}^{n-1}(-1)^m h_m
    \Delta_{e_k}e_{n-m}\right)
    \right|_{t=0}
    =
    q^{n-k}
    \left.(\Delta'_{e_{k-1}}e_n)\right|_{t=0}.
\]
\end{lemma}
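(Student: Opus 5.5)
The plan is to reduce both sides to the finite plethystic expansion \eqref{eq:delta-prime-finite} and then compare coefficients, using a $q$-Pascal identity for the numbers $c_{k,j}$.

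\emph{Step 1: expand each summand.} On symmetric functions of positive degree, $\Delta_{e_k}=\Delta'_{e_k}+\Delta'_{e_{k-1}}$. Applying \eqref{eq:delta-prime-finite} twice, with $k+1$ and with $k$ in place of $k$ (valid since $n-m\ge 1$ and $k\ge1$), gives
\[
\left.(\Delta_{e_k}e_r)\right|_{t=0}=\sum_{j=0}^{k+1}d_j\,e_r[\xxx[j]_q],\qquad d_j\coloneqq c_{k+1,j}+c_{k,j},
\]
for every $r\ge1$.

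\emph{Step 2: complete the sum over $m$.} I would add the missing $m=n$ term $(-1)^nh_n\sum_j d_j e_0[\xxx[j]_q]=(-1)^nh_n\sum_j d_j$. This term is zero because $\sum_j c_{k+1,j}=0$ and $\sum_j c_{k,j}=0$, which is \eqref{eq:sum-vanish} with $d=0$. Interchanging the sums and using the addition formula together with $e_r[-\xxx]=(-1)^rh_r$, the left-hand side becomes
\[
\sum_{j}d_j\sum_{m=0}^{n}h_m[-\xxx]\,e_{n-m}[\xxx[j]_q]=\sum_j d_j\,e_n\bigl[\xxx[j]_q-\xxx\bigr].
\]
For $j\ge1$ we have $\xxx[j]_q-\xxx=q\,\xxx[j-1]_q$, so this summand equals $q^n e_n[\xxx[j-1]_q]$. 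For $j=0$ the coefficient is $d_0=(-1)^{k+1}+(-1)^k=0$, so that term drops out.

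\emph{Step 3: compare coefficients.} It remains to show $d_{i+1}=q^{-k}c_{k,i}$ for $0\le i\le k$. Once this holds, the left-hand side is $q^{n-k}\sum_{i=0}^k c_{k,i}\,e_n[\xxx[i]_q]$, which is $q^{n-k}(\Delta'_{e_{k-1}}e_n)|_{t=0}$ by \eqref{eq:delta-prime-finite}.

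To prove the coefficient identity, I would use the $q$-Pascal rule
\[
\begin{bmatrix}k+1\\i+1\end{bmatrix}_q=\begin{bmatrix}k\\i\end{bmatrix}_q+q^{i+1}\begin{bmatrix}k\\i+1\end{bmatrix}_q .
\]
With this, the $\begin{bmatrix}k\\i+1\end{bmatrix}_q$-part of $c_{k+1,i+1}$ cancels exactly against $c_{k,i+1}$: the signs are opposite and the exponents agree, since $\binom{i+2}{2}-(k+1)(i+1)+(i+1)=\binom{i+2}{2}-k(i+1)$. What remains is
\[
(-1)^{k-i}q^{\binom{i+2}{2}-(k+1)(i+1)}\begin{bmatrix}k\\i\end{bmatrix}_q .
\]
Its exponent differs from that of $c_{k,i}$ by $(i+1)-(k+i+1)=-k$, which gives $d_{i+1}=q^{-k}c_{k,i}$. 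At the edge $i=k$ the second term vanishes because $c_{k,k+1}=0$, and the identity still holds.

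The main point needing care is Step 2. One must check that adding the $m=n$ term is legitimate, i.e.\ that the $j$-sum annihilates constants. One must also check that the $j=0$ plethysm $e_n[-\xxx]$ is harmless, which follows from $d_0=0$. The rest is the routine bookkeeping of $q$-exponents in Step 3.
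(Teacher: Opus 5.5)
Your proof is correct and follows the paper's argument. It uses the same $q$-Pascal identity $c_{k,j}+c_{k+1,j}=q^{-k}c_{k,j-1}$, the same appended vanishing $m=n$ term justified by $\sum_j c_{k,j}=0$, and the same addition-formula step with $[j]_q-1=q[j-1]_q$; the paper simply applies the coefficient identity before the addition formula rather than after. One small slip: the inner factor should be $e_m[-\xxx]=(-1)^mh_m$, not $h_m[-\xxx]$ (which equals $(-1)^me_m$), although your resulting expression $e_n[\xxx[j]_q-\xxx]$ is exactly what the correct factor gives.
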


\begin{proof}
The Gaussian binomial recurrence $\begin{bmatrix}k+1\\j\end{bmatrix}_q = q^j\begin{bmatrix}k\\j\end{bmatrix}_q + \begin{bmatrix}k\\j-1\end{bmatrix}_q$ gives
\[
    c_{k,j}+c_{k+1,j}=q^{-k}c_{k,j-1}.
\]
Thus \eqref{eq:delta-prime-finite} and
$\Delta_{e_k}=\Delta'_{e_k}+\Delta'_{e_{k-1}}$ on $\sym_r\otimes_\QQ\QQ(q,t)$ with $k,r\ge 1$ yield
\begin{equation}\label{eq:delta-finite}
    \left.(\Delta_{e_k}e_r)\right|_{t=0}
    =
    q^{-k}\sum_{j=0}^{k}c_{k,j}
    e_r[\xxx[j+1]_q]
    \qquad(k,r\geq1).
\end{equation}
This also holds for $r=0,k\ge 1$: $\Delta_{e_k}1=0$ and
$\sum_{j=0}^{k}c_{k,j}=0$ by \eqref{eq:sum-vanish}.
We may therefore append the zero term $m=n$ to the sum
in the statement of the current lemma. Since
$e_m[-\xxx]=(-1)^m h_m[\xxx]$ and
$[j+1]_q-1=q[j]_q$, the addition formula for elementary
symmetric functions gives
\begin{align}\label{eq:he-e}
    \sum_{m=0}^{n}(-1)^m h_m[\xxx]
    e_{n-m}[\xxx[j+1]_q]
    = \sum_{m=0}^{n}e_m[-\xxx]
    e_{n-m}[\xxx[j+1]_q] =
    e_n[\xxx([j+1]_q-1)]
    =
    q^n e_n[\xxx[j]_q].
\end{align}
Substituting \eqref{eq:delta-finite} and then applying \eqref{eq:he-e} and \eqref{eq:delta-prime-finite} proves the lemma, namely
\begin{align*}
    &\left.
    \left(\sum_{m=0}^{n-1}(-1)^m h_m
    \Delta_{e_k}e_{n-m}\right)
    \right|_{t=0} = \left.
    \left(\sum_{m=0}^{n}(-1)^m h_m
    \Delta_{e_k}e_{n-m}\right)
    \right|_{t=0} \\
    =&\sum_{m=0}^{n}(-1)^m h_m
    q^{-k}\sum_{j=0}^{k}c_{k,j}
    e_{n-m}[\xxx[j+1]_q] = q^{-k}\sum_{j=0}^k c_{k,j} \sum_{m=0}^n (-1)^m h_m e_{n-m}[\xxx[j+1]_q] \\
    =&q^{-k}\sum_{j=0}^k c_{k,j}q^n e_n[\xxx[j]_q] = q^{n-k}\left.(\Delta'_{e_{k-1}}e_n)\right|_{t=0}.
\end{align*}
\end{proof}

For $1\leq k\leq n$, the \emph{generalized coinvariant algebra}
of Haglund, Rhoades, and Shimozono is
\[
    I_{n,k}
    =
    \bigl\langle
        x_1^k,\ldots,x_n^k,\,
        e_n([n]),\ldots,
        e_{n-k+1}([n])
    \bigr\rangle,
    \qquad
    R_{n,k}=\QQ[\xxx_n]/I_{n,k};
\]
see \cite[Definition~1.1]{HaglundRhoadesShimozono2018}.
The grading is induced by $\deg x_i=1$, and $\symm_n$ acts
by permuting the variables. The generators $x_i^k$ make the
quotient finite-dimensional, while the homogeneity and
$\symm_n$-stability of $I_{n,k}$ make $R_{n,k}$ a graded
$\symm_n$-module.
The graded Frobenius formula of
\cite[Theorem~6.11]{HaglundRhoadesShimozono2018} gives
\begin{equation}\label{eq:generalized-coinvariant-delta}
    \grFrob(R_{n,k};q)
    =
    \rev_q\left(
        \omega\left(
            \left.(\Delta'_{e_{k-1}}e_n)\right|_{t=0}
        \right)
    \right)
    \qquad(1\leq k\leq n).
\end{equation}

We record the $q$-degrees needed for coefficient reversal.
For $1\leq a\leq b$, the generalized Artin basis of $R_{b,a}$
\cite[Theorem~4.13]{HaglundRhoadesShimozono2018}, together with
\eqref{eq:generalized-coinvariant-delta}, gives
\[
\deg_q\left(\left.(\Delta'_{e_{a-1}}e_b)\right|_{t=0}\right)
=(a-1)b-\binom{a}{2}.
\]
This specialization vanishes for $a>b$.
For all $a,b\geq1$, the recursion in \cite[Proposition~3.2]{MR4766722}
shows by induction that the maximum $q$-exponent in the
nonnegative fundamental expansion of
\cite[Proposition~5.3]{MR4766722} is
$\binom{n}{2}-\binom{k+1}{2}-\binom{\ell+1}{2}$,
which equals $(a-1)(b-1)$ upon setting
$n=a+b-1$, $k=a-1$, and $\ell=b-1$. It follows that
\[
\deg_q\left(
\left.(\Theta_{e_{a-1}}\Theta_{e_{b-1}}\nabla e_1)\right|_{t=0}
\right)=(a-1)(b-1).
\]
Here $\deg_q$ denotes the largest degree among the Schur
coefficients, which are polynomials in $q$ in these cases.
Consequently,
\[
\begin{aligned}
\rev_q\left(\left.(\Delta'_{e_{a-1}}e_b)\right|_{t=0}\right)
&=
q^{(a-1)b-\binom{a}{2}}\left(
\left.(\Delta'_{e_{a-1}}e_b)\right|_{q=q^{-1},\,t=0}\right)
&& (1\leq a\leq b),\\
\rev_q\left(
\left.(\Theta_{e_{a-1}}\Theta_{e_{b-1}}\nabla e_1)\right|_{t=0}
\right)
&=
q^{(a-1)(b-1)}\left(
\left.(\Theta_{e_{a-1}}\Theta_{e_{b-1}}\nabla e_1)
\right|_{q=q^{-1},\,t=0}\right)
&& (a,b\geq1).
\end{aligned}
\]
Here $q=q^{-1}$ means replacing $q$ with $q^{-1}$.
These formulas remain valid after applying $\omega$.

\subsection{Eulerian quasisymmetric functions}\label{subsec:eulerian}

For $\sigma=\sigma_1\cdots\sigma_n\in\mathfrak S_n$, define
\[
\begin{aligned}
&\operatorname{EXC}(\sigma)=\{i\in[n]:\sigma_i>i\},
\qquad
\operatorname{exc}(\sigma)=|\operatorname{EXC}(\sigma)|,
\\
&\operatorname{fix}(\sigma)=|\{i\in[n]:\sigma_i=i\}|,
\qquad
\operatorname{des}(\sigma)=|\{i\in[n-1]:\sigma_i>\sigma_{i+1}\}|.
\end{aligned}
\]
For $n\geq1$ and $0\leq k\leq n-1$, the \emph{Eulerian number}
$A_{n,k}$ counts permutations of $[n]$ with exactly $k$
excedances, or equivalently, exactly $k$ descents:
\[
    A_{n,k}
    \coloneqq
    \#\{w\in\symm_n:\operatorname{exc}(w)=k\}
    =
    \#\{w\in\symm_n:\operatorname{des}(w)=k\}.
\]
We set $A_{n,k}=0$ for $k<0$ or $k\geq n$. See \cite{FoataSchutzenbergerEulerian1970} for further details.

Let $\overline{\sigma}$ be the word obtained by barring
$\sigma_i$ whenever $i\in\operatorname{EXC}(\sigma)$, and use
the alphabet order
\[
\overline{1}<\cdots<\overline{n}<1<\cdots<n.
\]
Define $\operatorname{DEX}(\sigma)
=\operatorname{DES}(\overline{\sigma})$. This is used to define a quasisymmetric (actually symmetric) function analog $Q_{n,k}$ of $A_{n,k}$. Following \cite[Section~2]{ShareshianWachs2010}, define
\[
Q_{n,k}(\xxx)=
\sum_{\substack{\sigma\in\mathfrak S_n\\
                 \operatorname{exc}(\sigma)=k}}
F_{\operatorname{DEX}(\sigma),n}(\xxx),
\qquad
Q_{n,k,r}(\xxx)=
\sum_{\substack{\sigma\in\mathfrak S_n\\
                 \operatorname{exc}(\sigma)=k\\
                 \operatorname{fix}(\sigma)=r}}
F_{\operatorname{DEX}(\sigma),n}(\xxx).
\]
Thus $Q_{n,k}=\sum_{r=0}^nQ_{n,k,r}$, whereas $Q_{n,k,0}$
is the sum over derangements with $k$ excedances.
Set $Q_{0,0}=Q_{0,0,0}=1$ and interpret empty sums as zero.
These functions are symmetric and $h$-positive
\cite[Theorem~5.1]{ShareshianWachs2010}.

Write $Q_n(\xxx,t)=\sum_{k\geq0}Q_{n,k}(\xxx)t^k$.
The \emph{binomial Eulerian symmetric functions} are defined by
\cite{ShareshianWachs2020}
\begin{equation}\label{eq:tildeQ-Q}
\widetilde Q_n(\xxx,t)
=h_n(\xxx)+t\sum_{m=1}^n h_{n-m}(\xxx)Q_m(\xxx,t),
\qquad
\widetilde Q_{n,k}(\xxx)=[t^k]\widetilde Q_n(\xxx,t).
\end{equation}
In particular,
\[
\widetilde Q_{n,0}=h_n,
\qquad
\widetilde Q_{n,k}
=\sum_{m=1}^n h_{n-m}Q_{m,k-1}
\quad(1\leq k\leq n).
\]

We use the increasing-index convention for $F_{S,n}$ from
Subsection~\ref{subsec:quasisym}, whereas
\cite{ShareshianWachs2010} uses decreasing indices.
Both conventions give the same Eulerian symmetric functions:
after restricting to $x_1,\dots,x_N$, reversing these variables
interchanges $F_{S,n}$ and $F^\downarrow_{S,n}$ and fixes every
symmetric polynomial.
Since the sums defining the Eulerian functions are symmetric,
they agree in both conventions for every $N$, hence also in
infinitely many variables.
The same holds for $\widetilde Q_n$ and its coefficients $\widetilde{Q}_{n,k}$ by
their definitions above.

\subsection{Positroids and affine permutations}
\label{subsec:positroids-affine-perm}

A rank-$k$ \emph{matroid} $M$ on $[n]$ is a nonempty family
$\mathcal{B}(M)\subseteq\binom{[n]}{k}$ of bases satisfying
the basis exchange axiom: for $B,C\in\mathcal{B}(M)$ and
$i\in B\setminus C$, there is $j\in C\setminus B$ such that
$(B\setminus\{i\})\cup\{j\}\in\mathcal{B}(M)$.
An element is a \emph{loop} if it belongs to no basis,
and a \emph{coloop} if it belongs to every basis.

Let $\grass{k}{n}$ be the Grassmannian of $k$-planes in $\CC^n$.
A full-rank matrix $A=[v_1\ \cdots\ v_n]$ representing
$V\in\grass{k}{n}$ defines the matroid
\[
\mathcal{B}_V=
\left\{I\in\binom{[n]}{k}:\det(A_I)\neq 0\right\},
\]
where $A_I$ consists of the columns indexed by $I$
in increasing order.
The \emph{totally nonnegative Grassmannian}
$\grass{k}{n}_{\geq 0}$ consists of real $k$-planes admitting
a representing matrix with all maximal minors nonnegative.
A \emph{positroid} is a matroid represented by such a plane
\cite[Section~3]{Postnikov2006}.

An \emph{affine permutation} of period $n$ is a bijection
$f:\ZZ\to\ZZ$ satisfying $f(i+n)=f(i)+n$.
It is determined by its window $[f(1),\dots,f(n)]$, which are pairwise distinct modulo $n$ and hence $\frac{1}{n}\sum_{i=1}^n(f(i)-i)\in\ZZ$. Define their \emph{average displacement} and \emph{affine length} by
\[
\operatorname{av}(f)=\frac{1}{n}\sum_{i=1}^n(f(i)-i),
\qquad
\ell(f)=
\#\left\{(i,j)\in[n]\times\ZZ:
i<j,\ f(i)>f(j)\right\}.
\]
Equivalently, $\ell(f)$ counts the inversion pairs $\{(i,j):i<j,f(i)>f(j)\}$ modulo $(i,j)\sim(i+n,j+n)$. An alternative formula for the affine length is
\begin{equation}\label{eq:bg-affine-window-length}
\ell(f)
=\sum_{1\leq i<j\leq n}
\left|\left\lfloor\frac{f(j)-f(i)}{n}\right\rfloor\right|,
\end{equation}
see \cite[Proposition 3.1]{MR1392503}. Write $\widetilde{S}_n^d$ for the affine permutations
of average displacement $d$.
Thus $\widetilde{S}_n^0$ is the affine symmetric group,
and $f\mapsto f+d$ is a length-preserving bijection
$\widetilde{S}_n^0\to\widetilde{S}_n^d$.
The \emph{bounded affine permutations} of type $(k,n)$ form
\[
\bound{k}{n}=
\left\{f\in\widetilde{S}_n^k:
i\leq f(i)\leq i+n\text{ for all }i\in\ZZ\right\}.
\]
They are in bijection with rank-$k$ positroids on $[n]$
\cite[Theorem~3.1]{KnutsonLamSpeyer2013}.

Following \cite[Section~1.2]{KnutsonLamSpeyer2013},
extend the columns of $A$ above periodically by $v_{i+n}=v_i$ and set
\[
f_V(i)=\min\left\{j\geq i:
v_i\in\operatorname{span}(v_{i+1},\dots,v_j)\right\},
\]
with the empty span for $j=i$ equal to $\{0\}$.
This is independent of $A$ and lies in $\bound{k}{n}$.
For $f\in\bound{k}{n}$, define the \emph{open positroid stratum}
and its closure, the \emph{positroid variety}, by
\[
\Pi_f^\circ=\{V\in\grass{k}{n}:f_V=f\},
\qquad
\Pi_f=\overline{\Pi_f^\circ},
\]
where the closure is Zariski closure.

To identify this with the cyclic rank definition in
\cite[Corollary~3.12 and Sections~5.1--5.2]
{KnutsonLamSpeyer2013}, put the cyclic interval rank
$r_{ij}(V)=\dim\operatorname{span}(v_i,\dots,v_j)$ for $i\leq j$.
Then
\[
\begin{aligned}
r_{ij}(V)&=\#\{a\in\ZZ:i\leq a\leq j,\ f_V(a)>j\},\\
f_V(i)&=\min\{j\geq i:r_{ij}(V)=r_{i+1,j}(V)\},
\end{aligned}
\]
with $r_{i+1,i}(V)=0$.
The first identity follows by adding columns from right to left and counting the dimension variation.
Hence fixing $f_V$ is equivalent to fixing all cyclic interval ranks.

These strata are nonempty, smooth, irreducible, and locally closed,
and form a stratification
\[
\grass{k}{n}=\bigsqcup_{f\in\bound{k}{n}}\Pi_f^\circ,
\qquad
\codim{\grass{k}{n}}{\Pi_f}=\ell(f)
\]
by \cite[Theorems~5.9--5.10]{KnutsonLamSpeyer2013}.
The matroid $\mathcal{B}_V$ of $V$ need not be constant on $\Pi_f^\circ$, but its sets of loops and coloops are respecively constant.

Finally, the definition of $f_V$ gives
\[
i\text{ is a loop of }\mathcal{B}_V\iff f_V(i)=i,
\qquad
i\text{ is a coloop of }\mathcal{B}_V\iff f_V(i)=i+n.
\]
Indeed, these conditions mean that $v_i=0$ and that deleting
column $i$ lowers the rank, respectively.
Thus the positroids with neither loops nor coloops correspond to
\[
\bound{k}{n}^{\circ}=
\left\{f\in\bound{k}{n}:
i<f(i)<i+n\text{ for all }i\in\ZZ\right\}.
\]
Since nonzero column rescaling preserves all cyclic interval ranks,
each $\Pi_f^\circ$ is invariant under the coordinatewise action
of $(\CC^\times)^n$.

\subsection{Chow groups and Chow rings}\label{subsec:chow}

For a complex algebraic variety $X$, let $CH_i(X)$
denote the group of $i$-dimensional algebraic cycles modulo rational
equivalence, with integer coefficients. Let $CH_i^\QQ(X)=CH_i(X)\otimes_\ZZ\QQ$.
If $X$ is smooth of pure dimension $d$, set
$CH^r(X)=CH_{d-r}(X)$ and $CH^r_\QQ(X)=CH_{d-r}^\QQ(X)$;
the intersection product makes $CH^*(X)$
a graded ring.

For a closed inclusion $i:Z\hookrightarrow X$ with open complement
$j:U\hookrightarrow X$, the \emph{localization sequence}
\[
CH_r(Z)\xrightarrow{i_*}
CH_r(X)\xrightarrow{j^*}
CH_r(U)\longrightarrow0
\]
is exact \cite[Proposition~1.8]{MR1644323}.
For smooth $X$, restriction is therefore a surjective ring
homomorphism whose kernel consists of classes represented by
cycles supported on $Z$.

The \emph{exterior product}
\[
\bigoplus_{a+b=r}
CH_a(X)\otimes_{\mathbb Z}CH_b(Y)
\longrightarrow CH_r(X\times Y)
\]
is induced by $[V]\times[W]=[V\times W]$.
For proper morphisms $f$ and $g$, it satisfies
\[
(f\times g)_*(\alpha\times\beta)=f_*(\alpha)\times g_*(\beta)
\]
\cite[Proposition~1.10]{MR1644323}.

A \emph{cellular decomposition} is a finite filtration by closed
subvarieties whose successive differences are disjoint unions
of affine spaces.
If $Y$ admits such a decomposition, the exterior product is
surjective for every $X$ \cite[Example~1.10.2]{MR1644323}.
In particular, this applies to $Y=\PP^m$.
For smooth cellular $X$, the cycle class map gives
\[
CH^r(X)\cong H^{2r}(X;\mathbb Z),
\qquad H^{2r+1}(X;\mathbb Z)=0
\]
\cite[Example~19.1.11(b)]{MR1644323}.

All the results hold for rational coefficients by applying the functor $\otimes_\ZZ\QQ$.

\subsection{Technical identities and notations}\label{subsec:polynomial-identity}
\begin{definition}\label{def:res-var-set}
    For each non-emtpy subset $S\subseteq[n]$, let $\QQ[\xxx_S]$ denote the polynomial ring over $\QQ$ in the variable set $\xxx_S\coloneqq\{x_i\,:\,i\in S\}$. We similarly define the restricted version of everything in Definition~\ref{def:R_{n,a,b}}:
    \begin{itemize}
        \item $I_{S,a,b}\coloneqq\langle x_i^a,e_b(S\setminus\{i\})\,:\,i\in S\rangle\subseteq\QQ[\xxx_S]$,
        \item $I_{S,\infty,b}\coloneqq\langle e_b(S\setminus\{i\})\,:\,i\in S\rangle\subseteq\QQ[\xxx_S]$,
        \item $R_{S,a,b}\coloneqq\QQ[\xxx_S]/I_{S,a,b}$, and
        \item $R_{S,\infty,b}\coloneqq\QQ[\xxx_S]/I_{S,\infty,b}$.
    \end{itemize}
\end{definition}

We will repeatedly apply the following technical facts to our subsequent algebraic computation.

\begin{lemma}\label{lem:higher-e}
    Given integers $n>b>0,b^\prime\ge b$, for all $i\in[n]$ we have
    \[e_{b^\prime}([n]\setminus\{i\}),e_{b^\prime}([n]),x_i\cdot e_{b^\prime-1}([n]\setminus\{i\})\in I_{n,\infty,b}.\]
\end{lemma}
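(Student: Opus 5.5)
We prove the three memberships together by induction on $b'$, starting at $b'=b$, using only the elementary identity
\[
e_m(S)=e_m(S\setminus\{j\})+x_j\,e_{m-1}(S\setminus\{j\})
\]
for any $S\subseteq[n]$ and $j\in S$.

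\emph{Base case $b'=b$.} Here $e_b([n]\setminus\{i\})$ is a generator of $I_{n,\infty,b}$. Summing these generators over $i$, each $b$-subset $T\subseteq[n]$ occurs once for every $i\notin T$, that is, $n-b$ times. Hence
\[
\sum_{i=1}^n e_b([n]\setminus\{i\})=(n-b)\,e_b([n]).
\]
Since $n>b$, the factor $n-b$ is invertible in $\QQ$, so $e_b([n])\in I_{n,\infty,b}$. Then
\[
x_i\,e_{b-1}([n]\setminus\{i\})=e_b([n])-e_b([n]\setminus\{i\})\in I_{n,\infty,b}.
\]

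\emph{Inductive step.} Assume all three memberships hold for some $b'\ge b$; we derive them for $b'+1$.

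First, $x_i\,e_{b'}([n]\setminus\{i\})$ is $x_i$ times a known element of the ideal, so it lies in $I_{n,\infty,b}$.

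Second, we show $e_{b'+1}([n])$ lies in the ideal. By the known memberships, $x_i\,e_{b'}([n])\in I_{n,\infty,b}$ for each $i$. Summing over $i$ gives $e_1([n])\,e_{b'}([n])\in I_{n,\infty,b}$. The product $e_1e_{b'}$ expands as
\[
e_1e_{b'}=(b'+1)\,e_{b'+1}+m_{(2,1^{b'-1})},
\]
so we must dispose of the monomial symmetric term $m_{(2,1^{b'-1})}$. It equals
\[
m_{(2,1^{b'-1})}=\sum_i x_i^2\,e_{b'-1}([n]\setminus\{i\})=\sum_i x_i\bigl(x_i e_{b'-1}([n]\setminus\{i\})\bigr).
\]
Each summand is $x_i$ times an element already known to be in the ideal, so the whole term lies in $I_{n,\infty,b}$. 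Therefore $(b'+1)\,e_{b'+1}([n])\in I_{n,\infty,b}$, and dividing by $b'+1$ gives $e_{b'+1}([n])\in I_{n,\infty,b}$.

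Third, apply the identity once more:
\[
e_{b'+1}([n]\setminus\{i\})=e_{b'+1}([n])-x_i\,e_{b'}([n]\setminus\{i\}).
\]
Both terms on the right are now in the ideal, so $e_{b'+1}([n]\setminus\{i\})\in I_{n,\infty,b}$.

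This completes the induction. The only step needing care is absorbing the $m_{(2,1^{b'-1})}$ term, and that follows from the $x_i e_{b'-1}$ membership in the inductive hypothesis. The hypothesis $n>b$ is used exactly once, to invert $n-b$ in the base case.
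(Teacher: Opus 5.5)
Your proof is correct and follows the paper's argument: induction on $b'$, the same base case obtained by dividing $\sum_i e_b([n]\setminus\{i\})=(n-b)e_b([n])$ by $n-b$, and the same recovery of $e_{b'+1}([n]\setminus\{i\})$ as $e_{b'+1}([n])-x_i e_{b'}([n]\setminus\{i\})$. The only difference is your detour through $e_1e_{b'}$ and $m_{(2,1^{b'-1})}$ to obtain $e_{b'+1}([n])$; it is valid but unnecessary, since $\sum_i x_i e_{b'}([n]\setminus\{i\})=(b'+1)e_{b'+1}([n])$ holds directly and each summand is already in the ideal by your first step, which is the identity the paper uses.
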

\begin{proof}
    We prove it by induction on $b^\prime$. For $b^\prime=b$, by definition we have $e_{b}([n]\setminus\{i\})\in I_{n,\infty,b}$ for all $i\in[n]$, so
    \[e_b([n])=\frac{\sum_{i=1}^n e_b([n]\setminus\{i\})}{n-b}\in I_{n,\infty,b}.\]
    Furthermore,
    \[x_i\cdot e_{b-1}([n]\setminus\{i\})=e_b([n])-e_b([n]\setminus\{i\})\in I_{n,\infty,b}.\]
    Suppose that Lemma~\ref{lem:higher-e} holds for $b^\prime=k$. We want to show it for $b^\prime=k+1$. First we have $e_{k}([n]\setminus\{i\})\in I_{n,\infty,b}$ according to the induction assumption, so $x_i\cdot e_k([n]\setminus\{i\})\in I_{n,\infty,b}$. If $k\ge n$, then $e_{k+1}([n]\setminus\{i\})=e_{k+1}([n])=0\in I_{n,\infty,b}$. If $k<n$, then $e_{k+1}([n])=\frac{\sum_{i=1}^n x_i\cdot e_{k}([n]\setminus\{i\})}{k+1}\in I_{n,\infty,b}$. To sum up, we always have $e_{k+1}([n])\in I_{n,\infty,b}$. Consequently, $e_{k+1}([n]\setminus\{i\})=e_{k+1}([n])-x_i\cdot e_k([n]\setminus\{i\})\in I_{n,\infty,b}$. Therefore, Lemma~\ref{lem:higher-e} holds for $b^\prime=k+1$, finishing the induction.
\end{proof}

\begin{lemma}\label{lem:elementary}
    For any integer $c\ge 0$ and subset $S\subseteq[n]$ with $i_0\in S$, we have
    \begin{align}\label{eq:elementary}e_c(S\setminus\{i_0\}) = \sum_{k=c+1}^{\lvert S \rvert} \frac{(-1)^{k-c-1}}{x_{i_0}^{k-c}}e_{k}(S)\end{align}
    with the convention that the right hand side equals $0$ if $c\ge\lvert S\rvert$.
\end{lemma}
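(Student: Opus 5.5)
This identity lives in the field of rational functions $\QQ(\xxx_S)$, or equivalently holds after multiplying through by a power of $x_{i_0}$. I will derive it from the generating function of elementary symmetric polynomials. Write $E_S(t)=\sum_{k\ge0}e_k(S)t^k=\prod_{i\in S}(1+x_it)$. Then
\[E_{S\setminus\{i_0\}}(t)=\frac{E_S(t)}{1+x_{i_0}t}.\]
The plan is to expand this quotient so that only higher-degree terms of $E_S$ appear, using the fact that $E_{S\setminus\{i_0\}}$ is a polynomial of degree $\lvert S\rvert-1$.

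The first step is a simple recursion. Comparing coefficients of $t^k$ in $E_S(t)=(1+x_{i_0}t)E_{S\setminus\{i_0\}}(t)$ gives
\[e_k(S)=e_k(S\setminus\{i_0\})+x_{i_0}e_{k-1}(S\setminus\{i_0\})\quad\text{for all } k\ge 0.\]
Solving this for the lower index gives
\[e_{c}(S\setminus\{i_0\})=\frac{e_{c+1}(S)-e_{c+1}(S\setminus\{i_0\})}{x_{i_0}}.\]

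Iterating this upward, by downward induction on $c$ starting from $c\ge\lvert S\rvert-1$, produces the sum
\[\sum_{k=c+1}^{m}(-1)^{k-c-1}x_{i_0}^{-(k-c)}e_k(S)\]
plus the remainder term
\[(-1)^{m-c}x_{i_0}^{-(m-c)}e_m(S\setminus\{i_0\}).\]
Taking $m=\lvert S\rvert$ kills the remainder, since $e_{\lvert S\rvert}(S\setminus\{i_0\})=0$ because $\lvert S\setminus\{i_0\}\rvert=\lvert S\rvert-1$. For $c\ge\lvert S\rvert$ both sides are zero, and for $c=\lvert S\rvert-1$ the formula gives $e_{\lvert S\rvert}(S)/x_{i_0}=\prod_{i\in S\setminus\{i_0\}}x_i$, which is correct. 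These are the base cases.

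A cleaner way to present the same argument is to expand $1/(1+x_{i_0}t)$ in powers of $1/t$ rather than $t$:
\[\frac{1}{1+x_{i_0}t}=\sum_{j\ge1}(-1)^{j-1}(x_{i_0}t)^{-j}.\]
Multiplying by $E_S(t)$ and reading off the coefficient of $t^c$ gives exactly the sum in \eqref{eq:elementary}. This works because the left side is a polynomial, so the Laurent expansion in $t^{-1}$ agrees with it.

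No step is a real obstacle. The only care needed is in the boundary conventions: the case $c\ge\lvert S\rvert$, and the fact that the division by $x_{i_0}$ is formal.
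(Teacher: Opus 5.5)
Your proposal is correct and is essentially the paper's proof. The paper also runs a reversed induction on $c$, with base case $c\ge\lvert S\rvert$ (both sides vanish) and inductive step $e_{c}(S\setminus\{i_0\})=\bigl(e_{c+1}(S)-e_{c+1}(S\setminus\{i_0\})\bigr)/x_{i_0}$; this is exactly your unrolled telescoping, where the remainder vanishes because $e_{\lvert S\rvert}(S\setminus\{i_0\})=0$, and your expansion of $1/(1+x_{i_0}t)$ in powers of $t^{-1}$ is a valid compact repackaging of the same computation.
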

\begin{proof}
    We prove Equation~\eqref{eq:elementary} by reversed induction on $c$. For $c\ge\lvert S\rvert$, both sides of Equation~\eqref{eq:elementary} vanish. Suppose that \eqref{eq:elementary} holds for $c=d$ where $0<d\le\lvert S\rvert$. For $c=d-1$, we have
    \begin{align*}
        e_{d-1}(S\setminus\{i_0\})=\frac{e_d(S)-e_d(S\setminus\{i_0\})}{x_{i_0}}=\frac{e_d(S)-\sum_{k=d+1}^{\lvert S \rvert} \frac{(-1)^{k-d-1}}{x_{i_0}^{k-d}}e_{k}(S)}{x_{i_0}}=\sum_{k=d}^{\lvert S \rvert} \frac{(-1)^{k-d}}{x_{i_0}^{k-d+1}}e_{k}(S)
    \end{align*}
    where the second equality arises from our induction assumption.
\end{proof}
\begin{corollary}\label{cor:elementary-prod-vanish}
    Given integers $c,d\ge 0$ and subset $S\subseteq[n]$ such that $c+d>\lvert S\rvert$, for any distinct integers $i_1,\dots,i_\beta\in[n]\setminus S$ we have
    \begin{align}\label{eq:elementary-prod-vanish}e_c(S)\cdot \prod_{k=1}^{\beta} x_{i_k}^d \in \langle e_{r}(S\sqcup\{i_1,\dots,i_\beta\})\,:\, r\ge c+\beta\rangle\subseteq\QQ[\xxx_n]\end{align}
    with the convention that $\prod_{k=1}^\beta x_{i_k}^d=1$ and $\{i_1,\dots,i_\beta\}=\varnothing$ if $\beta=0$.
\end{corollary}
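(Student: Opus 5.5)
We argue by induction on $\beta$, using Lemma~\ref{lem:elementary} at each step to absorb one of the factors $x_{i_k}^d$ into elementary symmetric polynomials on a larger set. Write $J_\beta\coloneqq\langle e_r(S\sqcup\{i_1,\dots,i_\beta\}) : r\ge c+\beta\rangle$.

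\textbf{Base case $\beta=0$.} The claim reads $e_c(S)\in\langle e_r(S):r\ge c\rangle$, which is trivial: $e_c(S)$ is itself a generator. (The generator may be $0$ if $c>\lvert S\rvert$, which is harmless.)

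\textbf{Key step.} We isolate the following claim.

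\emph{Claim.} For a set $T$, an index $j\notin T$, and integers $r,d$ with $r+d>\lvert T\rvert$, we have
\[
x_j^d\, e_r(T)\in\langle e_s(T\sqcup\{j\}) : s\ge r+1\rangle.
\]

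To prove the Claim, apply Lemma~\ref{lem:elementary} with $S=T\sqcup\{j\}$, $i_0=j$, and $c=r$. This gives
\[
e_r(T)=\sum_{k=r+1}^{\lvert T\rvert+1}(-1)^{k-r-1}x_j^{-(k-r)}\,e_k(T\sqcup\{j\}),
\]
and multiplying by $x_j^d$ yields
\[
x_j^d\, e_r(T)=\sum_{k=r+1}^{\lvert T\rvert+1}(-1)^{k-r-1}x_j^{\,d-(k-r)}\,e_k(T\sqcup\{j\}).
\]
Every exponent here satisfies $d-(k-r)\ge d+r-\lvert T\rvert-1\ge 0$, because $r+d>\lvert T\rvert$. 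So this is a genuine polynomial identity, and each term lies in the claimed ideal. If $r>\lvert T\rvert$, both sides vanish.

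\textbf{Inductive step.} Assume the statement holds for $\beta-1$ with the set $S$ and indices $i_1,\dots,i_{\beta-1}$. It gives
\[
e_c(S)\prod_{k<\beta}x_{i_k}^d=\sum_r g_r\, e_r(S')
\]
with $S'=S\sqcup\{i_1,\dots,i_{\beta-1}\}$, $r\ge c+\beta-1$, and polynomial coefficients $g_r$. Multiply by $x_{i_\beta}^d$ and apply the Claim with $T=S'$ and $j=i_\beta$. The Claim needs $r+d>\lvert S'\rvert=\lvert S\rvert+\beta-1$, and this follows from
\[
r+d\ge c+\beta-1+d>\lvert S\rvert+\beta-1.
\]
Each $x_{i_\beta}^d e_r(S')$ therefore lies in $\langle e_s(S'\sqcup\{i_\beta\}):s\ge r+1\ge c+\beta\rangle$, which is the required ideal $J_\beta$.

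\textbf{Remaining point.} The one subtlety is that the inductive hypothesis only gives ideal membership, so the coefficients $g_r$ may involve $x_{i_\beta}$. This does no harm: multiplication by arbitrary polynomials preserves ideals, and the Claim is applied to each generator $e_r(S')$ separately. The only point needing real care is the nonnegativity of the exponents when clearing the denominators in Lemma~\ref{lem:elementary}. That is exactly where the hypothesis $c+d>\lvert S\rvert$ is used, and the inequality $r\ge c+\beta-1$ carries it through every stage of the induction.
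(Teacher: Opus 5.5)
Your proposal is correct and matches the paper's proof: both induct on $\beta$, apply Lemma~\ref{lem:elementary} to each generator $e_r(S\sqcup\{i_1,\dots,i_{\beta-1}\})$ with $i_0=i_\beta$, and use $r\ge c+\beta-1$ together with $c+d>\lvert S\rvert$ to see that every exponent of $x_{i_\beta}$ is nonnegative after multiplying by $x_{i_\beta}^d$. Isolating the single-step Claim is only a cleaner packaging of the same computation.
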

\begin{proof}
    We prove \eqref{eq:elementary-prod-vanish} by induction on $\beta$. The case $\beta=0$ is clear. Suppose that \eqref{eq:elementary-prod-vanish} holds for $\beta=t_0$. Then, for $\beta=t_0+1$ we have
    \begin{align*}
        e_c(S)\cdot \prod_{k=1}^{t_0+1} x_{i_k}^d = \bigg(e_c(S)\cdot \prod_{k=1}^{t_0} x_{i_k}^d\bigg)\cdot x_{i_{t_0+1}}^d\in \langle e_{r}(S\sqcup\{i_1,\dots,i_{t_0}\})\,:\,r\ge c+t_0\rangle\cdot x_{i_{t_0+1}}^d.
    \end{align*}
    In addition, Lemma~\ref{lem:elementary} yields
    \[e_{r}(S\sqcup\{i_1,\dots,i_{t_0}\}) = \sum_{k=r+1}^{\lvert S \rvert+t_0+1} \frac{(-1)^{k-r-1}}{x_{i_{t_0+1}}^{k-r}}e_{k}(S\sqcup\{i_1,\dots,i_{t_0+1}\}).\]
    It follows that
    \begin{align*}
        e_c(S)\cdot \prod_{k=1}^{t_0+1} x_{i_k}^d &\in\Bigg\langle x_{i_{t_0+1}}^d\cdot\sum_{k=r+1}^{\lvert S \rvert+t_0+1} \frac{(-1)^{k-r-1}}{x_{i_{t_0+1}}^{k-r}}e_{k}(S\sqcup\{i_1,\dots,i_{t_0+1}\})\,:\,r\ge c+t_0\Bigg\rangle \\
        &=\Bigg\langle\sum_{k=r+1}^{\lvert S \rvert+t_0+1} (-1)^{k-r-1}\cdot x_{i_{t_0+1}}^{r+d-k} \cdot e_{k}(S\sqcup\{i_1,\dots,i_{t_0+1}\})\,:\,r\ge c+t_0\Bigg\rangle
    \end{align*}
    where every exponent of $x_{i_{t_0+1}}$ is $r+d-k\ge (c+t_0)+d-(\lvert S\rvert +t_0+1) =c+d-\lvert S\rvert-1\ge 0$. As a result, we have
    \[e_c(S)\cdot \prod_{k=1}^{t_0+1} x_{i_k}^d \in \langle e_k(S\sqcup\{i_1,\dots,i_{t_0+1}\})\,:\,k\ge c+t_0+1\rangle,\]
    so \eqref{eq:elementary-prod-vanish} holds for $\beta=t_0+1$. By induction, the proof is complete.
\end{proof}

The following result in Thibon~\cite[Page 186]{MR1166042} is standard, which gives a formula to the boundary case $\F_{n,\infty,n}=\grFrob(\QQ[\xxx_n];q)$.
\begin{proposition}\label{prop:Thibon-poly}
We have
    \[\grFrob(\QQ[\xxx_n];q)=h_n\left[\frac{\xxx}{1-q}\right].\]
\end{proposition}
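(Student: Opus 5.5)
Since $\QQ[\xxx_n]$ is graded with finite-dimensional homogeneous pieces, the plan is to compute $\grFrob$ degree by degree. Each piece is a permutation representation, and we evaluate its character on each cycle type. In degree $d$, $\QQ[\xxx_n]_d$ has the monomials $x^\alpha$ with $|\alpha|=d$ as a basis, and $\symm_n$ permutes them. Hence the character value of $\sigma$ on this piece is the number of monomials fixed by $\sigma$. A monomial is fixed by $\sigma$ exactly when its exponent vector is constant on each cycle of $\sigma$.

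For $\sigma$ of cycle type $\lambda=(\lambda_1,\dots,\lambda_\ell)$, we choose one exponent $c_r\ge 0$ per cycle, and that cycle then contributes degree $\lambda_r c_r$. The graded character generating function is therefore
\[
\sum_{d\ge0} q^d\chi_{\QQ[\xxx_n]_d}(\sigma)=\prod_{r=1}^{\ell}\frac{1}{1-q^{\lambda_r}}.
\]
Inserting this into the definition of $\Frob$ gives
\[
\grFrob(\QQ[\xxx_n];q)=\sum_{\lambda\vdash n}\frac{p_\lambda}{z_\lambda}\prod_{r}\frac{1}{1-q^{\lambda_r}}.
\]

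It remains to recognize this as a plethysm. By the plethystic rule recalled in Subsection~\ref{subsec:symfunc-symgp}, $p_\lambda[\xxx/(1-q)]=\prod_r p_{\lambda_r}(\xxx)/(1-q^{\lambda_r})$. Combining this with the standard power-sum expansion $h_n=\sum_{\lambda\vdash n}p_\lambda/z_\lambda$, the sum above equals $h_n[\xxx/(1-q)]$. The only point needing care is that the identity is an identity of formal power series in $q$, with coefficients in $\sym_n$, and this holds because each homogeneous piece is finite-dimensional.

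There is no real obstacle here; the only step requiring attention is the fixed-monomial count. An alternative proof uses $\QQ[\xxx_n]\cong \sym^\bullet(V)$ with $V$ the permutation module, together with the plethystic formula for graded symmetric powers. I would present the direct character computation above, since it is self-contained.
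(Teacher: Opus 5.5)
Your argument is correct. The paper does not prove this proposition at all: it quotes it as a standard result of Thibon and uses it only as the boundary value $\F_{n,\infty,n}$.

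Your direct character computation is the standard self-contained proof. It uses only ingredients the paper already recalls in Subsection~\ref{subsec:symfunc-symgp}:
\begin{itemize}
\item the power-sum definition of $\Frob$;
\item the expansion $h_n=\sum_{\lambda\vdash n}z_\lambda^{-1}p_\lambda$;
\item the rule $p_r[\xxx/(1-q)]=p_r/(1-q^r)$, together with multiplicativity of $f\mapsto f[A]$.
\end{itemize}
The fixed-monomial count is right. The direction of the action is irrelevant, since $\sigma$ and $\sigma^{-1}$ have the same cycle type. Reading $1/(1-q^{\lambda_r})\in\QQ(q)$ as a power series in $q$ handles the only formal subtlety.

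An equally elementary alternative is closer in spirit to the multinomial bookkeeping in Lemma~\ref{lem:boundary-case-dim}. Split the monomials into $\symm_n$-orbits according to the multiplicities $c_j$ of each exponent value $j\ge 0$. The span of an orbit is the permutation module induced from the trivial representation of a Young subgroup $\prod_j\symm_{c_j}$, so its Frobenius image is $\prod_j h_{c_j}$. This gives $\grFrob(\QQ[\xxx_n];q)=\sum_{c_0+c_1+\cdots=n}\prod_{j\ge0}q^{jc_j}h_{c_j}$, which equals $h_n[\xxx(1+q+q^2+\cdots)]$ by the addition formula for $h_r[A+B]$. That version avoids power sums and shows $h$-positivity directly. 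Yours is shorter and needs no orbit combinatorics.
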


\section{A recursive formula for $\F_{n,a,b}$ with $a<\infty$}\label{sec:F_{n,a,b}-recurs}

We will show a recursive formula of $\F_{n,a,b}$ by analyzing the graded $\symm_n$-module structure of $I_{n,a-1,b}/I_{n,a,b}$. We begin with constructing a filtration of $I_{n,a-1,b}/I_{n,a,b}$.

\begin{definition}\label{def:fitration-n,a,b}
    For positive integers $n,a,d$, let $J_{n,a,d}\subseteq\mathbb{Q}[\xxx_n]$ be the ideal given by
    \[J_{n,a,d}\coloneqq\Bigg\langle\prod_{i\in T} x_{i}^{a-1}\,:\,T\in\binom{[n]}{d}\Bigg\rangle.\]
    Let $K_{n,a,b,d}\coloneqq I_{n,a,b}+J_{n,a,d}$.
\end{definition}

\begin{lemma}\label{lem:filtration-n,a,b}
    For positive integers $n,a,b$ with $a+b>n$ and $b<n$, we have $J_{n,a,b}\subseteq I_{n,a,b}$, and hence the filtration
    \begin{align}\label{eq:filtration-n,a,b}
        I_{n,a,b}=K_{n,a,b,b}\subseteq K_{n,a,b,b-1}\subseteq\dots\subseteq K_{n,a,b,1}=I_{n,a-1,b}.
    \end{align}
\end{lemma}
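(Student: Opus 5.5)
The statement has two parts: the containment $J_{n,a,b}\subseteq I_{n,a,b}$, and the chain of inclusions \eqref{eq:filtration-n,a,b} together with the identification of its endpoints. The chain itself is formal. Since $\prod_{i\in T}x_i^{a-1}$ with $|T|=d$ divides a product over any $(d+1)$-subset containing $T$, we have $J_{n,a,d+1}\subseteq J_{n,a,d}$, hence $K_{n,a,b,d+1}\subseteq K_{n,a,b,d}$. So the work lies in the containment and in the two endpoint equalities.

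\textbf{Right endpoint.} Here $J_{n,a,1}=\langle x_i^{a-1}\rangle$, so
\[K_{n,a,b,1}=\langle x_i^a,\,x_i^{a-1},\,e_b([n]\setminus\{i\})\rangle=\langle x_i^{a-1},\,e_b([n]\setminus\{i\})\rangle=I_{n,a-1,b}.\]
When $a=1$ the generators $x_i^0=1$ make both sides the unit ideal, so the identity still holds.

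\textbf{Left endpoint.} Once $J_{n,a,b}\subseteq I_{n,a,b}$ is known, it gives $K_{n,a,b,b}=I_{n,a,b}$. It suffices to show that each generator $\prod_{i\in T}x_i^{a-1}$, with $|T|=b$, lies in $I_{n,a,b}$.

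I would apply Corollary~\ref{cor:elementary-prod-vanish} with:
\begin{itemize}
\item $S=\varnothing$ and $c=0$, so $e_0(S)=1$;
\item $d=a-1$ and $\beta=b$, with $\{i_1,\dots,i_\beta\}=T$.
\end{itemize}
The hypothesis $c+d>|S|$ reads $a-1>0$, which requires $a\ge 2$. For $a=1$ the generator is $1$, and $I_{n,1,b}$ already contains $x_i=x_i^1$, but $1$ need not lie in it. So $a=1$ needs separate care.

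Granting $a\ge2$, the corollary gives
\[\prod_{i\in T}x_i^{a-1}\in\langle e_r(T):r\ge b\rangle=\langle e_b(T)\rangle.\]
This only says the product is a multiple of $e_b(T)=\prod_{i\in T}x_i$, which is useless. That choice of $S$ is too small, so I would instead take $S$ larger and use the complement structure.

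\textbf{Better choice of $S$.} Write $U=[n]\setminus T$, so $|U|=n-b\ge1$. Pick $j\in U$ and let $S'=U\setminus\{j\}$. The key generator is
\[e_b([n]\setminus\{j\})=e_b(T\sqcup S').\]
Apply Corollary~\ref{cor:elementary-prod-vanish} with $S=S'$, $c=0$, $d=a-1$, and $\beta=b$ over $T$. The hypothesis becomes $a-1>n-b-1$, i.e. $a+b>n$, which is exactly the assumption. The conclusion is that
\[\prod_{i\in T}x_i^{a-1}\in\langle e_r([n]\setminus\{j\}):r\ge b\rangle.\]

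It remains to place each $e_r([n]\setminus\{j\})$ with $r\ge b$ inside $I_{n,a,b}$. Lemma~\ref{lem:higher-e} (which needs $n>b>0$) gives $e_r([n]\setminus\{j\})\in I_{n,\infty,b}\subseteq I_{n,a,b}$. This proves $J_{n,a,b}\subseteq I_{n,a,b}$.

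This choice also covers $a=1$ when $b=n-1$ fails, or rather the hypothesis $a-1>n-b-1$ with $a=1$ forces $b\ge n$, which is excluded. So $a+b>n$ already forces $a\ge2$ whenever $b<n$, and the $a=1$ worry disappears.

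\textbf{Main obstacle.} The main step is choosing $S=[n]\setminus(T\cup\{j\})$ so that the corollary's hypothesis becomes exactly $a+b>n$. After that, everything follows from Lemma~\ref{lem:higher-e}.
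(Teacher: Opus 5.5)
Your proof is correct and is essentially the paper's argument. The paper likewise fixes $i_0\notin T$, applies Corollary~\ref{cor:elementary-prod-vanish} with $S=[n]\setminus(T\cup\{i_0\})$, $c=0$, $d=a-1$ (the hypothesis there is exactly $a+b>n$), and then uses Lemma~\ref{lem:higher-e} to land in $I_{n,\infty,b}\subseteq I_{n,a,b}$. Your checks of the chain and of the right endpoint $K_{n,a,b,1}=I_{n,a-1,b}$, which the paper leaves implicit, are fine. In a write-up, drop the abandoned $S=\varnothing$ attempt and the garbled remark about $a=1$, and simply note that $a+b>n>b$ forces $a\ge 2$.
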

\begin{proof}
    To show $J_{n,a,b}\subseteq I_{n,a,b}$, it suffices to show that for any distinct integers $i_1,\dots,i_b\in[n]$ we have $\prod_{k=1}^b x_{i_k}^{a-1}\in I_{n,a,b}$. Since $b<n$, we fix $i_0\in[n]\setminus\{i_1,\dots,i_b\}$. Take $S=[n]\setminus\{i_0,\dots,i_b\}$ and $c=0,d=a-1$ in Corollary~\ref{cor:elementary-prod-vanish} (note that $c+d=0+a-1=a-1>(n-b)-1=\lvert S\rvert$, so Corollary~\ref{cor:elementary-prod-vanish} works in this case). Then Corollary~\ref{cor:elementary-prod-vanish} implies $\prod_{k=1}^b x_{i_k}^{a-1}\in \langle e_r([n]\setminus\{i_0\})\,:\,r\ge b\rangle\subseteq I_{n,\infty,b}\subseteq I_{n,a,b}$ where the first containment arises from Lemma~\ref{lem:higher-e}, so we have $J_{n,a,b}\subseteq I_{n,a,b}$. It follows that $K_{n,a,b,b}=I_{n,a,b}+J_{n,a,b}=I_{n,a,b}$ and hence the filtration \eqref{eq:filtration-n,a,b} holds.
\end{proof}

Now we describe the relationship between each successive quotient $K_{n,a,b,m}/K_{n,a,b,m+1}$ of the filtration \eqref{eq:filtration-n,a,b} and $R_{n-m,a-1,b-m}$.

\begin{lemma}\label{lem:n,a,b-isom}
    Given positive integers $n,a,b$ with $a+b>n$ and $b<n$, we have the graded $\symm_n$-module isomorphism
    \[(K_{n,a,b,m}/K_{n,a,b,m+1})(m(a-1))\cong \ind_{\symm_{n-m}\times\symm_{m}}^{\symm_n} (R_{n-m,a-1,b-m}\otimes \mathbf{1}_{\symm_m})\]
    for all $1\le m\le b-1$,
    where $M(r)$ denotes the grading shift of the graded module $M$ given by $M(r)_d\coloneqq M_{r+d}$.
\end{lemma}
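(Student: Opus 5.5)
The plan is to construct an explicit surjection from the induced module onto the successive quotient, and then identify its kernel using a coefficient-extraction retraction.

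Fix $1\le m\le b-1$. For $T\in\binom{[n]}{m}$ write $S=[n]\setminus T$ and $x_T^{a-1}=\prod_{i\in T}x_i^{a-1}$. The induced module $\ind_{\symm_{n-m}\times\symm_m}^{\symm_n}(R_{n-m,a-1,b-m}\otimes\mathbf 1)$ is naturally $\bigoplus_{T\in\binom{[n]}{m}} R_{S,a-1,b-m}$, with $\symm_n$ permuting the summands and the variables. I would define the degree-$m(a-1)$ map
\[
\Phi:\bigoplus_{T} \QQ[\xxx_S]\longrightarrow K_{n,a,b,m}/K_{n,a,b,m+1},\qquad f\in\QQ[\xxx_S]\mapsto x_T^{a-1}f .
\]
It is visibly $\symm_n$-equivariant.

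\emph{Surjectivity.} The quotient is spanned by classes of $x_T^{a-1}\cdot(\text{monomial})$. If the monomial contains some $x_i$ with $i\in T$, then the product is divisible by $x_i^a\in I_{n,a,b}$ and so vanishes. Hence the monomial may be taken in $\xxx_S$, and $\Phi$ is onto. It then remains to show $\ker\Phi=\bigoplus_T I_{S,a-1,b-m}$.

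\emph{The kernel contains $I_{S,a-1,b-m}$.} For $j\in S$, the element $x_T^{a-1}x_j^{a-1}$ lies in $J_{n,a,m+1}\subseteq K_{n,a,b,m+1}$. The harder generators are $x_T^{a-1}e_{b-m}(S\setminus\{j\})$. Here I would use the expansion
\[
e_b([n]\setminus\{j\})=\sum_{U\subseteq T}x_U\, e_{b-|U|}(S\setminus\{j\}).
\]
Multiplying by a suitable power of $x_T$ and inducting on $|U|$ (downward in the degree $b-|U|$) should isolate $x_T^{a-1}e_{b-m}(S\setminus\{j\})$ modulo $I_{n,a,b}$. The terms with $U\subsetneq T$ would be handled by Corollary~\ref{cor:elementary-prod-vanish} together with Lemma~\ref{lem:higher-e}: the hypothesis $a+b>n$ is precisely what makes $c+d>|S|$ in that corollary, so those error terms land in ideals of higher elementary symmetric polynomials. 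The pieces that do not vanish outright would be absorbed into $J_{n,a,m+1}$ (a variable of $S$ raised to the power $a-1$) or into $x_i^a$ for $i\in T$. I expect this bookkeeping to be the main obstacle. I would first try the case $m=1$ and then induct on $m$ by peeling off one index of $T$ at a time.

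\emph{The kernel is contained in $I_{S,a-1,b-m}$.} I would use the $\QQ[\xxx_S]$-linear coefficient extraction $\psi_T:\QQ[\xxx_n]\to\QQ[\xxx_S]$, which takes the coefficient of $x_T^{a-1}$ when a polynomial is viewed in the $T$-variables. This map satisfies $\psi_T(x_T^{a-1}f)=f$ for $f\in\QQ[\xxx_S]$. If $x_T^{a-1}f\in K_{n,a,b,m+1}$, it therefore suffices to show $\psi_T(K_{n,a,b,m+1})\subseteq I_{S,a-1,b-m}$. I would check this on generators times monomials:
\begin{itemize}
\item For $x_i^a g$: if $i\in T$ the extraction gives $0$, and if $i\in S$ it gives a multiple of $x_i^a$.
\item For $x_U^{a-1}g$ with $|U|=m+1$: some $j\in U\cap S$ exists, so the extraction is a multiple of $x_j^{a-1}$.
\item For $e_b([n]\setminus\{i\})g$: the expansion above shows the extraction is a combination of $e_c(S\setminus\{i\})$ or $e_c(S)$ with $c\ge b-m$. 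These lie in $I_{S,\infty,b-m}$ by Lemma~\ref{lem:higher-e}, applied to $S$ with $|S|=n-m>b-m>0$.
\end{itemize}
Summing over $T$ then gives the claimed isomorphism after the grading shift by $m(a-1)$.
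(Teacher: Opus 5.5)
Your map $\Phi$ and your surjectivity argument are the paper's, but your injectivity argument takes a genuinely different and simpler route.

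\textbf{How the paper does injectivity.} It first reduces each $f_S$ to exponents at most $a-2$. It then applies the truncation $\pi_a$ onto monomials with all exponents at most $a-1$, using Corollary~\ref{cor:elementary-prod-vanish} to keep the truncated multiples of $e_b([n]\setminus\{i\})$ inside $I_{n,\infty,b}$. This places the kernel element in $I_{n,\infty,b}+J_{n,a,m+1}$. Finally it applies the ring map $\mathrm{ev}_{S_0}$ (setting $x_i=1$ for $i\notin S_0$) and extracts the degree-$D$ component to isolate $f_{S_0}$.

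\textbf{How your $\psi_T$ does it.} Your coefficient extraction does all of this at once. Because $\psi_T$ is only $\QQ[\xxx_S]$-linear, you must test it on arbitrary monomial multiples of generators, and your three checks do this correctly. For instance, for $g\cdot e_b([n]\setminus\{i\})$ with $g=g_Tg_S$ a monomial, at most one $U$ in the expansion has $x_Ug_T=x_T^{a-1}$. What remains is $g_S$ times $e_c(S\setminus\{i\})$ or $e_c(S)$ with $c\ge b-m$, and this lies in $I_{S,\infty,b-m}$ by Lemma~\ref{lem:higher-e}, since $n-m>b-m>0$.

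\textbf{What your route buys.} You need no exponent reduction and no degree argument. Your injectivity step uses neither Corollary~\ref{cor:elementary-prod-vanish} nor the hypothesis $a+b>n$, so that hypothesis is visibly needed only for well-definedness. In effect, $P\mapsto(\overline{\psi_T(P)})_T$ is a left inverse of $\Phi$ on $\QQ[\xxx_n]/K_{n,a,b,m+1}$, once the first point below is handled.

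\textbf{Two points to tighten.}

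First, a kernel element is a sum $\sum_{T'}x_{T'}^{a-1}f_{T'}$, not a single term. Applying $\psi_T$ therefore also produces the cross terms $\psi_T(x_{T'}^{a-1}f_{T'})$ for $T'\neq T$. These lie in $I_{S,a-1,b-m}$: the factor $x_{T'\setminus T}^{a-1}\in\QQ[\xxx_S]$ pulls out by linearity, and $T'\setminus T\subseteq S$ is nonempty. Alternatively, reduce all $f_{T'}$ to exponents at most $a-2$ first, as the paper does, and the cross terms vanish.

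Second, your well-definedness step is left as unfinished bookkeeping, but no isolation via the expansion is needed. Corollary~\ref{cor:elementary-prod-vanish} applies directly to the target term, with the set $S\setminus\{j\}$, $c=b-m$, $d=a-1$, and the $m$ indices of $T$. The hypothesis holds because $c+d=a+b-m-1>n-m-1=|S\setminus\{j\}|$. This gives $x_T^{a-1}e_{b-m}(S\setminus\{j\})\in\langle e_r([n]\setminus\{j\}):r\ge b\rangle\subseteq I_{n,\infty,b}$ by Lemma~\ref{lem:higher-e}. That is exactly the paper's two-line argument, and nothing has to be absorbed into $J_{n,a,m+1}$ or into the $x_i^a$.
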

\begin{proof}
    Note that
    \[\ind_{\symm_{n-m}\times\symm_{m}}^{\symm_n} (R_{n-m,a-1,b-m}\otimes \mathbf{1}_{\symm_m}) \cong \bigoplus_{S\in\binom{[n]}{n-m}} R_{S,a-1,b-m}\]
    as a graded $\symm_n$-module. It suffices to show that the graded $\symm_n$-module homomorphism
    \begin{align*}\Phi_{n,a,b,m}\,:\,\bigoplus_{S\in\binom{[n]}{n-m}} R_{S,a-1,b-m} &\overset{\cong}{\longrightarrow} (K_{n,a,b,m}/K_{n,a,b,m+1})(m(a-1)) \\
    (\overline{f_S})_{S\in\binom{[n]}{n-m}} &\mapsto \overline{\sum_{S\in\binom{[n]}{n-m}} f_S\cdot \prod_{i\in[n]\setminus S} x_i^{a-1}} \end{align*}
    is an isomorphism, where $\bar{f}$ denotes the coset of $f$ in the corresponding quotient.

    \textbf{The well-definedness of $\Phi_{n,a,b,m}$}: For any $S\in\binom{[n]}{n-m}$, we show that $I_{S,a-1,b-m}\cdot \prod_{j\in[n]\setminus S} x_j^{a-1} \subseteq K_{n,a,b,m+1}$. Since $I_{S,a-1,b-m}=\langle x_i^{a-1},e_{b-m}(S\setminus\{i\})\,:\,i\in S\rangle$ and the clear fact that $x_i^{a-1}\cdot \prod_{j\in[n]\setminus S} x_j^{a-1}\in J_{n,a,m+1}\subseteq K_{n,a,b,m+1}$, it remains to show that
    \[e_{b-m}(S\setminus\{i\})\cdot\prod_{j\in[n]\setminus S} x_j^{a-1}\in K_{n,a,b,m+1}.\]
    In fact, since $(b-m)+(a-1)=a+b-m-1>n-m-1=\lvert S\setminus\{i\}\rvert$, Corollary~\ref{cor:elementary-prod-vanish} yields
    \[e_{b-m}(S\setminus\{i\})\cdot\prod_{j\in[n]\setminus S} x_j^{a-1}\in \langle e_r([n]\setminus\{i\})\,:\,r\ge b\rangle,\]
    which, together with Lemma~\ref{lem:higher-e}, implies
    \[e_{b-m}(S\setminus\{i\})\cdot\prod_{j\in[n]\setminus S} x_j^{a-1} \in I_{n,\infty,b}\subseteq I_{n,a,b}\subseteq K_{n,a,b,m+1}.\]
    Therefore, $\Phi_{n,a,b,m}$ is well defined.

    \textbf{The surjectivity of $\Phi_{n,a,b,m}$}: By construction, the $\QQ$-vector space $K_{n,a,b,m}/K_{n,a,b,m+1}$ is spanned by the cosets of the form $\overline{g\cdot\prod_{i \in T} x_i^{a-1}}$ where $T\in\binom{[n]}{m}$ and $g$ is a monomial. If there exists $i_0\in T$ such that $x_{i_0}\mid g$, then $g\cdot\prod_{i \in T} x_i^{a-1}$ is divisible by $x_{i_0}^a\in I_{n,a,b}$ and thus $\overline{g\cdot\prod_{i \in T} x_i^{a-1}}=0$. Otherwise, we have $g\in\QQ[\xxx_{[n]\setminus T}]$, revealing that $\overline{g\cdot\prod_{i \in T} x_i^{a-1}}=\Phi_{n,a,b,m}(\bar{g})$ where $\bar{g}\in R_{[n]\setminus T,a-1,b-m}$. Consequently, the map $\Phi_{n,a,b,m}$ is surjective.

    \textbf{The injectivity of $\Phi_{n,a,b,m}$}: Given $(\overline{f_S})_{S\in\binom{[n]}{n-m}}\in\ker(\Phi_{n,a,b,m})$, it suffices to show that for all $S\in\binom{[n]}{n-m}$ we have $\overline{f_S}=0$. Since $\Phi_{n,a,b,m}$ is a graded map, we may assume that all the $f_S$ are homogeneous of the same degree $D$. Since $x_i^{a-1}\in I_{S,a-1,b-m}$ for every $i\in S$, we may assume that every exponent of each variable in $f_S$ is at most $a-2$.

    \noindent\textbf{Claim}: $$\sum_{S\in\binom{[n]}{n-m}} f_S\cdot\prod_{i\in[n]\setminus S}x_i^{a-1}\in I_{n,\infty,b}+J_{n,a,m+1}$$
    In fact, $\Phi_{n,a,b,m}\bigg((\overline{f_S})_{S    \in\binom{[n]}{n-m}}\bigg)=0$ implies that
    \[\sum_{S\in\binom{[n]}{n-m}} f_S\cdot\prod_{i\in[n]\setminus S}x_i^{a-1}\in K_{n,a,b,m+1}=I_{n,a,b}+J_{n,a,m+1},\]
    so we write
    \begin{align}\label{eq:n,a,b-ideal-expan}\sum_{S\in\binom{[n]}{n-m}} f_S\cdot\prod_{i\in[n]\setminus S}x_i^{a-1} = \sum_{i=1}^n g_i\cdot x_i^a + \sum_{i=1}^n r_i\cdot e_b([n]\setminus\{i\}) + \sum_{T\in\binom{[n]}{m+1}} \gamma_T\cdot\prod_{i\in T}x_i^{a-1} \end{align}
    where $g_i,r_i,\gamma_T\in\QQ[\xxx_n]$ are all homogeneous and $\deg(g_i)=D+m(a-1)-a,\deg(r_i)=D+m(a-1)-b,\deg(\gamma_T)=D+m(a-1)-(m+1)(a-1)=D-a+1$. As any term divisible by some $x_i^a$ can be absorbed into the first summation $\sum_{i=1}^n g_i\cdot x_i^a$, we may assume that:
    \begin{itemize}
        \item every exponent of each variable in both $r_i$ and $\gamma_T$ is at most $a-1$, and
        \item $\gamma_T\in\QQ[\xxx_{[n]\setminus T}]$.
    \end{itemize}
    Consider the direct sum decomposition
    \[\QQ[\xxx_n]=V_a\oplus I_a\]
    where $V_a\coloneqq\spa_\QQ\{\prod_{i=1}^nx_i^{d_i}\,:\,0\le d_i\le a-1\}$ and $I_a\coloneqq\langle x_i^a\,:\, i\in[n]\rangle$, which yields a linear projection $\pi_a\,:\,\QQ[\xxx_n]\twoheadrightarrow V_a$. For any monomial $\prod_{i=1}^n x_i^{d_i}\in V_a$ (i.e., $d_i\le a-1$) and $i_0\in[n]$, we write $A=\{i\in[n]\setminus\{i_0\}\,:\,d_i=a-1\}$. Then we have
    \begin{align*}&\pi_a\Bigg(\bigg(\prod_{i=1}^n x_i^{d_i} \bigg) \cdot e_b([n]\setminus\{i_0\})\Bigg)=\bigg(\prod_{i=1}^n x_i^{d_i} \bigg)\cdot e_b([n]\setminus(\{i_0\}\cup A)) \\
    &= \bigg( \prod_{i\in[n]\setminus A}x_{i}^{d_i} \bigg)\cdot\bigg( \prod_{i\in A}x_i^{a-1} \bigg)\cdot e_b([n]\setminus(\{i_0\}\cup A)) \\
    &\in \bigg( \prod_{i\in[n]\setminus A}x_{i}^{d_i} \bigg)\cdot \langle e_r([n]\setminus\{i_0\}) \,:\, r\ge b+\lvert A \rvert \rangle\subseteq \bigg( \prod_{i\in[n]\setminus A}x_{i}^{d_i} \bigg)\cdot I_{n,\infty,b}\subseteq I_{n,\infty,b}
    \end{align*}
    where the belonging ``$\in$'' arises from Corollary~\ref{cor:elementary-prod-vanish} and the first containment ``$\subseteq$'' arises from Lemma~\ref{lem:higher-e}. In addition, note that every monomial appearing in $r_i$ belongs to $V_a$. Therefore, $\pi_a(r_i\cdot e_b([n]\setminus\{i\}))\in I_{n,\infty,b}$, so applying $\pi_a$ to both sides of Equation~\eqref{eq:n,a,b-ideal-expan} results in
    \begin{align*}
        &\sum_{S\in\binom{[n]}{n-m}} f_S\cdot\prod_{i\in[n]\setminus S}x_i^{a-1} = \pi_a\Bigg( \sum_{S\in\binom{[n]}{n-m}} f_S\cdot\prod_{i\in[n]\setminus S}x_i^{a-1} \Bigg) \\ &= \sum_{i=1}^n \pi_a(g_i\cdot x_i^a) + \sum_{i=1}^n \pi_a(r_i\cdot e_b([n]\setminus\{i\})) + \sum_{T\in\binom{[n]}{m+1}} \pi_a\bigg(\gamma_T\cdot\prod_{i\in T}x_i^{a-1}\bigg) \\
        &= \sum_{i=1}^n \pi_a(r_i\cdot e_b([n]\setminus\{i\})) + \sum_{T\in\binom{[n]}{m+1}} \gamma_T\cdot\prod_{i\in T}x_i^{a-1} \in I_{n,\infty,b} + J_{n,a,m+1},
    \end{align*}
    concluding the proof of the claim above.

    For any $S_0\in\binom{[n]}{n-m}$, let $\mathrm{ev}_{S_0}\,:\,\QQ[\xxx_n] \rightarrow \QQ[\xxx_{S_0}]$ be the evaluation map given by \[x_i\mapsto\begin{cases}
        1, & \text{if $i\notin S_0$} \\
        x_i, & \text{otherwise.}
    \end{cases}\]
    Note that \[e_b([n]\setminus\{i\})=\sum_{k=0}^{m} e_{k}([n]\setminus (\{i\}\cup S_0)) \cdot e_{b-k}(S_0\setminus\{i\})\]
    because $\lvert [n]\setminus (\{i\}\cup S_0) \rvert \le n-(n-m) = m$. It follows that
    \begin{align}\label{eq:evaluate-e}
        \mathrm{ev}_{S_0}(e_b([n]\setminus\{i\}))\in\langle e_r(S_0\setminus\{i\})\,:\, r\ge b-m \rangle\subseteq I_{S_0,\infty,b-m} \subseteq I_{S_0,a-1,b-m}
    \end{align}
    where the first containment arises from Lemma~\ref{lem:higher-e}. Furthermore, since $\lvert [n]\setminus S_0 \rvert =m<m+1$, we have
    \begin{align}\label{eq:evaluate-x}
        \mathrm{ev}_{S_0}(J_{n,a,m+1})\subseteq \langle x_i^{a-1}\,:\,i\in S_0 \rangle \subseteq I_{S_0,a-1,b-m}.
    \end{align}
    Combining the claim above and the facts \eqref{eq:evaluate-e} and \eqref{eq:evaluate-x}, we obtain
    \begin{align}\label{eq:ev-all}
        \mathrm{ev}_{S_0} \Bigg( \sum_{S\in\binom{[n]}{n-m}} f_S\cdot\prod_{i\in[n]\setminus S}x_i^{a-1} \Bigg) \in I_{S_0,a-1,b-m}.
    \end{align}
    
    Moreover, since every exponent of each variable in $f_S$ is at most $a-2$, the degree of each monomial appearing in $\mathrm{ev}_{S_0}\Bigg(f_S\cdot\prod_{i\in[n]\setminus S} x_i^{a-1}\Bigg)$ is strictly greater than $D$ if $S\neq S_0$.
    Therefore, the degree-$D$ homogeneous component of
    \[\mathrm{ev}_{S_0} \Bigg( \sum_{S\in\binom{[n]}{n-m}} f_S\cdot\prod_{i\in[n]\setminus S}x_i^{a-1} \Bigg) \]
    is $\mathrm{ev}_{S_0}(f_{S_0}\cdot\prod_{i\in[n]\setminus S_0}x_i^{a-1}) = f_{S_0}$. Consequently, the fact that $I_{S_0,a-1,b-m}$ is a homogeneous ideal and the fact \eqref{eq:ev-all} implies that the degree-$D$ homogeneous component of
    \[\mathrm{ev}_{S_0} \Bigg( \sum_{S\in\binom{[n]}{n-m}} f_S\cdot\prod_{i\in[n]\setminus S}x_i^{a-1} \Bigg) \]
    belongs to $I_{S_0,a-1,b-m}$, i.e., $f_{S_0}\in I_{S_0,a-1,b-m}$. The arbitrariness of $S_0\in\binom{[n]}{n-m}$ implies that $(\overline{f_S})_{S\in\binom{[n]}{n-m}} = 0$. In conclusion, we have shown that $\ker(\Phi_{n,a,b,m})=0$ and hence $\Phi_{n,a,b,m}$ is injective.
\end{proof}
 
We eventually obtain a recursive formula of $\F_{n,a,b}$:

\begin{proposition}\label{prop:recursive-n,a,b}
    For integers $n,a,b>0$ with $a+b>n$ and $b<n$, we have
    \[\F_{n,a,b} = \sum_{m=0}^{b-1} q^{m(a-1)} \cdot h_m \cdot \F_{n-m,a-1,b-m}. \]
\end{proposition}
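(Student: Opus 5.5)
The plan is to read the recursion off the filtration \eqref{eq:filtration-n,a,b}, using additivity of graded Frobenius characteristics along short exact sequences. The starting point is the exact sequence of graded $\symm_n$-modules
\[0\to I_{n,a-1,b}/I_{n,a,b}\to R_{n,a,b}\to R_{n,a-1,b}\to 0.\]
It gives
\[\F_{n,a,b}=\F_{n,a-1,b}+\grFrob(I_{n,a-1,b}/I_{n,a,b};q).\]
This is the $m=0$ term, since $\F_{n-0,a-1,b-0}=\F_{n,a-1,b}$ and $h_0=1$. Lemma~\ref{lem:filtration-n,a,b} applies because $a+b>n$ and $b<n$. It says $I_{n,a-1,b}/I_{n,a,b}$ has a finite filtration by the images of $K_{n,a,b,m}$, for $m=1,\dots,b$, with successive quotients $K_{n,a,b,m}/K_{n,a,b,m+1}$ for $1\le m\le b-1$. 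Hence
\[\grFrob(I_{n,a-1,b}/I_{n,a,b};q)=\sum_{m=1}^{b-1}\grFrob(K_{n,a,b,m}/K_{n,a,b,m+1};q).\]

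Next I would identify each summand using Lemma~\ref{lem:n,a,b-isom}. The shift $M(r)_d=M_{r+d}$ gives $\grFrob(M;q)=q^{r}\grFrob(M(r);q)$, so each quotient has graded Frobenius characteristic
\[q^{m(a-1)}\grFrob\!\left(\ind_{\symm_{n-m}\times\symm_m}^{\symm_n}(R_{n-m,a-1,b-m}\otimes\mathbf 1_{\symm_m})\right).\]
Frobenius characteristic turns induction from a Young subgroup into the product of symmetric functions, and it does so degreewise, hence also for graded modules. Since $\Frob(\mathbf 1_{\symm_m})=h_m$, this summand equals $q^{m(a-1)}h_m\F_{n-m,a-1,b-m}$. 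Summing over $m=1,\dots,b-1$ and adding the $m=0$ term gives the claimed formula.

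The only point needing care is the hypotheses and edge cases, since the lemmas are already in hand. Lemma~\ref{lem:n,a,b-isom} requires $a+b>n$ and $b<n$, which hold. When $a=1$, the ring $R_{n,0,b}$ and the lemma's $R_{n-m,0,b-m}$ are zero because $x_i^0=1$ lies in the ideal. The filtration argument still goes through formally: $J_{n,1,d}$ contains $1$, so every $K_{n,1,b,m}$ is the unit ideal. In that case both sides should be checked to agree, with $\F_{n,1,b}$ equal to the contribution of $R_{n,1,b}=\QQ$. If this degenerate case causes trouble, I would handle $a=1$ directly: there $b\ge n$ is forced by $a+b>n$, so it cannot coexist with $b<n$, and the case is vacuous. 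The main obstacle is therefore essentially bookkeeping: the grading-shift direction, which produces $q^{m(a-1)}$ rather than $q^{-m(a-1)}$, and the fact that the filtration terminates at $K_{n,a,b,b}=I_{n,a,b}$.
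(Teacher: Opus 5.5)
Your proposal is correct and follows the paper's proof essentially verbatim: write $\F_{n,a,b}-\F_{n,a-1,b}$ as the graded Frobenius characteristic of $I_{n,a-1,b}/I_{n,a,b}$, split it along the filtration of Lemma~\ref{lem:filtration-n,a,b}, and identify each successive quotient through Lemma~\ref{lem:n,a,b-isom}. Your shift direction giving $q^{m(a-1)}$ is right, and your $a=1$ digression can simply be dropped, since, as you observe, the hypotheses $a+b>n$ and $b<n$ already force $a\ge 2$.
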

\begin{proof}
    Note that
    \begin{align*}
        &\F_{n,a,b} - \F_{n,a-1,b} = \grFrob(R_{n,a,b};q) - \grFrob(R_{n,a-1,b};q) = \grFrob(I_{n,a-1,b}/I_{n,a,b};q) \\
        &= \sum_{m=1}^{b-1} \grFrob(K_{n,a,b,m}/K_{n,a,b,m+1};q)
    \end{align*}
    by Lemma~\ref{lem:filtration-n,a,b}. Furthermore, Lemma~\ref{lem:n,a,b-isom} implies that
    \[\grFrob(K_{n,a,b,m}/K_{n,a,b,m+1};q)= q^{m(a-1)} \cdot h_m \cdot \grFrob(R_{n-m,a-1,b-m};q) = q^{m(a-1)} \cdot h_m \cdot \F_{n-m,a-1,b-m}\]
    for $1\le m\le b-1$.
    Consequently, we deduce that
    \[\F_{n,a,b}-\F_{n,a-1,b} = \sum_{m=1}^{b-1} q^{m(a-1)} \cdot h_m \cdot \F_{n-m,a-1,b-m}\]
    and thus
    \[\F_{n,a,b} = \sum_{m=0}^{b-1} q^{m(a-1)} \cdot h_m \cdot \F_{n-m,a-1,b-m}.\qedhere\]
\end{proof}

\section{A recursive formula for $\F_{n,\infty,b}$}\label{sec:F_{n,infty,b}-recurs}

Our strategy is similar to Section~\ref{sec:F_{n,a,b}-recurs}. We begin with constructing a filtration of $I_{n,\infty,b-1}/I_{n,\infty,b}$ (note that Lemma~\ref{lem:higher-e} reveals that $I_{n,\infty,b}\subseteq I_{n,\infty,b-1}$).

\begin{definition}\label{def:n,infty,b-filtration}
    For positive integers $n,b,d$ such that $1<b<n$, let $L_{n,b,d}\subseteq\QQ[\xxx_n]$ be the ideal given by
    \[L_{n,b,d}\coloneqq \Bigg\langle \prod_{i\in T} e_{b-1}([n]\setminus\{i\})\,:\,T\in\binom{[n]}{d}\Bigg\rangle\]
    and define $N_{n,b,d}=I_{n,\infty,b} + L_{n,b,d}$.
\end{definition}

\begin{lemma}\label{lem:n,infty,b-filtration}
    For integers $1<b<n$, we have $L_{n,b,n-b+2}\subseteq I_{n,\infty,b}$ and hence the filtration
    \[I_{n,\infty,b}=N_{n,b,n-b+2}\subseteq N_{n,b,n-b+1}\subseteq\dots\subseteq N_{n,b,1} = I_{n,\infty,b-1}.\]
\end{lemma}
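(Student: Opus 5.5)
The plan is to verify the inclusion $L_{n,b,n-b+2}\subseteq I_{n,\infty,b}$ directly on generators, and then read off the filtration formally.

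\textbf{Step 1 (an auxiliary relation).} Write $y_i\coloneqq e_{b-1}([n]\setminus\{i\})$. The input is Lemma~\ref{lem:higher-e}, applied with $b'=b$. Its hypotheses are $n>b>0$ and $b'\ge b$, which hold. It gives
\[
x_i\cdot y_i=x_i\cdot e_{b-1}([n]\setminus\{i\})\in I_{n,\infty,b}
\qquad\text{for every } i\in[n].
\]

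\textbf{Step 2 (the inclusion).} Fix $T\in\binom{[n]}{n-b+2}$. We must show $\prod_{i\in T}y_i\in I_{n,\infty,b}$. Pick any $i_1\in T$ and expand only the single factor $y_{i_1}$ with respect to the variables indexed by $T\setminus\{i_1\}$. Since $[n]\setminus\{i_1\}=([n]\setminus T)\sqcup(T\setminus\{i_1\})$, the subset expansion of elementary symmetric polynomials gives
\[
y_{i_1}=\sum_{S\subseteq T\setminus\{i_1\}}\Bigl(\prod_{l\in S}x_l\Bigr)\, e_{b-1-|S|}([n]\setminus T).
\]
\begin{itemize}
\item For $S=\varnothing$, the term is $e_{b-1}([n]\setminus T)$. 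This vanishes because $|[n]\setminus T|=b-2<b-1$. This is exactly where the size $n-b+2$ is used.
\item For $S\neq\varnothing$, choose $l\in S$. The term contains the factor $x_l$. The full product $\prod_{i\in T}y_i$ contains $y_l$ as one of its other factors, since $l\ne i_1$. Hence that term, multiplied by the remaining factors, is a multiple of $x_l y_l\in I_{n,\infty,b}$ by Step 1.
\end{itemize}
Therefore $\prod_{i\in T}y_i\in I_{n,\infty,b}$, so $L_{n,b,n-b+2}\subseteq I_{n,\infty,b}$.

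\textbf{Step 3 (the filtration).} Three facts give the chain.
\begin{itemize}
\item Every generator of $L_{n,b,d+1}$ is a multiple of a generator of $L_{n,b,d}$ (drop one index from $T$). Hence $L_{n,b,d+1}\subseteq L_{n,b,d}$, and so $N_{n,b,d+1}\subseteq N_{n,b,d}$.
\item By Step 2, $N_{n,b,n-b+2}=I_{n,\infty,b}$.
\item $L_{n,b,1}$ is generated by the $e_{b-1}([n]\setminus\{i\})$, so $L_{n,b,1}=I_{n,\infty,b-1}$. Lemma~\ref{lem:higher-e} with $b-1$ in place of $b$ (valid since $n>b-1>0$) gives $I_{n,\infty,b}\subseteq I_{n,\infty,b-1}$. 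Hence $N_{n,b,1}=I_{n,\infty,b-1}$.
\end{itemize}

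The only step that needs care is Step 2. A naive attempt expands every factor $y_i$ simultaneously. That fails, because the factor $y_l$ used to absorb $x_l$ would itself already have been rewritten. Expanding just one factor and leaving all the others intact avoids this problem.
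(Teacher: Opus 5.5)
Your proof is correct and follows essentially the same route as the paper. The paper also uses Lemma~\ref{lem:higher-e} to get $x_i\, e_{b-1}([n]\setminus\{i\})\in I_{n,\infty,b}$, reduces only the factor indexed by a fixed $i_0\in T$ modulo $I_{n,\infty,b}$ to $e_{b-1}([n]\setminus T)=0$, and obtains the two endpoints of the filtration in the same way; your subset expansion simply spells out the congruence the paper applies as ``delete the variables $x_i$, $i\in T\setminus\{i_0\}$, from that one factor.''
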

\begin{proof}
    Lemma~\ref{lem:higher-e} implies that \begin{align}\label{eq:special-vanish}x_i\cdot e_{b-1}([n]\setminus\{i\})\equiv 0 \mod{I_{n,\infty,b}}.\end{align} This fact can help us remove variables as follows: for any $T\in\binom{[n]}{n-b+2}$ and a fixed $i_0\in T$ we have
    \begin{align*}
        \prod_{i\in T}e_{b-1}([n]\setminus\{i\}) &= e_{b-1}([n]\setminus\{i_0\}) \cdot \prod_{i\in T\setminus\{i_0\}} e_{b-1}([n]\setminus\{i\}) \\
        &\equiv e_{b-1}([n]\setminus(\{i_0\}\cup T\setminus\{i_0\})) \cdot \prod_{i\in T\setminus\{i_0\}} e_{b-1}([n]\setminus\{i\}) \mod{I_{n,\infty,b}} \\
        &= e_{b-1}([n]\setminus T) \cdot \prod_{i\in T\setminus\{i_0\}} e_{b-1}([n]\setminus\{i\}) = 0
    \end{align*}
    where the congruence ``$\equiv$'' arises from applying \eqref{eq:special-vanish} for all $i\in T\setminus\{i_0\}$, and the last equality is because the fact that $\lvert [n]\setminus T \rvert = n-(n-b+2)=b-2<b-1$ means $e_{b-1}([n]\setminus T)=0$. Consequently, we have that $L_{n,b,n-b+2}\subseteq I_{n,\infty,b}$ and hence $N_{n,b,n-b+2}=I_{n,\infty,b}+L_{n,b,n-b+2}=I_{n,\infty,b}$. Moreover, Lemma~\ref{lem:higher-e} yields that $e_{b}([n]\setminus\{i\})\in I_{n,\infty,b-1}$ and thus $I_{n,\infty,b}\subseteq I_{n,\infty,b-1}=L_{n,b,1}$, which implies that $N_{n,b,1}=I_{n,\infty,b}+L_{n,b,1}=L_{n,b,1}=I_{n,\infty,b-1}$. Therefore, we have the filtration
    \[I_{n,\infty,b}=N_{n,b,n-b+2}\subseteq N_{n,b,n-b+1}\subseteq\dots\subseteq N_{n,b,1} = I_{n,\infty,b-1}.\qedhere\]
\end{proof}

Now we describe the relationship between each successive quotient $N_{n,b,m}/N_{n,b,m+1}$ of the filtration in Lemma~\ref{lem:n,infty,b-filtration} and $R_{n-m,\infty,b-1}$. We will eventually see that the following surjection is actually an isomorphism when proving Proposition~\ref{prop:recursive-n,infty,b}.

\begin{lemma}\label{lem:n,infty,b-surj}
    Given integers $1<b<n$, for any $1\le m\le n-b+1$ we have an graded $\symm_n$-module surjection
    \[\ind_{\symm_{n-m}\times\symm_m}^{\symm_n}( R_{n-m,\infty,b-1}\otimes\mathbf{1}_{\symm_m})\twoheadrightarrow (N_{n,b,m}/N_{n,b,m+1})(m(b-1))\]
    where $M(r)$ denotes the grading shift of the graded module $M$ given by $M(r)_d\coloneqq M_{r+d}$.
\end{lemma}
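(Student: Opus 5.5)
Following the template of Lemma~\ref{lem:n,a,b-isom}, define
\begin{align*}
\Psi_{n,b,m}\,:\,\bigoplus_{S\in\binom{[n]}{n-m}} R_{S,\infty,b-1} &\longrightarrow (N_{n,b,m}/N_{n,b,m+1})(m(b-1)),\\
(\overline{f_S})_{S} &\mapsto \overline{\sum_{S} f_S\cdot \prod_{i\in[n]\setminus S} e_{b-1}([n]\setminus\{i\})}.
\end{align*}
The left side is the induced module $\ind_{\symm_{n-m}\times\symm_m}^{\symm_n}(R_{n-m,\infty,b-1}\otimes\mathbf{1}_{\symm_m})$. The map $f\mapsto f\cdot\prod_{i\in[n]\setminus S}e_{b-1}([n]\setminus\{i\})$ is $\symm_n$-equivariant, because a permutation carrying $S$ to $\sigma(S)$ carries the product for $S$ to the product for $\sigma(S)$. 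It also raises degree by exactly $m(b-1)$, so $\Psi_{n,b,m}$ is graded after the stated shift. We must check that it is well defined and surjective; injectivity is not claimed here.

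\textbf{Surjectivity.} $N_{n,b,m}/N_{n,b,m+1}$ is spanned by cosets of $g\cdot\prod_{i\in T}e_{b-1}([n]\setminus\{i\})$ with $T\in\binom{[n]}{m}$ and $g$ a monomial. By \eqref{eq:special-vanish}, $x_j\cdot e_{b-1}([n]\setminus\{j\})\in I_{n,\infty,b}\subseteq N_{n,b,m+1}$ for every $j$. Hence any such coset with $x_j\mid g$ for some $j\in T$ vanishes. The surviving ones have $g\in\QQ[\xxx_{[n]\setminus T}]$, and these equal $\Psi_{n,b,m}(\bar g)$ with $\bar g$ placed in the summand $S=[n]\setminus T$.

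\textbf{Well-definedness.} We need $I_{S,\infty,b-1}\cdot P_S\subseteq N_{n,b,m+1}$, where $P_S\coloneqq\prod_{j\notin S}e_{b-1}([n]\setminus\{j\})$. So fix $i\in S$ and examine $e_{b-1}(S\setminus\{i\})\cdot P_S$. First rewrite $e_{b-1}(S\setminus\{i\})$ as $e_{b-1}([n]\setminus\{i\})$ plus correction terms. Using Lemma~\ref{lem:elementary}-type expansions (or directly $e_{b-1}(A\sqcup B)=\sum_k e_k(B)e_{b-1-k}(A)$ with $B=[n]\setminus S$),
\[
e_{b-1}([n]\setminus\{i\})=e_{b-1}(S\setminus\{i\})+\sum_{\varnothing\neq U\subseteq[n]\setminus S}\Big(\prod_{j\in U}x_j\Big)\, e_{b-1-|U|}(S\setminus\{i\}).
\]
Every correction term contains a factor $x_j$ with $j\notin S$. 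Since $e_{b-1}([n]\setminus\{j\})$ divides $P_S$, the relation $x_j e_{b-1}([n]\setminus\{j\})\equiv0 \pmod{I_{n,\infty,b}}$ kills each correction term after multiplying by $P_S$. Therefore
\[
e_{b-1}(S\setminus\{i\})\cdot P_S\equiv e_{b-1}([n]\setminus\{i\})\cdot P_S \pmod{I_{n,\infty,b}}.
\]
The right-hand side is a product of $e_{b-1}([n]\setminus\{j\})$ over the $(m+1)$-element set $([n]\setminus S)\cup\{i\}$, so it lies in $L_{n,b,m+1}\subseteq N_{n,b,m+1}$.

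I expect no serious obstacle. The only delicate point is the correction-term bookkeeping in the last step, which reduces entirely to the single relation \eqref{eq:special-vanish}; this is exactly the variable-removal trick used in Lemma~\ref{lem:n,infty,b-filtration}.
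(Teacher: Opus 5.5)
Your proposal is correct and follows essentially the same route as the paper: the same map $\Psi_{n,b,m}$, and well-definedness via the variable-removal relation \eqref{eq:special-vanish}. You write out the expansion of $e_{b-1}([n]\setminus\{i\})$ that the paper compresses into a single congruence, and your surjectivity step (discarding monomials divisible by some $x_j$ with $j\in T$) is the same as the paper's use of the evaluation map sending $x_j\mapsto 0$ for $j\in T$.
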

\begin{proof}
    Note that
    \[\ind_{\symm_{n-m}\times\symm_m}^{\symm_n}( R_{n-m,\infty,b-1}\otimes\mathbf{1}_{\symm_m})\cong\bigoplus_{S\in\binom{[n]}{n-m}} R_{S,\infty,b-1}\]
    as a graded $\symm_n$-module.
    Consider the graded $\symm_n$-module homomorphism
    \begin{align*}
        \Psi_{n,b,m}\,:\, \bigoplus_{S\in\binom{[n]}{n-m}} R_{S,\infty,b-1} & \longrightarrow (N_{n,b,m}/N_{n,b,m+1})(m(b-1)) \\
        (\overline{f_S})_{S\in\binom{[n]}{n-m}} & \mapsto \overline{\sum_{S\in\binom{[n]}{n-m}} f_S \cdot \prod_{i\in[n]\setminus S} e_{b-1}([n]\setminus\{i\})}.
    \end{align*}
    It suffices to show that $\Psi_{n,b,m}$ is well defined and surjective.

    \textbf{The well-definedness of $\Psi_{n,b,m}$}: For any generator $e_{b-1}(S\setminus\{i_0\})$ (where $i_0\in S$ and $\lvert S\rvert =n-m$) of $I_{S,\infty,b-1}$, we have
    \begin{align*}
        &e_{b-1}(S\setminus\{i_0\}) \cdot \prod_{i\in[n]\setminus S} e_{b-1}([n]\setminus\{i\}) \\ &\equiv e_{b-1}((S\setminus\{i_0\}) \cup ([n]\setminus S)) \cdot \prod_{i\in[n]\setminus S} e_{b-1}([n]\setminus\{i\}) \mod{I_{n,\infty,b}} \\
        &= e_{b-1}([n]\setminus\{i_0\}) \cdot \prod_{i\in[n]\setminus S} e_{b-1}([n]\setminus\{i\}) \in L_{n,b,m+1}
    \end{align*}
    where the congruence ``$\equiv$'' arises from applying \eqref{eq:special-vanish} for all $i\in[n]\setminus S$. Therefore, for any $S\in\binom{[n]}{n-m}$ we have
    \[I_{S,\infty,b-1} \cdot \prod_{i\in[n]\setminus S} e_{b-1}([n]\setminus\{i\}) \subseteq I_{n,\infty,b}+ L_{n,b,m+1}=N_{n,b,m+1},\]
    so $\Psi_{n,b,m}$ is well defined.

    \textbf{The surjectivity of $\Psi_{n,b,m}$}: By construction, $N_{n,b,m}/N_{n,b,m+1}$ is spanned by all the cosets of the form $\overline{f\cdot \prod_{i\in T} e_{b-1}([n]\setminus\{i\})}$ as a $\QQ$-vector space, where $f\in\QQ[\xxx_n]$ and $T\in\binom{[n]}{m}$. Let $\mathrm{ev}^T\,:\,\QQ[\xxx_n]\rightarrow\QQ[\xxx_{[n]\setminus T}]$ be the evaluation map given by
    \[x_i\mapsto\begin{cases}
        0, & \text{if $i\in T$} \\
        x_i, & \text{otherwise}.
    \end{cases}\]
    Applying \eqref{eq:special-vanish} for all $i\in T$, we obtain
    \begin{align*}
        f\cdot \prod_{i\in T} e_{b-1}([n]\setminus\{i\}) \equiv \mathrm{ev}^T(f) \cdot \prod_{i\in T} e_{b-1}([n]\setminus\{i\}) \mod{I_{n,\infty,b}}.
    \end{align*}
    It follows that
    \begin{align*}
        \overline{f\cdot \prod_{i\in T} e_{b-1}([n]\setminus\{i\})} = \overline{\mathrm{ev}^T(f) \cdot \prod_{i\in T} e_{b-1}([n]\setminus\{i\})} = \Psi_{n,b,m}(\overline{\mathrm{ev}^T(f)})
    \end{align*}
    where $\overline{\mathrm{ev}^T(f)}\in R_{[n]\setminus T,\infty,b-1}$. Consequently, $\Psi_{n,b,m}$ is surjective.
\end{proof}

\begin{corollary}\label{cor:n,infty,b-ineq}
    For integers $1<b<n$, we have
    \[\F_{n,\infty,b}\le \sum_{m=0}^{n-b+1} q^{m(b-1)} \cdot h_m \cdot \F_{n-m,\infty,b-1} .\]
\end{corollary}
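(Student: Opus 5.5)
The plan mirrors the proof of Proposition~\ref{prop:recursive-n,a,b}, with the isomorphism of Lemma~\ref{lem:n,a,b-isom} replaced by the surjection of Lemma~\ref{lem:n,infty,b-surj}, so that equality weakens to a Schur-positive inequality. First, by Lemma~\ref{lem:n,infty,b-filtration} we have the chain
\[
I_{n,\infty,b}=N_{n,b,n-b+2}\subseteq\dots\subseteq N_{n,b,1}=I_{n,\infty,b-1}.
\]
Every term is a homogeneous $\symm_n$-stable ideal: $\symm_n$ permutes the generators $\prod_{i\in T}e_{b-1}([n]\setminus\{i\})$ of $L_{n,b,d}$. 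Each graded piece of $\QQ[\xxx_n]$ is finite-dimensional, and graded Frobenius characteristics are additive on short exact sequences of graded $\symm_n$-modules with finite-dimensional components. From $0\to I_{n,\infty,b-1}/I_{n,\infty,b}\to R_{n,\infty,b}\to R_{n,\infty,b-1}\to 0$ and the filtration we get
\[
\F_{n,\infty,b}=\F_{n,\infty,b-1}+\sum_{m=1}^{n-b+1}\grFrob(N_{n,b,m}/N_{n,b,m+1};q).
\]

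Second, we bound each summand. A surjection of graded $\symm_n$-modules with finite-dimensional graded pieces gives, degree by degree, a surjection of $\QQ[\symm_n]$-modules. By Maschke's theorem the target is then a direct summand of the source, so the difference of their Frobenius characteristics is Schur positive in each degree. Applying this to Lemma~\ref{lem:n,infty,b-surj} and undoing the grading shift (a shift by $m(b-1)$ multiplies $\grFrob$ by $q^{m(b-1)}$) gives
\[
\grFrob(N_{n,b,m}/N_{n,b,m+1};q)\le q^{m(b-1)}\grFrob\bigl(\ind_{\symm_{n-m}\times\symm_m}^{\symm_n}(R_{n-m,\infty,b-1}\otimes\mathbf{1}_{\symm_m});q\bigr).
\]
Since induction from a Young subgroup corresponds to multiplication of Frobenius characteristics, the right side equals $q^{m(b-1)}h_m\F_{n-m,\infty,b-1}$. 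Summing over $m$ and using $\F_{n,\infty,b-1}=q^0h_0\F_{n,\infty,b-1}$ for the $m=0$ term yields the claimed inequality.

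The only delicate point is that the modules are infinite-dimensional, so we should check that everything is a well-defined formal $q$-series and that the order $\le$ is compatible with the infinite sum. This is harmless: the comparison is made coefficientwise in $q$, each coefficient involves only finitely many finite-dimensional pieces, and Schur positivity of each $q^d$-coefficient is exactly the definition of $\le$ on $\sym\otimes_\QQ\QQ[[q]]$. Note also that $b-1\ge1$ and $n-m>b-1$ for $m\le n-b$, while at $m=n-b+1$ we have $n-m=b-1$, where $R_{b-1,\infty,b-1}$ is still defined by Definition~\ref{def:R_{n,a,b}} (its generators $e_{b-1}$ of $b-2$ variables vanish). So no case is degenerate.
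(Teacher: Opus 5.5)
Your proposal is correct and follows essentially the same route as the paper: the filtration of Lemma~\ref{lem:n,infty,b-filtration} splits $\F_{n,\infty,b}-\F_{n,\infty,b-1}$ into the successive quotients, and the surjection of Lemma~\ref{lem:n,infty,b-surj} turns each summand into a Schur-positive inequality. Your added remarks on Maschke's theorem, the grading shift, coefficientwise comparison in $q$, and the boundary case $m=n-b+1$ supply details that the paper's proof leaves implicit.
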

\begin{proof}
    Note that
    \begin{align*}
        &\F_{n,\infty,b} - \F_{n,\infty,b-1} = \grFrob(R_{n,\infty,b};q)-\grFrob(R_{n,\infty,b-1};q) \\
        &= \grFrob(I_{n,\infty,b-1}/I_{n,\infty,b};q) = \sum_{m=1}^{n-b+1} \grFrob(N_{n,b,m}/N_{n,b,m+1};q) \\
        &\le \sum_{m=1}^{n-b+1} q^{m(b-1)} \cdot h_m \cdot \grFrob(R_{n-m,\infty,b-1};q) = \sum_{m=1}^{n-b+1} q^{m(b-1)} \cdot h_m \cdot \F_{n-m,\infty,b-1}
    \end{align*}
    where the second-to-last equality arises from Lemma~\ref{lem:n,infty,b-filtration} and the inequality arises from Lemma~\ref{lem:n,infty,b-surj}. Therefore, we have
    \[\F_{n,\infty,b}\le \sum_{m=0}^{n-b+1} q^{m(b-1)} \cdot h_m \cdot \F_{n-m,\infty,b-1} . \qedhere\]
\end{proof}

Our goal is to force the inequality in Corollary~\ref{cor:n,infty,b-ineq} to be an equality. Our strategy is the dimension-counting for the boundary case $b=n-1$. 

\begin{lemma}\label{lem:boundary-case-dim}
    For any integer $n>1$, we have
    \[\Hilb(R_{n,\infty,n-1};q) = \sum_{k=0}^{n-2} \binom{n}{k}\cdot \bigg(\frac{q}{1-q}\bigg)^k.\]
\end{lemma}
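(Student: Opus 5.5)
The key observation is that in the boundary case $b=n-1$ the ideal $I_{n,\infty,n-1}$ is a monomial ideal, so its Hilbert series can be computed by counting standard monomials. Indeed, for each $i\in[n]$ the set $[n]\setminus\{i\}$ has exactly $n-1$ elements. By Definition~\ref{def:symfpoly}, $e_{n-1}([n]\setminus\{i\})$ is the single squarefree monomial $\prod_{j\neq i}x_j$. Hence
\[
I_{n,\infty,n-1}=\Big\langle \textstyle\prod_{j\in[n]\setminus\{i\}}x_j\,:\,i\in[n]\Big\rangle
\]
is generated by monomials.

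First I will describe which monomials lie in this ideal. A monomial ideal contains exactly those monomials divisible by one of its generators, and the span of all such monomials is the whole ideal. A monomial $\prod_i x_i^{d_i}$ is divisible by $\prod_{j\neq i_0}x_j$ if and only if $d_j\ge1$ for all $j\neq i_0$. So a monomial lies in $I_{n,\infty,n-1}$ exactly when its support $\{i:d_i>0\}$ has size at least $n-1$. Consequently the residue classes of monomials with support of size at most $n-2$ form a $\QQ$-basis of $R_{n,\infty,n-1}$, and this basis is homogeneous.

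Next I will count this basis by degree. Fix a support $T\subseteq[n]$ with $|T|=k\le n-2$. The monomials with support exactly $T$ are $\prod_{i\in T}x_i^{d_i}$ with every $d_i\ge1$, and their generating function by degree is $\prod_{i\in T}(q+q^2+\cdots)=\big(\frac{q}{1-q}\big)^k$. There are $\binom{n}{k}$ choices of $T$. Summing over $0\le k\le n-2$ gives
\[
\Hilb(R_{n,\infty,n-1};q)=\sum_{k=0}^{n-2}\binom{n}{k}\Big(\frac{q}{1-q}\Big)^k,
\]
which is the claimed formula.

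There is no real obstacle here. The only point needing care is the identification of $e_{n-1}([n]\setminus\{i\})$ with a single monomial, which needs $n>1$ so that the set $[n]\setminus\{i\}$ is nonempty. The rest is the standard fact that standard monomials form a basis of the quotient by a monomial ideal.
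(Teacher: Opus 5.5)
Your proof is correct and follows the paper's argument: the paper also notes that $I_{n,\infty,n-1}$ is generated by the squarefree monomials $\prod_{j\neq i}x_j$, so the standard monomials are those involving at most $n-2$ distinct variables. It then sums $\bigl(\frac{q}{1-q}\bigr)^{|S|}$ over supports $S$ with $|S|\le n-2$, just as you do.
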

\begin{proof}
    Since $I_{n,\infty,n-1}=\langle \prod_{j\in[n]\setminus\{i\}} x_j \,:\, i\in[n] \rangle$ is generated by monomials, the Gröbner basis of $I_{n,\infty,n-1}$ with respect to any monomial order just consists of all these monomials. Therefore, the standard monomial basis $\mathcal{B}$ of $R_{n,\infty,n-1}$ consists of all the monomials not divisible by $\prod_{j\in[n]\setminus\{i\}} x_j$ for any $i\in[n]$. Equivalently, $\mathcal{B}$ consists of all the monomials where at most $n-2$ distinct variables appear. For any $S\subseteq[n]$, let $\mathcal{B}_S\coloneqq\{\mmm\in\QQ[\xxx_S]\,:\, \text{$\mmm$ is a monomial divisible by $x_i$ for all $i\in S$}\}$. We have the generating function identity
    \[\sum_{\mmm\in\mathcal{B}_S}q^{\deg(\mmm)} = \prod_{i\in S}\bigg(\sum_{d=1}^\infty q^{d}\bigg) = \bigg( \frac{q}{1-q} \bigg)^{\lvert S \rvert}.\]
    It follows that
    \begin{align*}
        \Hilb(R_{n,\infty,n-1};q) = \sum_{\mmm\in\mathcal{B}} q^{\deg(\mmm)} =\sum_{\substack{S\subseteq[n] \\ \lvert S \rvert \le n-2}} \sum_{\mmm\in\mathcal{B}_S} q^{\deg(\mmm)} =\sum_{k=0}^{n-2} \binom{n}{k} \cdot \bigg( \frac{q}{1-q} \bigg)^k,
    \end{align*}
    concluding our proof.
\end{proof}

\begin{proposition}\label{prop:recursive-n,infty,b}
    The inequality in Corollary~\ref{cor:n,infty,b-ineq} is actually an equality, i.e., for $1<b<n$ we have
    \[\F_{n,\infty,b} = \sum_{m=0}^{n-b+1} q^{m(b-1)} \cdot h_m \cdot \F_{n-m,\infty,b-1} .\]
\end{proposition}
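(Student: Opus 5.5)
The plan is to convert the inequality of Corollary~\ref{cor:n,infty,b-ineq} into an equality by a dimension count. Everything is Schur positive. If we can show that the Hilbert series of both sides agree, then the difference of the two sides is a Schur-positive series whose coefficients have total dimension zero, so it vanishes. Concretely, pairing with $p_1^n$ turns $\grFrob$ into $\Hilb$, and $\langle h_m\cdot G, p_1^n\rangle=\binom{n}{m}\langle G,p_1^{n-m}\rangle$. It therefore suffices to prove, for $1<b<n$,
\[
\Hilb(R_{n,\infty,b};q)=\sum_{m=0}^{n-b+1}\binom{n}{m}q^{m(b-1)}\Hilb(R_{n-m,\infty,b-1};q).
\]
It is in fact enough to get the inequality $\ge$ on Hilbert series, since $\le$ is already known from Corollary~\ref{cor:n,infty,b-ineq}.

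The key reduction is that it suffices to obtain equality at a single terminal value of $b$ and then propagate by induction. Fix $n$ and iterate the inequality from $b=n-1$ down to $b=1$. Here $\F_{n,\infty,1}$ is just $h_n$ in degree $0$ for $n>1$, since $I_{n,\infty,1}$ contains all $x_i$ (the elements $e_1([n]\setminus\{i\})$ span the linear forms when $n\ge 2$). One should also check small cases such as $R_{1,\infty,0}$ and $R_{m,\infty,b-1}$ with $b-1\ge m$, which should be treated as the full polynomial ring via Proposition~\ref{prop:Thibon-poly}; the boundary terms with $n-m<b$ need this care.

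Iterating, $\F_{n,\infty,n-1}$ is bounded above by an explicit sum built from the lower-$b$ recursions. Each intermediate step is an inequality in the same direction, and these compose because multiplication by $h_m$ and $q^{\cdot}$ preserves Schur positivity. So if the fully unrolled upper bound for $b=n-1$ has the same Hilbert series as the true $\Hilb(R_{n,\infty,n-1};q)$ computed in Lemma~\ref{lem:boundary-case-dim}, then every inequality used in the chain, including the one at level $(n,b)$, must be an equality.

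Concretely, I would run a strong induction on $n$. Assume the equality holds for all smaller $n'$ and all admissible $b'$. Then the right-hand side's Hilbert series at $b=n-1$ is determined, and we need to verify
\[
\sum_{k=0}^{n-2}\binom{n}{k}\Big(\tfrac{q}{1-q}\Big)^k
=\sum_{m=0}^{2}\binom{n}{m}q^{m(n-2)}\Hilb(R_{n-m,\infty,n-2};q).
\]
Here $\Hilb(R_{n,\infty,n-2})$ itself is only bounded, not known. The cleaner route is therefore to guess a closed formula $\Hilb(R_{n,\infty,b};q)=H_{n,b}(q)$ in the spirit of Lemma~\ref{lem:boundary-case-dim}, prove that $H_{n,b}$ satisfies the recursion with equality, and show that $H_{n,b}$ is a lower bound for the true Hilbert series. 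For the lower bound I would try to exhibit a linearly independent family of standard monomials, perhaps via a Gröbner/order-ideal argument.

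If that is too hard, the fallback is to apply the descending chain within the fixed-$n$ iteration. Starting at $b=n-1$, the equality of Hilbert series from Lemma~\ref{lem:boundary-case-dim} forces equality in Lemma~\ref{lem:n,infty,b-surj} at level $b=n-1$. That, however, only involves $R_{n-m,\infty,n-2}$ for $m\ge 0$, including $m=0$ where $R_{n,\infty,n-2}$ is itself unknown. The main obstacle is exactly this: certifying the binomial identity with the unrolled bound, that is, showing that the fully unrolled recursion evaluated through Proposition~\ref{prop:Thibon-poly} at the base produces $\sum_{k\le n-2}\binom{n}{k}(q/(1-q))^k$. I expect this to be a generating-function computation that unrolls into sums over chains of subsets $T_1\sqcup T_2\sqcup\cdots$ with weights $q^{\sum m_j(b_j-1)}$, to be matched against the binomial expansion.
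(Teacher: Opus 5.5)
Your framework is the paper's: pass to Hilbert series, unroll $H_{n,\infty,b}\le\sum_{m}q^{m(b-1)}\binom{n}{m}H_{n-m,\infty,b-1}$ from $(n,n-1)$ until every branch hits the boundary $b'=n'$ (polynomial ring, $(1-q)^{-\beta}$) or $b'=1$ (value $1$ for $n'>1$), and compare with the exact value in Lemma~\ref{lem:boundary-case-dim}. However, the proposal stops exactly where the proof begins. You never show that the unrolled bound equals $\sum_{k=0}^{n-2}\binom{n}{k}\bigl(\frac{q}{1-q}\bigr)^k$; you only say you expect it to be a generating-function computation. As you note yourself, neither the strong induction on $n$ nor a single step at $b=n-1$ can close the argument, because $H_{n,\infty,n-2}$ appears on the right with coefficient $1$ at the same $n$. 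The alternative route (guess $H_{n,b}$, prove a Gröbner lower bound) comes with neither a candidate formula nor an independence argument. So as written, nothing forces any inequality to be an equality.

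The missing step is the following evaluation. Record a path by its step sizes $m_i$. The step landing at level $b'=i$ contributes $q^{im_i}\binom{n'}{m_i}$, and validity requires $\sum_{i\ge j}m_i<n-j$ at each intermediate level $j$. The binomials telescope to $\binom{n}{m_\beta,\dots,m_{n-2},\beta}$ for paths ending at $(\beta,\beta)$, and to $\binom{n}{m_1,\dots,m_{n-2},\gamma}$ for paths ending at $(\gamma,1)$.

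For paths ending at $(\beta,\beta)$, expand $(1-q)^{-\beta}$ multinomially with multiplicities $d_0,d_1,\dots$. Since $\sum m_i=n-\beta$ and $\sum d_i=\beta$, the exponent equals $\sum(i-\beta)m_i+\sum(i+n-\beta)d_i$. Putting $c_{i-\beta}=m_i$, $c_{n-1-\beta}=0$, $c_{n-\beta+i}=d_i$ turns every term into $\binom{n}{c_0,c_1,\dots}q^{\sum_i ic_i}$. The sequences obtained are exactly those with $c_0\ge2$ and $\sum c_i=n$ whose least $j\in[n-2]$ with $c_0+\dots+c_j\le j+1$ equals $n-1-\beta$.

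For paths ending at $(\gamma,1)$, setting $c_0=\gamma$ and $c_i=m_i$ gives exactly the sequences with $c_0\ge2$, $\sum c_i=n$, and no such $j$. Hence the bound is $\sum_{c_0\ge2,\ \sum c_i=n}\binom{n}{c_0,c_1,\dots}q^{\sum ic_i}=\sum_{k=0}^{n-2}\binom{n}{k}\bigl(\frac{q}{1-q}\bigr)^k$, i.e.\ the count of monomials missing at least two variables.

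Finally, say explicitly why equality propagates to an arbitrary $(n',b')$ with $1<b'<n'$. That point lies on a path from $(n',n'-1)$ (take $m=0$ steps) with nonzero weight, so its coefficientwise nonnegative slack must vanish.
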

\begin{proof}
    Let $H_{n,\infty,b}\coloneqq\Hilb(R_{n,\infty,b};q)$. Corollary~\ref{cor:n,infty,b-ineq} gives
    \begin{align}\label{ineq:hilb}
        H_{n,\infty,b}\le \sum_{m=0}^{n-b+1} q^{m(b-1)} \cdot \binom{n}{m} \cdot H_{n-m,\infty,b-1}
    \end{align}
    for all $1<b<n$.
    It suffices to show that \eqref{ineq:hilb} is actually an equality. Our strategy is to iteratively apply \eqref{ineq:hilb} so that we can obtain an upper bound of $H_{n,\infty,n-1}$ using the base cases $H_{\beta,\infty,\beta}$ and $H_{\gamma,\infty,1}$ with $\gamma>1,\beta>0$, and then show that this upper bound equals the expression in Lemma~\ref{lem:boundary-case-dim}, which would force \eqref{ineq:hilb} to be an equality because every possible $(n,b)$ with $1<b<n$ occurs in this iterating process.

    We start with computing base cases. Given $n>1$, Lemma~\ref{lem:higher-e} implies that for all $i\in[n]$ we have $x_i\in I_{n,\infty,1}$. It follows that $R_{n,\infty,1}\cong\mathbb{Q}$ as a graded $\symm_n$-module, and thus for all $n>1$ we have
    \begin{align}\label{eq:boundary1}
        H_{n,\infty,1} = \Hilb(\QQ;q)=1.
    \end{align}
    For any $n\ge 1$, we have $I_{n,\infty,n}=0$ and hence $R_{n,\infty,n}=\QQ[\xxx_n]$, meaning that
    \begin{align}\label{eq:boundary2}
        H_{n,\infty,n}=\Hilb(\QQ[\xxx_n];q)=\Hilb(\QQ[x]^{\otimes n};q) = \Hilb(\QQ[x];q)^n=\bigg(\frac{1}{1-q}\bigg)^n.
    \end{align}
    \textbf{Note that Equation~\eqref{eq:boundary1} is valid only for $n>1$. For $H_{1,\infty,1}$, we should apply Equation~\eqref{eq:boundary2} rather than \eqref{eq:boundary1}.}

    Think of each $H_{n,\infty,b}$ as the point $(n,b)$ where $1\le b\le n$. As we iterate \eqref{ineq:hilb}, the value of $n+b$ is strictly decreasing, implying that the iterating process must eventually stop in finitely many steps. Since we can apply \eqref{ineq:hilb} whenever $1<b<n$, this iterating process stops exactly when either $(n,b)=(\beta,\beta)$ for some $\beta>0$ or $(n,b)=(\gamma,1)$ for some $\gamma>1$ (i.e., the two base cases mentioned in Equations~\eqref{eq:boundary1} and \eqref{eq:boundary2}). We deal with these two kinds of iterating paths as follows:
    \begin{itemize}
        \item If an iterating path starts from $(n,n-1)$ ($n>2$) and ends at $(\beta,\beta)$ ($\beta>0$), then this iterating path can be written by
        \[(n,n-1),(n-m_{n-2},n-2),\dots,(n-\sum_{i=j}^{n-2}m_i,j),\dots,(n-\sum_{i=\beta}^{n-2}m_i,\beta)=(\beta,\beta)\]
        where for all $\beta<j\le n-2$ we have $n-\sum_{i=j}^{n-2}m_i>j$ (otherwise, \eqref{ineq:hilb} is not valid for $n^\prime=n-\sum_{i=j}^{n-2}m_i,b^\prime=j$, which is a contradiction).
        \item If an iterating path starts from $(n,n-1)$ ($n>2$) and ends at $(\gamma,1)$ ($\gamma>1$), then this iterating path can be written by
        \[(n,n-1),(n-m_{n-2},n-2),\dots,(n-\sum_{i=j}^{n-2}m_i,j),\dots,(n-\sum_{i=1}^{n-2}m_i,1)=(\gamma,1)\]
        where for all $1<j\le n-2$ we have $n-\sum_{i=j}^{n-2}m_i>j$ (otherwise, \eqref{ineq:hilb} is not valid for $n^\prime=n-\sum_{i=j}^{n-2}m_i,b^\prime=j$, which is a contradiction). Note that for $j=1$, this inequality $n-\sum_{i=j}^{n-2}m_i>j$ automatically holds because $\gamma>1$.
    \end{itemize}
    The two possible types of iteration paths are illustrated in Figure~\ref{fig:hilb-iteration-paths}.
    
\begin{figure}[htbp]
\centering

\begin{minipage}{0.48\textwidth}
\centering
\begin{tikzpicture}[
    x=0.62cm, y=0.62cm,
    traj/.style={
        draw=orange!85!black,
        line width=1.5pt,
        -{Stealth[length=2mm]}
    },
    waypoint/.style={
        circle, draw=orange!85!black,
        fill=white, line width=1pt,
        inner sep=1.7pt
    },
    endpoint/.style={
        circle, fill=red!85!black,
        inner sep=2.7pt
    }
]
    \fill[cyan!8]
        (1,1) -- (9.5,1) -- (9.5,9.5) -- cycle;

    \foreach \k in {2,...,9} {
        \draw[gray!22, thin]
            (\k,1) -- (\k,\k);
        \draw[gray!22, thin]
            (\k,\k) -- (9.5,\k);
    }

    \draw[-{Stealth}, gray!80]
        (0,0) -- (10.4,0) node[right] {$n'$};
    \draw[-{Stealth}, gray!80]
        (0,0) -- (0,10.2) node[above] {$b'$};

    \draw[blue!75!black, very thick]
        (1,1) -- (9.5,9.5)
        node[pos=0.72, above left] {$n'=b'$};

    \draw[violet!80!black, very thick]
        (1,1) -- (9.5,1)
        node[pos=0.80, below] {$b'=1$};

    \draw[traj] (9,8) -- (9,7);
    \draw[traj] (9,7) -- (8,6);
    \draw[traj] (8,6) -- (7,5);
    \draw[traj] (7,5) -- (4,4);

    \foreach \x/\y in {9/8,9/7,8/6,7/5} {
        \node[waypoint] at (\x,\y) {};
    }

    \node[endpoint] at (4,4) {};

    \node[anchor=west, orange!85!black]
        at (9.12,8) {$(n,n-1)$};

    \node[anchor=south east, red!85!black]
        at (3.85,4.08) {$(\beta,\beta)$};

\end{tikzpicture}

\smallskip
\textbf{(a)} Ending on $n'=b'$.
\end{minipage}
\hfill
\begin{minipage}{0.48\textwidth}
\centering
\begin{tikzpicture}[
    x=0.62cm, y=0.62cm,
    traj/.style={
        draw=orange!85!black,
        line width=1.5pt,
        -{Stealth[length=2mm]}
    },
    waypoint/.style={
        circle, draw=orange!85!black,
        fill=white, line width=1pt,
        inner sep=1.7pt
    },
    endpoint/.style={
        circle, fill=red!85!black,
        inner sep=2.7pt
    }
]
    \fill[cyan!8]
        (1,1) -- (9.5,1) -- (9.5,9.5) -- cycle;

    \foreach \k in {2,...,9} {
        \draw[gray!22, thin]
            (\k,1) -- (\k,\k);
        \draw[gray!22, thin]
            (\k,\k) -- (9.5,\k);
    }

    \draw[-{Stealth}, gray!80]
        (0,0) -- (10.4,0) node[right] {$n'$};
    \draw[-{Stealth}, gray!80]
        (0,0) -- (0,10.2) node[above] {$b'$};

    \draw[blue!75!black, very thick]
        (1,1) -- (9.5,9.5)
        node[pos=0.72, above left] {$n'=b'$};

    \draw[violet!80!black, very thick]
        (1,1) -- (9.5,1)
        node[pos=0.80, below] {$b'=1$};

    \draw[traj] (9,8) -- (9,7);
    \draw[traj] (9,7) -- (8,6);
    \draw[traj] (8,6) -- (7,5);
    \draw[traj] (7,5) -- (7,4);
    \draw[traj] (7,4) -- (6,3);
    \draw[traj] (6,3) -- (5,2);
    \draw[traj] (5,2) -- (4,1);

    \foreach \x/\y in {
        9/8,9/7,8/6,7/5,7/4,6/3,5/2
    } {
        \node[waypoint] at (\x,\y) {};
    }

    \node[endpoint] at (4,1) {};

    \node[anchor=west, orange!85!black]
        at (9.12,8) {$(n,n-1)$};

    \node[anchor=south east, red!85!black]
        at (3.85,1.18) {$(\gamma,1)$};

\end{tikzpicture}

\smallskip
\textbf{(b)} Ending on $b'=1$.
\end{minipage}

\caption{The two types of iteration paths arising from
\eqref{ineq:hilb}. Each arrow represents an application
of \eqref{ineq:hilb}. The iteration terminates upon reaching
either the diagonal boundary $n'=b'$ or the horizontal
boundary $b'=1$.}
\label{fig:hilb-iteration-paths}
\end{figure}

    Therefore, summing over these two kinds of iterating paths, for $n>2$ we have
    \begin{align}\label{ineq:dyck-path-iterate}
        H_{n,\infty,n-1} \le& \sum_{\beta=1}^{n-2} \sum_{\substack{m_{\beta},\dots,m_{n-2}\ge 0 \\ \sum_{i=\beta}^{n-2} m_i = n-\beta \\ \text{ $\sum_{i=j}^{n-2} m_i < n-j$ for all $\beta<j\le n-2$ } }} \Bigg(\prod_{i=\beta}^{n-2} \Big(q^{im_i} \cdot \binom{n-\sum_{j=i+1}^{n-2}m_j}{m_i} \Big) \Bigg) \cdot H_{\beta,\infty,\beta}  \\
        \nonumber&+ \sum_{\gamma=2}^n \sum_{\substack{m_1,\dots,m_{n-2}\ge 0 \\ \sum_{i=1}^{n-2} m_i = n-\gamma \\ \text{$\sum_{i=j}^{n-2} m_i < n-j$ for all $1\le j\le n-2$} }} \Bigg(\prod_{i=1}^{n-2} \Big( q^{im_i} \cdot \binom{n-\sum_{j=i+1}^{n-2}m_j}{m_i} \Big) \Bigg) \cdot H_{\gamma,\infty,1} \\
        \nonumber = & \sum_{\beta=1}^{n-2} \sum_{\substack{m_{\beta},\dots,m_{n-2}\ge 0 \\ \sum_{i=\beta}^{n-2} m_i = n-\beta \\ \text{ $\sum_{i=j}^{n-2} m_i < n-j$ for all $\beta<j\le n-2$ } }} \Bigg(\prod_{i=\beta}^{n-2} \Big(q^{im_i} \cdot \binom{n-\sum_{j=i+1}^{n-2}m_j}{m_i} \Big) \Bigg) \cdot \Big(\frac{1}{1-q}\Big)^\beta \\
        \nonumber&+ \sum_{\gamma=2}^n \sum_{\substack{m_1,\dots,m_{n-2}\ge 0 \\ \sum_{i=1}^{n-2} m_i = n-\gamma \\ \text{$\sum_{i=j}^{n-2} m_i < n-j$ for all $1\le j\le n-2$} }} \Bigg(\prod_{i=1}^{n-2} \Big( q^{im_i} \cdot \binom{n-\sum_{j=i+1}^{n-2}m_j}{m_i} \Big) \Bigg) \cdot 1 \\
        \nonumber = & \sum_{\beta=1}^{n-2} \sum_{\substack{m_{\beta},\dots,m_{n-2}\ge 0 \\ \sum_{i=\beta}^{n-2} m_i = n-\beta \\ \text{ $\sum_{i=j}^{n-2} m_i < n-j$ for all $\beta<j\le n-2$ } }}\!\!\!\!\!\!\!\!\!\!\!\!\!\!\!\!\!\!\!\!\!\! \Bigg( q^{\sum_{i=\beta}^{n-2}im_i} \cdot \binom{n}{m_\beta,\dots,m_{n-2}, n - \sum_{i=\beta}^{n-2} m_i } \Bigg) \cdot \Big(\frac{1}{1-q}\Big)^\beta \\
        \nonumber&+ \sum_{\gamma=2}^n \sum_{\substack{m_1,\dots,m_{n-2}\ge 0 \\ \sum_{i=1}^{n-2} m_i = n-\gamma \\ \text{$\sum_{i=j}^{n-2} m_i < n-j$ for all $1\le j\le n-2$} }} \Bigg(q^{\sum_{i=1}^{n-2}im_i} \cdot \binom{n}{m_1,\dots,m_{n-2}, n - \sum_{i=1}^{n-2} m_i } \Bigg) \\
        \nonumber = & \sum_{\beta=1}^{n-2} \sum_{\substack{m_{\beta},\dots,m_{n-2}\ge 0 \\ \sum_{i=\beta}^{n-2} m_i = n-\beta \\ \text{ $\sum_{i=j}^{n-2} m_i < n-j$ for all $\beta<j\le n-2$ } }} \Bigg( q^{\sum_{i=\beta}^{n-2}im_i} \cdot \binom{n}{m_\beta,\dots,m_{n-2}, \beta } \Bigg) \cdot \Big(\frac{1}{1-q}\Big)^\beta \\
        \nonumber&+ \sum_{\gamma=2}^n \sum_{\substack{m_1,\dots,m_{n-2}\ge 0 \\ \sum_{i=1}^{n-2} m_i = n-\gamma \\ \text{$\sum_{i=j}^{n-2} m_i < n-j$ for all $1\le j\le n-2$} }} \Bigg(q^{\sum_{i=1}^{n-2}im_i} \cdot \binom{n}{m_1,\dots,m_{n-2}, \gamma } \Bigg)
    \end{align}
    where the first identity follows from Equations~\eqref{eq:boundary1} and \eqref{eq:boundary2}.

    We deal with the two outer sums on the last two rows of \eqref{ineq:dyck-path-iterate} via the two claims~\eqref{eq:path1} and \eqref{eq:path2} below, respectively.
    
    \noindent\textbf{Claim:}
    \begin{align}
        \label{eq:path1}
        &\sum_{\beta=1}^{n-2} \sum_{\substack{m_{\beta},\dots,m_{n-2}\ge 0 \\ \sum_{i=\beta}^{n-2} m_i = n-\beta \\ \text{ $\sum_{i=j}^{n-2} m_i < n-j$ for all $\beta<j\le n-2$ } }} \Bigg( q^{\sum_{i=\beta}^{n-2}im_i} \cdot \binom{n}{m_\beta,\dots,m_{n-2}, \beta } \Bigg) \cdot \Big(\frac{1}{1-q}\Big)^\beta \\
        \nonumber&= \sum_{\substack{c_1,c_2,\dots\ge 0 \\ c_0\ge 2 \\ \sum_{i=0}^\infty c_i=n \\ \text{$\sum_{i=0}^j c_i \le j+1 $ for some $1\le j\le n-2$} }} \binom{n}{c_0,c_1,\dots} \cdot q^{\sum_{i=0}^\infty ic_i}.
    \end{align}
    In fact, since
    \[\bigg(\frac{1}{1-q}\bigg)^\beta = (1+q+q^2+\dots)^\beta = \sum_{\substack{d_0,d_1,\dots\ge0 \\ \sum_{i=0}^\infty d_i=\beta}} \binom{\beta}{d_0,d_1,\dots} \cdot q^{\sum_{i=0}^\infty id_i},\]
    the left-hand-side of Equation~\eqref{eq:path1} equals
    \begin{align}\label{eq:LHS-hilb}
        &\sum_{\beta=1}^{n-2} \sum_{\substack{m_{\beta},\dots,m_{n-2}\ge 0 \\ \sum_{i=\beta}^{n-2} m_i = n-\beta \\ \text{ $\sum_{i=j}^{n-2} m_i < n-j$ for all $\beta<j\le n-2$ } }} \!\!\!\!\!\! q^{\sum_{i=\beta}^{n-2}im_i} \cdot \binom{n}{m_\beta,\dots,m_{n-2}, \beta }  \cdot \sum_{\substack{d_0,d_1,\dots\ge0 \\ \sum_{i=0}^\infty d_i=\beta}} \binom{\beta}{d_0,d_1,\dots} \cdot q^{\sum_{i=0}^\infty id_i} \\
        \nonumber&= \sum_{\beta=1}^{n-2} \sum_{\substack{m_{\beta},\dots,m_{n-2}\ge 0 \\ d_0,d_1,\dots\ge 0 \\ \sum_{i=0}^\infty d_i =\beta \\ \sum_{i=\beta}^{n-2} m_i = n-\beta \\ \text{ $\sum_{i=j}^{n-2} m_i < n-j$ for all $\beta<j\le n-2$ } }} \!\!\!\!\!\! q^{\sum_{i=\beta}^{n-2}im_i} \cdot \binom{n}{m_\beta,\dots,m_{n-2}, \beta }  \cdot \binom{\beta}{d_0,d_1,\dots} \cdot q^{\sum_{i=0}^\infty id_i} \\
        \nonumber&= \sum_{\beta=1}^{n-2} \sum_{\substack{m_{\beta},\dots,m_{n-2}\ge 0 \\ d_0,d_1,\dots\ge 0 \\ \sum_{i=0}^\infty d_i =\beta \\ \sum_{i=\beta}^{n-2} m_i = n-\beta \\ \text{ $\sum_{i=j}^{n-2} m_i < n-j$ for all $\beta<j\le n-2$ } }} \binom{n}{m_\beta,\dots,m_{n-2},d_0,d_1,\dots} \cdot q^{\sum_{i=\beta}^{n-2} im_i + \sum_{i=0}^\infty id_i} \\
        \nonumber&= \sum_{\beta=1}^{n-2} \sum_{\substack{m_{\beta},\dots,m_{n-2}\ge 0 \\ d_0,d_1,\dots\ge 0 \\ \sum_{i=0}^\infty d_i =\beta \\ \sum_{i=\beta}^{n-2} m_i = n-\beta \\ \text{ $\sum_{i=j}^{n-2} m_i < n-j$ for all $\beta<j\le n-2$ } }} \binom{n}{m_\beta,\dots,m_{n-2},d_0,d_1,\dots} \cdot q^{\sum_{i=\beta}^{n-2} (i-\beta)m_i + \sum_{i=0}^\infty (i+n-\beta)d_i} \\
        \nonumber&= \sum_{\beta=1}^{n-2} \sum_{\substack{c_0,\dots,c_{n-2-\beta}\ge 0 \\ c_{n-\beta},c_{n-\beta+1},\dots\ge 0 \\ \sum_{i=n-\beta}^\infty c_i =\beta \\ \sum_{i=0}^{n-2-\beta} c_i = n-\beta \\ \text{ $\sum_{i=j}^{n-2-\beta} c_i < n-j-\beta$ for all $0<j\le n-2-\beta$ } }} \!\!\!\!\!\!\!\!\!\!\!\!\!\!\!\!\!\!\binom{n}{c_0,\dots,c_{n-2-\beta},c_{n-\beta},c_{n-\beta+1},\dots} \cdot q^{\sum_{i=0}^{n-2-\beta} ic_i + \sum_{i=n-\beta}^\infty ic_i} \\
        \nonumber&= \sum_{\beta=1}^{n-2} \sum_{\substack{c_0,c_1,\cdots\ge 0 \\ c_{n-\beta-1} = 0 \\ \sum_{i=n-\beta}^\infty c_i =\beta \\ \sum_{i=0}^{n-2-\beta} c_i = n-\beta \\ \text{ $\sum_{i=j}^{n-2-\beta} c_i < n-j-\beta$ for all $0<j\le n-2-\beta$ } }} \binom{n}{c_0,c_1,\dots} \cdot q^{\sum_{i=0}^{\infty} ic_i}
    \end{align}
    where the third-to-last equality follows from the conditions $\sum_{i=0}^\infty d_i =\beta$ and $\sum_{i=\beta}^{n-2}m_i = n-\beta$ under the summation (which imply that $\sum_{i=\beta}^{n-2} im_i + \sum_{i=0}^\infty id_i = \sum_{i=\beta}^{n-2}(i-\beta)m_i + \sum_{i=0}^\infty (i+n-\beta)d_i + \beta \sum_{i=\beta}^{n-2}m_i + (\beta-n)\sum_{i=0}^\infty d_i = \sum_{i=\beta}^{n-2}(i-\beta)m_i + \sum_{i=0}^\infty (i+n-\beta)d_i + \beta (n-\beta) + (\beta-n)\beta = \sum_{i=\beta}^{n-2}(i-\beta)m_i + \sum_{i=0}^\infty (i+n-\beta)d_i$), the second-to-last equality is just the changes of index $m_i\mapsto c_{i-\beta}$ ($i=\beta,\dots,n-2$) and $d_i\mapsto c_{i+n-\beta}$ ($i=0,1,\dots$), and the last equality follows from inserting a new index $c_{n-\beta-1}=0$.

    For $1\le \beta\le n-2$, let $A_\beta$ be the set of integer sequences $(c_0,c_1,\dots)$ occurring in the inner summation on the last row of Equation~\eqref{eq:LHS-hilb}, and define $B_\beta$ by
    \[B_\beta\coloneqq\left\{(c_0,c_1,\dots)\in(\ZZ_{\ge 0})^\infty\,:\,\begin{aligned}
        &c_0\ge 2,\quad
        \sum_{i=0}^\infty c_i =n, \\
        &\min\{j\in[n-2]\,:\,\sum_{i=0}^j c_i \le j+1\} = n-1-\beta
    \end{aligned}\right\}.\]
    We show that $A_\beta=B_\beta$ as follows.

    Proof of $A_\beta\subseteq B_\beta$: For any $(c_0,c_1,\dots)\in A_\beta$, if $\beta=n-2$ then we have $c_0 =\sum_{i=0}^{n-2-\beta} c_i = n-\beta =2$; if $\beta<n-2$ then we have $c_0 = \sum_{i=0}^{n-2-\beta}c_i - \sum_{i=1}^{n-2-\beta}c_i = n-\beta - \sum_{i=1}^{n-2-\beta}c_i > n-\beta-(n-1-\beta) = 1$ and hence $c_0\ge 2$. Furthermore, $\sum_{i=0}^\infty c_i = \sum_{i=0}^{n-2-\beta}c_i + c_{n-1-\beta} + \sum_{i=n-\beta}^\infty c_i = (n-\beta) + 0 + \beta = n$. Finally, for any $1\le j<n-1-\beta$ we have $\sum_{i=0}^j c_i = \sum_{i=0}^{n-2-\beta} c_i - \sum_{i=j+1}^{n-2-\beta}c_i = (n-\beta)-\sum_{i=j+1}^{n-2-\beta}c_i > (n-\beta) - (n-(j+1)-\beta) = j+1$, but for $j=n-1-\beta$ we have $\sum_{i=0}^j c_i = \sum_{i=0}^{n-2-\beta}c_i + c_{n-1-\beta} = n-\beta + 0 = j + 1$. Therefore, $(c_0,c_1,\dots)\in B_\beta$. The arbitrariness of $(c_0,c_1,\dots)$ yields $A_\beta\subseteq B_\beta$.

    Proof of $B_\beta\subseteq A_\beta$: For any $(c_0,c_1,\dots)\in B_\beta$, the identity $\min\{j\in[n-2]\,:\,\sum_{i=0}^j c_i \le j+1\} = n-1-\beta$ implies that $\sum_{i=0}^{n-2-\beta}c_i> n-1-\beta$ and $\sum_{i=0}^{n-1-\beta}c_i \le n-\beta$, which result in the inequality $c_{n-1-\beta}<1$ and thus $c_{n-1-\beta}=0$. It follows that $n-1-\beta < \sum_{i=0}^{n-2-\beta}c_i \le n-\beta $, so $\sum_{i=0}^{n-2-\beta}c_i =  n-\beta$. Then the identity $\sum_{i=0}^\infty c_i = n$ means $\sum_{i=n-\beta}^\infty c_i = \sum_{i=0}^\infty c_i - \sum_{i=0}^{n-2-\beta}c_i - c_{n-1-\beta} = n - (n-\beta) - 0 = \beta$. Moreover, for all $0<j\le n-2-\beta$ we have $\sum_{i=j}^{n-2-\beta}c_i = \sum_{i=0}^{n-2-\beta}c_i - \sum_{i=0}^{j-1}c_i = (n-\beta) - \sum_{i=0}^{j-1}c_i < (n-\beta)-j = n-j-\beta$. Consequently, we have $(c_0,c_1,\dots)\in A_\beta$. The arbitrariness of $(c_0,c_1,\dots)$ means $B_\beta\subseteq A_\beta$.

    Now we have show that $A_\beta=B_\beta$, so we apply this fact to the last row of Equation~\eqref{eq:LHS-hilb}, obtaining that the left hand side of Equation~\eqref{eq:path1} equals
    \begin{align*}
        \sum_{\beta=1}^{n-2} \sum_{(c_0,c_1,\dots)\in A_\beta} \binom{n}{c_0,c_1,\dots} \cdot q^{\sum_{i=0}^\infty ic_i} = \sum_{\beta=1}^{n-2} \sum_{(c_0,c_1,\dots)\in B_\beta} \binom{n}{c_0,c_1,\dots} \cdot q^{\sum_{i=0}^\infty ic_i}
    \end{align*}
    which is equal to the right-hand-side of Equation~\eqref{eq:path1}. Therefore, Equation~\eqref{eq:path1} in the claim above is true.

    \noindent\textbf{Claim:}
    \begin{align}\label{eq:path2}
        &\sum_{\gamma=2}^n \sum_{\substack{m_1,\dots,m_{n-2}\ge 0 \\ \sum_{i=1}^{n-2} m_i = n-\gamma \\ \text{$\sum_{i=j}^{n-2} m_i < n-j$ for all $1\le j\le n-2$} }} \Bigg(q^{\sum_{i=1}^{n-2}im_i} \cdot \binom{n}{m_1,\dots,m_{n-2}, \gamma } \Bigg) \\
        \nonumber&= \sum_{\substack{c_1,c_2,\dots\ge 0 \\ c_0\ge 2 \\ \sum_{i=0}^\infty c_i=n \\ \text{$\sum_{i=0}^j c_i > j+1 $ for all $1\le j\le n-2$} }} \binom{n}{c_0,c_1,\dots} \cdot q^{\sum_{i=0}^\infty ic_i}.
    \end{align}
    In fact, for the left-hand-side of Equation~\eqref{eq:path2} we can replace $m_i$ with $c_i$ ($i=1,\dots,n-2$) and let $c_0=\gamma$. Then Equation~\eqref{eq:path2} is clear.

    Substituting Equations~\eqref{eq:path1} and \eqref{eq:path2} into the last two rows of Equation~\eqref{ineq:dyck-path-iterate} respectively, we obtain
    \begin{align*}
        H_{n,\infty,n-1} \le& \sum_{\substack{c_1,c_2,\dots\ge 0 \\ c_0\ge 2 \\ \sum_{i=0}^\infty c_i=n \\ \text{$\sum_{i=0}^j c_i \le j+1 $ for some $1\le j\le n-2$} }} \binom{n}{c_0,c_1,\dots} \cdot q^{\sum_{i=0}^\infty ic_i} \\
        &+ \sum_{\substack{c_1,c_2,\dots\ge 0 \\ c_0\ge 2 \\ \sum_{i=0}^\infty c_i=n \\ \text{$\sum_{i=0}^j c_i > j+1 $ for all $1\le j\le n-2$} }} \binom{n}{c_0,c_1,\dots} \cdot q^{\sum_{i=0}^\infty ic_i} \\
        =& \sum_{\substack{c_1,c_2,\dots\ge 0 \\ c_0\ge 2 \\ \sum_{i=0}^\infty c_i=n}} \binom{n}{c_0,c_1,\dots} \cdot q^{\sum_{i=0}^\infty ic_i} = \sum_{k=0}^{n-2} \sum_{\substack{c_1,c_2,\dots\ge 0 \\ \sum_{i=1}^\infty c_i = k \\ c_0 = n - k}} \binom{n}{c_0,c_1,\dots} \cdot q^{\sum_{i=0}^\infty ic_i} \\
        =& \sum_{k=0}^{n-2} \sum_{\substack{c_1,c_2,\dots\ge 0 \\ \sum_{i=1}^\infty c_i = k}} \binom{n}{n-k,c_1,c_2,\dots} \cdot q^{\sum_{i=1}^\infty ic_i} = \sum_{k=0}^{n-2} \sum_{\substack{c_1,c_2,\dots\ge 0 \\ \sum_{i=1}^\infty c_i = k}} \binom{n}{k} \cdot \binom{k}{c_1,c_2,\dots} \cdot q^{\sum_{i=1}^\infty ic_i} \\
        =& \sum_{k=0}^{n-2}\binom{n}{k} \cdot \sum_{\substack{c_1,c_2,\dots\ge 0 \\ \sum_{i=1}^\infty c_i = k}} \binom{k}{c_1,c_2,\dots} \cdot q^{\sum_{i=1}^\infty ic_i} = \sum_{k=0}^{n-2}\binom{n}{k} \cdot (q+q^2+q^3+\dots)^k \\
        =& \sum_{k=0}^{n-2} \binom{n}{k} \cdot \bigg(\frac{q}{1-q} \bigg)^k.
    \end{align*}
    Therefore, Lemma~\ref{lem:boundary-case-dim} implies that this inequality is actually an equality, forcing all the inequalities occurring in the iterating process before \eqref{ineq:dyck-path-iterate} to be equalities. Consequently, for all $1<b<n$ the inequality~\eqref{ineq:hilb} is actually an equality, implying that the inequality in Corollary~\ref{cor:n,infty,b-ineq} is actually an equality and finishing our proof.
\end{proof}

\section{Closed formulae for $\F_{n,a,b}$}\label{sec:F_{n,a,b}-closed}

In this section, we give a closed formula for the Frobenius character $\F_{n,a,b}$ with $a+b\ge n$. We first show that the two recursive formulas given by Propositions~\ref{prop:recursive-n,a,b} and \ref{prop:recursive-n,infty,b} and stabilization limits $\F_{n,\infty,1},\F_{n,\infty,n}$ uniquely determine $\F_{n,a,b}$, then construct an explicit quasisymmetric (actually symmetric) function $G_{n,a,b}$ satisfying the same recursive formulas and stabilization limits, and hence conclude that $\F_{n,a,b}=G_{n,a,b}$. The uniqueness is easy (see Lemma~\ref{lem:unique-recursive}), while analyzing $G_{n,a,b}$ is involved with lots of combinatorial details that we suggest to skip on a first read.

\begin{definition}\label{def:stable-limit}
    For a family of quasisymmetric functions $QF_{n,a,b}\in \qsym_n \otimes_\QQ \QQ[[q]]$, we write
    \[\lim_{a\rightarrow\infty} QF_{n,a,b} = QF_{n,\infty,b}\]
    if there exists a quasisymmetric function $QF_{n,\infty,b}\in \qsym_n \otimes_\QQ \QQ[[q]]$ such that for each $d\ge 0$ there exists $N=N(n,b,d)\ge 0$ such that for all $a>N$ we have $\langle q^d \rangle QF_{n,a,b}= \langle q^d \rangle QF_{n,\infty,b}$. Here $\langle q^d \rangle QF\in\qsym_n$ denotes the coefficient of $q^d$ in $QF\in\qsym_n\otimes_\QQ\QQ[[q]]$.
\end{definition}

\begin{lemma}\label{lem:unique-recursive}
    If a family of quasisymmetric functions $QF_{n,a,b}\in \qsym_n \otimes_\QQ \QQ[[q]]$ ($n,a,b>0,a+b\ge n,b\le n$) satisfies that:
    \begin{itemize}
    \item For $0<b \le n$, we have that
    \[\lim_{a\rightarrow\infty} QF_{n,a,b} = QF_{n,\infty,b}\]
    exists,
        \item For any $n,a,b>0$ such that $a+b>n,b<n$ (which implies $a-1>0$), we have
        \[QF_{n,a,b} = \sum_{m=0}^{b-1} q^{m(a-1)} \cdot h_m \cdot QF_{n-m,a-1,b-m},\]
        \item For any $1<b<n$, we have
        \[QF_{n,\infty,b} = \sum_{m=0}^{n-b+1} q^{m(b-1)} \cdot h_m \cdot QF_{n-m,\infty,b-1},\]
        \item For any $n>0$, we have
        \[QF_{n,\infty,n} = h_n\left[\frac{\xxx}{1-q}\right],\] and
        \item For any $n>1$, we have
        \[QF_{n,\infty,1}=h_n,\]
    \end{itemize}
    then for all $n,a,b>0$ with $a+b\ge n,b<n$ we have $QF_{n,a,b} = \F_{n,a,b}$.
\end{lemma}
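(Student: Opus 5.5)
Our approach is to check that the true family $\F_{n,a,b}$ satisfies the same five conditions, and then show the conditions determine the values. First, $\F$ itself satisfies all of them on the indicated ranges. The second condition is Proposition~\ref{prop:recursive-n,a,b} and the third is Proposition~\ref{prop:recursive-n,infty,b}. The fourth is Proposition~\ref{prop:Thibon-poly}, since $I_{n,\infty,n}=0$. For the fifth, $R_{n,\infty,1}\cong\QQ$ for $n>1$ by Lemma~\ref{lem:higher-e}, as in the proof of Proposition~\ref{prop:recursive-n,infty,b}.

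For the first condition (the limit exists and equals $\F_{n,\infty,b}$), note that $I_{n,a,b}=I_{n,\infty,b}+\langle x_i^a\rangle$. In degrees $d<a$ the generators $x_i^a$ contribute nothing, so $(R_{n,a,b})_d=(R_{n,\infty,b})_d$ as $\symm_n$-modules. Hence $\langle q^d\rangle\F_{n,a,b}=\langle q^d\rangle\F_{n,\infty,b}$ for $a>d$, and the limit exists with value $\F_{n,\infty,b}$. Thus it suffices to show that any two families satisfying the hypotheses coincide on the stated range. Set $D_{n,a,b}=QF_{n,a,b}-\F_{n,a,b}$, extended also to $a=\infty$ by the respective limits.

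\emph{Step 1: the $a=\infty$ layer.} We show $D_{n,\infty,b}=0$ for all $1\le b\le n$ by induction on $n+b$. The base cases $b=n$ and ($b=1$, $n>1$) hold by the fourth and fifth conditions. For $1<b<n$, the third condition expresses $D_{n,\infty,b}$ as a combination of $D_{n-m,\infty,b-1}$, each of smaller $n+b$ and with $1\le b-1\le n-m$ since $m\le n-b+1$. So it vanishes. For $n=1$ only $b=1=n$ occurs, which is covered by the fourth condition.

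\emph{Step 2: descend in $a$.} Fix $n,b$ with $b<n$ and $a\ge n-b$, and fix a degree $d$. Since $D_{n,a,b}$ vanishes for $a$ large in degree $d$ by Step 1, we want to propagate this to smaller $a$ by downward induction. The second condition can be rewritten as
\[D_{n,a-1,b}=D_{n,a,b}-\sum_{m=1}^{b-1}q^{m(a-1)}h_m D_{n-m,a-1,b-m}.\]
This expresses the $(a-1)$-term through the $a$-term and terms with smaller $n$; the condition $a+b>n$ ensures $(n-m)+(b-m)$ data stay in range, with $(a-1)+(b-m)\ge n-m$. So we induct on $n$ first and then, for fixed $n$, descend in $a$ from large $a$ (where $D$ vanishes in degree $d$) down to $a=n-b$. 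Cases with $b-m$ equal to $0$ do not occur since $m\le b-1$.

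This descent is the main obstacle: it must be done degree by degree, since the vanishing at large $a$ is only coefficientwise. Concretely, for a fixed degree $d$, choose $a_0>\max(N(n,b,d),d)$ for both families. By Step 1, $\langle q^d\rangle D_{n,a_0,b}=0$. The displayed identity then descends one step at a time to $\langle q^d\rangle D_{n,a,b}=0$ for every $a\ge n-b$, using the inductive hypothesis that $D_{n',a',b'}=0$ in all degrees for $n'<n$. Since $d$ is arbitrary, $D_{n,a,b}=0$ and hence $QF_{n,a,b}=\F_{n,a,b}$ throughout the range.
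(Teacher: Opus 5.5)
Your proof is correct. It differs from the paper's in how the finite-$a$ recursion is used.

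The paper inverts the second condition using $\sum_{i}(-1)^i e_i h_{r-i}=\delta_{r,0}$. This gives the upward formula $QF_{n,a,b}=\sum_{m=0}^{b-1}(-1)^m q^{ma}e_m\,QF_{n-m,a+1,b-m}$. The paper then iterates this formula $M(n,b,d)$ times, so that every term lands in the stable range. That expresses $\langle q^d\rangle QF_{n,a,b}$ through the limits $QF_{n',\infty,b'}$, which the third condition and the boundary values then pin down.

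You instead work with the difference $D=QF-\F$ and induct on $n$. Every non-leading term of the recursion has strictly smaller $n$, so the inductive hypothesis kills those terms. The recursion then collapses to $D_{n,a-1,b}=D_{n,a,b}$. Hence $D_{n,a,b}$ is independent of $a$, and coefficientwise it must equal its stable limit $D_{n,\infty,b}=0$.

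Your route avoids the $e$--$h$ inversion and the bookkeeping of the iteration count. You also explicitly check two hypotheses for $\F$ that the paper's proof leaves implicit: the limit condition, via $(I_{n,a,b})_d=(I_{n,\infty,b})_d$ for $d<a$, and the boundary value $\F_{n,\infty,1}=h_n$.

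The paper's route, in exchange, produces an explicit upward recursion. The paper reuses that same inversion later, in the proof of Corollary~\ref{cor:delta}.
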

\begin{proof}
    Note that for all $n,a,b>0$ with $a+b\ge n,b<n$ we have
    \begin{align}\label{eq:inverse}
        &\sum_{m=0}^{b-1} (-1)^m \cdot q^{ma} \cdot e_m \cdot QF_{n-m,a+1,b-m} = \sum_{m=0}^{b-1} (-1)^m \cdot q^{ma} \cdot e_m \cdot \sum_{r=0}^{b-m-1} q^{ra} \cdot h_r \cdot QF_{n-m-r,a,b-m-r} \\
        &= \sum_{m=0}^{b-1} \sum_{r=0}^{b-m-1} (-1)^m \cdot q^{(m+r)a} \cdot e_m \cdot h_r \cdot QF_{n-m-r,a,b-m-r} \notag\\
        &= \sum_{\substack{m,r\ge 0 \\ m+r\le b-1}} (-1)^m \cdot q^{(m+r)a} \cdot e_m \cdot h_r \cdot QF_{n-m-r,a,b-m-r} \notag\\
        &= \sum_{k=0}^{b-1} \sum_{\substack{m,r\ge 0 \\ m+r=k}} (-1)^m \cdot q^{(m+r)a} \cdot e_m \cdot h_r \cdot QF_{n-m-r,a,b-m-r} \notag\\
        &= \sum_{k=0}^{b-1} \sum_{\substack{m,r\ge 0 \\ m+r=k}} (-1)^m \cdot q^{ka} \cdot e_m \cdot h_r \cdot QF_{n-k,a,b-k} = \sum_{k=0}^{b-1} \Bigg(\sum_{\substack{m,r\ge 0 \\ m+r=k}} (-1)^m \cdot  e_m \cdot h_r\Bigg) \cdot q^{ka} \cdot QF_{n-k,a,b-k} \notag\\
        &= \sum_{k=0}^{b-1} \delta_{k,0} \cdot q^{ka} \cdot QF_{n-k,a,b-k} = QF_{n,a,b}.\notag
    \end{align}
     For each $n,a,b>0$ with $a+b\ge n$ and $b<n$, this formula computes $QF_{n,a,b}$ using $QF_{n',a',b'}$ with $0<n'\le n,0<b'\le b,a'\ge a+1$ such that $b'<n'$. Therefore, for $d\ge 0$ taking $
M(n,b,d)=
\max_{\substack{0<n'\le n\\0<b'\le b,\ b'<n'\\0\le d'\le d}}
N(n',b',d').
$ with $N(n',b',d')$ defined in Definition~\ref{def:stable-limit} and iterating this formula~\eqref{eq:inverse} for $M(n,b,d)$ times give $\langle q^d \rangle QF_{n,a,b}$ using $QF_{n',\infty,b'}$ with $0<n'\le n,0<b'\le b$ such that $b'<n'$. As a result, the family of quasisymmetric functions $\{QF_{n,a,b}\}_{n,a,b>0,a+b\ge n,b<n}$ is uniquely determined by the family of quasisymmetric functions $\{QF_{n,\infty,b}\}_{0<b< n}$. In addition, the recursive formula for $QF_{n,\infty,b}$ and the boundary conditions for $QF_{n,\infty,b}$ with $b=1$ or $b=n$ above also uniquely determine $QF_{n,\infty,b}$ with $0<b<n$. Consequently, all the properties above together uniquely determine $QF_{n,a,b}$ ($n,a,b>0,a+b\ge n,b<n$). Since the same properties hold for $\F_{n,a,b}$ as well (see Propositions~\ref{prop:Thibon-poly}, \ref{prop:recursive-n,a,b} and \ref{prop:recursive-n,infty,b}), we deduce that $\F_{n,a,b} = QF_{n,a,b}$ for all $n,a,b>0$ with $a+b\ge n$ and $b<n$.
\end{proof}

We will see that $\F_{n,a,b} = G_{n,a,b}$ where $G_{n,a,b}$ is defined below. (See Theorem~\ref{thm:closed-formula-n,a,b}.)
\begin{definition}\label{def:G}
    For all $n,a,b>0$ such that $b<n$ and $a+b\ge n$, define
    \[G_{n,a,b}\coloneqq \sum_{r=0}^{b-1}h_r\Bigg[\Bigg(\sum_{k=n-b}^{a-1}q^k\Bigg)\cdot \xxx\Bigg] \cdot \sum_{\substack{f\in\widetilde{S}_{n-r}^{0} \\ \text{$i-n+b<f(i)<i+b-r$ for all $i$}}} q^{\ell(f)} \cdot F_{\DES{f\mid_{[n-r]}},n-r}.\]
    In particular, we use the conventions that:
    \begin{itemize}
        \item for $0<b\le n$ we let $G_{n,0,b} \coloneqq 0$, and
        \item for $n,a>0$ we let $G_{n,a,n} \coloneqq h_n[(\sum_{k=0}^{a-1} q^k) \cdot \xxx]$.
    \end{itemize}
\end{definition}

\begin{lemma}\label{lem:trivial}
    For $n>1,a>0$ with $a+1\ge n$, we have
    \[G_{n,a,1} = h_n.\]
\end{lemma}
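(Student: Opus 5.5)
Since $b=1$, the outer sum in Definition~\ref{def:G} has only the term $r=0$, and $h_0[\cdots]=1$. The plan is therefore to compute
\[
G_{n,a,1}=\sum_{\substack{f\in\widetilde{S}_{n}^{0}\\ i-n+1<f(i)<i+1\text{ for all } i}} q^{\ell(f)}\, F_{\DES{f\mid_{[n]}},n}
\]
directly, and to show that the index set contains exactly one affine permutation, namely the identity. The hypothesis $a+1\ge n$ is needed only so that $G_{n,a,1}$ is defined. The parameter $a$ does not occur in the $r=0$ term at all.

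\textbf{Step 1.} Each admissible $f$ satisfies $f(i)\le i$ for all $i$. Since $f\in\widetilde{S}_n^0$, the average displacement is zero, so $\sum_{i=1}^n (f(i)-i)=0$. Every summand is $\le 0$, hence every summand vanishes and $f(i)=i$ for $i\in[n]$. By periodicity $f$ is the identity on $\ZZ$. Conversely, the identity satisfies $i-n+1<i<i+1$ because $n>1$, so it does lie in the index set.

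\textbf{Step 2.} For the identity, $\ell(f)=0$, since an increasing bijection has no inversions; this also follows from \eqref{eq:bg-affine-window-length}, because $\lfloor (j-i)/n\rfloor=0$ for $1\le i<j\le n$. The window word $12\cdots n$ has no descents, so the summand is $F_{\varnothing,n}=h_n$, as recorded in Subsection~\ref{subsec:quasisym}. This gives $G_{n,a,1}=h_n$.

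There is no real obstacle. The only point needing care is Step 1, where the strict upper bound $f(i)<i+1$ combines with the zero-average condition to force the identity. The lower bound $f(i)>i-n+1$ plays no role beyond confirming that the identity is admissible, and that is where $n>1$ enters.
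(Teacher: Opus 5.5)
Your proof is correct and follows essentially the same route as the paper: it keeps only the $r=0$ term, uses $f(i)\le i$ together with zero average displacement to force $f$ to be the identity, and reads off $q^{0}F_{\varnothing,n}=h_n$. The additional checks you include — that the identity is admissible because $n>1$, and that $\ell(\mathrm{id})=0$ — are details the paper leaves implicit.
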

\begin{proof}
    By Definition~\ref{def:G}, we have
    \begin{align*}
    &G_{n,a,1} = \sum_{\substack{f\in\widetilde{S}_n^0 \\ \text{$i-n+1 < f(i) < i+1$ for all $i$}}} q^{\ell(f)} \cdot F_{\DES{f\mid_{[n]}},n} \\
    &= \sum_{\substack{f\in\widetilde{S}_n^0 \\ \text{$i-n+1 < f(i) \le i$ for all $i$}}} q^{\ell(f)} \cdot F_{\DES{f\mid_{[n]}},n} = q^0 \cdot F_{\varnothing,n} = h_n
    \end{align*}
    where the second-to-last equality follows from the fact that $f(i) = i$ ($i\in\ZZ$) is the unique affine permutation satisfying all the conditions under the summation notation. In fact, the condition $f(i)\le i$ implies that $\frac{1}{n}\cdot\sum_{i=1}^n (f(i) - i) \le 0$ where the equality holds if and only if $f(i) = i$ for all $i\in[n]$ (which means that $f(i)=i$ for all $i\in\ZZ$ by periodicity).
\end{proof}

We show that $G_{n,a,b}$ satisfies the same recursive formula as $\F_{n,a,b}$.
\begin{lemma}\label{lem:not-dual-recurrsive}
    For $n,a,b>0$ with $b<n$ and $a+b>n$, we have
    \[G_{n,a,b} = \sum_{m=0}^{b-1} q^{m(a-1)} \cdot h_m \cdot G_{n-m,a-1,b-m}.\]
\end{lemma}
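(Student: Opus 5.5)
The plan is to reorganize Definition~\ref{def:G} so that the affine-permutation factor depends only on data that the recursion leaves unchanged. Once that is done, the recursion should reduce to the addition formula for $h_r$ under plethysm. For integers $N>c>0$ write
\[
A_{N,c}\coloneqq\sum_{\substack{f\in\widetilde{S}_{N}^{0}\\ i-(N-c)<f(i)<i+c\text{ for all }i}} q^{\ell(f)}\cdot F_{\DES{f\mid_{[N]}},N}.
\]
In the $r$-th summand of $G_{n,a,b}$, set $N=n-r$ and $c=b-r$. Then $N-c=n-b$, and the summation condition $i-n+b<f(i)<i+b-r$ is exactly $i-(N-c)<f(i)<i+c$. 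Writing $Y_{a}\coloneqq\sum_{k=n-b}^{a-1}q^k$, this gives
\[
G_{n,a,b}=\sum_{r=0}^{b-1}h_r[Y_a\,\xxx]\cdot A_{n-r,b-r}.
\]
The key observation is that $A$ depends only on $(n-r,b-r)$, and that $Y_a$ depends on $(n,b)$ only through $n-b$.

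Next I will apply this formula to each term on the right-hand side. For $0\le m\le b-1$, the triple $(n-m,a-1,b-m)$ satisfies the hypotheses of Definition~\ref{def:G}. Indeed, $a-1\ge n-b\ge1$ because $a+b>n$; also $b-m<n-m$; and $(a-1)+(b-m)\ge n-m$. So we are never in the conventional cases $G_{\cdot,0,\cdot}$ or $G_{N,a,N}$. Since $(n-m)-(b-m)=n-b$, the relevant $Y$ is $Y_{a-1}=\sum_{k=n-b}^{a-2}q^k$. This sum is empty, hence $0$, when $a-1=n-b$; in that case $h_s[0\cdot\xxx]=\delta_{s,0}$, which is consistent with the formula. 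Hence
\[
\sum_{m=0}^{b-1}q^{m(a-1)}h_m\,G_{n-m,a-1,b-m}=\sum_{m=0}^{b-1}\sum_{s=0}^{b-m-1}q^{m(a-1)}h_m\,h_s[Y_{a-1}\xxx]\,A_{n-m-s,b-m-s}.
\]

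Now set $r=m+s$, which ranges over $0\le r\le b-1$. Homogeneity of degree $m$ gives $q^{m(a-1)}h_m=h_m[q^{a-1}\xxx]$. The addition formula $h_r[A+B]=\sum_{m}h_m[A]h_{r-m}[B]$ then collapses the inner sum:
\[
\sum_{m+s=r}h_m[q^{a-1}\xxx]\,h_s[Y_{a-1}\xxx]=h_r[(Y_{a-1}+q^{a-1})\xxx]=h_r[Y_a\xxx].
\]
The result is exactly the rewritten form of $G_{n,a,b}$.

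There is no genuinely hard step. The point requiring care is the bookkeeping: checking that the constraint $i-n+b<f(i)<i+b-r$ really depends only on $(n-r,b-r)$ given that $n-b$ is fixed, and that the index ranges $0\le s\le b-m-1$ and $0\le m\le b-1$ recombine to exactly $0\le r\le b-1$ with every pair $(m,s)$ with $m+s=r$ appearing. I will also verify the boundary case $a-1=n-b$ separately, to make sure the empty plethystic sum is handled by the convention $h_0=1$ and $h_s[0]=0$ for $s>0$.
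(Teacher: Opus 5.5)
Your proposal is correct and follows essentially the same route as the paper: substitute the definition of $G_{n-m,a-1,b-m}$, regroup by $r=m+s$ (the affine-permutation sum depends only on $m+s$ because $n-b$ is unchanged), rewrite $q^{m(a-1)}h_m=h_m[q^{a-1}\xxx]$, and collapse the sum with the plethystic addition formula. Your explicit check that each triple $(n-m,a-1,b-m)$ lies in the non-conventional range of Definition~\ref{def:G}, including the empty-alphabet case $a-1=n-b$, is a detail the paper leaves implicit.
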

\begin{proof}
    We begin with computing the right-hand-side:
    \begin{align*}
        &\sum_{m=0}^{b-1} q^{m(a-1)} \cdot h_m \cdot G_{n-m,a-1,b-m} = \\
        &\sum_{m=0}^{b-1} q^{m(a-1)} \cdot h_m \cdot \sum_{r=0}^{b-m-1} h_r \Bigg[\Bigg(\sum_{k=n-b}^{a-2} q^k \Bigg) \cdot \xxx\Bigg] \cdot\!\!\!\!\!\!\!\!\!\!\!\!\!\!\!\!\!\!\!\! \sum_{\substack{f\in\widetilde{S}_{n-m-r}^0 \\ \text{$i-n+b < f(i) < i+b-m-r$ for all $i$}}}\!\!\!\!\!\!\!\!\!\!\!\! q^{\ell(f)} \cdot F_{\DES{f\mid_{[n-m-r]}},n-m-r}\\
        & = \sum_{\substack{m,r\ge 0 \\ m+r\le b-1}} q^{m(a-1)} \cdot h_m \cdot h_r\Bigg[\Bigg(\sum_{k=n-b}^{a-2} q^k \Bigg) \cdot \xxx\Bigg]\cdot\!\!\!\!\!\!\!\!\!\!\!\!\!\!\!\!\!\!\!\! \sum_{\substack{f\in\widetilde{S}_{n-m-r}^0 \\ \text{$i-n+b < f(i) < i+b-m-r$ for all $i$}}}\!\!\!\!\!\!\!\!\!\!\!\! q^{\ell(f)} \cdot F_{\DES{f\mid_{[n-m-r]}},n-m-r} \\
        & = \sum_{\substack{m,r\ge 0 \\ m+r\le b-1}} h_m[q^{a-1}\cdot \xxx] \cdot h_r\Bigg[\Bigg(\sum_{k=n-b}^{a-2} q^k \Bigg) \cdot \xxx\Bigg]\cdot\!\!\!\!\!\!\!\!\!\!\!\!\!\!\!\!\!\!\!\! \sum_{\substack{f\in\widetilde{S}_{n-m-r}^0 \\ \text{$i-n+b < f(i) < i+b-m-r$ for all $i$}}}\!\!\!\!\!\!\!\!\!\!\!\! q^{\ell(f)} \cdot F_{\DES{f\mid_{[n-m-r]}},n-m-r} \\
        & = \sum_{d=0}^{b-1} \sum_{\substack{m,r\ge 0 \\ m+r=d}} h_m[q^{a-1}\cdot \xxx] \cdot h_r\Bigg[\Bigg(\sum_{k=n-b}^{a-2} q^k \Bigg) \cdot \xxx\Bigg]\cdot\!\!\!\!\!\!\!\!\!\!\!\!\!\!\!\!\!\!\!\! \!\!\sum_{\substack{f\in\widetilde{S}_{n-m-r}^0 \\ \text{$i-n+b < f(i) < i+b-m-r$ for all $i$}}}\!\!\!\!\!\!\!\!\!\!\!\! q^{\ell(f)} \cdot F_{\DES{f\mid_{[n-m-r]}},n-m-r} \\
        & = \sum_{d=0}^{b-1} \sum_{\substack{m,r\ge 0 \\ m+r=d}} h_m[q^{a-1}\cdot \xxx] \cdot h_r\Bigg[\Bigg(\sum_{k=n-b}^{a-2} q^k \Bigg) \cdot \xxx\Bigg]\cdot\!\!\!\!\!\!\!\!\!\!\!\!\!\!\!\!\!\!\!\! \!\!\sum_{\substack{f\in\widetilde{S}_{n-d}^0 \\ \text{$i-n+b < f(i) < i+b-d$ for all $i$}}}\!\!\!\!\!\!\!\!\!\!\!\! q^{\ell(f)} \cdot F_{\DES{f\mid_{[n-d]}},n-d} \\
        & = \sum_{d=0}^{b-1} h_d\Bigg[\Bigg(\sum_{k=n-b}^{a-1} q^k \Bigg) \cdot \xxx\Bigg]\cdot\!\!\!\!\!\!\!\!\!\!\!\!\!\!\!\!\!\!\!\! \!\!\sum_{\substack{f\in\widetilde{S}_{n-d}^0 \\ \text{$i-n+b < f(i) < i+b-d$ for all $i$}}}\!\!\!\!\!\!\!\!\!\!\!\! q^{\ell(f)} \cdot F_{\DES{f\mid_{[n-d]}},n-d} = G_{n,a,b},
    \end{align*}
    which concludes our proof.
\end{proof}

We state some technical definitions necessary for the proof of the following results.
\begin{definition}\label{def:fix-std}
    For any injection $w\,:\,[n]\rightarrow \ZZ$ (we also regard it as a word in the alphabet $\ZZ$ of length $n$ without repeated letters), define
    \begin{align*}
        \FIX{w} &\coloneqq \{i\in[n] \,:\, w(i) = i\}, \\
        \fix{w} &\coloneqq \lvert \FIX{w} \rvert.
    \end{align*}
    Let $\std{w}\in\symm_n$ be the unique permutation such that for all $1\le i<j\le n$ we have that $\std{w}(i)<\std{w}(j)$ if and only if $w(i)<w(j)$.
\end{definition}

\begin{lemma}\label{lem:affine-fixed-point-insertion}
    Let $n,K>0$, let $\varepsilon\in\{-1,1\}$, and let
    $\xi\in\{0,1\}$.  Suppose that
    \[
        K\le n+\xi-1.
    \]
    Then
    \begin{align}
        &\sum_{r=0}^{n-K+\xi-1}q^{rK}h_r
        \sum_{\substack{
            u\in\widetilde S_{n-r}^{\varepsilon K}\\
            \text{$0<\varepsilon(u(i)-i)<n-r+\xi$ for all $i$}
        }}
        q^{\ell(u)}
        F_{\DES{u\mid_{[n-r]}},n-r}
        \notag\\
        &\qquad\qquad=
        \sum_{\substack{
            w\in\widetilde S_n^{\varepsilon K}\\
            \text{$0\le\varepsilon(w(i)-i)<n+\xi$ for all $i$}
        }}
        q^{\ell(w)}
        F_{\DES{w\mid_{[n]}},n}.
        \label{eq:affine-fixed-point-insertion}
    \end{align}
\end{lemma}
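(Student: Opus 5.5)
The plan is to prove \eqref{eq:affine-fixed-point-insertion} by splitting the right-hand side according to the fixed points of $w$. I treat $\varepsilon=1$ in detail. The case $\varepsilon=-1$ is the mirror image: the inequalities reverse, and the inserted fixed-point letters behave as letters \emph{larger} than all others rather than smaller, which is again compatible with the product $h_r\cdot F$ because $h_r$ is the single fundamental $F_{\varnothing,r}$ in either role.

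For $w$ on the right, let $T=\FIX{w}\subseteq[n]$ and $r=|T|$, extended periodically. Delete all positions $\equiv T \pmod n$ from $\ZZ$ together with their values, and relabel the remaining positions and values order-preservingly by $\ZZ$. This gives an affine permutation $u$ of period $n-r$ with no fixed points. Since each deleted value equals its position, $u(j)-j$ is $w(j)-j$ minus the number of fixed points $t$ with $j<t<w(j)$. In particular $u(j)>j$ always, because a non-fixed $w(j)>j$ with $w(j)\not\equiv t$ forces $u(j)\ge j+1$ after relabelling. I will check three things about $u$. First, it has average displacement $K$: the number of arcs crossing a cut is invariant, so the relation between crossing counts and average displacement is preserved. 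Second, $u(j)-j<n-r+\xi$. Third, conversely, given any $T\in\binom{[n]}{r}$ and any admissible $u$, inserting fixed points at the positions $T$ produces an admissible $w$. The bound check is the step I expect to be the main obstacle.

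For the bounds I would use the crossing-number description. For an affine permutation with average displacement $K$ and all displacements $\ge0$, each cut between $i$ and $i+1$ is crossed by exactly $K$ arcs $j\le i<w(j)$. Because there are no arcs going backward, the count $\#\{j\le i: w(j)>i\}-\#\{j>i:w(j)\le i\}=K$ reduces to its first term. An arc from $j$ of length $L$ passes over $L-1$ intermediate positions, and every intermediate position is either a fixed point or the start of another arc crossing there. Comparing the lengths $L$ in $w$ and in $u$ should then convert $w(j)-j<n+\xi$ exactly into $u(j)-j<n-r+\xi$, since the fixed points over which an arc passes are exactly the ones removed. The same crossing count gives the length identity $\ell(w)=\ell(u)+rK$. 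Each fixed point $t$ forms an inversion with precisely the $K$ arcs passing over it, and fixed points are never inverted with each other. All remaining inversions correspond bijectively to inversions of $u$. This uses the inversion description of $\ell$ rather than \eqref{eq:bg-affine-window-length}. Finally, the range $0\le r\le n-K+\xi-1$ appears because the set of admissible $u\in\widetilde S^{K}_{n-r}$ is empty unless $K\le n-r+\xi-1$: averaging displacements that are all $<n-r+\xi$ gives this. Hence the sum may run over all $r$, and the hypothesis $K\le n+\xi-1$ only ensures the $r=0$ term is meaningful.

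For descents, I compare consecutive letters of $w|_{[n]}$. At a fixed point $i$ followed by anything at $i+1$, we have $w(i)=i<i+1\le w(i+1)$, so it is an ascent. At a non-fixed $i-1$ followed by a fixed point $i$, we have $w(i-1)\ge i-1$, $w(i-1)\ne i-1$, and $w(i-1)\not\equiv i$, so $w(i-1)>i$ and it is a descent. Between two non-fixed letters, the relative order is that of $u$. Thus $\DES{w|_{[n]}}$ equals the descent set of the shuffle of the word $u|_{[n-r]}$ with an increasing word of $r$ new letters smaller than every letter of $u$, placed in the positions $T$. Summing over all $T\in\binom{[n]}{r}$ runs over all such shuffles. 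By the shuffle rule for fundamental quasisymmetric functions this sum is $F_{\varnothing,r}F_{\DES{u|_{[n-r]}},n-r}=h_r F_{\DES{u|_{[n-r]}},n-r}$. Combining this with the weight $q^{\ell(u)+rK}$ and summing over $r$ and $u$ yields the left-hand side.
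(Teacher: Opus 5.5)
Your proposal is correct and follows essentially the same route as the paper. Both arguments stratify by the fixed-point set, compress or insert order-preservingly, obtain $\ell(w)=\ell(u)+rK$ from the constant crossing number $K$, and identify the descents with a shuffle of $u|_{[n-r]}$ with an increasing word of smaller letters (resp.\ larger letters when $\varepsilon=-1$). Two differences are minor: you derive $\operatorname{av}(u)=K$ from crossing-count invariance rather than from the paper's decomposition $u(j)=\theta(j)+\lambda_jN$, and the bound step you flag as the main obstacle is routine, since each half-open window of $n$ consecutive integers contains exactly $r$ inserted fixed points (equivalently, $\rho_P$ is monotone with $\rho_P(i+n)=\rho_P(i)+n-r$).
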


\begin{proof}
    We first stratify the summation range on right-hand side of \eqref{eq:affine-fixed-point-insertion} according to $\fix{w\mid_{[n]}}$, and we will see that this statistic corresponds to the index $r$ on the left-hand side. With this goal, we fix an integer $r\ge0$, put
    \[
        N\coloneqq n-r,
    \] and construct a bijection
    \begin{align}
        &\binom{[n]}r
        \times
        \left\{
            u\in\widetilde S_N^{\varepsilon K}:
            \text{$0<\varepsilon(u(i)-i)<N+\xi$ for all $i$}
        \right\}
        \notag\\[-1mm]
        &\qquad\longleftrightarrow
        \left\{
        \begin{array}{c}
            w\in\widetilde S_n^{\varepsilon K}:
            \text{$0\le\varepsilon(w(i)-i)<n+\xi$ for all $i$},\\
            \fix{w\mid_{[n]}}=r
        \end{array}
        \right\}.
        \label{bij:oriented-fixed-point-insertion}
    \end{align}
    
    For $P\in\binom{[n]}r$, let
    \[
        P^{\mathrm{per}}\coloneqq P+n\ZZ.
    \]
    Write
    \[
        [n]\setminus P
        =
        \{\gamma_1<\gamma_2<\cdots<\gamma_N\},
    \]
    and define the strictly increasing bijection
    \[
        \rho_P:
        \ZZ\setminus P^{\mathrm{per}}
        \longrightarrow
        \ZZ
    \]
    by
    \[
        \rho_P(\gamma_j+\nu n)
        \coloneqq
        j+\nu N
        \qquad
        (j\in[N],\ \nu\in\ZZ).
    \]
    In particular,
    \begin{equation}\label{eq:oriented-compression-periodicity}
        \rho_P(i+n)=\rho_P(i)+N
        \qquad
        (i\notin P^{\mathrm{per}}).
    \end{equation}

    Let
    \[
        u\in\widetilde S_N^{\varepsilon K}
    \]
    satisfy
    \[
        0<\varepsilon(u(i)-i)<N+\xi
        \qquad(i\in\ZZ).
    \]
    Define its insertion along $P$ by
    \begin{equation}\label{eq:oriented-insertion-map}
        \iota_P(u)(i)
        \coloneqq
        \begin{cases}
            i,
            &i\in P^{\mathrm{per}},\\[1mm]
            \rho_P^{-1}\bigl(u(\rho_P(i))\bigr),
            &i\notin P^{\mathrm{per}}.
        \end{cases}
    \end{equation}

    Equation~\eqref{eq:oriented-compression-periodicity} and the
    periodicity of $u$ imply
    \[
        \iota_P(u)(i+n)=\iota_P(u)(i)+n.
    \]
    Indeed, if $i\notin P^{\mathrm{per}}$, then
    \begin{align*}
        \iota_P(u)(i+n)
        &=
        \rho_P^{-1}\bigl(u(\rho_P(i+n))\bigr)\\
        &=
        \rho_P^{-1}\bigl(u(\rho_P(i)+N)\bigr)\\
        &=
        \rho_P^{-1}\bigl(u(\rho_P(i))+N\bigr)\\
        &=
        \rho_P^{-1}\bigl(u(\rho_P(i))\bigr)+n\\
        &=
        \iota_P(u)(i)+n.
    \end{align*}
    The equality is immediate when $i\in P^{\mathrm{per}}$.

    The map $\iota_P(u)$ is the identity on $P^{\mathrm{per}}$.
    On $\ZZ\setminus P^{\mathrm{per}}$, it is conjugate to the
    bijection $u$ via $\rho_P$.  Hence $\iota_P(u)$ is a bijection
    of $\ZZ$.

    We next check the bounds.  Let $i\notin P^{\mathrm{per}}$ and put
    $j=\rho_P(i)$.

    If $\varepsilon=1$, then
    \[
        j<u(j)<j+N+\xi.
    \]
    When $\xi=0$, monotonicity and \eqref{eq:oriented-compression-periodicity} give
    \[
        i<\iota_P(u)(i)<i+n.
    \]
    When $\xi=1$, integrality gives $u(j)\le j+N$, and hence (monotonicity and \eqref{eq:oriented-compression-periodicity} yield)
    \[
        i<\iota_P(u)(i)\le i+n<i+n+1.
    \]
    Thus in both cases
    \[
        0<\iota_P(u)(i)-i<n+\xi.
    \]

    If $\varepsilon=-1$, then
    \[
        j-N-\xi<u(j)<j.
    \]
    When $\xi=0$, monotonicity and \eqref{eq:oriented-compression-periodicity} give
    \[
        i-n<\iota_P(u)(i)<i.
    \]
    When $\xi=1$, integrality gives $u(j)\ge j-N$, and hence (monotonicity and \eqref{eq:oriented-compression-periodicity} yield)
    \[
        i-n-1<i-n\le\iota_P(u)(i)<i.
    \]
    Thus in both cases
    \[
        0<i-\iota_P(u)(i)<n+\xi.
    \]

    Combining the two possibilities for $\varepsilon$, we obtain
    \[
        0<
        \varepsilon\bigl(\iota_P(u)(i)-i\bigr)
        <n+\xi
        \qquad(i\notin P^{\mathrm{per}}).
    \]
    Together with the first line of
    \eqref{eq:oriented-insertion-map}, this gives
    \begin{equation}\label{eq:oriented-inserted-bounds}
        0\le
        \varepsilon\bigl(\iota_P(u)(i)-i\bigr)
        <n+\xi
        \qquad(i\in\ZZ),
    \end{equation}
    and also
    \begin{equation}\label{eq:oriented-inserted-fixed-set}
        \FIX{\iota_P(u)\mid_{[n]}}=P.
    \end{equation}

    We now verify the average displacement.  Since $u$ is an affine
    permutation of period $N$, there exist
    $\theta\in\symm_N$ and integers
    $\lambda_1,\ldots,\lambda_N$ such that
    \[
        u(j)=\theta(j)+\lambda_jN
        \qquad(j\in[N]).
    \]
    Because $u\in\widetilde S_N^{\varepsilon K}$ and
    $\sum_{j=1}^N\theta(j)=\sum_{j=1}^Nj$, we have
    \begin{equation}\label{eq:oriented-type-lambda}
        \varepsilon K
        =\operatorname{av}(u)=
        \frac1N\sum_{j=1}^N\bigl(u(j)-j\bigr)
        =
        \sum_{j=1}^N\lambda_j.
    \end{equation}
    By the definition of $\rho_P$,
    \[
        \iota_P(u)(\gamma_j)
        =
        \gamma_{\theta(j)}+\lambda_jn.
    \]
    The positions in $P$ contribute zero to the total displacement $\sum_{i=1}^n
        \bigl(\iota_P(u)(i)-i\bigr)$.
    Therefore
    \begin{align*}
        \operatorname{av}(\iota_P(u))=\frac1n\sum_{i=1}^n
        \bigl(\iota_P(u)(i)-i\bigr)
        =
        \frac1n
        \sum_{j=1}^N
        \bigl(
            \gamma_{\theta(j)}-\gamma_j+\lambda_jn
        \bigr)=
        \sum_{j=1}^N\lambda_j=
        \varepsilon K,
    \end{align*}
    where the last equality follows from
    \eqref{eq:oriented-type-lambda}.  Thus
    \[
        \iota_P(u)\in\widetilde S_n^{\varepsilon K}.
    \]

    Conversely, let
    \[
        w\in\widetilde S_n^{\varepsilon K}
    \]
    satisfy
    \[
        0\le\varepsilon(w(i)-i)<n+\xi
        \qquad(i\in\ZZ)
    \]
    and suppose that
    \[
        \FIX{w\mid_{[n]}}=P.
    \]
    Since $w$ fixes $P^{\mathrm{per}}$ pointwise and is bijective,
    it maps $\ZZ\setminus P^{\mathrm{per}}$ bijectively onto itself.
    Hence
    \[
        u
        \coloneqq
        \rho_P\circ
        w\bigm|_{\ZZ\setminus P^{\mathrm{per}}}
        \circ\rho_P^{-1}
    \]
    is well defined and bijective.

    If $j\in\ZZ$ and $i=\rho_P^{-1}(j)$, then
    \[
        \rho_P^{-1}(j+N)=i+n.
    \]
    Using the periodicity of $w$ and
    \eqref{eq:oriented-compression-periodicity}, we obtain
    \begin{align*}
        u(j+N)
        =
        \rho_P\bigl(w(i+n)\bigr)=
        \rho_P\bigl(w(i)+n\bigr)=
        \rho_P\bigl(w(i)\bigr)+N=
        u(j)+N.
    \end{align*}
    Therefore $u$ is an affine permutation of period $N$.

    Since $i\notin P^{\mathrm{per}}$, we have $w(i)\ne i$, and hence
    \[
        0<\varepsilon(w(i)-i)<n+\xi.
    \]
    Applying $\rho_P$ and using the same endpoint analysis as above,
    now in reverse, gives
    \[
        0<\varepsilon(u(j)-j)<N+\xi.
    \]
    The average-displacement calculation above, read in reverse, gives
    \[
        \operatorname{av}(u)=\frac1N\sum_{j=1}^N\bigl(u(j)-j\bigr)
        =
        \frac1n\sum_{i=1}^n\bigl(w(i)-i\bigr)
        =
        \varepsilon K.
    \]
    Thus $u\in\widetilde S_N^{\varepsilon K}$, and insertion $(P,u)\mapsto \iota_P(u)$ and
    deletion $w\mapsto \left(\FIX{w\mid_{[n]}},\rho_P\circ
        w\bigm|_{\ZZ\setminus P^{\mathrm{per}}}
        \circ\rho_P^{-1}\right)$ are mutually inverse.  We have proved the bijection \eqref{bij:oriented-fixed-point-insertion}.

    We next determine the possible range of $r$ such that both sides of this bijection are nonempty.  For every $i\in[N]$,
    the integer
    \[
        \varepsilon(u(i)-i)
    \]
    lies between $1$ and $N+\xi-1$.  Hence
    \[
        K
        =
        \frac1N
        \sum_{i=1}^N
        \varepsilon(u(i)-i)
        \le N+\xi-1.
    \]
    Since $N=n-r$, this is equivalent to
    \begin{equation*}
        r\le n-K+\xi-1.
    \end{equation*}
    Consequently, the
    strata indexed by
    \begin{align}
    \label{eq:oriented-fixed-point-range}
        \fix{w\mid_{[n]}}=r,\quad0\le r\le n-K+\xi-1
    \end{align}
    exhaust the right-hand side of
    \eqref{eq:affine-fixed-point-insertion}.

    We now compare affine lengths of $u$ and $\iota_P(u)$.  For an affine permutation $w$
    occurring in the right-hand side of
    \eqref{eq:affine-fixed-point-insertion}, define
    \[
        \operatorname{cr}^{\varepsilon}_p(w)
        \coloneqq
        \begin{cases}
            \left|
                \{j\in\ZZ:j\le p<w(j)\}
            \right|,
            &\varepsilon=1,\\[2mm]
            \left|
                \{j\in\ZZ:w(j)\le p<j\}
            \right|,
            &\varepsilon=-1,
        \end{cases}
        \qquad(p\in\ZZ).
    \]
    These cardinalities are finite because
    \[
        0\le\varepsilon(w(j)-j)<n+\xi.
    \]

    We claim that $\operatorname{cr}^{\varepsilon}_p(w)$ is
    independent of $p$.  When $\varepsilon=1$, moving the cut from
    $p-1$ to $p$ adds the interval beginning at $p$ precisely when
    $w(p)>p$.  In that case, the unique preimage of $p$ is strictly
    smaller than $p$, so exactly one interval ending at $p$ is
    removed.  If $w(p)=p$, no interval is added or removed.

    When $\varepsilon=-1$, moving the cut from $p-1$ to $p$ removes
    the interval ending at $p$ precisely when $w(p)<p$.  In that
    case, the unique preimage of $p$ is strictly larger than $p$, so
    exactly one interval beginning at $p$ is added.  If $w(p)=p$,
    again no interval is added or removed.  This proves the claim.

    Double-counting, modulo $(j,p)\sim(j+n,p+n)$, the
    ordered integer pairs $(j,p)$ such that $\begin{cases}
        j\le p < w(j), &\text{if $\varepsilon=1$} \\
        w(j) \le p < j, &\text{if $\varepsilon=-1$}
    \end{cases}$ gives
    \[
        \sum_{p=1}^n\operatorname{cr}^{\varepsilon}_p(w)
        =
        \sum_{j=1}^n\varepsilon(w(j)-j)
        =
        nK,
    \]
    where counting the representatives $(j,p)$ with $p\in[n]$ gives the left-hand summation, and counting the representatives $(j,p)$ with $j\in[n]$ gives the right-hand summation.
    Therefore
    \begin{equation}\label{eq:oriented-crossing-number}
        \operatorname{cr}^{\varepsilon}_p(w)=K
        \qquad(p\in\ZZ).
    \end{equation}

    Recall that for an affine permutation $v\in\widetilde{S}_n$, its affine length $\ell(v)$ counts the number of the equivalence classes of its inversions $(i,j)$ such that $i<j$ and $v(i)>v(j)$ modulo $(i,j)\sim(i+n,j+n)$. Since $\rho_P$ is order preserving and satisfies
    \eqref{eq:oriented-compression-periodicity}, it induces a
    bijection between the affine inversion classes of $u$ and those
    affine inversion classes of $\iota_P(u)$ whose two indices lie
    outside $P^{\mathrm{per}}$.

    Suppose first that $\varepsilon=1$.  An inversion involving an
    inserted fixed point cannot have that fixed point as its first
    entry.  For each $p\in P$, the inversion classes involving
    $p+n\ZZ$ therefore have unique representatives
    \[
        (j,p),
        \qquad
        j<p<\iota_P(u)(j),
    \]
    and their number is
    $\operatorname{cr}^{1}_p(\iota_P(u))=K$.

    Suppose instead that $\varepsilon=-1$.  An inversion involving an
    inserted fixed point cannot have that fixed point as its second
    entry.  For each $p\in P$, the inversion classes involving
    $p+n\ZZ$ therefore have unique representatives
    \[
        (p,j),
        \qquad
        \iota_P(u)(j)<p<j,
    \]
    and their number is
    $\operatorname{cr}^{-1}_p(\iota_P(u))=K$.

    There are no inversions between two inserted fixed points.
    Since $|P|=r$, in both cases we obtain
    \begin{equation}\label{eq:oriented-insertion-length}
        \ell(\iota_P(u))=\ell(u)+rK.
    \end{equation}

    It remains to compare descent sets of $u$ and $\iota_P(u)$.  Put
    \[
        \pi\coloneqq\std{u\mid_{[N]}}\in\symm_N,
        \qquad
        \eta_r\coloneqq(1,2,\ldots,r),
    \]
    where $\eta_0$ is the empty word.

    If $\varepsilon=1$, define
    \[
        \pi^{+r}
        \coloneqq
        \bigl(r+\pi(1),\ldots,r+\pi(N)\bigr),
    \]
    and let $\omega_P^{1}$ be the unique shuffle of
    $\eta_r$ and $\pi^{+r}$ in which the letters of $\eta_r$ occupy
    precisely the positions in $P$.

    If $\varepsilon=-1$, define
    \[
        \widehat{\eta}_r
        \coloneqq
        (N+1,N+2,\ldots,N+r),
    \]
    and let $\omega_P^{-1}$ be the unique shuffle of
    $\pi$ and $\widehat{\eta}_r$ in which the letters of
    $\widehat{\eta}_r$ occupy precisely the positions in $P$.

    We claim that
    \begin{equation}\label{eq:oriented-descent-comparison}
        \DES{\iota_P(u)\mid_{[n]}}
        =
        \DES{\omega_P^{\varepsilon}}.
    \end{equation}
    Fix $i\in[n-1]$.

    If neither $i$ nor $i+1$ lies in $P$, then
    \[
        \rho_P(i+1)=\rho_P(i)+1,
    \]
    and the order-preserving property of $\rho_P^{-1}$ shows that
    both words have the same comparison at $i$.

    If both $i$ and $i+1$ lie in $P$, then
    \[
        \iota_P(u)(i)=i<i+1=\iota_P(u)(i+1),
    \]
    and the corresponding letters of the inserted increasing word
    are also increasing.  Thus neither word has a descent at $i$.

    Suppose that $i\notin P$ and $i+1\in P$.  When
    $\varepsilon=1$, we have $\iota_P(u)(i)>i$.  Moreover,
    $\iota_P(u)(i)\notin P^{\mathrm{per}}$, so it cannot equal
    $i+1$.  Hence
    \[
        \iota_P(u)(i)>i+1=\iota_P(u)(i+1).
    \]
    Both words therefore have a descent at $i$, since in
    $\omega_P^1$ every nonfixed letter is larger than every inserted
    letter.  When $\varepsilon=-1$, we have
    \[
        \iota_P(u)(i)<i<i+1=\iota_P(u)(i+1),
    \]
    and neither word has a descent at $i$, since in
    $\omega_P^{-1}$ every nonfixed letter is smaller than every
    inserted letter.

    Finally, suppose that $i\in P$ and $i+1\notin P$.  When
    $\varepsilon=1$, we have
    \[
        \iota_P(u)(i)=i<i+1<\iota_P(u)(i+1),
    \]
    and neither word has a descent at $i$.  When
    $\varepsilon=-1$, we have $\iota_P(u)(i+1)<i+1$.
    Furthermore, $\iota_P(u)(i+1)\notin P^{\mathrm{per}}$, so it
    cannot equal $i$.  Hence
    \[
        \iota_P(u)(i)=i>\iota_P(u)(i+1).
    \]
    Both words have a descent at $i$, because every inserted letter
    in $\omega_P^{-1}$ is larger than every nonfixed letter.

    These four cases prove
    \eqref{eq:oriented-descent-comparison}.  Applying the shuffle
    product rule for fundamental quasisymmetric functions gives
    \begin{align}
        \sum_{P\in\binom{[n]}r}
        F_{\DES{\iota_P(u)\mid_{[n]}},n}
        &=
        h_rF_{\DES{u\mid_{[N]}},N}.
        \label{eq:oriented-insertion-shuffle}
    \end{align}

    Combining the bijection
    \eqref{bij:oriented-fixed-point-insertion}, the length formula
    \eqref{eq:oriented-insertion-length}, and the shuffle identity
    \eqref{eq:oriented-insertion-shuffle}, we obtain
    \begin{align}
        &\sum_{\substack{
            w\in\widetilde S_n^{\varepsilon K}\\
            \text{$0\le\varepsilon(w(i)-i)<n+\xi$ for all $i$}\\
            \fix{w\mid_{[n]}}=r
        }}
        q^{\ell(w)}F_{\DES{w\mid_{[n]}},n}
        \notag\\
        &\qquad=
        q^{rK}h_r
        \sum_{\substack{
            u\in\widetilde S_{n-r}^{\varepsilon K}\\
            \text{$0<\varepsilon(u(i)-i)<n-r+\xi$ for all $i$}
        }}
        q^{\ell(u)}
        F_{\DES{u\mid_{[n-r]}},n-r}.
        \label{eq:oriented-fixed-point-stratum}
    \end{align}
    Summing \eqref{eq:oriented-fixed-point-stratum} over
    $0\le r\le n-K+\xi-1$ and using
    \eqref{eq:oriented-fixed-point-range} proves
    \eqref{eq:affine-fixed-point-insertion}.
\end{proof}

\begin{lemma}\label{lem:a+b-n=1-dual}
    For all $1<b<n$, we have
    \begin{align*}
        &\sum_{r=0}^{n-b}q^{r(b-1)}h_r
        \sum_{\substack{
            f\in\widetilde S_{n-r}^0\\
            \text{$i-n+b+r-1<f(i)<i+b-1$ for all $i$}
        }}
        q^{\ell(f)}
        F_{\DES{f\mid_{[n-r]}},n-r}\\
        &\qquad=
        \sum_{\substack{
            f\in\widetilde S_n^0\\
            \text{$i-n+b\le f(i)<i+b$ for all $i$}
        }}
        q^{\ell(f)}
        F_{\DES{f\mid_{[n]}},n}.
    \end{align*}
\end{lemma}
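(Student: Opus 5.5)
The plan is to reduce this to Lemma~\ref{lem:affine-fixed-point-insertion} by translating the affine permutations so that the displacement windows become one-sided. Take $\varepsilon=-1$ and translate by $b-1$: for $f\in\widetilde S_n^0$, set $w\coloneqq f-(b-1)$, so that $w\in\widetilde S_n^{-(b-1)}$. Translation preserves both the length and the descent set of the window. The right-hand condition $i-n+b\le f(i)<i+b$ becomes
\[
i-n+1\le w(i)<i+1,
\qquad\text{i.e.}\qquad
0\le i-w(i)\le n-1,
\]
which is the condition $0\le -(w(i)-i)<n+\xi$ with $\xi=0$. On the left-hand side, with $N=n-r$, the condition $i-N+b-1<f(i)<i+b-1$ becomes $0<i-u(i)<N$ for $u=f-(b-1)\in\widetilde S_N^{-(b-1)}$, again with $\xi=0$.

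So the intended instance is $\varepsilon=-1$, $K=b-1$, $\xi=0$. The hypothesis $K\le n+\xi-1$ reads $b-1\le n-1$, which holds. The upper limit of the $r$-sum in Lemma~\ref{lem:affine-fixed-point-insertion} is then $n-K+\xi-1=n-b$, matching the statement, and the factor $q^{rK}=q^{r(b-1)}$ also matches. Substituting back yields the identity.

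The only points needing care are bookkeeping. First, $\ell(f-c)=\ell(f)$ and $\DES{(f-c)\mid_{[m]}}=\DES{f\mid_{[m]}}$, since translation by a constant preserves every comparison $f(i)>f(j)$. Second, translation maps $\widetilde S_m^0$ onto $\widetilde S_m^{-c}$, which the paper already notes. Third, the strict versus non-strict inequalities must be checked to match exactly after translation, for instance $f(i)<i+b$ if and only if $w(i)\le i$. I expect no real obstacle; the main risk is an off-by-one in these bounds, so I would verify them once explicitly. If instead $\varepsilon=+1$ with translation by $n-b$ seemed more natural, the analogous check would give $K=n-b$, and the $r$-range $b-1$ would not match, confirming that $\varepsilon=-1$ is the right choice.
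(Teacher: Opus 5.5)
Your proposal is correct and is essentially the paper's proof: translate vertically by $-(b-1)$, check the inner and outer bounds become $0<-(u(i)-i)<n-r$ and $0\le-(w(i)-i)<n$, and invoke Lemma~\ref{lem:affine-fixed-point-insertion} with $\varepsilon=-1$, $K=b-1$, $\xi=0$, noting $n-K-1=n-b$. The only unstated hypothesis is $K>0$, which is immediate from $b>1$.
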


\begin{proof}
    Put
    \[
        K\coloneqq b-1.
    \]
    For every positive integer $N$, vertical translation by $-K$,
    \[
        f(i)\longmapsto f(i)-K,
    \]
    is a bijection from $\widetilde S_N^0$ to
    $\widetilde S_N^{-K}$.  It preserves affine length and descent set.

    For $0\le r\le n-b$, put $N=n-r$.  The bounds in the inner sum
    transform as
    \begin{align*}
        i-n+b+r-1<f(i)<i+b-1
        &\iff
        i-N<f(i)-K<i\\
        &\iff
        0<-\bigl((f(i)-K)-i\bigr)<N.
    \end{align*}
    On the right-hand side, integrality gives
    \begin{align*}
        i-n+b\le f(i)<i+b
        &\iff
        i-n+1\le f(i)-K\le i\\
        &\iff
        i-n<f(i)-K\le i\\
        &\iff
        0\le-\bigl((f(i)-K)-i\bigr)<n.
    \end{align*}
    Hence the asserted identity is equivalent to
    \begin{align*}
        &\sum_{r=0}^{n-b}q^{rK}h_r
        \sum_{\substack{
            u\in\widetilde S_{n-r}^{-K}\\
            \text{$0<-(u(i)-i)<n-r$ for all $i$}
        }}
        q^{\ell(u)}F_{\DES{u\mid_{[n-r]}},n-r}\\
        &\qquad=
        \sum_{\substack{
            w\in\widetilde S_n^{-K}\\
            \text{$0\le-(w(i)-i)<n$ for all $i$}
        }}
        q^{\ell(w)}F_{\DES{w\mid_{[n]}},n}.
    \end{align*}
    Since
    \[
        n-K-1=n-b,
    \]
    this is Lemma~\ref{lem:affine-fixed-point-insertion} with
    \[
        \varepsilon=-1,
        \qquad
        \xi=0.
    \]
\end{proof}

\begin{lemma}\label{lem:a+b-n=1-not-dual}
    For all $1<b<n$, we have
    \begin{align*}&\sum_{r=0}^{b-1}q^{r(n-b)} \cdot h_r \cdot \sum_{\substack{f\in\widetilde{S}_{n-r}^0 \\ \text{$i-n+b< f(i)<i+b-r$ for all $i$}}} q^{\ell(f)} \cdot F_{\DES{f\mid_{[n-r]}},n-r} \\
    &= \sum_{\substack{f\in\widetilde{S}_{n}^0 \\ \text{$i-n+b\le f(i)<i+b$ for all $i$}}} q^{\ell(f)} \cdot F_{\DES{f\mid_{[n]}},n} .\end{align*}
\end{lemma}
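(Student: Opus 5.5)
This lemma is the ``upward'' analogue of Lemma~\ref{lem:a+b-n=1-dual}, and I plan to prove it the same way. The idea is to reduce it to Lemma~\ref{lem:affine-fixed-point-insertion}, this time with $\varepsilon=1$.

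\textbf{Step 1: translate the window.} Put $K\coloneqq n-b$, so that $K\ge 1$ because $b<n$. For each $r$, use the length- and descent-preserving translation $f\mapsto u=f+K$, which maps $\widetilde S_{N}^0$ onto $\widetilde S_N^{K}$, where $N=n-r$. The inner bounds on the left become
\[
i-n+b<f(i)<i+b-r \iff i<u(i)<i+N \iff 0<u(i)-i<N .
\]
On the right, with $N=n$, integrality gives
\[
i-n+b\le f(i)<i+b \iff i\le u(i)<i+n \iff 0\le u(i)-i<n .
\]
The weight $q^{r(n-b)}$ is exactly $q^{rK}$.

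\textbf{Step 2: apply Lemma~\ref{lem:affine-fixed-point-insertion} with $\varepsilon=1$, $\xi=0$.} The hypothesis $K\le n+\xi-1=n-1$ holds since $b\ge 1$. The lemma's left-hand sum runs over $0\le r\le n-K-1=b-1$, which is precisely the range $0\le r\le b-1$ in the statement. So the translated identity is literally \eqref{eq:affine-fixed-point-insertion}.

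\textbf{Step 3: check the edge cases.} The only points needing care are the following.
\begin{itemize}
\item The condition $b>1$ is not actually needed beyond $K\le n-1$.
\item For $r$ near $b-1$, the inner sums may be empty. This is harmless, since Lemma~\ref{lem:affine-fixed-point-insertion} already accounts for empty strata.
\item The descent set of $f\mid_{[N]}$ equals that of $u\mid_{[N]}$, because translation is order-preserving.
\end{itemize}
I expect no real obstacle. The main risk is an off-by-one in translating the strict and non-strict bounds, which the displayed equivalences above settle.
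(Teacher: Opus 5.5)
Your proposal is correct and matches the paper's proof exactly: the paper also sets $K=n-b$, translates by $K$ to land in $\widetilde S_{n-r}^{K}$ with bounds $0<u(i)-i<n-r$ (left) and $0\le u(i)-i<n$ (right), and then invokes Lemma~\ref{lem:affine-fixed-point-insertion} with $\varepsilon=1$, $\xi=0$, using $n-K-1=b-1$. The only cosmetic point is that the right-hand translation $i-n+b\le f(i)\iff i\le f(i)+K$ needs no integrality argument.
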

\begin{proof}
    Put
    \[
        K\coloneqq n-b.
    \]
    For every positive integer $N$, vertical translation by $K$,
    \[
        f(i)\longmapsto f(i)+K,
    \]
    is a bijection from $\widetilde S_N^0$ to
    $\widetilde S_N^K$.  It preserves affine length and descent set.

    For $0\le r\le b-1$, the bounds in the inner sum transform as
    \[
        i-n+b<f(i)<i+b-r
        \iff
        i<f(i)+K<i+n-r.
    \]
    The bounds on the right-hand side transform as
    \[
        i-n+b\le f(i)<i+b
        \iff
        i\le f(i)+K<i+n.
    \]
    Therefore the asserted identity is equivalent to
    \begin{align*}
        &\sum_{r=0}^{b-1}q^{rK}h_r
        \sum_{\substack{
            u\in\widetilde S_{n-r}^{K}\\
            \text{$0<u(i)-i<n-r$ for all $i$}
        }}
        q^{\ell(u)}F_{\DES{u\mid_{[n-r]}},n-r}\\
        &\qquad=
        \sum_{\substack{
            w\in\widetilde S_n^{K}\\
            \text{$0\le w(i)-i<n$ for all $i$}
        }}
        q^{\ell(w)}F_{\DES{w\mid_{[n]}},n}.
    \end{align*}
    Since
    \[
        n-K-1=b-1,
    \]
    this is precisely Lemma~\ref{lem:affine-fixed-point-insertion}
    with
    \[
        \varepsilon=1,
        \qquad
        \xi=0.
    \]
\end{proof}

In order to analyze the stabilization limit $G_{n,\infty,b}$, we need the following technical result.
\begin{lemma}\label{lem:dual-recursive}
    For $n,a,b>0$ with $1<b<n$ and $a+b>n$, we have
    \begin{align}\label{eq:dual-recursive}G_{n,a,b} = \sum_{m=0}^{n-b+1} q^{m(b-1)} \cdot h_m \cdot G_{n-m,a-m,b-1}.\end{align}
\end{lemma}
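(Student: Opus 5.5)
The plan is to reduce \eqref{eq:dual-recursive} to Lemmas~\ref{lem:a+b-n=1-dual} and~\ref{lem:a+b-n=1-not-dual} by splitting the plethystic prefactor in Definition~\ref{def:G}. For integers $N\ge 1$ and $c$, write
\[
P_{N,c}\coloneqq\sum_{\substack{f\in\widetilde S_N^0\\ \text{$i-N+c<f(i)<i+c$ for all $i$}}} q^{\ell(f)}F_{\DES{f\mid_{[N]}},N},
\qquad
T_{N,c}\coloneqq\sum_{\substack{f\in\widetilde S_N^0\\ \text{$i-N+c\le f(i)<i+c$ for all $i$}}} q^{\ell(f)}F_{\DES{f\mid_{[N]}},N}.
\]
The inner sum of $G_{n,a,b}$ at index $r$ is exactly $P_{n-r,b-r}$, since $n-b=(n-r)-(b-r)$; in particular it does not depend on $a$. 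Hence
\[
G_{n,a,b}=\sum_{r=0}^{b-1}h_r\bigl[q^{n-b}[a-n+b]_q\,\xxx\bigr]\,P_{n-r,b-r}.
\]
In the same notation, Lemma~\ref{lem:a+b-n=1-not-dual} reads $\sum_{j=0}^{b'-1}q^{j(n'-b')}h_jP_{n'-j,b'-j}=T_{n',b'}$. Lemma~\ref{lem:a+b-n=1-dual} reads $\sum_{m=0}^{n'-b'}q^{m(b'-1)}h_mP_{n'-m,b'-1}=T_{n',b'}$, because $i-n'+b'+m-1=i-(n'-m)+(b'-1)$. Both hold for $1<b'<n'$.

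First I would expand the left-hand side. Since $q^{n-b}[a-n+b]_q=q^{n-b}+q^{n-b+1}[a-n+b-1]_q$, the addition formula gives
\[
h_r\bigl[q^{n-b}[a-n+b]_q\xxx\bigr]=\sum_{j+s=r}q^{j(n-b)}h_j\cdot h_s\bigl[q^{n-b+1}[a-n+b-1]_q\xxx\bigr].
\]
Regrouping by $s$ and setting $n'=n-s$, $b'=b-s$ (so that $n'-b'=n-b$) gives
\[
G_{n,a,b}=\sum_{s=0}^{b-1}h_s\bigl[q^{n-b+1}[a-n+b-1]_q\xxx\bigr]\sum_{j=0}^{b'-1}q^{j(n'-b')}h_jP_{n'-j,b'-j}.
\]
Next I would expand the right-hand side. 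For $0\le m\le n-b$ we have $b-1<n-m$ and $(a-m)+(b-1)\ge n-m$, so Definition~\ref{def:G} applies. The new prefactor is $h_s[q^{(n-m)-(b-1)}[a-n+b-1]_q\xxx]$, whose inner plethystic argument is independent of $m$. The inner sum is $P_{n-m-s,b-1-s}$. Swapping the order of summation, the right-hand side becomes
\[
\sum_{s}h_s\bigl[q^{n-b+1}[a-n+b-1]_q\xxx\bigr]\sum_{m=0}^{n-b}q^{m(b-1)}h_mP_{n-s-m,b-1-s}
\]
plus the separate term $m=n-b+1$. Here $a-m\ge 0$ because $a+b>n$, and $q^{m(b-1)}=q^{ms}q^{m(b'-1)}$. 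I would absorb the factor $q^{ms}$ into the plethysm by writing $q^{m(b-1)}h_m\,h_s[\cdots]$ and comparing with the left-hand side term by term in $s$. If this absorption does not go through directly, I would instead substitute the inner plethystic argument $q^{n-m-b+1}[\,\cdot\,]$ for each $m$ and recheck the exponents; this is routine bookkeeping that I would verify before proceeding. After this step, the $s$-th inner sums on the two sides should be exactly the two sides of the dual and non-dual lemmas applied at $(n',b')$. For $0\le s\le b-2$ we have $1<b'<n'$, so both lemmas give $T_{n',b'}$ and the $s$-th terms agree.

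The main obstacle is the boundary bookkeeping, which has two parts.

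\begin{itemize}
\item On the left, the term $s=b-1$ has $b'=1$, where neither lemma applies. Its inner sum is $P_{n-b+1,1}$, which equals $h_{n-b+1}$ by the argument of Lemma~\ref{lem:trivial}, since the only admissible $f$ is the identity. This term is therefore $h_{n-b+1}\,h_{b-1}[q^{n-b+1}[a-n+b-1]_q\xxx]$.
\item On the right, the term $m=n-b+1$ uses the convention $G_{b-1,a-m,b-1}=h_{b-1}[[a-n+b-1]_q\xxx]$. Multiplying by $q^{(n-b+1)(b-1)}h_{n-b+1}$ and using homogeneity, $q^{(n-b+1)(b-1)}h_{b-1}[Y]=h_{b-1}[q^{n-b+1}Y]$, this term becomes $h_{n-b+1}\,h_{b-1}[q^{n-b+1}[a-n+b-1]_q\xxx]$.
\end{itemize}

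These two leftover terms are equal, which completes the comparison. I would carefully check that the index ranges of $s$ match on both sides, including the degenerate case $a=n-b+1$, where $[0]_q=0$ and only $s=0$ survives. With those checks, \eqref{eq:dual-recursive} follows.
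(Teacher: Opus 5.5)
Your proposal is correct, and it takes a genuinely different route from the paper's.

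The paper argues by induction on $a+b-n$. The base case $a+b-n=1$ is exactly Lemmas~\ref{lem:a+b-n=1-not-dual} and~\ref{lem:a+b-n=1-dual}. The inductive step expands both sides of \eqref{eq:dual-recursive} with the $a$-recursion of Lemma~\ref{lem:not-dual-recurrsive}, which peels off the top letter $q^{a-1}$ of the alphabet. It then applies the induction hypothesis and matches boundary terms using Lemma~\ref{lem:trivial} and the convention for $G_{N,c,N}$.

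You instead peel off the bottom letter $q^{n-b}$ once. Both sides then become $\sum_s h_s[q^{n-b+1}[a-n+b-1]_q\xxx]$ times an inner sum. The identity reduces, separately for each $s\le b-2$, to the two base lemmas at $(n-s,b-s)$, plus one matching pair of leftover terms.

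One step should be stated correctly. On the right, the alphabet $q^{n-m-b+1}[a-n+b-1]_q\xxx$ does depend on $m$; only the $[a-n+b-1]_q$ part does not. So your displayed swapped sum, with $q^{m(b-1)}$ in front of $h_s[q^{n-b+1}[a-n+b-1]_q\xxx]$, is not literally right. However, homogeneity of $h_s$ gives $q^{m(b-1)}h_s[q^{n-b+1-m}Y]=q^{m(b-1-s)}h_s[q^{n-b+1}Y]$ with $Y=[a-n+b-1]_q\xxx$. This is exactly the factor $q^{m(b'-1)}$ that Lemma~\ref{lem:a+b-n=1-dual} needs at $(n',b')=(n-s,b-s)$, so the absorption goes through directly and no fallback is needed. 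When $a=n-b+1$, the leftover right-hand term falls under the convention $G_{b-1,0,b-1}=0$, which is consistent with $h_{b-1}[0]=0$.

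Your route is a short, non-inductive proof. It shows that, uniformly in $a$, the dual recursion is just the $a+b-n=1$ case multiplied by a common plethystic factor. The paper's route uses the explicit shape of $G$ only in the base case and otherwise runs on the recursion of Lemma~\ref{lem:not-dual-recurrsive}, at the price of much heavier index bookkeeping.
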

\begin{proof}
    We prove Equation~\eqref{eq:dual-recursive} by induction on $a+b-n$. For the base case $a+b-n=1$, from Lemma~\ref{lem:a+b-n=1-not-dual} we deduce that
    \begin{align}\label{eq:base-case-not-dual}
        &G_{n,a,b} = G_{n,n-b+1,b} = \\
        \nonumber& \sum_{r=0}^{b-1}h_r[q^{n-b} \cdot \xxx] \cdot \sum_{\substack{f\in\widetilde{S}_{n-r}^0 \\ \text{$i-n+b<f(i)<i+b-r$ for all $i$}}} q^{\ell(f)} \cdot F_{\DES{f\mid_{[n-r]}},n-r} = \\
        \nonumber& \sum_{r=0}^{b-1} q^{r(n-b)} \cdot h_r \cdot \sum_{\substack{f\in\widetilde{S}_{n-r}^0 \\ \text{$i-n+b<f(i)<i+b-r$ for all $i$}}} q^{\ell(f)} \cdot F_{\DES{f\mid_{[n-r]}},n-r} = \\
        \nonumber& \sum_{\substack{f\in\widetilde{S}_{n}^0 \\ \text{$i-n+b\le f(i)<i+b$ for all $i$}}} q^{\ell(f)} \cdot F_{\DES{f\mid_{[n]}},n} .
    \end{align}
    Moreover, Lemma~\ref{lem:a+b-n=1-dual} implies that
    \begin{align}\label{eq:base-case-dual}
        & \sum_{m=0}^{n-b+1} q^{m(b-1)} \cdot h_m \cdot G_{n-m,a-m,b-1} = \sum_{m=0}^{n-b+1} q^{m(b-1)} \cdot h_m \cdot G_{n-m,n-b-m+1,b-1} = \\
        \nonumber &\sum_{m=0}^{n-b} q^{m(b-1)} \cdot h_m \cdot G_{n-m,n-b-m+1,b-1} = \sum_{\substack{f\in\widetilde{S}_{n}^0 \\ \text{$i-n+b\le f(i)<i+b$ for all $i$}}} q^{\ell(f)} \cdot F_{\DES{f\mid_{[n]}},n} .
    \end{align}
    As Equations~\eqref{eq:base-case-not-dual} and \eqref{eq:base-case-dual} share the same right-hand-side, the left-hand-sides of both must be equal to each other as well. It follows that Equation~\eqref{eq:dual-recursive} holds when $a+b-n=1$.

    Suppose, for the sake of induction, that Equation~\eqref{eq:dual-recursive} holds when $a+b-n=c$ ($c\ge 1$). Then we deal with the case where $a+b-n=c+1$. First applying Lemma~\ref{lem:not-dual-recurrsive} and then applying the induction assumption, we obtain
    \begin{align}\label{eq:undual-dual}
        &G_{n,a,b} = \sum_{m=0}^{b-1} q^{m(a-1)} \cdot h_m \cdot G_{n-m,a-1,b-m} \qquad\text{(Lemma~\ref{lem:not-dual-recurrsive})}\\
        \nonumber &= \sum_{m=0}^{b-2} q^{m(a-1)} \cdot h_m \cdot G_{n-m,a-1,b-m} + q^{(b-1)(a-1)} \cdot h_{b-1} \cdot G_{n-b+1, a-1, 1} \\
        \nonumber &= \sum_{m=0}^{b-2} q^{m(a-1)} \cdot h_m \cdot G_{n-m,a-1,b-m} + q^{(b-1)(a-1)} \cdot h_{b-1} \cdot h_{n-b+1} \qquad\text{(Lemma~\ref{lem:trivial})}\\
        \nonumber &= \sum_{m=0}^{b-2} q^{m(a-1)} \cdot h_m \cdot \sum_{r=0}^{n-b+1} q^{r(b-m-1)} \cdot h_r \cdot G_{n-m-r,a-1-r,b-m-1} + q^{(b-1)(a-1)} \cdot h_{b-1} \cdot h_{n-b+1}\\ \nonumber &\text{(The equality above follows from induction hypotheses)} \\
        \nonumber &= \sum_{m=0}^{b-2}\sum_{r=0}^{n-b+1} q^{m(a-1)+r(b-m-1)} \cdot h_m \cdot h_r \cdot G_{n-m-r,a-1-r,b-m-1} + q^{(b-1)(a-1)} \cdot h_{b-1} \cdot h_{n-b+1} \\
        \nonumber &= \sum_{m=0}^{b-2}\sum_{r=0}^{n-b+1} q^{m(a-1)+r(b-1)-rm} \cdot h_m \cdot h_r \cdot G_{n-m-r,a-1-r,b-m-1} + q^{(b-1)(a-1)} \cdot h_{b-1} \cdot h_{n-b+1} \\
        \nonumber &= \sum_{m=0}^{b-2}\sum_{r=0}^{n-b} q^{m(a-1)+r(b-1)-rm} \cdot h_m \cdot h_r \cdot G_{n-m-r,a-1-r,b-m-1} \\ \nonumber&+ \sum_{m=0}^{b-2} q^{m(a-1)+(n-b+1)(b-1)-(n-b+1)m} \cdot h_m \cdot h_{n-b+1} \cdot G_{n-m-(n-b+1),a-1-(n-b+1),b-m-1} \\ \nonumber&+ q^{(b-1)(a-1)} \cdot h_{b-1} \cdot h_{n-b+1} \\
        \nonumber&= \sum_{m=0}^{b-2}\sum_{r=0}^{n-b} q^{m(a-1)+r(b-1)-rm} \cdot h_m \cdot h_r \cdot G_{n-m-r,a-1-r,b-m-1} \\ \nonumber&+ \sum_{m=0}^{b-2} q^{m(a+b-n-2)+(n-b+1)(b-1)} \cdot h_m \cdot h_{n-b+1} \cdot G_{b-m-1,a+b-n-2,b-m-1} \\ \nonumber&+ q^{(b-1)(a-1)} \cdot h_{b-1} \cdot h_{n-b+1} \\
        \nonumber&= \sum_{m=0}^{b-2}\sum_{r=0}^{n-b} q^{m(a-1)+r(b-1)-rm} \cdot h_m \cdot h_r \cdot G_{n-m-r,a-1-r,b-m-1} \\ \nonumber&+ \sum_{m=0}^{b-2} q^{m(c-1)+(n-b+1)(b-1)} \cdot h_m \cdot h_{n-b+1} \cdot G_{b-m-1,c-1,b-m-1} \\ \nonumber&+ q^{(b-1)(a-1)} \cdot h_{b-1} \cdot h_{n-b+1} \\
        \nonumber&= \sum_{m=0}^{b-2}\sum_{r=0}^{n-b} q^{m(a-1)+r(b-1)-rm} \cdot h_m \cdot h_r \cdot G_{n-m-r,a-1-r,b-m-1} \\ \nonumber&+ \sum_{m=0}^{b-2} q^{m(c-1)+(n-b+1)(b-1)} \cdot h_m \cdot h_{n-b+1} \cdot h_{b-m-1}\Bigg[\Bigg(\sum_{k=0}^{c-2}q^k\Bigg) \cdot \xxx\Bigg] + q^{(b-1)(a-1)} \cdot h_{b-1} \cdot h_{n-b+1} \\
        \nonumber&= \sum_{m=0}^{b-2}\sum_{r=0}^{n-b} q^{m(a-1)+r(b-1)-rm} \cdot h_m \cdot h_r \cdot G_{n-m-r,a-1-r,b-m-1} \\ \nonumber&+ q^{(n-b+1)(b-1)} \cdot h_{n-b+1} \cdot \sum_{m=0}^{b-1} q^{m(c-1)} \cdot h_m \cdot h_{b-m-1}\Bigg[\Bigg(\sum_{k=0}^{c-2}q^k\Bigg) \cdot \xxx\Bigg] \\
        \nonumber&= \sum_{m=0}^{b-2}\sum_{r=0}^{n-b} q^{m(a-1)+r(b-1)-rm} \cdot h_m \cdot h_r \cdot G_{n-m-r,a-1-r,b-m-1} \\ \nonumber&+ q^{(n-b+1)(b-1)} \cdot h_{n-b+1} \cdot \sum_{m=0}^{b-1} h_m[q^{c-1} \cdot \xxx] \cdot h_{b-m-1}\Bigg[\Bigg(\sum_{k=0}^{c-2}q^k\Bigg) \cdot \xxx\Bigg] \\
        \nonumber&= \sum_{m=0}^{b-2}\sum_{r=0}^{n-b} q^{m(a-1)+r(b-1)-rm} \cdot h_m \cdot h_r \cdot G_{n-m-r,a-1-r,b-m-1} \\ \nonumber&+ q^{(n-b+1)(b-1)} \cdot h_{n-b+1} \cdot h_{b-1}\Bigg[\Bigg(\sum_{k=0}^{c-1}q^k\Bigg) \cdot \xxx\Bigg]
    \end{align}
    where the first identity follows from Lemma~\ref{lem:not-dual-recurrsive}, the third identity follows from Lemma~\ref{lem:trivial}, and the fourth identity arises from the induction assumption.
    By Lemma~\ref{lem:not-dual-recurrsive}, we additionally have
    \begin{align}\label{eq:dual-undual}
        &\sum_{m=0}^{n-b+1} q^{m(b-1)} \cdot h_m \cdot G_{n-m,a-m,b-1} \\
        \nonumber &= \sum_{m=0}^{n-b} q^{m(b-1)} \cdot h_m \cdot G_{n-m,a-m,b-1} + q^{(n-b+1)(b-1)} \cdot h_{n-b+1} \cdot G_{b-1,a-n+b-1,b-1} \\
        \nonumber&= \sum_{m=0}^{n-b} q^{m(b-1)} \cdot h_m \cdot \sum_{r=0}^{b-2} q^{r(a-m-1)} \cdot h_r \cdot G_{n-m-r,a-m-1,b-1-r} + q^{(n-b+1)(b-1)} \cdot h_{n-b+1} \cdot G_{b-1,c,b-1} \\
        \nonumber&= \sum_{m=0}^{n-b} \sum_{r=0}^{b-2} q^{m(b-1)+r(a-m-1)} \cdot h_m \cdot h_r \cdot G_{n-m-r,a-m-1,b-1-r} + q^{(n-b+1)(b-1)} \cdot h_{n-b+1} \cdot h_{b-1}\Bigg[\Bigg(\sum_{k=0}^{c-1} q^k \Bigg) \cdot \xxx \Bigg] \\
        \nonumber&= \sum_{r=0}^{n-b} \sum_{m=0}^{b-2} q^{r(b-1)+m(a-r-1)} \cdot h_r \cdot h_m \cdot G_{n-r-m,a-r-1,b-1-m} + q^{(n-b+1)(b-1)} \cdot h_{n-b+1} \cdot h_{b-1}\Bigg[\Bigg(\sum_{k=0}^{c-1} q^k \Bigg) \cdot \xxx \Bigg] \\
        \nonumber&= \sum_{m=0}^{b-2} \sum_{r=0}^{n-b} q^{r(b-1)+m(a-1)-rm} \cdot h_r \cdot h_m \cdot G_{n-r-m,a-r-1,b-1-m} + q^{(n-b+1)(b-1)} \cdot h_{n-b+1} \cdot h_{b-1}\Bigg[\Bigg(\sum_{k=0}^{c-1} q^k \Bigg) \cdot \xxx \Bigg].
    \end{align}
    Since Equations~\eqref{eq:undual-dual} and \eqref{eq:dual-undual} have the same right-hand-side, their left-hand-sides must be equal to each other as well, implying that Equation~\eqref{eq:dual-recursive} still holds when $a+b-n=c+1$, which finishes the induction.
\end{proof}

We now prove a closed formula for $\F_{n,a,b}$.

\begin{theorem}\label{thm:closed-formula-n,a,b}
    For all $n,a,b>0$ such that $b<n$ and $a+b\ge n$, we have
    \[\F_{n,a,b} = \sum_{r=0}^{b-1}h_r\Bigg[\Bigg(\sum_{k=n-b}^{a-1}q^k\Bigg)\cdot \xxx\Bigg] \cdot \sum_{\substack{f\in\widetilde{S}_{n-r}^0 \\ \text{$i-n+b<f(i)<i+b-r$ for all $i$}}} q^{\ell(f)} \cdot F_{\DES{f\mid_{[n-r]}},n-r}.\]
\end{theorem}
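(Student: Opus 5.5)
The plan is to apply the uniqueness criterion of Lemma~\ref{lem:unique-recursive} to the family $QF_{n,a,b}\coloneqq G_{n,a,b}$ of Definition~\ref{def:G}. Note that $G_{n,a,b}$ is the right-hand side of the theorem. We take the conventions $G_{n,a,n}=h_n[(\sum_{k=0}^{a-1}q^k)\xxx]$ for $b=n$, which also covers the base case $a+b=n$ with $b=n$ excluded. We must verify the five hypotheses of that lemma, and then conclude $\F_{n,a,b}=G_{n,a,b}$.

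\textbf{Step 1: the finite-$a$ recursion.} For $a+b>n$ and $b<n$, the recursion is exactly Lemma~\ref{lem:not-dual-recurrsive}. We only need to check the boundary term $m=b-m'$ that lands on $G_{n-m,a-1,b-m}$ with $b-m=n-m$. This cannot occur since $b<n$, so all terms stay in the domain $b'<n'$, $a'+b'\ge n'$.

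\textbf{Step 2: stabilization and the limits.} For fixed $n,b$ and $d$, the coefficient of $q^d$ in $h_r[(\sum_{k=n-b}^{a-1}q^k)\xxx]$ stabilizes as $a\to\infty$ to that of $h_r[q^{n-b}\xxx/(1-q)]$. The inner affine-permutation sums do not depend on $a$, so the limit $G_{n,\infty,b}$ exists, and the same holds for $b=n$ with limit $h_n[\xxx/(1-q)]$, which is the fourth hypothesis. For $b=1$ and $n>1$, Lemma~\ref{lem:trivial} gives $G_{n,a,1}=h_n$ for every admissible $a$, hence $G_{n,\infty,1}=h_n$.

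\textbf{Step 3: the $\infty$-recursion.} For $1<b<n$, apply Lemma~\ref{lem:dual-recursive}, which gives $G_{n,a,b}=\sum_{m=0}^{n-b+1}q^{m(b-1)}h_mG_{n-m,a-m,b-1}$. Then extract the coefficient of $q^d$ with $a$ large enough that every $G_{n-m,a-m,b-1}$ appearing has stabilized in degrees $\le d$. Since $m\le n-b+1$ is bounded, taking $a\to\infty$ in this finite sum yields $G_{n,\infty,b}=\sum_m q^{m(b-1)}h_mG_{n-m,\infty,b-1}$. In the term $m=n-b+1$ we get $G_{b-1,\cdot,b-1}$, which falls under the $b=n$ convention; its limit is consistent with the fourth hypothesis.

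\textbf{Main obstacle.} The delicate point is bookkeeping of the domains. We must make sure every $G$ invoked in Lemmas~\ref{lem:not-dual-recurrsive} and~\ref{lem:dual-recursive} has indices with $a'+b'\ge n'$, $a'>0$ and $b'\le n'$. We must also make sure the convention values at $b'=n'$ coincide with what the lemma of uniqueness expects, namely that $\F_{n',a',n'}$ plays no role except through its limit $h_{n'}[\xxx/(1-q)]$. Once these checks are done, Lemma~\ref{lem:unique-recursive}, together with Propositions~\ref{prop:Thibon-poly}, \ref{prop:recursive-n,a,b} and~\ref{prop:recursive-n,infty,b} for $\F$ itself, gives $\F_{n,a,b}=G_{n,a,b}$, which is the stated formula.
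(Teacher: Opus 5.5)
Your proposal is correct and follows the paper's proof essentially verbatim. Like the paper, you verify the hypotheses of Lemma~\ref{lem:unique-recursive} for $G_{n,a,b}$ as follows:
\begin{itemize}
\item the finite-$a$ recursion comes from Lemma~\ref{lem:not-dual-recurrsive};
\item the $\infty$-recursion comes from Lemma~\ref{lem:dual-recursive}, passed to the limit $a\to\infty$;
\item the boundary limits come from Lemma~\ref{lem:trivial} and the $b=n$ convention.
\end{itemize}
You then conclude $\F_{n,a,b}=G_{n,a,b}$. Your extra bookkeeping (that $b-m<n-m$, that $a-m\to\infty$ since $m\le n-b+1$ is bounded, and that the inner affine-permutation sums are independent of $a$) only makes explicit details the paper leaves implicit.
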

\begin{proof}
    In order to apply Lemma~\ref{lem:unique-recursive} to $G_{n,a,b}$, we need to verify all the conditions in Lemma~\ref{lem:unique-recursive}.
    By Definition~\ref{def:G}, we have that $\lim_{a\rightarrow\infty}G_{n,a,b} = G_{n,\infty,b}$ exists (in the sense of Definition~\ref{def:stable-limit}). According to Lemma~\ref{lem:dual-recursive}, we let $a\rightarrow\infty$ (in the sense of Definition~\ref{def:stable-limit}) and thus obtain
    \[G_{n,\infty,b} = \sum_{m=0}^{n-b+1} q^{m(b-1)} \cdot h_m \cdot G_{n-m,\infty,b-1}.\]
    Additionally, Lemma~\ref{lem:trivial} implies $G_{n,\infty,1} = h_n$ for $n>1$, and the conventions Definition~\ref{def:G} imply $G_{n,\infty,n} = h_n\big[\frac{\xxx}{1-q}\big]$ for $n>0$. All these results and Lemma~\ref{lem:not-dual-recurrsive} mean that all the conditions in Lemma~\ref{lem:unique-recursive} holds for $G_{n,a,b}$, so Lemma~\ref{lem:unique-recursive} yields $\F_{n,a,b} = G_{n,a,b}$ for all $n,a,b>0$ such that $b<n$ and $a+b\ge n$, completing our proof.
\end{proof}
\begin{remark}\label{rmk:closed-formula-n,a,b}
    Theorem~\ref{thm:closed-formula-n,a,b} implies that the its right-hand-side, a quasisymmetric function, is actually a symmetric function, which is not combinatorially obvious.
\end{remark}

In particular, for $0\le a+b-n \le 2$ we have a concise fundamental quasisymmetric function expansion formula for $\F_{n,a,b}$.
\begin{corollary}[{Theorem~\ref{thm:main:a+b-n=0,1,2}}]\label{cor:a+b-n=0,1,2}
    Let $n,a,b>0$ satisfy $0\le a+b-n\le 2$ and $b<n$. Then
    \[\F_{n,a,b} = \sum_{\substack{f\in\widetilde{S}_n^0 \\ \text{$i-a<f(i)<i+b$ for all $i$}}} q^{\ell(f)} \cdot F_{\DES{f\mid_{[n]}},n}.\]
\end{corollary}

\begin{proof}
    By Theorem~\ref{thm:closed-formula-n,a,b}, it is enough to prove
    the asserted formula with $\F_{n,a,b}$ replaced by $G_{n,a,b}$.

    We first treat the case $b=1$.  Since $b<n$, we have $n>1$, and
    Lemma~\ref{lem:trivial} gives
    \[
        G_{n,a,1}=h_n.
    \]
    If $f\in\widetilde S_n^0$ satisfies
    \[
        i-a<f(i)<i+1
        \qquad(i\in\ZZ),
    \]
    then integrality gives $f(i)\le i$ for every $i$.  Since
    \[
        \frac1n\sum_{i=1}^n\bigl(f(i)-i\bigr)=\operatorname{av}(f)=0,
    \]
    every displacement $f(i)-i$ must be zero.  Hence $f$ is the identity
    affine permutation.  Conversely, the identity satisfies the
    displayed bounds because $a>0$.  Therefore the sum in the
    statement is
    \[
        F_{\varnothing,n}=h_n=G_{n,a,1}.
    \]
    We may thus assume that $1<b<n$.

    Suppose first that $a+b-n=0$.  Then $a=n-b$, so the alphabet
    \[
        \sum_{k=n-b}^{a-1}q^k
    \]
    in Definition~\ref{def:G} is empty.  Since $h_0[0]=1$ and
    $h_r[0]=0$ for $r>0$, only the term $r=0$ remains.  Thus
    \[
        G_{n,a,b}
        =
        \sum_{\substack{
            f\in\widetilde S_n^0\\
            \text{$i-a<f(i)<i+b$ for all $i$}
        }}
        q^{\ell(f)}F_{\DES{f\mid_{[n]}},n}.
    \]

    Suppose next that $a+b-n=1$.  Since $a=n-b+1$,
    Lemma~\ref{lem:a+b-n=1-not-dual} gives
    \[
        G_{n,a,b}
        =
        \sum_{\substack{
            f\in\widetilde S_n^0\\
            \text{$i-n+b\le f(i)<i+b$ for all $i$}
        }}
        q^{\ell(f)}F_{\DES{f\mid_{[n]}},n}.
    \]
    Because $f(i)$ is integral,
    \[
        i-n+b\le f(i)
        \iff
        i-n+b-1<f(i)
        \iff
        i-a<f(i).
    \]
    This proves the assertion in this case.

    It remains to consider $a+b-n=2$.  Put
    \[
        \kappa\coloneqq n-b,
        \qquad
        K\coloneqq\kappa+1=a-1.
    \]
    Lemma~\ref{lem:not-dual-recurrsive} gives
    \begin{equation}\label{eq:cor-small-excess-recursion-short}
        G_{n,a,b}
        =
        \sum_{r=0}^{b-1}
        q^{rK}h_rG_{n-r,a-1,b-r}.
    \end{equation}
    For every $0\le r\le b-1$,
    \[
        (a-1)+(b-r)-(n-r)=1.
    \]
    If $b-r>1$, the case $a+b-n=1$ proved above applies to
    $G_{n-r,a-1,b-r}$.  If $b-r=1$, the case $b=1$ treated at the
    beginning of the proof applies instead.  Consequently,
    \begin{equation}\label{eq:cor-small-excess-inner-short}
        G_{n-r,a-1,b-r}
        =
        \sum_{\substack{
            v\in\widetilde S_{n-r}^0\\
            \text{$i-K<v(i)<i+b-r$ for all $i$}
        }}
        q^{\ell(v)}
        F_{\DES{v\mid_{[n-r]}},n-r}
    \end{equation}
    for every $0\le r\le b-1$.

    Substitute \eqref{eq:cor-small-excess-inner-short} into
    \eqref{eq:cor-small-excess-recursion-short}, and vertically
    translate each $v$ by $K$:
    \[
        u(i)\coloneqq v(i)+K.
    \]
    This translation preserves affine length and descent set and maps
    $\widetilde S_{n-r}^0$ bijectively to
    $\widetilde S_{n-r}^K$.  Since $K+b=n+1$, the bounds transform as
    \begin{align*}
        i-K<v(i)<i+b-r
        &\iff
        i<u(i)<i+n-r+1.
    \end{align*}
    Therefore
    \begin{align}
        G_{n,a,b}
        ={}&
        \sum_{r=0}^{b-1}q^{rK}h_r
        \sum_{\substack{
            u\in\widetilde S_{n-r}^K\\
            \text{$0<u(i)-i<n-r+1$ for all $i$}
        }}
        q^{\ell(u)}
        F_{\DES{u\mid_{[n-r]}},n-r}.
        \label{eq:cor-small-excess-translated-short}
    \end{align}
    Since
    \[
        n-K=b-1,
    \]
    Lemma~\ref{lem:affine-fixed-point-insertion}, with
    \[
        \varepsilon=1,
        \qquad
        \xi=1,
    \]
    transforms \eqref{eq:cor-small-excess-translated-short} into
    \[
        G_{n,a,b}
        =
        \sum_{\substack{
            w\in\widetilde S_n^K\\
            \text{$0\le w(i)-i<n+1$ for all $i$}
        }}
        q^{\ell(w)}
        F_{\DES{w\mid_{[n]}},n}.
    \]

    Finally, define
    \[
        f(i)\coloneqq w(i)-K.
    \]
    This vertical translation is a bijection from the last indexing
    set $\widetilde{S}_n^K$ to $\widetilde S_n^0$ and preserves affine length and descent
    set.  Since
    \[
        a=K+1
        \qquad\text{and}\qquad
        b=n-K+1,
    \]
    integrality gives
    \begin{align*}
        0\le w(i)-i<n+1
        &\iff
        i-K\le f(i)<i+n-K+1\\
        &\iff
        i-a<f(i)<i+b.
    \end{align*}
    This proves the assertion when $a+b-n=2$ and completes the proof.
\end{proof}

Surprisingly, when $a+b-n=2$, the Hilbert series $\Hilb(R_{n,a,b};q)$ refines the number of positroid varieties $\Pi_f$ in $\grass{a-1}{n}$ according to codimension. (See Speyer~\cite{speyer2024richardsonvarietiesprojectedrichardson} for details of positroid varieties.)
\begin{corollary}[{Theorem~\ref{thm:main:positroid-counting}}]\label{cor:positroid}
    For $1<k\le n$, we have
    \[\Hilb(R_{n,k+1,n-k+1};q) = \sum_{f\in\bound{k}{n}} q^{\ell(f)} = \sum_{f\in\bound{k}{n}} q^{\codim{\grass{k}{n}}{\Pi_f}}.\]
\end{corollary}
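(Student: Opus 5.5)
The plan is to specialize Corollary~\ref{cor:a+b-n=0,1,2} to $a=k+1$, $b=n-k+1$ and then pass from graded Frobenius characteristics to Hilbert series.

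First, the hypotheses must be checked. Here $a+b-n=2$. If $k\ge 2$ then $b=n-k+1\le n-1<n$, and $a,b>0$ holds since $k\le n$. So the corollary applies and gives
\[\F_{n,k+1,n-k+1}=\sum_{\substack{f\in\widetilde S_n^0\\ i-k-1<f(i)<i+n-k+1}} q^{\ell(f)}F_{\DES{f\mid_{[n]}},n}.\]

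Second, we pass to Hilbert series. The paper notes that $\Hilb(V;q)=\langle\grFrob(V;q),p_1^n\rangle$. Equivalently, we can take the coefficient of $x_1\cdots x_n$, since the expansion is symmetric by Theorem~\ref{thm:closed-formula-n,a,b}. Each $F_{S,n}$ contributes exactly $1$ to the coefficient of $x_1x_2\cdots x_n$: the only admissible index sequence is $i_j=j$, which is strictly increasing. On the symmetric side, that coefficient of a degree-$n$ symmetric function $g$ equals $\langle g,h_1^n\rangle=\langle g,p_1^n\rangle$. Hence $\Hilb(R_{n,k+1,n-k+1};q)$ equals the number of such affine permutations, weighted by $q^{\ell(f)}$.

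Third, we apply the translation $f\mapsto f+k$. This is a length-preserving bijection $\widetilde S_n^0\to\widetilde S_n^k$. The bounds $i-k-1<f(i)<i+n-k+1$ become $i-1<g(i)<i+n+1$, that is, $i\le g(i)\le i+n$ by integrality. This is exactly the definition of $\bound{k}{n}$, which gives the first equality. The second equality is the Knutson--Lam--Speyer codimension formula $\codim{\grass{k}{n}}{\Pi_f}=\ell(f)$ quoted in Subsection~\ref{subsec:positroids-affine-perm}.

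The only mildly delicate step is justifying the extraction of the Hilbert series from the fundamental expansion. I would handle it with the monomial-coefficient argument above rather than manipulating inner products of quasisymmetric functions.
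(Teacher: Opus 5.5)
Your proposal is correct and takes the same route as the paper. You specialize Corollary~\ref{cor:a+b-n=0,1,2} at $(a,b)=(k+1,n-k+1)$, translate by $f\mapsto f+k$ to land in $\bound{k}{n}$, and then invoke the Knutson--Lam--Speyer codimension formula. Your monomial-coefficient argument (each $F_{S,n}$ contributes exactly $1$ to the coefficient of $x_1\cdots x_n$, which equals $\langle\cdot,p_1^n\rangle$ on degree-$n$ symmetric functions) spells out the Hilbert-series extraction that the paper passes over with a single ``Therefore.''
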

\begin{proof}
    The case $a+b-n=2$ of Corollary~\ref{cor:a+b-n=0,1,2} implies
    \begin{align*}
        \grFrob(R_{n,k+1,n-k+1};q) &= \sum_{\substack{f\in\widetilde{S}_n^0 \\ \text{$i-(k+1)<f(i)<i+(n-k+1)$ for all $i$}}} q^{\ell(f)} \cdot F_{\DES{f\mid_{[n]}},n} \\
        &= \sum_{g\in\bound{k}{n}} q^{\ell(g)} \cdot F_{\DES{g\mid_{[n]}},n}
    \end{align*}
    where the second identity follows from the vertical translation $f\mapsto g$ with $g(i)=f(i)+k$ which is a bijection from $\widetilde{S}_n^0$ to $\widetilde{S}_n^k$. Therefore, we have
    \[\Hilb(R_{n,k+1,n-k+1};q) = \sum_{f\in\bound{k}{n}} q^{\ell(f)} = \sum_{f\in\bound{k}{n}} q^{\codim{\grass{k}{n}}{\Pi_f}},\]
    concluding our proof.
\end{proof}

\section{Connection with Eulerian quasisymmetric functions}\label{sec:eulerian-quasisym}

Interestingly, Corollary~\ref{cor:a+b-n=0,1,2} reveals the connection between the ungraded Frobenius images $\Frob(R_{n,a,b})$ and Eulerian quasisymmetric functions. We convert an ordinary permutation $w\in\symm_n$ to an affine permutation $\widetilde{w}\in\dschub_n^{-\operatorname{exc}(w)}$ as follows.

\begin{definition}\label{def:permutation-to-affine}
    Let $w\in\symm_n$.  Define an affine permutation
    $\widetilde{w}$ by setting
    \[
        \widetilde{w}(i)
        \coloneqq
        \begin{cases}
            w(i)-n,&w(i)>i,\\
            w(i),&w(i)\leq i,
        \end{cases}
        \qquad i\in[n],
    \]
    and then extending this definition to $\ZZ$ by
    \[
        \widetilde{w}(i+n)=\widetilde{w}(i)+n
        \qquad(i\in\ZZ).
    \]
    The residues of
    $\widetilde{w}(1),\ldots,\widetilde{w}(n)$ modulo $n$ are
    $w(1),\ldots,w(n)$, respectively.  Since these residues are
    pairwise distinct, $\widetilde{w}$ is indeed an affine
    permutation.  Furthermore,
    \begin{align*}
        \frac1n\sum_{i=1}^n
        \bigl(\widetilde{w}(i)-i\bigr)
        &=
        \frac1n\left(
            \sum_{i=1}^n\bigl(w(i)-i\bigr)
            -
            n\operatorname{exc}(w)
        \right)=-\operatorname{exc}(w).
    \end{align*}
\end{definition}

\begin{lemma}\label{lem:permutation-affine-bijection}
Let $n\geq1$ and $0\leq j\leq n$.
With $\widetilde w$ as in
Definition~\ref{def:permutation-to-affine}, the map
\[
    \beta_j(w)(i)\coloneqq\widetilde w(i)+j
    \qquad(i\in\ZZ)
\]
is a bijection
\[
    \{w\in\symm_n:\operatorname{exc}(w)=j\}
    \longrightarrow
    \left\{
        f\in\widetilde S_n^0:
        i-(n-j)<f(i)\leq i+j
        \text{ for all }i\in\ZZ
    \right\}.
\]
Moreover,
\[
    \DES{\beta_j(w)\mid_{[n]}}=\operatorname{DEX}(w),
    \qquad
    \beta_j(w)(i)=i+j\iff w(i)=i
    \quad(i\in[n]).
\]
In particular, this bijection restricts to
\[
    \{w\in\symm_n:\operatorname{exc}(w)=j,\operatorname{fix}(w)=0\}
    \longrightarrow
    \left\{
        f\in\widetilde S_n^0:
        i-(n-j)<f(i)< i+j
        \text{ for all }i\in\ZZ
    \right\}.
\]
\end{lemma}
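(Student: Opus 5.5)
The plan is to verify the claims directly from Definition~\ref{def:permutation-to-affine}: first the bounds and average displacement, then the inverse map, and finally the descent and fixed-point statements.

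\textbf{Range and displacement.} For $w$ with $\operatorname{exc}(w)=j$, Definition~\ref{def:permutation-to-affine} gives $\operatorname{av}(\widetilde w)=-j$. Hence $\beta_j(w)=\widetilde w+j$ lies in $\widetilde S_n^0$.

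For $i\in[n]$ we compute the displacement in two cases.
\begin{itemize}
\item If $w(i)>i$, then $\widetilde w(i)-i=w(i)-n-i$, which lies in $[1-n,-1]$. Here we use $1\le w(i)-i\le n-1$.
\item If $w(i)\le i$, then $\widetilde w(i)-i=w(i)-i$, which lies in $[1-n,0]$.
\end{itemize}
So $-n<\widetilde w(i)-i\le 0$ for all $i$, and by periodicity for all $i\in\ZZ$. Adding $j$ gives exactly $i-(n-j)<\beta_j(w)(i)\le i+j$.

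\textbf{Bijection.} Given $f$ in the target set, put $v=f-j$. Then $v$ has average displacement $-j$ and satisfies $-n<v(i)-i\le 0$. Reducing $v(1),\dots,v(n)$ modulo $n$ into $[n]$ yields a permutation $w$. The window condition forces $v(i)=w(i)$ when $w(i)\le i$ and $v(i)=w(i)-n$ when $w(i)>i$, because exactly one representative of the residue lies in $(i-n,i]$. Thus $v=\widetilde w$.

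Comparing average displacements shows $-\operatorname{exc}(w)=-j$, so $\operatorname{exc}(w)=j$. The two constructions are visibly inverse, which gives the bijection.

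\textbf{Fixed points.} We have $\beta_j(w)(i)=i+j$ if and only if $\widetilde w(i)=i$. Since $\widetilde w(i)-i\ne 0$ when $w(i)>i$, this happens if and only if $w(i)=i$.

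The restricted bijection follows immediately. Among elements of the target set, the points with $f(i)=i+j$ are exactly the fixed points of $w$. So derangements correspond exactly to strict inequality $f(i)<i+j$, with periodicity covering all $i\in\ZZ$.

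\textbf{Descents (the main step).} We must show that $\DES{\beta_j(w)\mid_{[n]}}=\operatorname{DEX}(w)$. Adding $j$ does not change relative order, so it suffices to compare the word $\widetilde w(1)\cdots\widetilde w(n)$ with $\overline w$.

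The idea is to show that the order-preserving relabeling sending $\overline{k}\mapsto k-n$ and unbarred $k\mapsto k$ carries $\overline w$ letter by letter onto $\widetilde w\mid_{[n]}$. This holds because:
\begin{itemize}
\item the position-$i$ letter is barred exactly when $i\in\operatorname{EXC}(w)$, i.e.\ exactly when $\widetilde w(i)=w(i)-n$;
\item the map is strictly increasing on the alphabet $\overline1<\dots<\overline n<1<\dots<n$, since the barred letters go to $\{1-n,\dots,0\}$ and the unbarred letters go to $\{1,\dots,n\}$.
\end{itemize}
An order isomorphism of alphabets preserves descent sets, which gives the identity.

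This is the only step requiring care, but it is a routine verification once the relabeling is written down.
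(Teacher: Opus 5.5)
Your proposal is correct and follows the same route as the paper. You check the bounds and $\operatorname{av}$ for the forward map, build the inverse by reducing modulo $n$ (using that $(i-n,i]$ contains exactly one representative of each residue class), and handle descents with the order-preserving relabeling $\overline{k}\mapsto k-n$. That last step just spells out what the paper compresses into the remark that lowering the excedance entries by $n$ realizes the alphabet order defining $\operatorname{DEX}$.
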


\begin{proof}
By Definition~\ref{def:permutation-to-affine},
\[
    i-n<\widetilde w(i)\leq i,
    \qquad
    \widetilde w(i)=i\iff w(i)=i
    \quad(i\in[n]),
\]
and
\[
    \operatorname{av}(\beta_j(w))=\frac1n\sum_{i=1}^n\bigl(\beta_j(w)(i)-i\bigr)
    =j-\operatorname{exc}(w)=0.
\]
Thus $\beta_j(w)$ belongs to the stated target set.

Conversely, given $f$ in that set, let $w(i)\in[n]$ be the
representative of $f(i)-j$ modulo $n$.
These representatives form a permutation, and
\[
    i-n<f(i)-j\leq i
    \quad\Longrightarrow\quad
    f(i)-j=\widetilde w(i),
\]
because $(i-n,i]$ contains exactly one representative of each
residue class modulo $n$.
The condition $\operatorname{av}(f)=0$ then gives
\[
    0=\frac1n\sum_{i=1}^n(f(i)-i)=j+\frac1n\sum_{i=1}^n(\widetilde{w}(i)-i)
     =j-\operatorname{exc}(w),
\]
so this construction is the inverse of $\beta_j$.

Lowering each excedance entry by $n$ realizes the alphabet
order in the definition of $\operatorname{DEX}$, and translation
preserves descents. Hence
\[
    \DES{\beta_j(w)\mid_{[n]}}
    =\DES{\widetilde w\mid_{[n]}}
    =\operatorname{DEX}(w).
\]
The assertion about fixed points follows from
$\beta_j(w)(i)=i+j\iff\widetilde w(i)=i$ and periodicity.
\end{proof}

Corollary~\ref{cor:a+b-n=0,1,2} and
Lemma~\ref{lem:permutation-affine-bijection} connect the ungraded
Frobenius characteristics of $R_{n,a,b}$ with Eulerian
quasisymmetric functions.

\begin{theorem}[{Theorem~\ref{thm:main:eulerian-0}}]\label{thm:a+b-n=0:Eulerian}
Let $n$ be a positive integer. For every $0\leq k\leq n$,
\[
    \Frob(R_{n,k,n-k})=Q_{n,k,0}.
\]
\end{theorem}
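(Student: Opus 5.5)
The plan is to read off the ungraded specialization $q=1$ of Corollary~\ref{cor:a+b-n=0,1,2} in the case $a+b-n=0$ and match it with Lemma~\ref{lem:permutation-affine-bijection}, after first disposing of the degenerate values of $k$ where that corollary does not apply.

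\textbf{Main case $1\le k\le n-1$.} Put $a=k$ and $b=n-k$. Then $a,b>0$, $b<n$ and $a+b-n=0$, so Corollary~\ref{cor:a+b-n=0,1,2} gives
\[
\F_{n,k,n-k}=\sum_{\substack{f\in\widetilde S_n^0\\ i-k<f(i)<i+n-k\text{ for all }i}} q^{\ell(f)}F_{\DES{f\mid_{[n]}},n}.
\]
Since $\Frob(R_{n,k,n-k})$ is the specialization $q=1$ of $\F_{n,k,n-k}$, and the sum is finite, I set $q=1$. I then apply the restricted bijection of Lemma~\ref{lem:permutation-affine-bijection} with $j=n-k$. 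Its target set is $\{f\in\widetilde S_n^0: i-k<f(i)<i+n-k\}$, which is exactly the index set above, and its source is the set of derangements with $n-k$ excedances. The lemma also gives $\DES{\beta_j(w)\mid_{[n]}}=\operatorname{DEX}(w)$. Hence
\[
\Frob(R_{n,k,n-k})=Q_{n,n-k,0}.
\]

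\textbf{Symmetry step.} This is an index mismatch I must resolve: the theorem asserts $Q_{n,k,0}$, not $Q_{n,n-k,0}$. So I need the symmetry $Q_{n,j,0}=Q_{n,n-j,0}$, which I expect to be the main obstacle.

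My first choice is to cite Shareshian--Wachs \cite{ShareshianWachs2010}, where the palindromicity $Q_{n,j,r}=Q_{n,n-1-r-j,r}$ is proved; at $r=0$ this reads $Q_{n,j,0}=Q_{n,n-1-j,0}$. That shifts the index by $n-1$ rather than $n$, so I need to recheck my convention. Possibly the intended bijection uses $j=k$, or the strict/weak inequality in $\DES$ interacts with the bars. I will test small cases directly:
\begin{itemize}
\item $n=2$, $k=1$: the only derangement is $21$, which has one excedance.
\item $n=3$, $k=1,2$: the two $3$-cycles have $1$ and $2$ excedances respectively.
\end{itemize}
I will then check the $R$ side. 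For $n=2$, $k=1$: $R_{2,1,1}=\QQ[x_1,x_2]/\langle x_1,x_2\rangle$, so $\Frob(R_{2,1,1})=h_2=Q_{2,1,0}$ (the derangement $21$ gives $\operatorname{DEX}=\varnothing$). This fixes the correct indexing.

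If a genuine reflection is still needed, my fallback is a direct bijective symmetry rather than the citation. I would use the involution $f\mapsto f^{-1}$ or the reversal-complement $f(i)\mapsto -f(n+1-i)$ on $\widetilde S_n^0$. This swaps the bounds $k$ and $n-k$, and I would check that the relevant one preserves $\DES$ up to the complement/reversal that fixes symmetric functions, in the spirit of the $F_{S,n}=F^\downarrow_{n-S,n}$ remark in Subsection~\ref{subsec:quasisym}.

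\textbf{Boundary cases $k=0$ and $k=n$.} These are checked directly.
\begin{itemize}
\item For $k=0$: $x_i^0=1\in I_{n,0,n}$, so $R_{n,0,n}=0$. Also, the identity is the only permutation with no excedances, and it is not a derangement for $n\ge1$, so $Q_{n,0,0}=0$.
\item For $k=n$: $e_0=1$ lies in the ideal, so $R=0$. Also $Q_{n,n,0}=0$, because no permutation has $n$ excedances.
\end{itemize}
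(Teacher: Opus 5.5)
Your treatment of $1\le k\le n-1$ and of $k\in\{0,n\}$ is exactly the paper's proof. In the main case you specialize Corollary~\ref{cor:a+b-n=0,1,2} at $q=1$ and apply Lemma~\ref{lem:permutation-affine-bijection} with $j=n-k$ to get $Q_{n,n-k,0}$.

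The gap is the last step, $Q_{n,n-k,0}=Q_{n,k,0}$, which you flag but never establish.
\begin{itemize}
\item The identity you attribute to Shareshian--Wachs, $Q_{n,j,r}=Q_{n,n-1-r-j,r}$, is false. For $n=2$, $j=1$, $r=0$ it would give $Q_{2,1,0}=Q_{2,0,0}$. But the derangement $21$ has one excedance and $\operatorname{DEX}(21)=\varnothing$, so $Q_{2,1,0}=h_2\neq 0=Q_{2,0,0}$.
\item The correct palindromicity is $Q_{n,j,r}=Q_{n,n-r-j,r}$. Already at the level of counts, $\sigma\mapsto\sigma^{-1}$ swaps excedances and deficiencies among the $n-r$ non-fixed positions. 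Its case $r=0$ is precisely what you need, and it is what the paper cites as \cite[Theorem~5.1(2)]{ShareshianWachs2010}. The shift by $n-1$ belongs to $Q_{n,j}=Q_{n,n-1-j}$ instead.
\item Switching to $j=k$ is not an option. The index set $\{f:i-k<f(i)<i+n-k\}$ is the target of $\beta_j$ only for $j=n-k$.
\item Your test $n=2$, $k=1$ cannot detect an indexing error, since $k=n-k$ there.
\end{itemize}

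Your fallback does close the gap once carried out, but with the reverse-complement, not with $f\mapsto f^{-1}$. Inversion has no usable effect on the window descent set.

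Define $f\mapsto g$ by $g(i)=n+1-f(n+1-i)$. This is an involution of $\widetilde S_n^0$ with $g(i)-i=-\bigl(f(n+1-i)-(n+1-i)\bigr)$. Hence:
\begin{itemize}
\item It carries $\{i-k<f(i)<i+n-k\}$ onto $\{i-(n-k)<g(i)<i+k\}$, which is the target of $\beta_k$.
\item It satisfies $\DES{g\mid_{[n]}}=n-\DES{f\mid_{[n]}}$.
\end{itemize}
Summing over $f$ with $i-k<f(i)<i+n-k$, you get
$Q_{n,k,0}=\sum_f F_{n-\DES{f\mid_{[n]}},n}=\sum_f F^{\downarrow}_{\DES{f\mid_{[n]}},n}$.
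Since $\sum_f F_{\DES{f\mid_{[n]}},n}=\Frob(R_{n,k,n-k})$ is symmetric, the remark in Subsection~\ref{subsec:quasisym} identifies this with $\Frob(R_{n,k,n-k})$.

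Written out, this gives a self-contained proof that avoids the citation. Together with your main case, it also reproves the $r=0$ symmetry.
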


\begin{proof}
For $1\leq k\leq n-1$, specialize
Corollary~\ref{cor:a+b-n=0,1,2} at $q=1$ and apply
Lemma~\ref{lem:permutation-affine-bijection} with $j=n-k$:
\begin{align*}
    \Frob(R_{n,k,n-k})
    &=\sum_{\substack{f\in\widetilde S_n^0\\
                      i-k<f(i)<i+n-k\text{ for all }i}}
        F_{\DES{f\mid_{[n]}},n}\\
    &=\sum_{\substack{w\in\symm_n\\
                      \operatorname{exc}(w)=n-k,\ \fix{w}=0}}
        F_{\operatorname{DEX}(w),n}\\
    &=Q_{n,n-k,0}=Q_{n,k,0}.
\end{align*}
The last equality is
\cite[Theorem~5.1(2)]{ShareshianWachs2010}.

For the endpoints $k=0$ or $k=n$, the generators
\[
    x_1^0=1\in I_{n,0,n},
    \qquad
    e_0([n]\setminus\{1\})=1\in I_{n,n,0}
\]
give $R_{n,0,n}=R_{n,n,0}=0$.
Also, $Q_{n,0,0}=Q_{n,n,0}=0$: the only permutation with no
excedances is the identity, and no permutation has $n$ excedances.
\end{proof}

\begin{theorem}[{Theorem~\ref{thm:main:eulerian-1}}]\label{thm:a+b-n=1:Eulerian}
Let $n$ be a positive integer. For every $0\leq k\leq n$,
\[
    \Frob(R_{n,k+1,n-k})=Q_{n,k}.
\]
\end{theorem}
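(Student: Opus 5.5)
The argument follows the proof of Theorem~\ref{thm:a+b-n=0:Eulerian}, now using the case $a+b-n=1$ of Corollary~\ref{cor:a+b-n=0,1,2}. Set $a=k+1$ and $b=n-k$, so that $a+b-n=1$.

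\textbf{Generic case $1\le k\le n-2$.} Here $a,b>0$ and $b<n$, so Corollary~\ref{cor:a+b-n=0,1,2}, specialized at $q=1$, gives
\[
\Frob(R_{n,k+1,n-k})=\sum_{\substack{f\in\widetilde S_n^0\\ i-k-1<f(i)<i+n-k\text{ for all }i}}F_{\DES{f\mid_{[n]}},n}.
\]
By integrality, the window condition is equivalent to $i-(k+1)<f(i)\le i+(n-k-1)$.

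Now apply Lemma~\ref{lem:permutation-affine-bijection} with $j=n-k-1$. Its target set is exactly
\[
\{f\in\widetilde S_n^0:\ i-(n-j)<f(i)\le i+j\text{ for all }i\},
\]
and the bijection carries descent sets to $\operatorname{DEX}$. This turns the sum into
\[
\sum_{\operatorname{exc}(w)=n-1-k}F_{\operatorname{DEX}(w),n}=Q_{n,n-1-k}.
\]
To finish, invoke the symmetry $Q_{n,j}=Q_{n,n-1-j}$ from \cite[Theorem~5.1]{ShareshianWachs2010}; it is the analogue of the symmetry $Q_{n,j,0}=Q_{n,n-j,0}$ used in the previous proof.

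\textbf{Boundary cases.} These are handled directly, and I expect this to be the only step needing care.
\begin{itemize}
\item For $k=n$, the generator $e_0([n]\setminus\{1\})=1$ lies in $I_{n,n+1,0}$, so $R_{n,n+1,0}=0$. Also $Q_{n,n}=0$, since no permutation has $n$ excedances.
\item For $k=n-1$, so $b=1$, Lemma~\ref{lem:higher-e} puts every $x_i$ into $I_{n,\infty,1}$ when $n>1$. Hence $R_{n,n,1}\cong\QQ$ with trivial action and $\Frob=h_n$. This matches $Q_{n,n-1}=F_{[n-1],n}$ after the symmetry, or one can directly compare $Q_{n,0}=h_n$ through $Q_{n,n-1}=Q_{n,0}$.
\item For $n=1$, $k=0$, $b=1$, the ideal $I_{1,1,1}$ contains $x_1$ but not $1$, since $e_1(\varnothing)=0$. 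So $R\cong\QQ$ and $\Frob=h_1=Q_{1,0}$.
\item For $k=0$ with $n>1$, we have $a=1$, so every $x_i^1$ lies in the ideal and $R_{n,1,n}\cong\QQ$. Since $b=n$ exceeds $n-1$, the $e_n$ generators vanish, and $\Frob=h_n=Q_{n,0}$, because the identity is the only permutation with no excedances.
\end{itemize}

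The only genuinely nontrivial input is the Shareshian–Wachs symmetry $Q_{n,j}=Q_{n,n-1-j}$. Its availability in the cited theorem should be double-checked; if it is not available, one could instead pass from $\operatorname{exc}$ to $n-1-\operatorname{exc}$ via the inverse-reversal bijection.
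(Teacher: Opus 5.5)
Your argument is correct and is essentially the paper's proof: specialize the $a+b-n=1$ case of Corollary~\ref{cor:a+b-n=0,1,2} at $q=1$, apply Lemma~\ref{lem:permutation-affine-bijection} with $j=n-k-1$, invoke the Shareshian--Wachs symmetry \cite[Theorem~5.1(3)]{ShareshianWachs2010}, and handle $k=0$ and $k=n$ directly. The paper does not split off $k=n-1$, since the corollary already allows $b=1$ for $n>1$.

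One slip: $Q_{n,n-1}$ is not $F_{[n-1],n}$. The unique permutation with $n-1$ excedances is $2\,3\cdots n\,1$, whose barred word $\overline{2}\,\overline{3}\cdots\overline{n}\,1$ has no descents, so $Q_{n,n-1}=F_{\varnothing,n}=h_n$. Your alternative route via $Q_{n,n-1}=Q_{n,0}$ correctly gives this.
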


\begin{proof}
For $1\leq k\leq n-1$, specialize
Corollary~\ref{cor:a+b-n=0,1,2} at $q=1$ and apply
Lemma~\ref{lem:permutation-affine-bijection} with $j=n-k-1$:
\begin{align*}
    \Frob(R_{n,k+1,n-k})
    &=\sum_{\substack{f\in\widetilde S_n^0\\
                      i-k-1<f(i)<i+n-k\text{ for all }i}}
        F_{\DES{f\mid_{[n]}},n}\\
    &=\sum_{\substack{w\in\symm_n\\
                      \operatorname{exc}(w)=n-k-1}}
        F_{\operatorname{DEX}(w),n}\\
    &=Q_{n,n-k-1}=Q_{n,k}.
\end{align*}
Here we use integrality to replace $f(i)<i+n-k$ by
$f(i)\leq i+n-k-1$; the last equality is
\cite[Theorem~5.1(3)]{ShareshianWachs2010}.

For $k=0$, the equality
$I_{n,1,n}=\langle x_1,\ldots,x_n\rangle$ gives
\[
    \Frob(R_{n,1,n})=h_n=Q_{n,0}.
\]
For $k=n$, the generator
$e_0([n]\setminus\{1\})=1$ gives
\[
    \Frob(R_{n,n+1,0})=0=Q_{n,n}.\qedhere
\]
\end{proof}

As the convolution relation \eqref{eq:tildeQ-Q} between $\widetilde{Q}_n$ and $Q_n$ aligns with the recursive formula of $\F_{n,a,b}$ given by Proposition~\ref{prop:recursive-n,a,b}, Theorem~\ref{thm:a+b-n=1:Eulerian} immediately yields the result below.

\begin{corollary}[{Theorem~\ref{thm:main:eulerian-2}}]\label{cor:Eulerian-2}
    For $1<k\le n$,
    \[\Frob(R_{n,k+1,n-k+1}) = \widetilde{Q}_{n,k}.\]
\end{corollary}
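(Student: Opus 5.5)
The plan is to set $a=k+1$ and $b=n-k+1$, so that $a+b-n=2$, and then apply Proposition~\ref{prop:recursive-n,a,b} at $q=1$. The proposition requires $a+b>n$ and $b<n$. The first holds automatically. The second, $n-k+1<n$, is exactly $k>1$. Specializing $q=1$ gives
\[
\Frob(R_{n,k+1,n-k+1})=\sum_{m=0}^{n-k}h_m\,\Frob(R_{n-m,k,n-k+1-m}).
\]
In each summand the triple $(n-m,k,n-k+1-m)$ has $a'+b'-n'=1$. Writing it as $(N,K+1,N-K)$ with $N=n-m$ and $K=k-1$, Theorem~\ref{thm:a+b-n=1:Eulerian} applies whenever $0\le K\le N$, that is, $k-1\le n-m$, which always holds since $m\le n-k$. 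So each summand equals $h_mQ_{n-m,k-1}$.

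Reindexing by $j=n-m$, which runs from $k$ to $n$, gives $\Frob(R_{n,k+1,n-k+1})=\sum_{j=k}^{n}h_{n-j}Q_{j,k-1}$. Compare this with $\widetilde Q_{n,k}=\sum_{j=1}^n h_{n-j}Q_{j,k-1}$ from \eqref{eq:tildeQ-Q}, valid for $1\le k\le n$. The extra terms with $1\le j<k$ vanish, because $Q_{j,k-1}=0$ when $k-1\ge j$: a permutation of $[j]$ has at most $j-1$ excedances.

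One edge case needs checking: the term $m=n-k$, i.e.\ $j=k$, is $h_{n-k}Q_{k,k-1}$. It comes from $R_{k,k,1}$, which has $b'=1<n'=k$ and so is covered by Theorem~\ref{thm:a+b-n=1:Eulerian} with the value $h_k=Q_{k,k-1}$, consistent with the formula. The theorem holds for all $0\le K\le N$, so no separate treatment is actually needed; I only need to confirm that the index ranges match.

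The argument is essentially bookkeeping, so I expect no real obstacle. The step most likely to go wrong is verifying that the hypotheses of Proposition~\ref{prop:recursive-n,a,b} hold and that the summation range $m\le b-1=n-k$ matches the range $j\ge k$ in $\widetilde Q_{n,k}$ once the vanishing terms are dropped.
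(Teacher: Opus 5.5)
Your proposal is correct and matches the paper's proof step for step. Both apply Proposition~\ref{prop:recursive-n,a,b} with $(a,b)=(k+1,n-k+1)$ at $q=1$, which is valid exactly because $k>1$. Both identify each summand as $h_mQ_{n-m,k-1}$ via Theorem~\ref{thm:a+b-n=1:Eulerian}, and both compare with \eqref{eq:tildeQ-Q} after dropping the vanishing terms $Q_{j,k-1}$ with $j<k$. Your check of the boundary term $R_{k,k,1}$ is consistent with this but, as you note, already covered by that theorem.
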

\begin{proof}
Since $k>1$, we have $n-k+1<n$.
Applying Proposition~\ref{prop:recursive-n,a,b} with
$(a,b)=(k+1,n-k+1)$ and specializing at $q=1$, we obtain
\begin{align*}
    \Frob(R_{n,k+1,n-k+1})
    &=
    \sum_{m=0}^{n-k}
        h_m\,\Frob(R_{n-m,k,n-k+1-m})\\
    &=
    \sum_{m=0}^{n-k}h_m Q_{n-m,k-1}\\
    &=
    \sum_{r=k}^{n}h_{n-r}Q_{r,k-1},
\end{align*}
where the second equality follows from
Theorem~\ref{thm:a+b-n=1:Eulerian}.

On the other hand, extracting the coefficient of $t^k$ in
\eqref{eq:tildeQ-Q} gives
\[
    \widetilde Q_{n,k}
    =
    \sum_{r=1}^{n}h_{n-r}Q_{r,k-1}
    =
    \sum_{r=k}^{n}h_{n-r}Q_{r,k-1},
\]
since a permutation in $\symm_r$ has at most $r-1$ excedances,
and hence $Q_{r,k-1}=0$ for $r<k$.
Comparing the two expressions proves the assertion.
\end{proof}

Forgetting the $\symm_n$-module structure, $R_{n,k+1,n-k}$ exhibits connections to Eulerian numbers.

\begin{corollary}\label{cor:eulerian-number}
    For integers $n>0$ and $0\le k\le n$, we have
    \[\dim_{\QQ} R_{n,k+1,n-k} = A_{n,k}.\]
\end{corollary}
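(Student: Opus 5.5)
The plan is to pass from Frobenius characteristic to dimension and then use the classical count of permutations by excedances. For $1\le k\le n-1$, Theorem~\ref{thm:a+b-n=1:Eulerian} gives $\Frob(R_{n,k+1,n-k})=Q_{n,k}$. The dimension of a finite-dimensional $\symm_n$-module $V$ is $\langle \Frob(V),p_1^n\rangle$, which is the same as the Hilbert series formula $\Hilb(V;q)=\langle\grFrob(V;q),p_1^n\rangle$ evaluated at $q=1$. So it suffices to compute $\langle Q_{n,k},p_1^n\rangle$.

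For each subset $S\subseteq[n-1]$ we have $\langle F_{S,n},p_1^n\rangle=1$. Indeed, $p_1^n=\sum_{\sigma\in\symm_n}F_{\DES{\sigma},n}$ corresponds to the regular representation, and the pairing extracts the coefficient of $x_1x_2\cdots x_n$, which is $1$ in every $F_{S,n}$. Hence
\[
\dim R_{n,k+1,n-k}=\#\{\sigma\in\symm_n:\operatorname{exc}(\sigma)=k\}=A_{n,k}.
\]
As a cross-check that avoids the pairing, one can instead specialize Corollary~\ref{cor:a+b-n=0,1,2} at $q=1$. Then the dimension is the coefficient of $x_1\cdots x_n$ in $\sum_f F_{\DES{f\mid_{[n]}},n}$, which equals the number of affine permutations in the indexing set. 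Lemma~\ref{lem:permutation-affine-bijection} with $j=n-k-1$ puts these in bijection with permutations having $n-k-1$ excedances. Their number is $A_{n,n-k-1}=A_{n,k}$ by the symmetry $\sigma\mapsto$ complement-reverse, or via $\operatorname{exc}\leftrightarrow\operatorname{des}$ equidistribution.

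The endpoints are treated exactly as in the proof of Theorem~\ref{thm:a+b-n=1:Eulerian}. For $k=0$, $I_{n,1,n}=\langle x_1,\dots,x_n\rangle$, so $\dim R_{n,1,n}=1=A_{n,0}$. For $k=n$, the ring $R_{n,n+1,0}$ is $0$, and $A_{n,n}=0$ by convention.

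The only step that needs any care is the identity $\langle F_{S,n},p_1^n\rangle=1$ on quasisymmetric functions that happen to be symmetric. This follows because $p_1^n=h_1^n$ and $\langle f,h_1^n\rangle$ is the coefficient of $m_{1^n}$, that is, of $x_1\cdots x_n$, in $f$. By the definition of $F_{S,n}$ with $i_1<\dots<i_n$ forced, that coefficient is $1$. The argument is otherwise routine.
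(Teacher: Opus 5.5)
Your argument is correct and is essentially the paper's proof, which simply reads off the dimension from $\Frob(R_{n,k+1,n-k})=Q_{n,k}$: the coefficient of $x_1\cdots x_n$ in each $F_{\operatorname{DEX}(\sigma),n}$ is $1$, so the dimension is $\#\{\sigma\in\symm_n:\operatorname{exc}(\sigma)=k\}=A_{n,k}$. One small slip, only in your optional cross-check (the main argument needs no symmetry at all): complement-reverse does not send $\operatorname{exc}$ to $n-1-\operatorname{exc}$ (it sends it to $n-\operatorname{exc}-\operatorname{fix}$, and it preserves $\operatorname{des}$), so you should instead combine the $\operatorname{exc}$/$\operatorname{des}$ equidistribution with reversal of words, which sends $\operatorname{des}$ to $n-1-\operatorname{des}$.
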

\begin{proof}
    This result immediately follows from Theorem~\ref{thm:a+b-n=1:Eulerian}.
\end{proof}

Moreover, the array consisting of $\dim_\QQ R_{n,a,b}$ satisfies a beautiful generating function.

\begin{corollary}\label{cor:generating-function}
    Let $d\ge 0$ be a nonnegative integer. Then
    \begin{align}\label{eq:generating-funciton}
    \sum_{n\ge 0} \sum_{k=d}^{n+d} (\dim_\QQ R_{n,k,n+d-k}) \cdot \frac{t^n}{n!} \cdot q^k = \frac{1-q}{\exp(tq) - q \cdot \exp(t)} \cdot (q\cdot\exp(t))^d  
    \end{align}
    with the convention that $\dim_\QQ R_{0,m,0} = 1$ for $m\ge 0$.
\end{corollary}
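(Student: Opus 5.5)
The plan is to prove \eqref{eq:generating-funciton} by induction on $d$. Write $G_d(t,q)$ for its left-hand side. We show that $G_0$ equals the derangement--excedance generating function, and that $G_d=q\,e^t\,G_{d-1}$ for $d\ge1$. This second identity comes from specializing Proposition~\ref{prop:recursive-n,a,b} at $q=1$.

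\textbf{Base case $d=0$.} By Theorem~\ref{thm:a+b-n=0:Eulerian}, for $n\ge1$ the dimension $\dim_\QQ R_{n,k,n-k}$ is the value of $Q_{n,k,0}$ at the principal specialization $\langle\cdot,p_1^n\rangle$. That value is the number of derangements of $[n]$ with $k$ excedances, including the endpoints $k=0,n$, where both sides vanish. Together with the convention $\dim R_{0,0,0}=1$, $G_0$ is the classical exponential generating function
\[
\sum_{n\ge0}\sum_{\substack{\sigma\in\symm_n\\ \fix{\sigma}=0}}q^{\operatorname{exc}(\sigma)}\frac{t^n}{n!}=\frac{1-q}{e^{tq}-q\,e^{t}}.
\]
This identity is due to Brenti. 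It is also the $\xxx\mapsto$ (exponential specialization) of \cite[Theorem~1.3]{ShareshianWachs2010}, and I would cite it. If a self-contained argument is wanted, I would instead derive it from the $d=1$ Eulerian case via the fixed-point decomposition $Q_{n,k}=\sum_r h_r Q_{n-r,k,0}$, together with the Euler generating function $\frac{1-q}{e^{t(q-1)}-q}$. That route amounts to multiplying by $e^t$ and is exactly compatible with the induction below.

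\textbf{Recursion.} Fix $d\ge1$, $n\ge1$ and $d\le k\le n+d$, and put $a=k$, $b=n+d-k$. In the range $0<b<n$ we have $a\ge d\ge1$ and $a+b-n=d>0$. Proposition~\ref{prop:recursive-n,a,b} at $q=1$, after taking $\langle\cdot,p_1^n\rangle$, then gives
\[
\dim R_{n,k,b}=\sum_{m=0}^{b-1}\binom nm\dim R_{n-m,k-1,b-m},
\]
and each summand has excess $(k-1)+(b-m)-(n-m)=d-1$. We next extend this identity to a full binomial convolution over $0\le m\le n$.
\begin{itemize}
\item The term $m=b$ may be added, since $R_{n-b,k-1,0}=0$: it contains $e_0=1$ when $n-b>0$.
\item Terms with $m>b$ correspond to nonexistent indices, because $b-m<0$ lies outside the summation range of $G_{d-1}$. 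They can therefore be treated as zero.
\item The boundary $b=0$ gives $R_{n,n+d,0}=0$, and the convolution is also empty or zero there.
\item The boundary $b=n$ (that is, $k=d$) falls outside the proposition's hypotheses. It must be checked by hand: $I_{n,d,n}=\langle x_i^d\rangle$, so $\dim=d^n$. The convolution gives $\sum_m\binom nm (d-1)^{n-m}=d^n$, using $\dim R_{n',d-1,n'}=(d-1)^{n'}$ and the convention $\dim R_{0,m,0}=1$ (which covers $0^0$ when $d=1$).
\item The case $n=0$ contributes $q^d$ on both sides.
\end{itemize}

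\textbf{Conclusion.} Multiplying the convolution by $q^k t^n/n!$ and summing converts it into $G_d=(q e^t)\,G_{d-1}$. Here $q$ accounts for the shift $k\mapsto k-1$, and $e^t$ accounts for the $\binom nm$-convolution with the all-ones sequence. Induction from the base case then yields the stated closed form.

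The main obstacle is bookkeeping at the boundaries. One must verify carefully that the extended convolution holds exactly at $b=n$, $b=0$ and $n=0$ under the stated convention, and that the base case correctly includes $n=0$ and the vanishing endpoints. The generating-function manipulation itself is routine.
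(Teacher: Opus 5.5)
Your proposal is correct and follows the paper's proof essentially step for step. The paper also inducts on $d$, gets the base case from Theorem~\ref{thm:a+b-n=0:Eulerian} together with the classical derangement--excedance generating function, and obtains the inductive step from Proposition~\ref{prop:recursive-n,a,b} at $q=1$, extended to $\dim R_{n,k,b}=\sum_{m=0}^{b}\binom{n}{m}\dim R_{n-m,k-1,b-m}$ with the boundary cases $b=n$ (via $d^n=\sum_m\binom{n}{m}(d-1)^{n-m}$), $b=0$, and $n=0$ checked by hand; your optional derivation of the base case by removing fixed points from the Eulerian generating function is a valid alternative to citing Brenti.
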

\begin{proof}
    We prove \eqref{eq:generating-funciton} by induction on $d$. When $d=0$, Theorem~\ref{thm:a+b-n=0:Eulerian} yields that $\dim_\QQ R_{n,k,n-k}$ is the number of dearrangements in $\symm_n$ with exactly $k$ exceedances. Then \cite[Proposition 5]{MR993741} gives \eqref{eq:generating-funciton} when $d=0$.

    Suppose, for the sake of induction, that \eqref{eq:generating-funciton} holds when $d=d_0$ ($d_0\ge 0$). We want to show that \eqref{eq:generating-funciton} holds when $d=d_0+1$. First, we claim that for all $n\ge 0$ and $d_0+1\le k \le n+d_0+1$ we have
    \begin{align}
        \label{eq:dim-recursive}
        \dim_\QQ R_{n,k,n+d_0+1-k}=\sum_{m=0}^{n+d_0+1-k} \binom{n}{m} \cdot \dim_\QQ R_{n-m,k-1,n+d_0+1-k-m}.
    \end{align}
    
    Indeed, if $n>0$ and $d_0+1< k\le n+d_0$ then the recursive formula in Proposition~\ref{prop:recursive-n,a,b} implies that
    \begin{align*}
        \dim_\QQ R_{n,k,n+d_0+1-k} &= \sum_{m=0}^{n+d_0-k} \binom{n}{m} \cdot \dim_\QQ R_{n-m,k-1,n+d_0+1-k-m} \\
        \nonumber&=\sum_{m=0}^{n+d_0+1-k} \binom{n}{m} \cdot \dim_\QQ R_{n-m,k-1,n+d_0+1-k-m}.
    \end{align*}
    If $n>0,k=d_0+1$, then \eqref{eq:dim-recursive} reduces to
    \[\dim_\QQ R_{n,d_0+1,n} = \sum_{m=0}^n \binom{n}{m} \cdot \dim_\QQ R_{n-m,d_0,n-m}\]
    which is true because: The vector space isomorphism $R_{n,a,n}=\QQ[x_1,\dots,x_n]/\langle x_i^a:i\in[n]\rangle\cong(\QQ[x]/\langle x^a \rangle)^{\otimes n}$ implies that $\dim_\QQ R_{n,a,n} = a^n$ for $n,a\ge 0$ with the convention that $0^0=1$, so we have
    \begin{align*}
        \dim_\QQ R_{n,d_0+1,n} = (d_0+1)^n = \sum_{m=0}^n \binom{n}{m} \cdot d_0^{n-m} = \sum_{m=0}^n \binom{n}{m} \cdot \dim_\QQ R_{n-m,d_0,n-m}.
    \end{align*}
    If $n>0,k=n+d_0+1$, then \eqref{eq:dim-recursive} reduces to
    \[\dim_\QQ R_{n,n+d_0+1,0} = \binom{n}{0} \cdot \dim_\QQ R_{n,n+d_0,0}\]
    which is true because both sides are $0$. If $n=0$, then $d_0+1\le k \le n+d_0+1$ means $k=d_0+1$ and hence \eqref{eq:dim-recursive} reduces to
    \[\dim_\QQ R_{0,d_0+1,0} = \binom{0}{0} \cdot \dim_\QQ R_{0,d_0,0}\]
    which is true because both sides equal $1$ by convention. Therefore, \eqref{eq:dim-recursive} holds for all $n\ge 0$ and $d_0+1 \le k \le n+d_0+1$.

    We now begin to show \eqref{eq:generating-funciton} for $d=d_0+1$. Applying \eqref{eq:dim-recursive} and the induction assumption, we obtain
    \begin{align*}
        &\sum_{n\ge 0} \sum_{k=d_0+1}^{n+d_0+1} (\dim_\QQ R_{n,k,n+d_0+1-k}) \cdot \frac{t^n}{n!} \cdot q^k \\ =& \sum_{n\ge 0} \sum_{k=d_0+1}^{n+d_0+1} \Bigg(\sum_{m=0}^{n+d_0+1-k} \binom{n}{m} \cdot \dim_\QQ R_{n-m,k-1,n+d_0+1-k-m}\Bigg) \cdot \frac{t^n}{n!} \cdot q^k \\
        =& \sum_{n\ge 0} \sum_{k=d_0+1}^{n+d_0+1} \Bigg(\sum_{m=0}^{n+d_0+1-k} \frac{t^m}{m!}\cdot \frac{t^{n-m}}{(n-m)!} \cdot \dim_\QQ R_{n-m,k-1,n+d_0+1-k-m}\Bigg) \cdot q^k \\
        =& \sum_{n\ge 0} \sum_{m=0}^n \sum_{k=d_0+1}^{n+d_0+1-m} \frac{t^m}{m!}\cdot \frac{t^{n-m}}{(n-m)!} \cdot \dim_\QQ R_{n-m,k-1,n+d_0+1-k-m} \cdot q^k \\
        =& \sum_{m\ge 0} \sum_{n\ge m} \sum_{k=d_0+1}^{n+d_0+1-m} \frac{t^m}{m!}\cdot \frac{t^{n-m}}{(n-m)!} \cdot \dim_\QQ R_{n-m,k-1,n+d_0+1-k-m} \cdot q^k \\
        =& \sum_{m\ge 0}\frac{t^m}{m!}\cdot  \sum_{n\ge m} \sum_{k=d_0+1}^{n+d_0+1-m} \frac{t^{n-m}}{(n-m)!} \cdot \dim_\QQ R_{n-m,k-1,n+d_0+1-k-m} \cdot q^k \\
        =& \sum_{m\ge 0}\frac{t^m}{m!}\cdot  \sum_{n\ge 0} \sum_{k=d_0+1}^{n+d_0+1} \frac{t^{n}}{n!} \cdot \dim_\QQ R_{n,k-1,n+d_0+1-k} \cdot q^k \\
        =& \sum_{m\ge 0}\frac{t^m}{m!}\cdot  \sum_{n\ge 0} \sum_{k=d_0}^{n+d_0} \frac{t^{n}}{n!} \cdot \dim_\QQ R_{n,k,n+d_0-k} \cdot q^{k+1}
        = \sum_{m\ge 0}\frac{t^m}{m!}\cdot q \cdot  \sum_{n\ge 0} \sum_{k=d_0}^{n+d_0} \frac{t^{n}}{n!} \cdot \dim_\QQ R_{n,k,n+d_0-k} \cdot q^{k} \\ =& \sum_{m\ge 0}\frac{t^m}{m!}\cdot q \cdot \frac{1-q}{\exp(tq) - q \cdot \exp(t)} \cdot (q\cdot\exp(t))^{d_0} = \exp(t) \cdot q \cdot \frac{1-q}{\exp(tq) - q \cdot \exp(t)} \cdot (q\cdot\exp(t))^{d_0} \\
        =& \frac{1-q}{\exp(tq) - q \cdot \exp(t)} \cdot (q\cdot\exp(t))^{d_0+1},
    \end{align*}
    concluding the proof.
\end{proof}

\section{Connection with the Delta conjecture and the Theta conjecture}\label{sec:Theta-conjecture}

For the case $a+b-n=1$, we reveal the connection between $\F_{n,a,b}$ and the Theta conjecture.

\begin{theorem}[{Theorem~\ref{thm:main:theta}}]\label{thm:theta-conjecture}
    For integers \(a,b>0\), we have
    \[
        \F_{a+b-1,a,b}
        =
        \rev_q\Big(
        \omega\left(
        \left.
        \bigl(
        \Theta_{e_{a-1}}\Theta_{e_{b-1}}\nabla e_1
        \bigr)\right|_{t=0}
        \right)\Big).
    \]
\end{theorem}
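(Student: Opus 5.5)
Set $n=a+b-1$, so $a+b-n=1$. The plan is to compare two recursions, one for each side, and then run an induction. The paper's proofs are all recursion-driven: the $\F$ side satisfies Proposition~\ref{prop:recursive-n,a,b}, and Lemma~\ref{lem:unique-recursive} pins everything down via limits. So I will not match closed forms. Instead I will show that the right-hand side satisfies a recursion in $(a,b)$ that the left-hand side is already known to satisfy.

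\textbf{Step 1: the left-hand side.} Proposition~\ref{prop:recursive-n,a,b} relates the $a+b-n=1$ family to the $a+b-n=0$ family, which is the wrong direction. The usable tool is instead the $e_a^\perp$/$h_j^\perp$ structure. Applying $h_1^\perp$ (or $h_j^\perp$) to $\F_{a+b-1,a,b}$ corresponds to restriction to $\symm_{n-1}$. I would try to compute $h_j^\perp\F_{n,a,b}$ from the fundamental expansion of Corollary~\ref{cor:a+b-n=0,1,2}. A cleaner route uses Lemma~\ref{lem:a+b-n=1-not-dual} and Lemma~\ref{lem:a+b-n=1-dual}. These express $\F_{n,n-b+1,b}$ in two ways: as $\sum_r q^{r(n-b)}h_r\F_{n-r,\cdot,\cdot}^{(0)}$ and as $\sum_r q^{r(b-1)}h_r(\cdot)$, where $\F^{(0)}$ denotes the $a+b=n$ case, i.e.\ $R_{n-r,a-1,b-r}$.

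\textbf{Step 2: the $a+b=n$ case and the $t=0$ Delta functions.} The $a+b=n$ case is linked to $\Delta'$ via Theorem~\ref{thm:main:e_k-perp}, but that is proved later. So I would proceed as follows. Write the Theta side using \eqref{eq:theta-delta}:
\[
\Theta_{e_{a-1}}\Theta_{e_{b-1}}\nabla e_1=\Theta_{e_{a-1}}e_b .
\]
Then apply the commutation relation \eqref{eq:theta-h-perp}. This gives a recursion for $h_j^\perp\Theta_{e_{a-1}}e_b$ in terms of $\Theta_{e_{a-1-j+i}}\Delta_{e_{j-i}}h_i^\perp e_b$. Since $h_i^\perp e_b$ vanishes for $i\ge2$, only $i\in\{0,1\}$ survive. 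At $t=0$ the $\Delta_{e_k}$ terms are handled by \eqref{eq:delta-finite}, and Lemma~\ref{lem:delta-t-zero-convolution} converts the alternating convolution into $\Delta'$. This gives a closed recursion for $\Theta_{e_{a-1}}e_b|_{t=0}$ under $h_j^\perp$.

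\textbf{Step 3: the key identity.} I would then aim to prove the identity
\[
\F_{a+b-1,a,b}=\sum_{m\ge0}(\pm)q^{(\cdot)}h_m\,\rev_q\omega\bigl(\Delta'_{e_{a-2}}e_{b-m}\bigr)\big|_{t=0}\cdots .
\]
Its $\F$ form comes from Lemma~\ref{lem:unique-recursive}-style recursions together with HRS's \eqref{eq:generalized-coinvariant-delta}. Alternatively, and this is what I would try first, compare the fundamental expansions directly. Corollary~\ref{cor:a+b-n=0,1,2} gives $\F_{n,a,b}$ as a sum over affine permutations, which biject to permutations of $[n]$ with $a-1$ excedances graded by a Mahonian-type statistic. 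The Theta side at $t=0$ has the segmented-Smirnov-word expansion of \cite[Proposition~5.3]{MR4766722}. I would build a statistic-preserving bijection matching $\ell(f)$ and $\DES{f\mid_{[n]}}$ with (reversed) inversion and descent data on segmented Smirnov words, with $\omega$ accounting for complementing descent sets via $\Omega$. The $q$-reversal is then controlled by the degree formula $(a-1)(b-1)$ already recorded in Section~\ref{subsec:theta}, together with the maximum of $\ell(f)$ over the summation range, which should also equal $(a-1)(b-1)$.

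\textbf{Main obstacle.} The obstacle is this bijection, or equivalently establishing a common recursion both sides obey. As a fallback, I would match the recursion of \cite[Proposition~3.2]{MR4766722} for the Theta specializations against the recursion obtained by applying Lemma~\ref{lem:affine-fixed-point-insertion} to strip fixed points and extremal values of $f$. Equality would then follow by induction on $a+b$, with the base cases $a=1$ or $b=1$ both equal to $h_n$ after $\rev_q\omega$, since $\Theta_{e_0}e_n=e_n$.
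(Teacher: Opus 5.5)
Your proposal has a genuine gap: none of the three routes you sketch connects $\F_{a+b-1,a,b}$ to $\Theta_{e_{a-1}}e_b|_{t=0}$, and that connection is the whole content of the theorem. Step~3's ``key identity'' has unspecified signs and exponents and ends in dots, so it is not yet a statement. The statistic-preserving bijection with segmented Smirnov words is only announced. The fallback has a structural problem. Every recursion the paper proves for the family $a+b-n=1$ (Proposition~\ref{prop:recursive-n,a,b}, Lemma~\ref{lem:dual-recursive}, Lemma~\ref{lem:affine-fixed-point-insertion}) passes through the family $a+b-n=0$, and no Theta or Delta formula is known for that family. Only $e_a^\perp\F_{a+b,a,b}$ is ever identified (Corollary~\ref{cor:delta}), and that is deduced \emph{from} the present theorem. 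You would have to eliminate that family and then prove a matching Theta identity; neither is done.

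Step~2 does not fill the gap either. By \eqref{eq:theta-h-perp} and $h_i^\perp e_b=0$ for $i\ge2$,
\[
h_j^\perp\Theta_{e_{a-1}}e_b=\Theta_{e_{a-1-j}}\Delta_{e_j}e_b+\Theta_{e_{a-j}}\Delta_{e_{j-1}}e_{b-1}.
\]
For $1\le j\le a-2$ the first term is a genuinely mixed Theta--Delta expression that leaves the family $\{\Theta_{e_k}e_m\}$. For example, for $j=1$ and $b\ge2$ it contains $\Theta_{e_{a-2}}\Delta'_{e_1}e_b=\Theta_{e_{a-2}}\Theta_{e_{b-2}}\nabla e_2$. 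Lemma~\ref{lem:delta-t-zero-convolution} only handles pure alternating sums $\sum_m(-1)^mh_m\Delta_{e_k}e_{n-m}$. The one clean case, $j=a$, gives $h_a^\perp\Theta_{e_{a-1}}e_b=\Delta_{e_{a-1}}e_{b-1}$. But a single skewing does not determine a symmetric function, and on the algebra side you have no formula for $h_j^\perp\F_{n,a,b}$ at all. This mechanism is how the paper later extracts the weaker Corollary~\ref{cor:delta} from the theorem; it cannot prove the theorem.

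What is missing is a proven bridge between the affine-permutation expansion and the Theta specialization, and the paper imports it. Set $n=a+b-1$ and $r=b-1$.
\begin{enumerate}
\item Lemma~\ref{lem:permutation-affine-bijection} with $j=r$ turns Corollary~\ref{cor:a+b-n=0,1,2} into $\F_{n,a,b}=\sum_{w\in\symm_n,\ \operatorname{exc}(w)=r}q^{\ell(\beta_r(w))}F_{\operatorname{DEX}(w),n}$.
\item Compare $\beta_r(w)$ with the affine permutation $f_w$ that Liao attaches to $w$, viewed as an unbarred bicolored permutation. The window formula \eqref{eq:bg-affine-window-length} gives $\ell(f_w)=\ell(\beta_r(w))+r(n-r)$, so $\F_{n,a,b}$ equals Liao's $\Phi^{\mathrm{unb}}_{n,0,r}$.
\item \cite[Theorem~1.5]{liao2026bicoloredeulerianquasisymmetricfunctions} then yields $\F_{n,a,b}=q^{(a-1)(b-1)}\omega\bigl((\Theta_{e_{b-1}}\Theta_{e_{a-1}}\widetilde H_{(1)})|_{t=0,\,q=q^{-1}}\bigr)$. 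This becomes $\rev_q$ once the $q$-degree is known to be $(a-1)(b-1)$.
\end{enumerate}
Your ``main obstacle'' is exactly the content of that cited theorem. Without invoking it, or actually constructing the bijection or common recursion you describe, your argument has no step that produces the Theta operators. Your base cases $a=1$ and $b=1$ are correct, but there is no induction step for them to anchor.
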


\begin{proof}
Set $n=a+b-1$ and $r=b-1$.
For $a\geq2$, Corollary~\ref{cor:a+b-n=0,1,2} gives
\[\F_{n,a,b} = \sum_{\substack{f\in\widetilde{S}_n^0 \\ \text{$i-a<f(i)<i+b$ for all $i$}}} q^{\ell(f)} \cdot F_{\DES{f\mid_{[n]}},n}.\]
This also holds for $a=1$, since $\F_{n,1,n}=h_n$, and the
summing range consists only of the identity:
its elements satisfy
\[
    f(i)\geq i,
    \qquad
    \sum_{i=1}^n(f(i)-i)=0.
\]
Consequently, Lemma~\ref{lem:permutation-affine-bijection} gives
\begin{equation}\label{eq:theta-shared-bijection-sum}
    \F_{n,a,b}
    =\sum_{\substack{w\in\symm_n\\\operatorname{exc}(w)=r}}
        q^{\ell(\beta_r(w))}F_{\operatorname{DEX}(w),n}.
\end{equation}

For an ordinary permutation $w$ with $r$ excedances, we identify it with a bicolored permutation used in \cite{liao2026bicoloredeulerianquasisymmetricfunctions} without bars, so the affine
permutation $f_w$ used in
\cite[Section~3]{liao2026bicoloredeulerianquasisymmetricfunctions}
satisfies that
\[
    f_w(i)=w(i)-2n\varepsilon_i
          =\widetilde w(i)-n\varepsilon_i\qquad(\varepsilon_i\coloneqq\mathbf{1}_{\{w(i)>i\}}).
\]
Since $\widetilde w(i)\leq0$ precisely when $\varepsilon_i=1$,
for $1\leq s<t\leq n$ we have
\[\varepsilon_s-\varepsilon_t>0\Leftrightarrow\widetilde{w}(s)\le 0\text{ and }\widetilde{w}(t)>0\Rightarrow\widetilde{w}(t)-\widetilde{w}(s)>0\]
and
\[\varepsilon_s-\varepsilon_t<0\Leftrightarrow\widetilde{w}(s)> 0\text{ and }\widetilde{w}(t)\le 0\Rightarrow\widetilde{w}(t)-\widetilde{w}(s)<0.\]
Therefore,
\begin{align*}
    &\left|\left\lfloor\frac{f_w(t)-f_w(s)}n\right\rfloor\right|
    = \left|\left\lfloor\frac{\widetilde{w}(t)-\widetilde{w}(s)}n+(\varepsilon_s-\varepsilon_t)\right\rfloor\right|=\left|\left\lfloor\frac{\widetilde{w}(t)-\widetilde{w}(s)}n\right\rfloor+(\varepsilon_s-\varepsilon_t)\right| \\=& \left|\left\lfloor\frac{\widetilde{w}(t)-\widetilde{w}(s)}n\right\rfloor\right|+\left|\varepsilon_s-\varepsilon_t\right| =
    \left|\left\lfloor
        \frac{\beta_r(w)(t)-\beta_r(w)(s)}n
    \right\rfloor\right|
    +|\varepsilon_s-\varepsilon_t|.
\end{align*}
Thus the affine-length formula
\eqref{eq:bg-affine-window-length}
gives
\[
    \ell(f_w)
    =\ell(\beta_r(w))
       +\sum_{1\leq s<t\leq n}|\varepsilon_s-\varepsilon_t|
    =\ell(\beta_r(w))+r(n-r).
\]
Consequently, if we write $\Phi^{\mathrm{unb}}_{n,k,r}(\xxx;q)$ for the symmetric function given by \cite[Definitions~1.4]
{liao2026bicoloredeulerianquasisymmetricfunctions} then \eqref{eq:theta-shared-bijection-sum} becomes
\[
    \F_{n,a,b}
    =\sum_{\substack{w\in\symm_n\\\operatorname{exc}(w)=r}}
        q^{\ell(f_w)-r(n-r)}F_{\operatorname{DEX}(w),n}
    =\Phi^{\mathrm{unb}}_{n,0,r}(\xxx;q),
\]
where the last equality follows from
\cite[Definitions~1.1 and~1.4]
{liao2026bicoloredeulerianquasisymmetricfunctions};
when the bar parameter is zero, all entries are unbarred.

Applying
\cite[Theorem~1.5]{liao2026bicoloredeulerianquasisymmetricfunctions}
with bar parameter $k=0$, excedance parameter $r$, and $u=n-k-r=a$,
we obtain
\begin{align*}
    \F_{n,a,b}
    &=q^{(a-1)(b-1)}\omega\left(
        \left.
        \bigl(\Theta_{e_r}\Theta_{e_{n-r-1}}\widetilde H_{(1)}\bigr)
        \right|_{t=0,\,q=q^{-1}}
    \right) \\ &=q^{(a-1)(b-1)}\omega\left(
        \left.
        \bigl(\Theta_{e_{b-1}}\Theta_{e_{a-1}}\widetilde H_{(1)}\bigr)
        \right|_{t=0,\,q=q^{-1}}
    \right).
\end{align*}
Finally, $\widetilde H_{(1)}=\nabla e_1$, the Theta operators
commute on positive-degree symmetric functions, and the
$q$-degree recalled at the end of Section~\ref{subsec:theta} is
$(a-1)(b-1)$. Therefore
\[
    \F_{a+b-1,a,b}
    =\rev_q\left(\omega\left(
        \left.
        \bigl(\Theta_{e_{a-1}}\Theta_{e_{b-1}}\nabla e_1\bigr)
        \right|_{t=0}
    \right)\right).\qedhere
\]
\end{proof}

Theorem~\ref{thm:theta-conjecture} and the inversion of the recursive formula in Proposition~\ref{prop:recursive-n,a,b} yield the connection between the case $a+b-n=0$ and the Delta conjecture.

\begin{corollary}[{Theorem~\ref{thm:main:e_k-perp}}]\label{cor:delta}
    For $a,b\ge 1$, we have
    \[e_a^\perp\F_{a+b,a,b} = q^{\binom{a}{2}}\rev_q(\omega((\Delta'_{e_{a-1}}e_{b})\mid_{t=0})).\]
\end{corollary}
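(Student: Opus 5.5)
The approach is to invert the recursion of Proposition~\ref{prop:recursive-n,a,b}. This expresses the case $a+b-n=0$ through cases with $a+b-n=1$, which Theorem~\ref{thm:theta-conjecture} identifies with Theta expressions. Then push $e_a^\perp$ through, and finish with Lemma~\ref{lem:delta-t-zero-convolution}.

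\textbf{Step 1: inversion.} Equation~\eqref{eq:inverse} in the proof of Lemma~\ref{lem:unique-recursive} only used the recursion, so it holds for $\F$. For $b\ge 1$ (first handling the trivial case $b$ with $b<n=a+b$, which always holds since $a\ge1$) it gives
\[
\F_{a+b,a,b}=\sum_{m=0}^{b-1}(-1)^m q^{ma} e_m\,\F_{a+b-m,a+1,b-m}.
\]
Each summand has $(a+1)+(b-m)-(a+b-m)=1$. Theorem~\ref{thm:theta-conjecture}, together with $\Theta_{e_{r-1}}\nabla e_1=e_r$, turns $\F_{a+b-m,a+1,b-m}$ into
\[
q^{a(b-m-1)}\,\omega\bigl((\Theta_{e_a}e_{b-m})|_{t=0,\,q=q^{-1}}\bigr),
\]
using the degree $(a-1)(b-1)$ recorded in Subsection~\ref{subsec:theta} with $a\mapsto a+1$, $b\mapsto b-m$. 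The prefactor then combines with $q^{ma}$ to the constant $q^{a(b-1)}$. Writing $\omega(e_m)=h_m$, we obtain
\[
\F_{a+b,a,b}=q^{a(b-1)}\,\omega\Bigl(\sum_{m=0}^{b-1}(-1)^m h_m\,\Theta_{e_a}e_{b-m}\Bigr)\Big|_{t=0,\,q=q^{-1}}.
\]

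\textbf{Step 2: applying the skewing operator.} Since $e_a^\perp\omega=\omega h_a^\perp$, it suffices to compute $h_a^\perp$ of $\sum_m(-1)^m h_m\Theta_{e_a}e_{b-m}$. First use the coproduct rule \eqref{eq:h-perp-product}:
\[
h_a^\perp(h_m X)=\sum_j h_{m-a+j}\,h_j^\perp X .
\]
Then use the commutation \eqref{eq:theta-h-perp}:
\[
h_j^\perp\Theta_{e_a}e_{b-m}=\Theta_{e_{a-j}}\Delta_{e_j}e_{b-m}+\Theta_{e_{a-j+1}}\Delta_{e_{j-1}}e_{b-m-1},
\]
since $h_i^\perp e_r$ vanishes for $i\ge2$. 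Organize the result by the Theta index $s=a-j$ or $a-j+1$.

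\textbf{Step 3: cancellation (main obstacle).} I expect the main obstacle is showing that all terms with $\Theta_{e_s}$, $s\ge1$, cancel in the alternating sum over $m$. To do this, I would reindex $m$ so that, for fixed $s$, the pieces $\Theta_{e_s}\Delta_{e_{a-s}}e_{b-m}$ coming from $(j=a-s,i=0)$ pair with those from $(j=a-s+1,i=1)$ with shifted $m$. The aim is to reduce each coefficient to $\sum_k(-1)^k h_k h_{N-k}$-type or $\sum(-1)^k e_kh_{N-k}=\delta_{N,0}$-type identities. The boundary terms at $m=b-1$ ($e_0$, $e_1$) need care, and if exact cancellation fails I would instead redo the computation before setting $t=0$.

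\textbf{Step 4: conclusion.} The surviving $s=0$ part should be
\[
\sum_{m}(-1)^m h_m\Delta_{e_a}e_{b-m},
\]
possibly with a shift, which gives $q^{b-a}\Delta'_{e_{a-1}}e_b$ at $t=0$ by Lemma~\ref{lem:delta-t-zero-convolution}. After $q\mapsto q^{-1}$ and multiplying by $q^{a(b-1)}$, compare with
\[
\rev_q(\Delta'_{e_{a-1}}e_b|_{t=0})=q^{(a-1)b-\binom a2}(\cdot)|_{q=q^{-1}}.
\]
The exponents combine as
\[
a(b-1)-(b-a)-(a-1)b+\binom a2=\binom a2,
\]
as claimed. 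For $a>b$, both sides should vanish: the Delta side vanishes by Subsection~\ref{subsec:theta}, and the left side should then vanish by the same computation.
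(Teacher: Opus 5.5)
Your proposal is correct and follows the paper's proof essentially step for step. You invert Proposition~\ref{prop:recursive-n,a,b}, rewrite each $a+b-n=1$ term via Theorem~\ref{thm:theta-conjecture} with $q$-degree $a(b-m-1)$, pass $e_a^\perp\omega=\omega h_a^\perp$ through using \eqref{eq:h-perp-product} and \eqref{eq:theta-h-perp}, and finish with Lemma~\ref{lem:delta-t-zero-convolution} and the reversal bookkeeping.

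The cancellation you flag in Step~3 is exact and needs no further identities. For each $s\ge 1$, shifting $m\mapsto m-1$ in the $i=1$ terms makes them cancel the $i=0$ terms carrying the same $\Theta_{e_s}\Delta_{e_{a-s}}e_{b-m}$. The only unmatched terms are $m=0$, which vanishes since $h_{-s}=0$, and the shifted $m=b$, which vanishes since $\Theta_{e_s}\Delta_{e_{a-s}}1=0$. Hence exactly $\sum_{m=0}^{b-1}(-1)^m h_m\Delta_{e_a}e_{b-m}$ survives, with no shift, as in the paper.
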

\begin{proof}
We first invert the convolution identity in Proposition~\ref{prop:recursive-n,a,b} using
$\sum_{i=0}^{r}(-1)^i e_i h_{r-i}=\delta_{r,0}$.
Applying Proposition~\ref{prop:recursive-n,a,b} to each
$\F_{a+b-m,a+1,b-m}$, we obtain
\[
\begin{aligned}
&\sum_{m=0}^{b-1}(-1)^m q^{am}e_m
    \F_{a+b-m,a+1,b-m}\\
&\quad=
    \sum_{m=0}^{b-1}\sum_{j=0}^{b-m-1}
    (-1)^m q^{a(m+j)}e_mh_j
    \F_{a+b-m-j,a,b-m-j}\\
&\quad=
    \sum_{r=0}^{b-1}\sum_{\substack{m,j\ge 0\\m+j=r}}
    (-1)^m q^{a(m+j)}e_mh_j
    \F_{a+b-m-j,a,b-m-j}\\
&\quad=
    \sum_{r=0}^{b-1}q^{ar}
    \left(\sum_{m=0}^{r}(-1)^m e_mh_{r-m}\right)
    \F_{a+b-r,a,b-r}\\
&\quad=\F_{a+b,a,b}.
\end{aligned}
\]
Set
\[
    \mathcal U_{a,b}
    =
    \sum_{m=0}^{b-1}(-1)^m h_m\Theta_{e_a}e_{b-m}.
\]
By the identity chain above, Theorem~\ref{thm:theta-conjecture},
\eqref{eq:theta-delta}, and the coefficient-reversal formulas
at the end of Section~\ref{subsec:theta}, 
\begin{align}
    &\nonumber\F_{a+b,a,b} = \sum_{m=0}^{b-1}(-1)^m q^{am}e_m
    \F_{a+b-m,a+1,b-m} \\ = &\notag\sum_{m=0}^{b-1}(-1)^m q^{am}e_m
    \rev_q\Big(
        \omega\left(
        \left.
        \bigl(
        \Theta_{e_{a}}\Theta_{e_{b-m-1}}\nabla e_1
        \bigr)\right|_{t=0}
        \right)\Big) \qquad\text{(Theorem~\ref{thm:theta-conjecture})}\\
    =&\notag \sum_{m=0}^{b-1}(-1)^m q^{am}e_m
    q^{a(b-m-1)}\Big(
        \omega\left(
        \left.
        \bigl(
        \Theta_{e_{a}}\Theta_{e_{b-m-1}}\nabla e_1
        \bigr)\right|_{q=q^{-1},\,t=0}
        \right)\Big) \\ 
    =&\notag q^{a(b-1)}\omega\left(\sum_{m=0}^{b-1}(-1)^m h_m
        \left.
        \bigl(
        \Theta_{e_{a}}\Theta_{e_{b-m-1}}\nabla e_1
        \bigr)\right|_{q=q^{-1},\,t=0}
        \right) \\
    =&\notag q^{a(b-1)}\omega\left(\sum_{m=0}^{b-1}(-1)^m h_m
        \left.
        \bigl(
        \Theta_{e_{a}}\Delta_{e_0}^\prime e_{b-m}
        \bigr)\right|_{q=q^{-1},\,t=0}
        \right) \qquad\text{(see \eqref{eq:theta-delta})}\\
    =&\notag q^{a(b-1)}\omega\left(\sum_{m=0}^{b-1}(-1)^m h_m
        \left.
        \bigl(
        \Theta_{e_{a}} e_{b-m}
        \bigr)\right|_{q=q^{-1},\,t=0}
        \right) \\
    =& 
    q^{a(b-1)}
    \omega\left(
        \left.\mathcal U_{a,b}\right|_{q=q^{-1},\,t=0}
    \right).
    \label{eq:delta-cor-inversion}
\end{align}

We next apply $h_a^\perp$ to $\mathcal U_{a,b}$.
For $0\leq j\leq a$ and $r\geq1$,
\eqref{eq:theta-h-perp} gives
\[
    h_{a-j}^\perp\Theta_{e_a}e_r
    =
    \Theta_{e_j}\Delta_{e_{a-j}}e_r
    +
    \Theta_{e_{j+1}}\Delta_{e_{a-j-1}}e_{r-1}.
\]
Using \eqref{eq:h-perp-product} and \eqref{eq:theta-h-perp},
we obtain
\[
\begin{aligned}
h_a^\perp\mathcal U_{a,b}
&=
\sum_{m=0}^{b-1}\sum_{j=0}^{a}
(-1)^m h_{m-j}
h_{a-j}^\perp\Theta_{e_a}e_{b-m}\\
&=
\sum_{m=0}^{b-1}\sum_{j=0}^{a}
(-1)^m h_{m-j}
\Theta_{e_j}\Delta_{e_{a-j}}e_{b-m}\\
&\quad+
\sum_{m=0}^{b-1}\sum_{j=0}^{a-1}
(-1)^m h_{m-j}
\Theta_{e_{j+1}}\Delta_{e_{a-j-1}}e_{b-m-1}\\
&=
\sum_{m=0}^{b-1}\sum_{j=0}^{a}
(-1)^m h_{m-j}
\Theta_{e_j}\Delta_{e_{a-j}}e_{b-m}\\
&\quad-
\sum_{m=1}^{b}\sum_{j=1}^{a}
(-1)^m h_{m-j}
\Theta_{e_j}\Delta_{e_{a-j}}e_{b-m}.
\end{aligned}
\]
In the last equality, we replaced $(m,j)$ by $(m-1,j-1)$
in the second double sum.
Since
\[
h_{-j}=0,
\qquad
\Theta_{e_j}\Delta_{e_{a-j}}1=0
\qquad(1\leq j\leq a),
\]
we may replace the range $1\leq m\leq b$ in that sum
by $0\leq m\leq b-1$. Hence
\[
\begin{aligned}
h_a^\perp\mathcal U_{a,b}
&=
\sum_{m=0}^{b-1}\sum_{j=0}^{a}
(-1)^m h_{m-j}
\Theta_{e_j}\Delta_{e_{a-j}}e_{b-m}\\
&\quad-
\sum_{m=0}^{b-1}\sum_{j=1}^{a}
(-1)^m h_{m-j}
\Theta_{e_j}\Delta_{e_{a-j}}e_{b-m}\\
&=
\sum_{m=0}^{b-1}(-1)^m h_m
\Delta_{e_a}e_{b-m}.
\end{aligned}
\]
Consequently, Lemma~\ref{lem:delta-t-zero-convolution} yields
\[
    \left.(h_a^\perp\mathcal U_{a,b})\right|_{t=0}
    =
    q^{b-a}
    \left.(\Delta'_{e_{a-1}}e_b)\right|_{t=0}.
\]
Using $e_a^\perp\omega=\omega h_a^\perp$ in
\eqref{eq:delta-cor-inversion} and the identity above, we conclude that
\[
    e_a^\perp\F_{a+b,a,b}
    =
    q^{(a-1)b}
    \omega\left(
        \left.(\Delta'_{e_{a-1}}e_b)\right|_{q=q^{-1},\,t=0}
    \right).
\]
For $a\leq b$, the coefficient-reversal formula at the end of
Section~\ref{subsec:theta} identifies this right-hand side with
\[
    q^{\binom{a}{2}}
    \rev_q\left(
        \omega\left(
            \left.(\Delta'_{e_{a-1}}e_b)\right|_{t=0}
        \right)
    \right).
\]
For $a>b$, the Delta specialization vanishes, so this identification still holds.
\end{proof}

Corollary~\ref{cor:delta} has an algebraic version. Specifically, Corollary~\ref{cor:delta} gives $\grFrob(\alt_a(R_{a+b,a,b});q)=q^{\binom{a}{2}}\grFrob(R_{b,a};q)$ with $1\le a\le b$, where $R_{n,k}$ is the generalized coinvariant algebra \cite{HaglundRhoadesShimozono2018} reviewed in Subsection~\ref{subsec:theta}. This identity admits the following algebraic interpretation.
\begin{corollary}\label{cor:alg-of-e-perp}
    For $1\le a\le b$, multiplication by $\prod_{b+1\le i<j\le a+b}(x_i-x_j)$ gives an $\symm_b$-module isomorphism $R_{b,a}\cong\alt_a(R_{a+b,a,b})$.
\end{corollary}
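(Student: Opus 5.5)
Write $\delta(\xxx)\coloneqq\prod_{b+1\le i<j\le a+b}(x_i-x_j)$, the Vandermonde in the last $a$ variables. The plan is to build an explicit map and then finish by a dimension count. Define
\[\mu:\QQ[\xxx_b]\to\alt_a(R_{a+b,a,b}),\qquad f\mapsto\overline{f\cdot\delta(\xxx)}.\]
The image lies in $\alt_a$ because $\delta$ is antisymmetric under $\symm_a$ acting on $\{b+1,\dots,a+b\}$ and $f$ is fixed by it. The map commutes with $\symm_b$ and raises degree by $\binom a2$. By Corollary~\ref{cor:delta} together with \eqref{eq:generalized-coinvariant-delta}, the graded Frobenius characteristics of $R_{b,a}$ shifted by $q^{\binom a2}$ and of $\alt_a(R_{a+b,a,b})$ coincide, so the graded dimensions agree. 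It therefore suffices to show two things: $\mu$ kills $I_{b,a}$, and the induced map $R_{b,a}\to\alt_a(R_{a+b,a,b})$ is surjective (or, alternatively, injective).

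\textbf{Step 1: $\mu$ kills $I_{b,a}$.} Clearly $x_i^a\delta\in I_{a+b,a,b}$. Next fix $r\ge b-a+1$ and consider $e_r([b])\delta$. In the quotient, the relations $e_b([a+b]\setminus\{i\})$ and Lemma~\ref{lem:higher-e} give $e_{b'}([a+b])\equiv0$ for all $b'\ge b$. I would expand
\[e_r([b])=\sum_{k\ge0}(-1)^k h_k(\{b+1,\dots,a+b\})\,e_{r+k}\ \text{(full)},\]
which is the inverse of $e(\text{full})=e([b])\,e(\text{last }a)$. Every term with $r+k\ge b$ vanishes modulo the ideal. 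The surviving terms have $k<b-r\le a-1$, and each contributes $h_k(\text{last }a)\,\delta\cdot(\text{symmetric polynomial})$. Here I would use that $h_k(y_1,\dots,y_a)\,\delta$ with $k\le a-1$ and all $y_i^a=0$ lies in the ideal generated by the $y_i^a$: modulo $y_i^a$, the antisymmetric polynomials are spanned by $a_\lambda=\det(y_j^{\lambda_i+a-i})$ with all exponents $\le a-1$, which forces $\lambda=\emptyset$, i.e.\ only $\delta$ survives in degree $\binom a2$. Since $h_k\delta$ has degree greater than $\binom a2$ for $k\ge1$, it vanishes.

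This degree argument is too crude for the symmetric factor in the full variables, and I expect this to be the main obstacle. I would redo Step 1 more carefully, possibly via Corollary~\ref{cor:elementary-prod-vanish} with $S=[b]$, so that the products $x_{i_1}^{a-1}\cdots$ appearing in $\delta$ absorb $e_r([b])$ into $\langle e_{r'}([a+b]\setminus\{i\}):r'\ge b\rangle$.

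\textbf{Step 2: surjectivity.} I would filter $R_{a+b,a,b}$ by the filtration $K_{n,a+1,b,m}$ of Lemma~\ref{lem:filtration-n,a,b} and Lemma~\ref{lem:n,a,b-isom}, or simply use a spanning argument. Antisymmetrization in the last $a$ variables of a monomial $g\cdot\prod_{j>b}x_j^{c_j}$ with $c_j\le a-1$ is nonzero only when the $c_j$ are distinct, that is, when they form a permutation of $0,\dots,a-1$. The antisymmetrization then equals $\pm g\delta$ plus terms divisible by $x_j^a$ after reduction. Hence $\alt_a(R_{a+b,a,b})$ is spanned by $\{\overline{g\delta}\}$ with $g\in\QQ[\xxx_b]$, and $\mu$ is surjective. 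Equality of Hilbert series then gives bijectivity, which proves the isomorphism $R_{b,a}\cong\alt_a(R_{a+b,a,b})$ of $\symm_b$-modules.
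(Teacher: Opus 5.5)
Your Step 2 (spanning $\alt_a(R_{a+b,a,b})$ by classes $\overline{g\delta}$ with $g\in\QQ[\xxx_b]$) and the final dimension comparison are fine, and they are essentially what the paper does. The gap is Step 1, the well-definedness of $\mu$ on $R_{b,a}$. This step is the real content of the statement, and neither of your two routes establishes it.

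\textbf{The expansion route.} The expansion is wrong as written. Inverting $\prod_{i\le a+b}(1+x_it)=\prod_{i\le b}(1+x_it)\prod_{j>b}(1+x_jt)$ gives
\[
e_r([b])=\sum_{k=0}^{r}(-1)^k\,h_k(x_{b+1},\dots,x_{a+b})\,e_{r-k}([a+b]),
\]
so the index goes down, not up. Already for $a=b=1$ and $r=1$, your formula gives $x_1+x_2-x_1x_2^2\neq x_1$. With the correct expansion, your observation that $h_k(x_{b+1},\dots,x_{a+b})\,\delta\in\langle x_{b+1}^a,\dots,x_{a+b}^a\rangle$ for $k\ge1$ is true. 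However, it only yields $e_r([b])\delta\equiv e_r([a+b])\delta$ modulo the $x_j^a$. For $b-a+1\le r\le b-1$, showing $e_r([a+b])\delta\in I_{a+b,a,b}$ is equivalent to the original claim, so this route is circular.

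\textbf{The fallback route.} Corollary~\ref{cor:elementary-prod-vanish} cannot close the gap either. That corollary treats products of the added variables carrying a common exponent $d$ with $c+d>|S|$, so it effectively handles the monomials of $\delta$ one at a time. Here the cancellation inside $\delta$ is essential. Take $a=b=2$ and $r=1$. The degree-$2$ part of $I_{4,2,2}$ is spanned by the $x_i^2$ and the four $e_2([4]\setminus\{i\})$, and a direct linear-algebra check in degree $2$ shows $(x_1+x_2)x_3\notin I_{4,2,2}$. On the other hand, $(x_1+x_2)(x_3-x_4)=e_2([4]\setminus\{4\})-e_2([4]\setminus\{3\})\in I_{4,2,2}$.

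\textbf{The missing idea.} The last identity is the general mechanism: for $i,j\notin S$,
\[
e_d(S)(x_i-x_j)=e_{d+1}(S\cup\{i\})-e_{d+1}(S\cup\{j\}).
\]
Peel off one Vandermonde factor at a time, by induction on $n-|S|$. This gives
\[
e_d(S)\prod_{i<j,\ i,j\notin S}(x_i-x_j)\in\langle e_{d+n-|S|-1}([n]\setminus\{i\}):i\in[n]\rangle.
\]
Now take $S=[b]$, $n=a+b$ and $d\ge b-a+1$. The result lies in degree $d+a-1\ge b$, hence in $I_{a+b,\infty,b}\subseteq I_{a+b,a,b}$ by Lemma~\ref{lem:higher-e}. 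This is how the paper proves well-definedness. With it in hand, your surjectivity argument and the comparison via Corollary~\ref{cor:delta} and \eqref{eq:generalized-coinvariant-delta} finish the proof as you describe.
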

\begin{remark}\label{rmk:alg-of-e-perp}
    This map is homogeneous of degree \(\binom a2\), and becomes a graded isomorphism after shifting the grading of \(R_{b,a}\) upward by \(\binom a2\).
\end{remark}
\begin{proof}
    Note that for any $S\subseteq[n]$ with $n>0$, $d\ge 0$, and $i,j\notin S$, we have
    \begin{equation}\label{eq:diff-e}
        e_d(S)\cdot (x_i-x_j) = e_{d+1}(S\cup\{i\})-e_{d+1}(S\cup\{j\}).
    \end{equation}
    We first use this observation to prove that for $S\subsetneq[n]$ with $n>0$ and $d\ge 0$ we have
    \begin{equation}\label{eq:well-def}
        e_d(S)\cdot\prod_{\substack{i<j\\i,j\in[n]\setminus S}}(x_i-x_j) \in \left\langle e_{d+n-\lvert S\rvert-1}([n]\setminus\{i\})\,:\,i\in[n] \right\rangle.
    \end{equation}
    We prove it by induction on $n-\lvert S\rvert$. For $n-\lvert S\rvert =1$, this reduces to $e_d(S)\in\left\langle e_{d}([n]\setminus\{i\})\,:\,i\in[n] \right\rangle$ which is obvious. Now assume that $n-\lvert S\rvert>1$ and that \eqref{eq:well-def} holds for all smaller values of $n-\lvert S\rvert$ for the sake of induction. Pick $i_0<j_0$ from $[n]\setminus S$ and write $S_1=S\cup\{i_0\},\,S_2=S\cup\{j_0\}$. Then
    \begin{align*}
        &e_d(S)\cdot\prod_{\substack{i<j\\i,j\in[n]\setminus S}}(x_i-x_j) =e_d(S)\cdot (x_{i_0}-x_{j_0})\cdot\prod_{\substack{\{i<j\}\subseteq[n]\setminus S\\\{i,j\}\neq\{i_0,j_0\}}}(x_i-x_j) \\
        =& (e_{d+1}(S_1)-e_{d+1}(S_2))\cdot\prod_{\substack{\{i<j\}\subseteq[n]\setminus S\\\{i,j\}\neq\{i_0,j_0\}}}(x_i-x_j) \qquad\text{(by \eqref{eq:diff-e})} \\
        =& e_{d+1}(S_1)\cdot\prod_{\substack{\{i<j\}\subseteq[n]\setminus S\\\{i,j\}\neq\{i_0,j_0\}}}(x_i-x_j) - e_{d+1}(S_2)\cdot\prod_{\substack{\{i<j\}\subseteq[n]\setminus S\\\{i,j\}\neq\{i_0,j_0\}}}(x_i-x_j) \\
        =& e_{d+1}(S_1)\cdot\left(\prod_{\substack{i<j\\i,j\in[n]\setminus S_1}}(x_i-x_j)\right) \cdot g_1 - e_{d+1}(S_2)\cdot\left(\prod_{\substack{i<j\\i,j\in[n]\setminus S_2}}(x_i-x_j)\right)\cdot g_2
    \end{align*}
    where $g_1,g_2\in\QQ[\xxx_n]$. As the induction hypotheses implies that $e_{d+1}(S_k)\cdot\left(\prod_{\substack{i<j\\i,j\in[n]\setminus S_k}}(x_i-x_j)\right)\in\left\langle e_{d+1+n-\lvert S_k\rvert-1}([n]\setminus\{i\})\,:\,i\in[n] \right\rangle=\left\langle e_{d+n-\lvert S\rvert-1}([n]\setminus\{i\})\,:\,i\in[n] \right\rangle$ ($k=1,2$), we obtain \[e_d(S)\cdot\prod_{\substack{i<j\\i,j\in[n]\setminus S}}(x_i-x_j)\in\left\langle e_{d+n-\lvert S\rvert-1}([n]\setminus\{i\})\,:\,i\in[n] \right\rangle,\]
    concluding the proof of \eqref{eq:well-def}.

    Now we use \eqref{eq:well-def} to show the well-definedness of the multiplication map $R_{b,a}\rightarrow R_{a+b,a,b}$ by $\prod_{b+1\le i<j\le a+b}(x_i-x_j)$. Let
    \[I_{b,a}=\left\langle x_1^a,\dots,x_b^a, e_{b-a+1}([b]),\dots,e_b([b])\right\rangle\]
    be the defining ideal of $R_{b,a}$. It suffices to show that \begin{align}\label{eq:real-well-def}
        I_{b,a}\cdot\prod_{b+1\le i<j\le a+b}(x_i-x_j) \subseteq I_{a+b,a,b}.
    \end{align}
    Regarding the two types of generators of $I_{b,a}$. The first type of generators $x_i^a$ are contained in $I_{a+b,a,b}$, and for the second type of generators $e_d([b])$ ($b-a+1\le d\le b$) we have
    \[e_d([b])\cdot\prod_{b+1\le i<j\le a+b}(x_i-x_j)\in\left\langle e_{d+a-1}([a+b]\setminus\{i\})\,:\,i\in[a+b] \right \rangle\subseteq I_{a+b,a,b}\]
    by \eqref{eq:well-def} and Lemma~\ref{lem:higher-e}. We have proved \eqref{eq:real-well-def} and hence the well-definedness of the multiplication map $R_{b,a}\rightarrow R_{a+b,a,b}$.

    We claim that the image of this multiplication map equals $\alt_a(R_{a+b,a,b})$. In fact, modulo the relations \(x_{b+1}^a=\cdots=x_{a+b}^a=0\), a nonzero alternant in the last \(a\) variables must use the distinct exponents \(0,\ldots,a-1\). Hence every alternating class is represented by the Vandermonde polynomial multiplied by a polynomial in the first \(b\) variables.
    
    We thus obtain an $\symm_b$-equivariant surjection
    \[R_{b,a}\twoheadrightarrow \alt_a(R_{a+b,a,b}).\]
    Furthermore, as Corollary~\ref{cor:delta} and \eqref{eq:generalized-coinvariant-delta}, after setting $q=1$, imply
    \[\Frob(R_{b,a})=\Frob(\alt_a(R_{a+b,a,b})),\]
    the surjection above must be an $\symm_b$-module isomorphism.
\end{proof}

\section{Chow ring interpretation}\label{sec:chow}
Fix $n,k>0$ be integers such that $k<n$. Let $X \coloneqq (\mathbb{P}^{k-1})^n$ be the product of complex projective spaces, and let $U_{n,k}$ be the open subscheme of $X$ given by
\[U_{n,k}\coloneqq\left\{ (\ell_1,\dots,\ell_n)\in X\,:\, \sum_{\substack{i=1\\i\neq j}}^n \ell_i =\ \CC^{k}\,\,\text{for all}\,\, j\in[n] \right\},\]
where we regard all the components $\ell_j$ as lines in a common vector space $\CC^k$ passing through the origin.

Pawlowski and Rhoades~\cite{MR4029695} defined a similar variety
\[X_{m,j}\coloneqq\{(\ell_1,\dots,\ell_m)\in (\mathbb{P}^{j-1})^m\,:\,\sum_{i=1}^m \ell_i = \CC^j \}\]
for $0<j\le m$. They computed its integral cohomology ring via cellular decomposition.
\begin{theorem}[{\cite[Theorem 5.12(3)]{MR4029695}}]\label{thm:span-line-config}
    For $0<j\le m$, we have the ring isomorphism
    \[H^*(X_{m,j}) \cong \ZZ[x_1,\dots,x_m]/\langle x_1^j,\dots,x_m^j,e_{m-j+1}([m]),e_{m-j+2}([m]),\dots,e_m([m])\rangle\]
    which maps $c_1(\mathcal{L}_i)$ to the residue class of $x_i$ for all $i\in[m]$, where $\mathcal{L}_i$ is the pullback of the tautological line bundle over the $i$-th projective space factor.
\end{theorem}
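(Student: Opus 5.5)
The plan is to realize $H^*(X_{m,j})$ as a quotient of $H^*((\PP^{j-1})^m)$, produce enough relations from a vector bundle argument, and then force equality by a rank count against a cellular decomposition. First I would construct a cellular decomposition of $X_{m,j}$. Fix a flag $0=F_0\subset F_1\subset\cdots\subset F_j=\CC^j$ and a Schubert-type cell decomposition of each $\PP^{j-1}$. Stratify configurations $(\ell_1,\dots,\ell_m)$ by the record of which indices $i$ enlarge the span $\ell_1+\cdots+\ell_{i}$, together with the relative position of each $\ell_i$ with respect to the running span. Refining this by Schubert positions should give a decomposition into affine cells, one for each choice of the data. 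Granting this, $X_{m,j}$ is a smooth cellular variety. By \cite[Example~19.1.11(b)]{MR1644323} its odd cohomology vanishes, and $H^*(X_{m,j})\cong CH^*(X_{m,j})$ is free of rank equal to the number of cells. I would then check that this number equals the rank of the proposed quotient ring, namely the number of ordered set partitions of $[m]$ into $j$ blocks, which is the Haglund--Rhoades--Shimozono count~\cite{HaglundRhoadesShimozono2018}.

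Second, I would produce the surjection and the relations. The space $X_{m,j}$ is open in $(\PP^{j-1})^m$, whose Chow ring is $\ZZ[x_1,\dots,x_m]/\langle x_i^j\rangle$ with $x_i=c_1(\mathcal L_i)$. The localization sequence of Subsection~\ref{subsec:chow} then makes restriction a surjective ring map onto $CH^*(X_{m,j})$. To see that $e_d([m])$ dies for $d\ge m-j+1$, I would use the bundle map $\bigoplus_i\mathcal L_i\to\underline{\CC}^j$ given by the inclusions of the lines. This map is surjective exactly on $X_{m,j}$, so there its kernel $\mathcal K$ is a vector bundle of rank $m-j$. The Whitney formula gives
\[
\prod_{i=1}^m(1+x_i)=c\Bigl(\bigoplus_i\mathcal L_i\Bigr)=c(\mathcal K)\,c(\underline{\CC}^j)=c(\mathcal K),
\]
so $e_d(x_1,\dots,x_m)=c_d(\mathcal K)=0$ for $d>m-j$. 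Sign conventions on $c_1$ of the tautological bundle do not matter here, since each $e_d$ is homogeneous and the sign change only rescales it. Thus restriction factors through a surjection from the stated quotient ring onto $H^*(X_{m,j})$.

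Third, I would conclude by ranks. Over $\ZZ$, the Gröbner basis of Haglund--Rhoades--Shimozono~\cite{HaglundRhoadesShimozono2018} has leading coefficients $\pm1$. Hence the quotient ring is a free $\ZZ$-module with the generalized Artin basis, whose rank is the number of ordered set partitions of $[m]$ into $j$ blocks. A surjection between free $\ZZ$-modules of equal finite rank is an isomorphism in each degree, which gives the theorem and the stated correspondence $c_1(\mathcal L_i)\mapsto x_i$.

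I expect the main obstacle to be the first step: making the cell decomposition genuinely affine and verifying that the cells are counted by ordered set partitions. If a direct construction stalls, my fallback is to fiber $X_{m,j}$ over the position of the first spanning index. Each such fiber is a vector bundle over a smaller $X_{m',j'}$ times an open cell of $\PP^{j-1}$, and an inductive cell count follows from that structure.
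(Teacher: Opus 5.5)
Your overall architecture is sound, and it is the strategy behind the cited result. The paper gives no proof of its own; it quotes Pawlowski--Rhoades and notes that they compute via a cellular decomposition. Your second step is correct as written: restriction from $CH^*((\PP^{j-1})^m)$ is surjective by localization, and the kernel $\mathcal K$ of $\bigoplus_i\mathcal L_i\to\underline{\CC}^j$ gives $e_d(x_1,\dots,x_m)=c_d(\mathcal K)=0$ for $d>m-j$.

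The step that fails as you wrote it is the paving. Schubert cells of the individual factors $\PP^{j-1}$, taken relative to a fixed flag, do not cut $X_{m,j}$ into affine pieces, even after stratifying by the spanning record. For $m=j=2$ the record is always $\{1,2\}$, and the piece $\{([a:1],[b:1]):a\neq b\}\cong\CC\times\CC^{\times}$ is not an affine space. You would get three pieces instead of the $2!\,\mathrm{Stir}(2,2)=2$ cells you need.

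The fixed flag must enter only through the flag of running spans. Write $S=\{s_1<\cdots<s_j\}$ for the spanning record, $W_d=\ell_{s_1}+\cdots+\ell_{s_d}$, and $d(i)=|S\cap[i-1]|$ for $i\notin S$. Stratify by three pieces of data:
\begin{enumerate}
\item the set $S$;
\item the Schubert cell $C_w\subseteq\mathrm{Fl}(j)$ containing $W_\bullet$;
\item for each $i\notin S$, the least $c_i\in[d(i)]$ with $\ell_i\subseteq W_{c_i}$.
\end{enumerate}
Over $C_w$, choose a section $g\colon C_w\to\mathrm{GL}_j$ with $W_\bullet=g\,F_\bullet$, and let $\varepsilon_1,\dots,\varepsilon_j$ be the standard basis. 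Then
\[
\ell_{s_d}=\Bigl[g\Bigl(\varepsilon_d+\sum_{c<d}\alpha_c\varepsilon_c\Bigr)\Bigr],\qquad
\ell_i=\Bigl[g\Bigl(\varepsilon_{c_i}+\sum_{c<c_i}\beta_c\varepsilon_c\Bigr)\Bigr],
\]
with all coordinates free. So each piece is an affine space of dimension $\dim C_w+\binom j2+\sum_{i\notin S}(c_i-1)$.

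To get a genuine cellular decomposition, filter first by the rank sequence $(\dim(\ell_1+\cdots+\ell_i))_i$, then by preimages of Schubert varieties, then by the subbundles $\PP(W_c)$. Restricted growth words give $\sum_S\prod_{i\notin S}d(i)=\mathrm{Stir}(m,j)$, so there are $j!\,\mathrm{Stir}(m,j)$ cells, which is the count you need. As a check, for $(m,j)=(3,2)$ the codimensions give $1+3q+2q^2=\Hilb(R_{3,2};q)$.

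There is a smaller gap in your third step. Unit leading coefficients alone do not make $\ZZ[x_1,\dots,x_m]/I$ free. The division algorithm shows that the generalized Artin monomials span this quotient only if the Gröbner basis elements lie in the ideal generated over $\ZZ$. Lying in $I_\QQ\cap\ZZ[x_1,\dots,x_m]$ is not enough. You must either check that the Haglund--Rhoades--Shimozono membership arguments are integral, or use their dimension count over every prime field, since $(\ZZ[x_1,\dots,x_m]/I)\otimes\mathbb{F}_p=\mathbb{F}_p[x_1,\dots,x_m]/I$. Spanning is all you actually need. If the Artin monomials span each graded piece, a graded surjection onto a free module of the same total rank is an isomorphism in every degree.
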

We will use Theorem~\ref{thm:span-line-config} to prove the geometric interpretation Theorem~\ref{thm:main:chow} of our $R_{n,k,n-k}$. We restate and prove it.
\begin{theorem}\label{thm:chow}
    We have the ring isomorphism
    \[CH^*_\QQ (U_{n,k}) \cong R_{n,k,n-k},\]
    which maps $c_1(\mathcal{L}_i)$ to the residue class of $x_i$ for all $i\in[n]$, where $\mathcal{L}_i$ is the pullback of the tautological line bundle over the $i$-th projective space factor.
\end{theorem}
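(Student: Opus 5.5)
We will realize $U_{n,k}$ as the complement in $X=(\PP^{k-1})^n$ of a union of closed subvarieties, and compute $CH^*_\QQ(U_{n,k})$ as a quotient of $CH^*_\QQ(X)=\QQ[x_1,\dots,x_n]/\langle x_1^k,\dots,x_n^k\rangle$ via the localization sequence. Here $x_i=c_1(\mathcal{L}_i)$; the sign convention for $c_1$ of the tautological bundle is harmless, since every relation we use is homogeneous. Concretely, $X\setminus U_{n,k}=\bigcup_{j=1}^n Z_j$, where $Z_j=\{\sum_{i\neq j}\ell_i\neq\CC^k\}$. By the localization sequence, the restriction $CH^*_\QQ(X)\to CH^*_\QQ(U_{n,k})$ is surjective, and its kernel is the image of $\bigoplus_j CH_*(Z_j)$. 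So we must show that this kernel is exactly the image of $I_{n,k,n-k}$.

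\textbf{Step 1: the generators $e_{n-k}([n]\setminus\{j\})$ lie in the kernel.} Write $X=\PP^{k-1}\times(\PP^{k-1})^{n-1}$, with the first factor indexed by $j$, and let $Y_j=(\PP^{k-1})^{n-1}\setminus X_{n-1,k}$. Then $Z_j=\PP^{k-1}\times Y_j$. By Theorem~\ref{thm:span-line-config}, tensored with $\QQ$, together with the cellular comparison $CH\cong H^{2*}$, the kernel of restriction from $(\PP^{k-1})^{n-1}$ to $X_{n-1,k}$ is generated by $e_{n-k}([n]\setminus\{j\}),\dots,e_{n-1}([n]\setminus\{j\})$ modulo the $x_i^k$. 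By the localization sequence, this kernel equals the image of $CH_*(Y_j)$. Since the exterior product with the cellular factor $\PP^{k-1}$ is surjective, the image of $CH_*(Z_j)$ in $CH_*(X)$ is $CH^*(\PP^{k-1})\otimes\mathrm{im}(CH_*(Y_j))$, using the compatibility of the exterior product with proper pushforward. This is the ideal generated by $e_r([n]\setminus\{j\})$ for $r\ge n-k$, together with the $x_i^k$.

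\textbf{Step 2: assembling the kernel.} The kernel of restriction to $U_{n,k}$ is the sum of the images of $CH_*(Z_j)$. This holds because $\bigsqcup_j Z_j\to\bigcup_j Z_j$ is proper and surjective, so its pushforward on Chow groups is surjective. Hence the kernel is $\sum_j\langle x_i^k,\ e_r([n]\setminus\{j\}) : r\ge n-k\rangle$. By Lemma~\ref{lem:higher-e}, applied with $b=n-k$ (valid since $0<n-k<n$), the higher $e_r([n]\setminus\{j\})$ already lie in $I_{n,\infty,n-k}$. The kernel is therefore exactly $I_{n,k,n-k}$. It follows that $CH^*_\QQ(U_{n,k})\cong R_{n,k,n-k}$, with $c_1(\mathcal L_i)\mapsto x_i$ and the ring structure coming from the surjective restriction ring map.

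\textbf{Main obstacle.} The delicate point is Step 1's passage from cohomology to Chow groups for $X_{n-1,k}$. We need that the kernel of $CH^*((\PP^{k-1})^{n-1})\to CH^*(X_{n-1,k})$ matches the cohomological kernel of Theorem~\ref{thm:span-line-config}. The plan is to use the cellular decomposition of $X_{n-1,k}$ from Pawlowski--Rhoades, which gives $CH^*(X_{n-1,k})\cong H^{2*}(X_{n-1,k})$ compatibly with restriction from the product. Since restriction is surjective in both theories and the cycle maps on both sides are isomorphisms, the two kernels agree. A secondary worry is the edge case $n-1<k$. This cannot occur, because $k<n$ gives $n-1\ge k$, so $X_{n-1,k}$ is defined and Theorem~\ref{thm:span-line-config} applies.
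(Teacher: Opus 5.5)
Your proposal is correct and follows essentially the same route as the paper. Both arguments apply localization to $X\setminus\bigcup_j Z_j$, write $Z_j=\PP^{k-1}\times W_{n-1,k}$ and use Theorem~\ref{thm:span-line-config} with the cellular Chow--cohomology comparison to identify the image of each $CH_*(Z_j)$, and then invoke Lemma~\ref{lem:higher-e} with $b=n-k$ to discard the higher $e_r([n]\setminus\{j\})$. Your explicit justification that $\bigoplus_j CH_*(Z_j)\to CH_*(\bigcup_j Z_j)$ is surjective supplies a step the paper leaves implicit when it writes the localization sequence with a direct sum.
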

\begin{proof}
    Let $Z_i$ be the subvariety (we may take the reduced induced scheme structure, but it does not matter because we only need the Chow group $CH^\QQ_*(Z_i)$) of $X$ given by
    \[Z_i\coloneqq\left\{(\ell_1,\dots,\ell_n)\in X\,:\,\sum_{\substack{j=1\\ j\neq i}}^n \ell_j \subsetneq \CC^k\right\}.\]
    Since $U_{n,k} = X \setminus \bigcup_{i=1}^n Z_i$, the localization sequence yields the exact sequence of abelian groups
    \begin{align}\label{exact-seq-chow}
        \bigoplus_{i=1}^n CH^\QQ_{n(k-1)-*}(Z_i) \rightarrow CH_\QQ^*(X) \rightarrow CH_\QQ^*(U_{n,k}) \rightarrow 0,
    \end{align}
    where the first map is the pushforward of the closed embeddings $Z_i\hookrightarrow X$ and the second map is the pullback of the open embedding $U_{n,k}\subset X$.

    Let $W_{m,k}\coloneqq (\PP^{k-1})^m \setminus X_{m,k}$ ($m\ge k$). Then $Z_i = W_{n-1,k}\times\PP^{k-1}$ where the $\PP^{k-1}$ is the $i$-th projective space factor of $X$ and $W_{n-1,k}$ is contained in the product of the $1,\dots,\hat{i},\dots,n$-th projective space factors of $X$. Thus Theorem~\ref{thm:span-line-config} and the localization sequence
    \[CH^\QQ_{(n-1)(k-1)-*}(W_{n-1,k}) \rightarrow CH_\QQ^*((\PP^{k-1})^{n-1}) \rightarrow CH_\QQ^*(X_{n-1,k}) \rightarrow 0\]
    imply that the image of the pushforward
    \[CH^\QQ_{(n-1)(k-1)-*}(W_{n-1,k}) \rightarrow CH_\QQ^*((\PP^{k-1})^{n-1})=\QQ[x_1,\dots,\widehat{x_i},\dots,x_n]/\langle x_1^k,\dots,\widehat{x_i^k},\dots,x_n^k\rangle\]
    is the ideal
    \[\begin{aligned}\Big\langle \overline{e_{n-k}([n]\setminus\{i\})},\overline{e_{n-k+1}([n]\setminus\{i\})},\dots,\overline{e_{n-1}([n]\setminus\{i\})}\Big\rangle\subseteq\QQ[x_1,\dots,\widehat{x_i},\dots,x_n]/\langle x_1^k,\dots,\widehat{x_i^k},\dots,x_n^k\rangle\end{aligned}.\]
    Here we use the fact that both $(\PP^{k-1})^{n-1}$ and $X_{n-1,k}$ admit cellular decompositions (see \cite{MR4029695}), so their cycle class maps identify their rational Chow rings with their even cohomology rings. It follows that the image of the pushforward
    \[CH^\QQ_{n(k-1)-*}(Z_i)\rightarrow CH_\QQ^*(X)=\QQ[\xxx_n]/\langle x_1^k,\dots,x_n^k\rangle\] is the ideal
    \begin{align}\label{image-ideal-chow}
        \Big\langle \overline{e_{n-k}([n]\setminus\{i\})},\overline{e_{n-k+1}([n]\setminus\{i\})},\dots,\overline{e_{n-1}([n]\setminus\{i\})} \Big\rangle \subseteq \QQ[\xxx_n]/\langle x_1^k,\dots,x_n^k\rangle,
    \end{align}
    where we use the fact that the exterior product
    \[\bigoplus_{r+s=d}CH^\QQ_r(W_{n-1,k})\otimes CH^\QQ_s(\PP^{k-1}) \overset{\times}{\longrightarrow} CH^\QQ_d(W_{n-1,k}\times \PP^{k-1})=CH^\QQ_d(Z_i)\]
    is surjective (because $\PP^{k-1}$ has a cellular decomposition) and the fact that for proper morphisms $f,g$ we have $(f\times g)_*(\alpha\times\beta) =(f_*\alpha) \times (g_*\beta)$.

    Combining \eqref{exact-seq-chow} and \eqref{image-ideal-chow}, we deduce that the ring $CH_\QQ^*(U_{n,k})$ is isomorphic to the quotient of $\QQ[\xxx_n]$ by the ideal
    \[\langle x_i^k:i\in[n]\rangle + \sum_{i=1}^n \langle e_{n-k}([n]\setminus\{i\}),e_{n-k+1}([n]\setminus\{i\}),\dots,e_{n-1}([n]\setminus\{i\})\rangle.\]
    It remains to show that
    \begin{align*} \langle x_i^k:i\in[n]\rangle + \sum_{i=1}^n\langle &e_{n-k}([n]\setminus\{i\}),e_{n-k+1}([n]\setminus\{i\}),\dots,e_{n-1}([n]\setminus\{i\})\rangle \\ &= \langle x_i^k,e_{n-k}([n]\setminus\{i\})\,:\,i\in[n]\rangle.\end{align*}
    Indeed, it is clear that the right-hand side is contained in the left-hand side. Also, Lemma~\ref{lem:higher-e} implies that the left-hand side is contained in the right-hand side. Consequently, this equality holds, finishing our proof.
\end{proof}

We now relate the Hilbert series of $CH^*_\QQ(U_{n,k})$
to the positroid stratification of $U_{n,k}$.
Recall that $\bound{k}{n}$ is the set of
$(k,n)$-bounded affine permutations. Let
\[
\bound{k}{n}^{\circ}\coloneqq
\left\{
f\in\bound{k}{n}:
i<f(i)<i+n\text{ for all }i\in[n]
\right\}.
\]
For $f\in\bound{k}{n}$, let $\Pi_f^\circ$
denote the corresponding open positroid stratum
of $\grass{k}{n}$, whose closure is $\Pi_f$.

For each $f\in\bound{k}{n}^{\circ}$, define
\[
U_f\coloneqq
\left\{
(\ell_1,\dots,\ell_n)\in U_{n,k}:
\operatorname{rowspan}[v_1\ \cdots\ v_n]
\in\Pi_f^\circ
\right\},
\]
where $v_i$ is any nonzero vector (written by a column vector) spanning
$\ell_i$.
This definition is independent of the choices
of $v_i$, since rescaling the columns preserves
all the rank conditions defining the positroid
strata.

\begin{proposition}\label{prop:chow-positroid-stratification}
    The space $U_{n,k}$ admits the stratification
    \[
    U_{n,k}=
    \bigsqcup_{f\in\bound{k}{n}^{\circ}}U_f,
    \]
    where every $U_f$ is nonempty and
    \[
    \codim{U_{n,k}}{U_f}
    =
    \codim{\grass{k}{n}}{\Pi_f}
    =
    \ell(f).
    \]
\end{proposition}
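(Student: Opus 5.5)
The plan is to realize $U_{n,k}$, up to a smooth fibration, as an open piece of the Grassmannian. Let $M^{\circ}\subseteq\mathrm{Mat}_{k\times n}(\CC)$ be the open set of full-rank matrices $A=[v_1\ \cdots\ v_n]$ with all columns nonzero. There are two smooth surjections out of $M^\circ$. The first is $p:M^\circ\to(\PP^{k-1})^n$, $A\mapsto(\CC v_1,\dots,\CC v_n)$, a principal $(\CC^\times)^n$-bundle onto the open set where the $\ell_i$ span $\CC^k$. The second is $\pi:M^\circ\to\grass{k}{n}$, $A\mapsto\operatorname{rowspan}A$, a principal $\gl_k$-bundle onto the open set $G^\circ$ of planes whose matroid has no loops. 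We write $M_{U}\coloneqq p^{-1}(U_{n,k})$. The condition that $\sum_{i\neq j}\ell_i=\CC^k$ for every $j$ means exactly that no element is a coloop, while nonzero columns means no loops. Hence $M_U=\pi^{-1}(G^{\circ\circ})$, where $G^{\circ\circ}$ is the open set of planes whose matroid has neither loops nor coloops, and $p|_{M_U}:M_U\to U_{n,k}$ is a $(\CC^\times)^n$-torsor.

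For the stratification, recall from Subsection~\ref{subsec:positroids-affine-perm} that the sets of loops and coloops are constant on each $\Pi_f^\circ$, and that $\Pi_f^\circ\subseteq G^{\circ\circ}$ exactly when $f\in\bound{k}{n}^\circ$. Therefore $G^{\circ\circ}=\bigsqcup_{f\in\bound{k}{n}^\circ}\Pi_f^\circ$. Pulling back along $\pi$ gives $M_U=\bigsqcup_f\pi^{-1}(\Pi_f^\circ)$. Each piece is $(\CC^\times)^n$-stable, because column rescaling preserves the cyclic ranks. So the pieces descend along $p$ to a disjoint decomposition $U_{n,k}=\bigsqcup_f U_f$ with $p^{-1}(U_f)=\pi^{-1}(\Pi_f^\circ)$, which agrees with the definition of $U_f$. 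For nonemptiness, $\Pi_f^\circ\neq\varnothing$ by \cite{KnutsonLamSpeyer2013}. Any $V\in\Pi_f^\circ$ has a full-rank representing matrix, and its columns are nonzero because there are no loops. That matrix lies in $M_U$ and maps into $U_f$.

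For the codimension, both $p$ and $\pi$ restricted to $M_U$ are smooth surjective morphisms with irreducible fibers of constant dimension, namely $n$ and $k^2$ respectively. Codimension of a locally closed subset is preserved under pullback by such flat morphisms with equidimensional fibers. This gives
\[
\codim{U_{n,k}}{U_f}=\codim{M_U}{p^{-1}(U_f)}=\codim{M_U}{\pi^{-1}(\Pi_f^\circ)}=\codim{G^{\circ\circ}}{\Pi_f^\circ}.
\]
Since $G^{\circ\circ}$ is open in $\grass{k}{n}$ and $\Pi_f^\circ$ is dense in the irreducible $\Pi_f$, the last codimension equals $\codim{\grass{k}{n}}{\Pi_f}=\ell(f)$ by \cite[Theorems~5.9--5.10]{KnutsonLamSpeyer2013}. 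It remains to check that this is a genuine stratification, i.e.\ that each $U_f$ is locally closed and the closure of a stratum is a union of strata. Both properties are inherited from the positroid stratification of $\grass{k}{n}$. They pass to $G^{\circ\circ}$ and then to $U_{n,k}$, because $p|_{M_U}$ is a smooth surjection, hence open, and closures commute with preimages under open surjections.

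The step I expect to need the most care is the precise matching $p^{-1}(U_{n,k})=\pi^{-1}(G^{\circ\circ})$. In particular, one must verify that the spanning condition, when some $\ell_i$ are equal or dependent, is exactly the coloop-free condition on the matroid of $A$, and that full rank is automatic. Once that identification is in place, the rest is a formal transfer along the two torsors.
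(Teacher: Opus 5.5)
Your proposal is correct and follows essentially the same route as the paper. Both arguments pass through the space of $k\times n$ matrices with nonzero columns and no coloops, and both use the $(\CC^\times)^n$-torsor to $U_{n,k}$ together with the $\gl_k$-torsor onto the loop- and coloop-free part of $\grass{k}{n}$. Both identify $p^{-1}(U_f)=\pi^{-1}(\Pi_f^\circ)$ and transfer dimensions through these two bundles. Your additional check that each $U_f$ is locally closed and that closures of strata are unions of strata, via openness of smooth surjections, is a point the paper leaves implicit.
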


\begin{proof}
    A configuration $(\ell_1,\dots,\ell_n)$
    can be represented by a $k\times n$ matrix
    $A=[v_1\ \cdots\ v_n]$, where each $v_i$
    is nonzero and spans $\ell_i$.
    The defining condition of $U_{n,k}$
    is equivalent to requiring that
    deleting any column of $A$ leaves a
    matrix of rank $k$.
    Thus the column matroid of $A$ has
    neither loops nor coloops.

    For each $k$-plane $V$,
    a loop of $\mathcal{B}_V$ at $i$ corresponds to $f_V(i)=i$,
    while a coloop of $\mathcal{B}_V$ at $i$ corresponds to
    $f_V(i)=i+n$; see Subsection~\ref{subsec:positroids-affine-perm}.
    Consequently, the positroid strata
    parametrizing configurations in $U_{n,k}$
    are precisely those indexed by
    $\bound{k}{n}^{\circ}$.
    This proves the asserted stratification
    and the nonemptiness of its strata.

    It remains to compute their codimensions.
    Let $\mathcal{M}_{n,k}$ be the open
    subvariety of $\operatorname{Mat}_{k\times n}(\CC)$
    consisting of matrices whose columns are
    nonzero and whose column matroids have
    no coloops, i.e. any $n-1$ columns form a matrix of rank $k$.
    There are two natural morphisms
    \[
    \begin{aligned}
    \pi:\mathcal{M}_{n,k}&\longrightarrow U_{n,k},
    &
    [v_1\ \cdots\ v_n]
    &\longmapsto
    (\CC v_1,\dots,\CC v_n),\\
    \rho:\mathcal{M}_{n,k}&\longrightarrow
    \grass{k}{n},
    &
    A&\longmapsto\operatorname{rowspan}(A),
    \end{aligned}
    \]
    where $\pi$ is surjective and the image of $\rho$ is the union
    of the open positroid strata indexed
    by $\bound{k}{n}^{\circ}$.

    For $f\in\bound{k}{n}^{\circ}$, write
    \[
    \mathcal{M}_f
    \coloneqq\rho^{-1}(\Pi_f^\circ)
    =\pi^{-1}(U_f).
    \]
    The fibers of
    $\pi:\mathcal{M}_f\to U_f$
    are isomorphic to $(\CC^\times)^n$,
    since two matrices represent the same
    line configuration $(\ell_1,\dots,\ell_n)$ precisely when
    their corresponding columns differ
    by nonzero scalar multiples.
    On the other hand, the fibers of
    $\rho:\mathcal{M}_f\to\Pi_f^\circ$
    are isomorphic to $\gl_k(\CC)$,
    since two full-rank matrices have the
    same row span precisely when they
    differ by an invertible row operation.
    Both maps are locally trivial bundles.
    It follows that
    \[
    \dim\mathcal{M}_f
    =\dim U_f+n
    =\dim\Pi_f^\circ+k^2.
    \]
    Hence
    \[
    \dim U_f
    =\dim\Pi_f^\circ+k^2-n.
    \]
    Since
    \[
    \dim U_{n,k}=n(k-1)
    \quad\text{and}\quad
    \dim\grass{k}{n}=k(n-k),
    \]
    we obtain
    \[
    \begin{aligned}
    \codim{U_{n,k}}{U_f}
    &=n(k-1)-\dim U_f\\
    &=k(n-k)-\dim\Pi_f^\circ\\
    &=\codim{\grass{k}{n}}{\Pi_f}\\
    &=\ell(f),
    \end{aligned}
    \]
    where the last equality follows from
    Subsection~\ref{subsec:positroids-affine-perm}.
\end{proof}

The preceding proposition, together with
Theorems~\ref{thm:chow}
and~\ref{thm:main:a+b-n=0,1,2},
gives the following interpretation of
the Hilbert series of $R_{n,k,n-k}$.

\begin{corollary}[{Corollary~\ref{cor:main:chow-positroid}}]\label{cor:chow-positroid-hilbert}
    For $1\le k<n$, we have
    \[
    \begin{aligned}
    \Hilb(CH^*_\QQ(U_{n,k});q)
    =\Hilb(R_{n,k,n-k};q)=\sum_{f\in\bound{k}{n}^{\circ}}
      q^{\ell(f)}=\sum_{f\in\bound{k}{n}^{\circ}}
      q^{\codim{U_{n,k}}{U_f}}.
    \end{aligned}
    \]
    In particular, for each $d\ge 0$,
    \[
    \dim_\QQ CH^d_\QQ(U_{n,k})
    =
    \#\left\{
    f\in\bound{k}{n}^{\circ}:
    \ell(f)=d
    \right\}.
    \]
\end{corollary}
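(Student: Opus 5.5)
The argument is a chain of three equalities, each taken from an earlier result.

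\emph{First equality.} Theorem~\ref{thm:chow} gives a graded isomorphism $CH^*_\QQ(U_{n,k})\cong R_{n,k,n-k}$. It is graded because $c_1(\mathcal{L}_i)\in CH^1$ corresponds to $x_i$ of degree $1$, and the ring $CH^*_\QQ(U_{n,k})$ is generated by these classes, since it is a quotient of $CH^*_\QQ(X)$ by the localization sequence. Hence $\Hilb(CH^*_\QQ(U_{n,k});q)=\Hilb(R_{n,k,n-k};q)$.

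\emph{Second equality.} Apply Corollary~\ref{cor:a+b-n=0,1,2} with $a=k$ and $b=n-k$. Here $a+b-n=0$, $a,b>0$ and $b<n$ because $1\le k<n$. Pairing with $p_1^n$ turns each $F_{S,n}$ into $1$, so
\[\Hilb(R_{n,k,n-k};q)=\sum q^{\ell(f)},\]
the sum running over $f\in\widetilde S_n^0$ with $i-k<f(i)<i+n-k$ for all $i$. Next translate by $g(i)=f(i)+k$. This is a length-preserving bijection $\widetilde S_n^0\to\widetilde S_n^k$, and it carries the bounds to $i<g(i)<i+n$. The elements of $\widetilde S_n^k$ satisfying these bounds are exactly $\bound{k}{n}^{\circ}$: the bounds hold for all $i\in\ZZ$ once they hold for $i\in[n]$, by periodicity, and they imply the weak bounds defining $\bound{k}{n}$.

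\emph{Third equality.} This is the codimension statement $\codim{U_{n,k}}{U_f}=\ell(f)$ of Proposition~\ref{prop:chow-positroid-stratification}, applied termwise.

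Finally, the dimension statement follows by extracting the coefficient of $q^d$.

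There is no real obstacle. The only point that needs a line of justification is the gradedness of the isomorphism in Theorem~\ref{thm:chow}, which is handled by the generation argument above.
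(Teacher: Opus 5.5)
Your proposal is correct and follows the paper's proof step for step. You use Theorem~\ref{thm:chow} for the first equality, the case $a+b-n=0$ of Corollary~\ref{cor:a+b-n=0,1,2} together with the translation $f\mapsto f+k$ onto $\bound{k}{n}^{\circ}$ for the second, and Proposition~\ref{prop:chow-positroid-stratification} for the third. Your added remarks on why the isomorphism is graded and on periodicity of the bounds simply make explicit what the paper leaves implicit.
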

\begin{remark}\label{rmk}
    Interestingly, this means that $\dim_\QQ CH^d_\QQ(U_{n,k})$ agrees with the number of strata $U_f$ with codimension $d$, while the author cannot find a geometric proof. The simplest way--showing the linear independence of fundamental classes $\left[\overline{U_f}\right]$--fails because they are usually linearly dependent.
\end{remark}
\begin{proof}
    The first equality follows immediately
    from Theorem~\ref{thm:chow}.
    Specializing
    Theorem~\ref{thm:main:a+b-n=0,1,2}
    to $(a,b)=(k,n-k)$ gives
    \[
    \F_{n,k,n-k}
    =
    \sum_{\substack{
    f\in\widetilde{S}_n^0\\
    i-k<f(i)<i+n-k\text{ for all }i
    }}
    q^{\ell(f)}
    F_{\DES{f\mid_{[n]}},n}.
    \]
    It follows that
    \[
    \Hilb(R_{n,k,n-k};q)
    =
    \sum_{\substack{
    f\in\widetilde{S}_n^0\\
    i-k<f(i)<i+n-k\text{ for all }i
    }}
    q^{\ell(f)}.
    \]
    The translation $f\mapsto f+k$
    preserves affine length and bijectively
    maps the summation range onto
    $\bound{k}{n}^{\circ}$.
    This proves the second equality.
    The last equality follows from
    Proposition~\ref{prop:chow-positroid-stratification}.
\end{proof}

\section{Future directions}\label{sec:future}
This paper gives plenty of representation theoretical results using one family of algebras $R_{n,a,b}$ to connect three seemingly different worlds: the positroids (affine permutations), the Delta and Theta conjectures, and the Eulerian quasisymmetric functions. Interestingly, all these staff has geometric meaning: positroids arise from positive geometry, the Delta conjecture also has a geometric model \cite{MR4029695}, and Eulerian quasisymmetric functions occur in the Frobenius characteristic of the cohomology of toric varieties \cite{ShareshianWachs2010,ShareshianWachs2020}. We thus ask for geometric interpretations to our results. The following problem is an example.
\begin{problem}\label{prob:geom-proof}
    Find a geometric proof of Corollary~\ref{cor:chow-positroid-hilbert} through studying the variety $U_{n,k}$.
\end{problem}

\section{Acknowledgments}\label{sec:acknowledgments}
The author thanks Alessandro Iraci, Martina Lanini, and Brendon Rhoades for their initial observations that motivated this work, as well as for helpful discussions and suggestions during the development of the project. Special thanks are due to Brendon Rhoades for bringing the problem to the author's attention, and for many helpful discussions and suggestions throughout this work. The author also thanks Hsin-Chieh Liao for helpful discussions about Eulerian quasisymmetric functions.

\section{Declaration of generative AI and AI-assisted technologies in the manuscript preparation process}

During the preparation of this work, the author used GPT-5.5 to prove Lemmas~\ref{lem:affine-fixed-point-insertion} and \ref{lem:dual-recursive}, and GPT-6.1 sol to assist with LaTeX formatting and language polishing. The original problem concerned the rings $R_{n,k+1,n-k+1}$. To address this problem through recursion, the author independently introduced the more general family $R_{n,a,b}$ and independently formulated and proved the recursive formulas in Propositions~\ref{prop:recursive-n,a,b} and \ref{prop:recursive-n,infty,b}. The author also developed the key idea of reducing $R_{n,a,b}$ to $R_{n,\infty,b}$. These contributions form the core of this work and were made without AI assistance. The author reviewed and edited the AI-generated output as needed and takes full responsibility for the content of the published article.

\printbibliography

\end{document}